\def\r{\color{red}}\def\b{\color{blue}}\def\g{\color{ForestGreen}}
\newcommand{\pushright}[1]{\ifmeasuring@#1\else\omit\hfill$\displaystyle#1$\fi\ignorespaces}
\newcommand{\pushleft}[1]{\ifmeasuring@#1\else\omit$\displaystyle#1$\hfill\fi\ignorespaces}
\numberwithin{equation}{section}
\newtheorem{thm}[subsection]{Theorem}
\newtheorem{prop}[subsection]{Proposition}
\newtheorem{corollary}[subsection]{Corollary}
\newtheorem{lemma}[subsection]{Lemma}
\newtheorem*{example}{Example}
\newtheorem{question}[equation]{Question}
\theoremstyle{definition}
\newtheorem{defn}[equation]{Definition}
\newtheorem{eg}[equation]{Example}
\newtheorem{remark}[equation]{Remark}
\newcommand{\R}{\mathbb R}
\newcommand{\Q}{\mathbb Q}
\newcommand{\Z}{\mathbb Z}
\newcommand{\N}{\mathbb N}
\newcommand{\C}{\mathbb C}
\renewcommand{\O}{\mathcal{O}}
\renewcommand{\P}{\mathbb P}
\renewcommand{\phi}{\varphi}
\newcommand{\Spec}{\mathrm{Spec}\ }
\DeclareMathOperator{\Art}{Art}
\DeclareMathOperator{\Gal}{Gal}
\DeclareMathOperator{\Aut}{Aut}
\DeclareMathOperator{\wt}{wt}
\DeclareMathOperator{\cha}{char}
\DeclareMathOperator{\divisor}{div}
\DeclareMathOperator{\lcm}{lcm}
\DeclareMathOperator{\ess}{ess}
\DeclareMathOperator{\divi}{div}
\DeclareMathOperator{\denom}{denom}
\title[Conductors and minimal discriminants]{Conductors and minimal discriminants of hyperelliptic curves: A comparison in the tame case}
\author{Padmavathi Srinivasan}
\address{Current address: School of Mathematics, University of Georgia, 452 Boyd Graduate Studies, 1023 D. W. Brooks Drive, Athens, GA 30602.}
\email{Padmavathi.Srinivasan@uga.edu}
\urladdr{https://padmask.github.io/}
\begin{document}
 
\date{\today}

\begin{abstract}
 Let $C$ be a hyperelliptic curve of genus $g$ over the fraction field $K$ of a discrete valuation ring $R$. Assume that the residue field $k$ of $R$ is perfect and that $\cha k > 2g+1$. Let $S = \Spec R$. Let $X$ be the minimal proper regular model of $C$ over $S$. Let $\Art (C/K)$ denote the Artin conductor of the $S$-scheme $X$ and let $\nu (\Delta_C)$ denote the minimal discriminant of $C$. We prove that $-\Art (C/K) \leq \nu (\Delta_C)$. The key ingredients are a combinatorial refinement of the discriminant introduced in this paper (called the metric tree) and a recent refinement of Abhyankar's inversion formula for studying plane curve singularities. We also prove combinatorial restrictions for $-\Art (C/K) = \nu (\Delta_C)$.
\end{abstract}

\maketitle
\section{Introduction} The goal of this paper is to prove an inequality between two measures of degeneracy for a family of hyperelliptic curves, namely the Artin conductor and the minimal discriminant. Let $(R,\nu)$ be a discrete valuation ring with perfect residue field $k$ of of odd characteristic. Let $K$ be the fraction field of $R$. Let $C$ be a smooth, projective, geometrically integral curve of genus $g \geq 1$ defined over $K$. Let $S = \Spec R$. Let $X$ be a proper, flat, regular $S$-scheme with generic fiber $C$. The Artin conductor of the model $X$ is given by $\Art (X/S) =  \chi(X_{\overline{K}}) - \chi(X_{\overline{k}}) - \delta $, where $\chi$ is the \'etale Euler-characteristic and $\delta$ is the Swan conductor associated to the $\ell$-adic representation $\Gal (\overline{K}/K) \rightarrow \Aut_{\Q_\ell} (H^1_{\mathrm{et}}(X_{\overline{K}}, \Q_\ell))$ ($\ell \neq \cha k$). The Artin conductor is a measure of degeneracy of the model $X$; it is a non-positive integer that is zero precisely when either $X/S$ is smooth or when $g=1$ and $(X_k)_{\mathrm{red}}$ is smooth. 
Let $\Art (C/K)$ denote the Artin conductor of the minimal proper regular model of $C$ over $S$.

For hyperelliptic curves, there is another measure of degeneracy defined in terms of minimal Weierstrass equations. Assume that $C$ is hyperelliptic. An integral Weierstrass equation for $C$ is an
equation of the form $y^2= f(x)$ with $f(x) \in R[x]$, such that $C$ is birational to the plane curve given by this equation. The discriminant of such an equation is defined to be the
non-negative integer $\nu(\Delta_f)$, where $\Delta_f$ is the discriminant of $f$, thought of as a polynomial of degree $2 \lceil \deg(f)/2 \rceil$. A minimal Weierstrass equation is an equation for which the integer $\nu(\Delta_f)$ is as small as possible amongst all integral equations, and the corresponding integer $\nu(\Delta_C)$ is called the minimal discriminant of $C$. 

When $g=1$, we have $-\Art (C/K) = \nu(\Delta_C)$ by the Ogg-Saito formula \cite[p.156, Corollary 2]{saito2}. When $g=2$, Liu~\cite[p.52, Theoreme~1 and p.53, Theoreme~2]{liup} shows that $-\Art (C/K) \leq \nu(\Delta_C)$. In the author's thesis \cite{conddisc}, Liu's inequality was extended to hyperelliptic curves of \emph{arbitrary genus} assuming that the roots of $f$ are defined over an \emph{unramified} extension of $K$. In this paper, we extend \cite{conddisc} assuming only that $\cha(k) > 2g+1$ (in particular, the roots of $f$ are defined over a \emph{tame} extension of $K$). 

\begin{thm}\label{Tfinalthm}
Let $C$ be a hyperelliptic curve of genus $g \geq 1$ over a discretely valued field $K$
with ring of integers $R$ and perfect residue field $k$ such that $\cha(k) > 2g+1$. Let $\nu(\Delta_C)$ be the minimal discriminant of $C$ and let $\Art(C/K)$ denote the Artin conductor of the minimal regular model of $C$. Then
\[ - \Art(C/K) \leq \nu(\Delta_C) .\]
\end{thm}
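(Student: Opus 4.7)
The plan is to reduce the inequality to an explicit combinatorial bound on a regular model built directly from the Weierstrass data, using the metric tree and Abhyankar's inversion formula as the organizing tools.

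I would start with a minimal integral Weierstrass equation $y^2 = f(x)$ realizing $\nu(\Delta_C)$ and base change to the splitting field $K' = K(\alpha_1, \dots, \alpha_{2g+2})$ of $f$ (with $\alpha_{2g+2} = \infty$ when $\deg f = 2g+1$). Since $\cha(k) > 2g+1$ forces $\cha(k) \nmid (2g+2)!$, the extension $K'/K$ is tame. The pairwise valuations $v_{K'}(\alpha_i - \alpha_j)$ assemble into the paper's metric tree, whose total edge length recovers $\nu(\Delta_{f,K'})$ and whose branching encodes the degenerations among the roots. Tracking how both sides scale under the tame base change $K'/K$ --- the discriminant by $e := [K':K]$, and $-\Art$ via a standard Riemann--Hurwitz / Swan-conductor calculation --- reduces the theorem to proving the analogous inequality over $K'$ for the model coming from $f$.

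Over $K'$, I would construct an explicit proper flat regular model $Y/S'$ of $C_{K'}$ from the cluster structure of the tree: the Weierstrass affine $\Spec R'[x,y]/(y^2 - f(x))$ is singular precisely where two or more roots of $f$ collide modulo the maximal ideal, and each such node of the metric tree contributes a plane curve singularity whose embedded resolution is governed by the local branching. Here the refined Abhyankar inversion formula enters: it translates the multiplicities and distances between colliding roots into the Euler characteristic drop, the self-intersections, and the multiplicities of the exceptional divisors introduced during resolution. Summing the resulting local data over the tree produces a closed-form expression
\[ -\Art(Y/S') \;=\; \sum_{\text{nodes } v \text{ of the metric tree}} (\text{local contribution at } v), \]
so the desired inequality $-\Art(Y/S') \leq \nu(\Delta_{f,K'})$ becomes a node-by-node comparison against the cluster contributions to the discriminant.

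Finally I would descend via the tame quotient $Y/\Gal(K'/K)$, resolve its (tame) quotient singularities to obtain a proper flat regular $S$-model $X$ of $C$, and invoke the fact that passing from any regular model to the minimal regular model only decreases $-\Art$, yielding $-\Art(C/K) \leq -\Art(X/S)$. Combining this with the tame-descent control of the Artin conductor and the $K'$-inequality from the previous paragraph gives the theorem. The main obstacle is precisely the node-by-node inequality: one must fix a normal form for the plane curve singularity at each cluster and show, using the refined Abhyankar formula, that the resulting local Artin contribution never overshoots the local discriminant contribution. Handling the point at infinity when $\deg f = 2g+1$, as well as valuations of the leading coefficient forcing coordinate changes in $x$, will require care but should fall out of treating $\infty$ symmetrically with the other $2g+2$ roots.
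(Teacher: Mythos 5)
Your overall architecture --- build an explicit regular model from $f$, bound its Artin conductor by the discriminant via local contributions organized by the metric tree, then use that passing to the minimal regular model only decreases $-\Art$ --- matches the paper's strategy in outline, but the central reduction you propose does not work. You claim that tracking how both sides scale under the tame base change $K'/K$ reduces the theorem to the analogous inequality over the splitting field $K'$. The discriminant of the fixed $f$ does scale by $e=[K':K]$, but the Artin conductor does not: it is built from the Euler characteristic of the special fiber of a model together with the action of $\Gal(\overline K/K)$, and after base change to $K'$ the curve is (potentially) semistable, so the conductor over $K'$ is typically far smaller than $e$ times the conductor over $K$ (e.g.\ $y^2=x^3-t$ has $-\Art=\nu(\Delta)=2$ over $k((t))$ but acquires good reduction over $K'=K(t^{1/6})$). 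Consequently an inequality over $K'$ is much weaker than what is needed, and there is no ``standard Riemann--Hurwitz / Swan'' formula converting it back; the Swan term is zero here and carries no information. The same problem resurfaces in your descent step: after forming $Y/\Gal(K'/K)$ and resolving the tame quotient singularities, the exceptional divisors introduced by that resolution contribute to $-\Art(X/S)$ in a way that is not controlled by $-\Art(Y/S')$ or by the $K'$-side computation --- this is precisely the hard part of the problem. The paper even points out (in discussing Faraggi--Nowell) that the natural output of such a quotient-plus-resolution procedure is an snc model, and that the conductor-discriminant inequality is \emph{false} for minimal snc models already in genus $1$, so the descent must be handled with much more care than ``resolve and compare.''

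The paper avoids base change altogether: it reduces to the equicharacteristic case $R=k[[t]]$, constructs a good embedded resolution $Y^f$ of $(\P^1_R,\divi(f))$ directly over $R$ (using Newton--Puiseux expansions with bounded denominators and their characteristic exponents), takes $X^f$ as the normalization in $K(x)(\sqrt f)$, and proves $-\Art(X^f)\le\nu(\Delta_f)$ by induction on $(\deg f,\nu(\Delta_f))$. The inductive step compares $f$ with ``replacement polynomials'' extracted from the strict transform after a single blowup at each bad point, and the metric tree together with the refined Abhyankar inversion formula is used only to compute how the discriminant changes under this replacement --- not to produce a one-shot closed-form node-by-node formula for $-\Art$ of a model over the splitting field, which is what your plan would require. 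If you want to salvage your approach you would need either a genuinely Galois-equivariant accounting of the quotient-resolution contributions over $K$ (this is essentially the Mac Lane valuation route of the author's later work with Obus), or to abandon the base change and work over $R$ as the paper does.
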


\subsection{Combinatorial criterion for equality}
The techniques in this paper enable us to give a purely combinatorial explanation for why the Ogg-Saito conductor-discriminant \emph{equality} in genus $1$ is sometimes only an \emph{inequality} when $g \geq 2$. The reason for bad reduction in hyperelliptic curves in odd residue characteristic is because distinct roots of the polynomial $f$ reduce to the same element in the residue field. Roughly, the difference in the two invariants comes about because the discriminant keeps track of not just the collision of roots, but \emph{how many} roots collide at the same point. However, if we have a large number of roots coming together to order $1$ that can still be separated with a single blowup, then the conductor is still small. The inequality between the conductor and the discriminant in this case boils down to the inequality $2 \leq n(n-1)$ for any integer $n \geq 2$. This analysis is accurate if the roots of $f$ are rational and we have an even number of roots coming together, and every pair comes together to order $1$. 

More generally, even when the roots of $f$ are non-rational, for every closed point $P$ in $\divi(f)$ on $\mathbb{P}^1_R$, one can look at the multiplicity of $f$ in the local ring at $P$ -- this is a positive integer that can be viewed as a weighted sum $\wt_P$ of the roots of $f$ specializing to $P$. (See Definition~\ref{D:weights} and Lemma~\ref{L:wtsmult}). For example, for $P$ as above, we have $\wt_P = 1$ exactly when $f$ does not vanish identically along the special fiber and exactly one irreducible factor of $f$ specializes to $P$, and this factor is either linear or a translate of an Eisenstein polynomial. For equality to hold, it is necessary that all points $P$ in $\divi(f)$ have $\wt_P \leq 3$. More precisely, for every polynomial $f \in R[x]$ (for example, a polynomial $f$ such that $\nu(\Delta_f) = \nu(\Delta_C)$), our techniques produce an explicit proper regular model $X^f$ for the hyperelliptic curve with equation $y^2=f(x)$, which is sometimes the minimal model, and we can show
\begin{thm}\label{Cexbal}
 $-(\Art(X^f)) = \nu(\Delta_f)$ if and only if every $P $ in $\divi(f)$ is either a good weight $3$ point (see Definition~\ref{D:goodtype3}) or has $\wt_P \leq 2$. In particular, we have $-(\Art(X^f)) < \nu(\Delta_f)$ if there exists $P$ in $\divi(f)$ with $\wt_P \geq 4$.
\end{thm}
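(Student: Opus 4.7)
My plan is to reduce the theorem to a point-by-point local comparison on $\P^1_R$. Both invariants decompose as sums indexed by the closed points $P$ of $\divi(f)$: the discriminant side because $\nu(\Delta_f)$ is additive over the points where roots collide, so $\nu(\Delta_f)=\sum_P c_P$ with $c_P$ read off from the factorization of $f$ at $P$; and the Artin conductor side because by construction the model $X^f$ is obtained from $\P^1_R$ by a sequence of blowups supported over the points $P$, so the special fiber components and their intersection data (which determine $\Art(X^f)$ via the usual Euler-characteristic formula) split into contributions $a_P$ localized at each $P$. Setting $\epsilon_P := c_P - a_P$, the inequality $-\Art(X^f)\leq \nu(\Delta_f)$ (implicit in the proof of Theorem~\ref{Tfinalthm}) is the pointwise statement $\epsilon_P \geq 0$, and the theorem to prove reduces to characterizing the vanishing $\epsilon_P=0$.

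Next I would run a case analysis on $\wt_P$. For $\wt_P=1$, the local picture is either smooth or a single Eisenstein-type factor, the construction of $X^f$ requires no non-trivial blowup at $P$, and one checks directly $c_P=a_P=0$, hence $\epsilon_P=0$. For $\wt_P=2$, a single blowup separates the two simply colliding roots, and the numerical identity underlying the introduction's ``$2\le n(n-1)$'' remark (which is an equality at $n=2$) yields $c_P = a_P$, so again $\epsilon_P=0$. For $\wt_P\geq 4$, I would use the strict version of that inequality together with the fact that the metric tree at $P$ must have either a node of branching $\geq 4$ or enough additional depth that the discriminant accrues contributions uncompensated by exceptional components of the model, to conclude $\epsilon_P>0$; this will also yield the final ``in particular'' assertion as an immediate corollary.

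The main obstacle, and the case where genuine combinatorics is required, is $\wt_P=3$: here $\epsilon_P$ vanishes in some configurations and is strictly positive in others, and Definition~\ref{D:goodtype3} is engineered precisely to isolate the favorable ones. To handle it I would expand both $c_P$ and $a_P$ in terms of the metric-tree invariants at $P$ (node depths, branching data, and the residual ramification of the factors of $f$ specializing to $P$) and compare. Weight $3$ configurations to examine include three simply colliding roots, a linear-plus-quadratic split (with the quadratic either split, inert, or Eisenstein-type), and an irreducible cubic factor of $f$ reducing to a triple root, along with the internal metric structure of each; an enumeration of these subcases against Definition~\ref{D:goodtype3} should exhibit $\epsilon_P=0$ exactly in the good cases.

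Combining the three ranges of $\wt_P$ gives the biconditional: $-\Art(X^f)=\nu(\Delta_f)\iff \sum_P\epsilon_P=0 \iff \epsilon_P=0$ for every $P\iff$ every $P$ has $\wt_P\leq 2$ or is good of weight $3$. I expect the computational heart of the argument to be the weight $3$ enumeration, since the other ranges are governed by clean numerical inequalities, whereas weight $3$ requires a careful reading of the metric tree against the local structure of $X^f$.
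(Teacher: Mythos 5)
Your overall shape (localize the conductor--discriminant difference at the points of $\divi(f)$ and characterize its vanishing by a case analysis on $\wt_P$) is consistent with what the paper does, but there is a genuine gap in how you propose to compute the local terms: you treat $\epsilon_P$ as a quantity decided by the configuration at $P$ and essentially one blowup, whereas the actual local contribution is a sum over \emph{every} stage of the resolution above $P$. Already for $\wt_P=2$ your justification fails as stated: a single irreducible quadratic factor with roots of valuation $5/2$, say, has $\wt_P=2$ but is not resolved by one blowup, and the ``$2\le n(n-1)$'' identity for a single step does not by itself give $\epsilon_P=0$. What makes the whole case analysis finite is the propagation mechanism that the paper extracts from the replacement construction: by Remark~\ref{rootscoefficients}, Remark~\ref{Rcomprep}/Corollary~\ref{Cnewpair} the contributions $\min(n_i,\lambda_i)$ never increase under replacement, and by Corollary~\ref{fmultexc} the new parity bit $b_P$ is the parity of $\wt_P$; hence once $\widetilde{\wt}\le 2$ the equality criterion of Theorem~\ref{replacementchange}(b) holds automatically at all later stages, while a $b=0$, $\wt_P=3$ point has $b_P=1$, so its replacement acquires a weight-$4$ point exactly when some $\widetilde{\wt_Q}=3$ --- which is exactly the failure of Definition~\ref{D:goodtype3} (this equivalence is the content of Lemma~\ref{L:altgoodtype3}). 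Without this monotonicity-and-parity argument, your biconditional does not close: you would have to verify the equality condition at unboundedly many stages, and your one-step enumeration cannot certify $\epsilon_P=0$.

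Relatedly, your weight-$3$ enumeration is incomplete and mislocated. The good/bad dichotomy is not determined by the multiset of colliding factors at $P$ alone: three simply colliding roots are good precisely when they separate after one blowup and bad when they all meet again (the excluded case in Lemma~\ref{L:altgoodtype3}), and a single irreducible factor of weight $3$ need not be a cubic --- the good single-factor cases are $(n_1,\lambda_1)\in\{(3,4),(4,3),(3,5),(5,3)\}$, while e.g.\ $(3,7)$ or $(6,3)$ with all roots re-colliding are weight $3$ but not good. So the data you must compare is the weight profile of the \emph{replacement polynomials} (Definition~\ref{defnreppoly}), not a list of shapes of the factorization at $P$. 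Your conclusions for $\wt_P\le 2$ and $\wt_P\ge 4$ are correct, but they too should be derived from the per-stage equality criterion ($\wt\in\{2,3\}$ at every bad point of every stage) rather than from the one-blowup heuristics you give.
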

We also prove the following corollaries to this theorem in Section~\ref{finalproof}, by showing that the conditions in this theorem are automatically satisfied in the setting of the minimal Weierstrass equation for an elliptic curve, thus explaining the Ogg-Saito equality in genus $1$ and inequality in higher genus.
\begin{corollary}\label{Rgenus1}
 Assume that $\deg(f)$ is $3$ and that $y^2=f(x)$ is a minimal Weierstrass equation. Then  $-\Art(X^f) = \nu(\Delta_f)$.
\end{corollary}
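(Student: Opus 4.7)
The plan is to invoke Theorem~\ref{Cexbal}: it suffices to verify that every closed point $P$ in $\divi(f)$ on $\mathbb{P}^1_R$ either has $\wt_P \leq 2$ or is a good weight $3$ point in the sense of Definition~\ref{D:goodtype3}. Since $\deg f = 3$, the multiplicities of $\divi(f)$ on the special fiber of $\mathbb{P}^1_R$ total at most $4$: three from the horizontal components of the finite roots, plus a contribution of $1$ from the point at infinity (which appears because $\deg f$ is odd). In particular, no point can have $\wt_P \geq 4$, and the only potentially problematic case is a finite closed point $P$ at which all three roots of $f$ specialize to a common residue, so that $\wt_P = 3$.

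I will argue that any such weight $3$ point is necessarily \emph{good} by contradiction, using the minimality of the Weierstrass equation. Suppose $P$ is a bad weight $3$ point. After translating $x$ by a lift $c \in R$ of the residue at $P$, one may assume $P$ corresponds to the closed point $(x, \pi)$ of $\mathbb{P}^1_R$ and write $f(x) = x^3 + a_2 x^2 + a_1 x + a_0$. The weight $3$ condition forces $\nu(a_i) \geq 3 - i$ for $i = 0, 1, 2$, and the expectation is that unpacking Definition~\ref{D:goodtype3} converts the badness condition at $P$ into the stronger divisibilities $\nu(a_2) \geq 2$, $\nu(a_1) \geq 4$, $\nu(a_0) \geq 6$.

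Under these stronger divisibilities, the classical elliptic-curve substitution $x = \pi^2 u$, $y = \pi^3 v$ converts $y^2 = f(x)$ into an integral Weierstrass equation $v^2 = g(u)$ with
\[
g(u) \;=\; u^3 + (a_2/\pi^2)\, u^2 + (a_1/\pi^4)\, u + (a_0/\pi^6) \;\in\; R[u],
\]
and the standard cubic discriminant formula gives $\nu(\Delta_g) = \nu(\Delta_f) - 12 < \nu(\Delta_f)$, contradicting the minimality of $y^2 = f(x)$. Hence no bad weight $3$ point appears in $\divi(f)$; together with the first paragraph, this verifies the combinatorial hypothesis of Theorem~\ref{Cexbal}, giving $-\Art(X^f) = \nu(\Delta_f)$.

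The main obstacle will be confirming, directly from Definition~\ref{D:goodtype3}, that ``bad'' is exactly the coefficient condition $\nu(a_i) \geq 2(3-i)$ that enables the scale-down substitution; once this dictionary is in place, the argument collapses to the classical non-minimality check for short Weierstrass equations. A minor subtlety is that the correspondence between $\wt_P$ and the valuations $\nu(a_i)$ must account for non-rational triple collisions (three conjugate roots defined over a tame ramified cubic extension of $K$), but in every such case the relation $\sum_i \nu(a_i) \cdot \text{(weight)}$ reduces to the same inequalities on $\nu(a_i)$ used above.
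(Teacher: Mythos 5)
Your overall strategy---verify the hypothesis of Theorem~\ref{Cexbal} by showing that a failure would allow a discriminant-lowering change of variables, contradicting minimality---is the same contrapositive the paper uses, but the dictionary your contradiction rests on is false, and it is exactly the step you flagged as ``the main obstacle.'' Badness of a weight~$3$ point is \emph{not} equivalent to $\nu(a_2)\geq 2$, $\nu(a_1)\geq 4$, $\nu(a_0)\geq 6$ after translating only by a lift of the residue at $P$: take $f=(x-t-t^2)(x-t-2t^2)(x-t-3t^2)$. The origin has $\wt_P=3$ and is not a good weight~$3$ point (all three roots specialize to the same point $Q$ of the exceptional curve, so case~(a) of Definition~\ref{D:goodtype3} fails and $\widetilde{\wt_Q}=3$), yet $a_2=-(3t+6t^2)$ has $\nu(a_2)=1$, so your substitution $x=\pi^2u$, $y=\pi^3v$ is not integral and produces no contradiction. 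Your divisibilities are sufficient for badness but not necessary, and your argument needs necessity. What badness actually records is a collision at a point $Q$ of the exceptional curve, so before scaling one must translate by a lift of the common degree-one truncation of the roots (here $x\mapsto x+t$), not merely by a residue lift. This recentering is precisely what the paper's replacement polynomial at $Q$ encodes, and the paper's proof disposes of the case in one line: if $P$ is a bad weight~$3$ point, all roots of $f$ specialize to $Q$ and the replacement polynomial defines the same elliptic curve with strictly smaller discriminant, so the equation was not minimal (this, with Lemma~\ref{L:altgoodtype3}, verifies the hypothesis of Theorem~\ref{Cexbal}).

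There is also a gap in your first step: the claim that no point can have $\wt_P\geq 4$ is not a pure degree count, because you implicitly wrote $f$ as a monic cubic, i.e.\ assumed $b=0$ in the factorization $f=ut^bg_1\cdots g_l$. The corollary concerns an arbitrary minimal integral equation with $\deg f=3$, and nothing in the hypotheses forces the leading coefficient of $f$ to be a unit; when it has valuation $1$ the special fiber is an odd component of $\divi(f)$ and adds $1$ to the weight of every point of $A$, so a finite point at which all three roots specialize with valuation $\geq 1$ has $\wt_P=4$. The paper's proof treats exactly this configuration first, and again excludes it by minimality (the replacement polynomial gives the same curve with strictly smaller discriminant); your count silently skips it. Both halves of your argument therefore need the minimality hypothesis to be routed through a change of variables centered at the collision point---equivalently, through the replacement-polynomial computation---rather than through the coefficient conditions as stated.
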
 

\begin{corollary}\label{Ctoomuchcollision}
 We have strict inequality $-(\Art(X^f))< \nu(\Delta_f)$ whenever four or more irreducible factors of $f$ specialize to the same point in the standard model $\P^1_R$ (``non-generic collision of roots''). 
\end{corollary}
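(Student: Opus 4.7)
The plan is to deduce this corollary directly from the numerical criterion in Theorem~\ref{Cexbal}. That theorem asserts that $-\Art(X^f) < \nu(\Delta_f)$ whenever there exists a closed point $P \in \divi(f) \subset \P^1_R$ with $\wt_P \geq 4$. So it suffices to produce such a point $P$ from the hypothesis. Given that four or more irreducible factors $f_1, \ldots, f_r$ of $f$ (with $r \geq 4$) specialize to a common closed point $P$ of $\P^1_R$, I would aim to show that the point $P$ itself already satisfies $\wt_P \geq 4$.

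For this, I would invoke Lemma~\ref{L:wtsmult}, which identifies $\wt_P$ with the multiplicity of $f$ in the local ring $\O_{\P^1_R, P}$. Factoring $f = u \prod_i f_i$ over $K$ (with $u \in K^{\times}$ and the $f_i$ the distinct irreducible factors, cleared of denominators so that each $f_i \in R[x]$), the multiplicity is additive on products: $\mathrm{mult}_P(f) = \sum_i \mathrm{mult}_P(f_i)$. Any factor $f_j$ that does not specialize to $P$ is a unit in $\O_{\P^1_R, P}$ and contributes $0$, while each of the $r \geq 4$ factors specializing to $P$ contributes a strictly positive integer, hence at least $1$. Summing gives $\wt_P \geq r \geq 4$, and Theorem~\ref{Cexbal} yields the claimed strict inequality.

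The main (indeed, only) substantive input is the additivity of $\wt_P$ under the factorization of $f$, which is a formal property of multiplicities in local rings once one has the translation from $\wt_P$ to $\mathrm{mult}_P(f)$ provided by Lemma~\ref{L:wtsmult}. No serious obstacle arises: the corollary is essentially the geometric repackaging of the numerical threshold $\wt_P \geq 4$ appearing in Theorem~\ref{Cexbal}, and the ``non-generic collision'' terminology is just the observation that each irreducible factor contributes at least $1$ to this weight, so four factors already saturate the threshold.
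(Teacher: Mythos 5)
Your proposal is correct and matches the paper's argument, whose entire proof is the one-line observation that each irreducible factor specializing to $P$ contributes at least $1$ to $\wt_P$, so four factors force $\wt_P \geq 4$ and Theorem~\ref{Cexbal} gives the strict inequality. Your detour through Lemma~\ref{L:wtsmult} and additivity of multiplicities is just a slightly more explicit way of making the same point.
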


%

Using Theorem~\ref{Cexbal}, we are able to produce examples of hyperelliptic curves with bad reduction in every genus where we have equality and inequality. 
\begin{example} Let $a_1,a_2,\ldots,a_{2g-1}$ be any $2g-1$ elements of $R$ with pairwise distinct reductions in $k$. 
 \begin{itemize}
 \item If $C$ is the genus $g$ hyperelliptic curve given by $y^2=x^{2g+2}-t$, then we have $-(\Art(C/K)) = \nu(\Delta_C)$.
  \item Let $C$ be the genus $g$ hyperelliptic curve $y^2=(x-a_1)(x-a_1+t)(x-a_2)(x-a_2+t)\ldots(x-a_g)(x-a_g+t)$. Then $-(\Art(C/K)) = \nu(\Delta_C)$.
  \item Let $C$ be the hyperelliptic curve $y^2=(x-a_1)(x-a_1+t)(x-a_1-t)(x-a_2)(x-a_3)\ldots(x-a_{2g-1})$. Then $-(\Art(C/K)) < \nu(\Delta_C)$.
 \end{itemize}
\end{example}

When $g \geq 2$, since both the Artin conductor and the minimal discriminant are nonzero precisely when the curve $C$ has bad reduction, one might also ask if there is an inequality between the conductor and the discriminant in the other direction. The difference between the two invariants can be as large as a quadratic function in $g$. (See Example~\ref{Eotherdirection}.) 

For our explicit proper regular possibly non minimal model $X^f$ for the hyperelliptic curve, we can show $\nu(\Delta_C) \leq (g+1)(2g+1)(-\Art(X^f))$. (See Remark~\ref{Rotherdirection}.) An analogous inequality with a different notion of discriminant is proven in the semistable case in \cite[Th\'{e}or\`{e}me~1.1]{Mau}, by proving effectivity of a certain Cartier divisor on a moduli space. This leads us to the following question which we do not answer in this paper (since we have not analyzed how many contractible components our model $X^f$ might have).
\begin{question}
 Is there an explicit quadratic function $c(g)$ such that $\nu(\Delta_C) \leq c(g)(-\Art(C/K))$?
\end{question}

\subsection{Summary of earlier work on conductor-discriminant inequalities}
In genus $1$, the proof of the Ogg-Saito formula used the explicit classification of special fibers of minimal regular models of genus $1$ curves. In genus $2$, \cite{liup} defines another discriminant that is
  specific to genus $2$ curves, and compares both the Artin conductor and the minimal discriminant (our $\nu(\Delta_C)$, which Liu calls $\Delta_0$) to this third discriminant (which Liu calls $\Delta_{\min}$). This
  third discriminant $\Delta_{\min}$ is sandwiched between the Artin conductor and the minimal discriminant and is defined using a possibly non-integral Weierstrass equation such that the associated differentials generate the $R$-lattice of global sections of the relative dualizing sheaf of the minimal regular model.  It does
not directly generalize to higher genus hyperelliptic curves (but see \cite[Definition~1, Remarque~9]{liup} for a related conductor-discriminant question). Liu even provides an explicit formula for the difference between the Artin conductor and both $\Delta_0$ and $\Delta_{\min}$ that can be described in terms of the
combinatorics of the special fiber of the minimal regular model (of which there are already over $120$ types!).

Since these invariants are insensitive to unramified base extensions, we may assume that $k$ is algebraically closed. We also fix a polynomial $f \in R[x]$ such that $\nu(\Delta_C) = \nu(\Delta_f)$. The common starting point of \cite{conddisc} and this paper is to produce an explicit regular model $X^f$ admitting a finite degree $2$ map to an explicit regular model $Y^f$ of $\mathbb{P}^1_K$. It suffices to show $-\Art(X^f) \leq \nu(\Delta_f)$, since $-\Art(C/K) \leq -\Art(X^f)$. The model $Y^f$, which we call the good embedded resolution of the pair $(\P^1_R, \divi(f))$, is a blowup of $\P^1_{R}$ on which all components of $\divi(f)$ of odd multiplicity are regular and disjoint (Definition~\ref{defgoodembres}). The normalization of $Y^f$ in the function field of the hyperelliptic curve $X^f$ is an explicit regular model for $C$. In \cite{conddisc}, the assumption that the roots of $f$ are defined over $K$ ensures that all irreducible components of $\divi(f)$ are already regular in the standard model $\P^1_{R}$, and we only have to deal with making the odd multiplicity components of $\divi(f)$ disjoint. The conductor-discriminant inequality for $f$ is then proven by decomposing both $-\Art(X^f)$ and $\nu(\Delta_f)$ into local terms indexed by the vertices of the dual tree of $Y^f$. When the roots of $f$ are not defined over $K$, this analysis is much more involved, since we now need to carry out explicit embedded resolution of $\divi(f)$ in $\P^1_{R}$.

\subsection{Outline of this paper}
\subsubsection{\textbf{Explicit regular models}}
In Section~\ref{embedding}, we show that we may reduce to the case $R=k[[t]]$ by producing a polynomial $f^\sharp \in k[[t]][x]$ such that $\nu(\Delta_f) = \nu(\Delta_f^\sharp)$ and $-\Art(X^f)=-\Art(X^{f^\sharp})$. Our assumption that $\cha(k) > \deg(f)$ ensures that the roots of $f$ are defined over a tame cyclic Galois extension. This allows us to write down Newton-Puiseux expansions for the roots of $f$ with bounded denominators. The continued fraction expansions of a finite set of special exponents in these Newton-Puiseux expansions, called the characteristic exponents (see Definition~\ref{charexponents}) control the combinatorics of the special fiber of the model $Y^f$, and in turn $-\Art(X^f)$. The dual graphs of these embedded resolutions can be computed using the explicit resolution algorithm described in \cite[Theorem~3.3.1,Lemma~3.6.1]{Wall}. For the rest of the paper, it is assumed that $R=k[[t]]$.

\subsubsection{\textbf{An inductive argument and the base case}}
The proof of the conductor-discriminant inequality is an induction on the ordered pair of integers $(\deg(f),\nu(\Delta_f))$. The base case of this induction is when $f$ factors as a product of linear and shifted Eisenstein polynomials with distinct specializations in $\P^1_R$. Here $Y^f$ is $\P^1_R$ and $X^f$ is the Weierstrass model, which is regular in this case. A direct computation then shows that we have $-\Art(X^f) = \nu(\Delta_f)$ (Section~\ref{Sbasecase}). When $f$ is not of this form, we study the equation of the strict transform of $f$ after a blowup of $\P^1_R$ at the images of the nonregular points of the Weierstrass model. We use this equation along with a change of variables to define a set of replacement polynomials for the polynomial $f$ (Definition~\ref{defnreppoly}). The key calculation is to compare the conductor (and respectively the discriminant) of the polynomial $f$ to the sum of the conductors (and respectively the discriminants) of its replacement polynomials. We show that the change on the conductor side is less than or equal to the change in the discriminant side (Theorem~\ref{replacementchange}). Adding the conductor-discriminant inequalities for the replacement polynomials (which we know from the induction hypothesis) to the key inductive inequality then proves the conductor-discriminant inequality for $f$. 

Section~\ref{induction} is devoted to defining the replacement polynomials. 

\subsubsection{\textbf{Change on the conductor side during induction}}
In Section~\ref{secconductorchange}, we use the inclusion-exclusion property of the Euler-characteristic along with the Riemann-Hurwitz formula to compute the difference between the conductor of $f$ and the sum of the conductors of its replacement polynomials. This difference is the left hand side of the key inductive inequality. 

\subsubsection{\textbf{The metric tree}}
The right hand side of the key inductive inequality, which is the difference between the discriminant of $f$ and the sum of the discriminants of its replacement polynomials, is harder to compute. For this, we first  replace the discriminant $f$ by a combinatorial refinement of it, which we call the metric tree of $f$. The metric tree is introduced in Section~\ref{secmettree}. See Figure~\ref{fig:MetTree1} for an example.

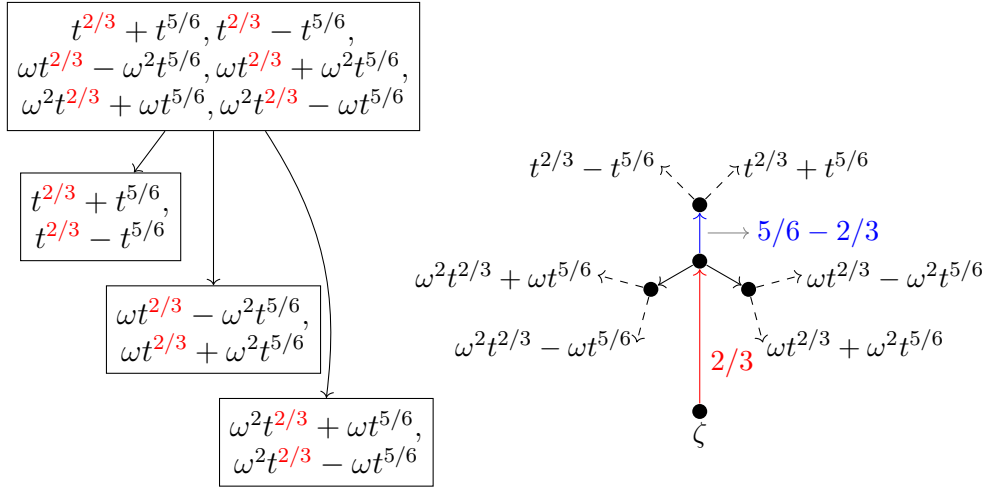
\begin{figure}[hbt!]
\hbox to \textwidth{\hss
\begin{tikzpicture}[every node/.style = {shape=rectangle, draw, align=center}, edge from parent/.style={->=latex,draw},]
  \node {$t^{\r{2/3}}+t^{5/6},t^{\r{2/3}}-t^{5/6},$\\ $\omega t^{\r{2/3}}-\omega^2 t^{5/6},\omega t^{\r{2/3}}+\omega^2 t^{5/6},$ \\ $\omega^2 t^{\r{2/3}}+\omega t^{5/6},\omega^2 t^{\r{2/3}}-\omega t^{5/6}$}
    child { node [yshift=-0.5cm] {$t^{\r{2/3}}+t^{5/6},$ \\ $t^{\r{2/3}}-t^{5/6}$} }
    child { node [yshift=-2cm] {$\omega t^{\r{2/3}}-\omega^2 t^{5/6},$ \\ $\omega t^{\r{2/3}}+\omega^2 t^{5/6}$} }
    child { node [yshift=-3.5cm] {$\omega^2 t^{\r{2/3}}+\omega t^{5/6},$ \\ $\omega^2 t^{\r{2/3}}-\omega t^{5/6}$} edge from parent[edge from parent path={(\tikzparentnode) to[bend left=20] (\tikzchildnode.north)}]
             };
\end{tikzpicture}
\hskip -3em
\tikzstyle{my dot}=[fill,circle,inner sep=1pt]
\tikzstyle{level 1}=[sibling angle=120]
\tikzstyle{level 2}=[sibling angle=90]
\tikzstyle{level 3}=[sibling angle=60]
\tikzstyle{level 4}=[sibling angle=30]
\tikzstyle{small node}=[inner sep=0pt, outer sep=0pt]
\begin{tikzpicture}[rotate=90,grow cyclic,edge from parent/.style={->,draw},scale=0.5, every node/.style={font=\small},circle, inner sep=2pt, every label/.style={rectangle,inner sep=1pt,outer sep=0pt}, baseline=-20mm]
\node (eta) at (-2,0) [fill,label=below:$\zeta$] {};
\node (noname) at (2,0) [fill] {} 
  child { node [fill] {} 
    child {node [small node,label=right:$\omega t^{2/3} + \omega^2 t^{5/6}$] {} edge from parent[dashed]} 
    child {node [small node,label=right:$\omega t^{2/3} - \omega^2 t^{5/6}$] {} edge from parent[dashed]} } 
  child { node [fill] {} 
    child {node [small node,label=right:$t^{2/3} + t^{5/6}$] {} edge from parent[color=black,dashed]} 
    child {node [small node,label=left:$t^{2/3} - t^{5/6}$] {} edge from parent[color=black,dashed]} edge from parent[color=blue] node[pin={[overlay,font=\small]0:$\b{5/6-2/3}$}]{}}
  child { node [fill] {} 
    child {node [small node,label=left:$\omega^2 t^{2/3} + \omega t^{5/6}$] {} edge from parent[dashed]} 
    child {node [small node,label=left:$\omega^2 t^{2/3} - \omega t^{5/6}$] {} edge from parent[dashed]}}; 
\draw [color=red,->] (eta) -- node[anchor=west,pos=.3, xshift=-0.5pt] {$\r{2/3}$} (noname);
\end{tikzpicture}
\hss}
\caption{Figure~\ref{fig:MetTree1}: Metric tree of the minimal polynomial of $t^{2/3}+t^{5/6}$ over $\mathbb{C}((t))$} \label{fig:MetTree1}
\end{figure}

The metric tree keeps track of the relative $t$-adic distances between all pairs of roots of $f$. It is easy to recover the discriminant of $f$ from its metric tree (Lemma~\ref{gromov}). The Galois action on the roots of $f$ extends to a Galois action on the whole metric tree. For example, if $f$ is irreducible and its roots have valuation $a/b < 1$ with $\gcd(a,b)=1$, then the metric tree of $f$ has $b$ identical subtrees glued onto one end of a segment of length $a/b$, as in Figure~\ref{fig:MetTree1} and the Galois action permutes these subtrees, keeping points on the line segment fixed. 

 \subsubsection{\textbf{The change on the discriminant side and Abhyankar's inversion formula}}
We exploit this symmetry of the metric tree, along with a refinement of Abhyankar's inversion formula from \cite{GGP} to describe the metric tree of the replacement polynomials from the metric tree of $f$ (Theorem~\ref{mettreerep1} and Theorem~\ref{mettreerep2}). Continuing with the same setup as before, if $f$ is irreducible with roots of valuation $a/b$, its replacement polynomial $g$ is also irreducible with $\deg(g) = \deg(f) a/b$. Furthermore, Abhyankar's inversion formula can be used to prove that the metric tree of $g$ is obtained by gluing $a$ identical subtrees to one end of a line segment of length $b/a-1$. The subtrees in the replacement polynomial are identical to the subtrees in $f$, except that the metric gets scaled by a factor of $b/a$. When $f$ has multiple irreducible factors, we compute the replacement polynomials of each irreducible factor separately and use a recent refinement of the inversion formula to show how to glue them together appropriately. Once we have the metric tree of the replacement polynomial, we can use Lemma~\ref{gromov} once again to compute the discriminants of the replacement polynomials, and in particular the difference in the discriminant of $f$ and its replacement polynomials (Section~\ref{Sdiscchange}).

\begin{figure}[hbt!]
\def\r{\color{red}}\def\b{\color{blue}}\def\g{\color{ForestGreen}}
\tikzstyle{my dot}=[fill,circle,inner sep=1pt]
\tikzstyle{level 1}=[sibling angle=120, level distance=2cm]
\tikzstyle{level 2}=[sibling angle=90]
\tikzstyle{level 3}=[sibling angle=60]
\tikzstyle{level 4}=[sibling angle=30]
\tikzstyle{small node}=[inner sep=0pt, outer sep=0pt]
\begin{tikzpicture}[scale=0.3, rotate=90,grow cyclic,edge from parent/.style={->,draw},]
\node (noname) at (8,0) [my dot] {} 
  child { node [my dot] {} 
    child {node [small node] {} edge from parent[dashed]} 
    child {node [small node] {} edge from parent[dashed]} }
  child { node [my dot] {} 
    child {node [small node] {} edge from parent[dashed]} 
    child {node [small node] {} edge from parent[dashed]} edge from parent node[pin={[font=\small]0:${1/6}$}]{} } 
  child { node [my dot] {} 
    child {node [small node] {} edge from parent[dashed]} 
    child {node [small node] {} edge from parent[dashed]} }; 
\node (eta) [below of=noname,node distance = 0.3*8cm, my dot] {};
\draw [->,color=red] (eta) -- node[anchor=west, pos=0.3,font=\small] {$\r{2/3}$} (noname);
\end{tikzpicture}
\hskip -0.5em
\tikzstyle{my dot}=[fill,circle,inner sep=1pt]
\tikzstyle{level 1}=[sibling angle=180, level distance=3cm]
\tikzstyle{level 2}=[sibling angle=90]
\tikzstyle{level 3}=[sibling angle=60]
\tikzstyle{level 4}=[sibling angle=30]
\tikzstyle{small node}=[inner sep=0pt, outer sep=0pt]
\begin{tikzpicture}[scale=0.3, rotate=90,grow cyclic,edge from parent/.style={->,draw},]
\node (noname) at (6,0) [my dot] {} 
  child { node [my dot] {} 
    child {node [small node] {} edge from parent[dashed]} 
    child {node [small node] {} edge from parent[dashed]} edge from parent node[pin={[overlay,font=\small]90:${{\g{1/4}}=(1/6)\cdot{\r{(3/2)}}}$}]{} } 
  child { node [my dot] {} 
    child {node [small node] {} edge from parent[dashed]} 
    child {node [small node] {} edge from parent[dashed]} }; 
\node (eta) [below of=noname,node distance = 0.3*6cm, my dot] {};
\draw [->,color=blue] (eta) -- node[overlay,anchor=west, pos=0.3,font=\small,xshift=-0.3em] {${\b{1/2}}={\r{(3/2)}}-1$} (noname);
\node[overlay,shape=rectangle] at (6,7) {$\rightsquigarrow$};
\node[overlay,shape=rectangle] at (6,-7) {$\rightsquigarrow$};
\end{tikzpicture}
\hskip 0.5em
\tikzstyle{my dot}=[fill,circle,inner sep=1pt]
\tikzstyle{level 1}=[sibling angle=90, level distance=6cm]
\tikzstyle{level 2}=[sibling angle=90]
\tikzstyle{level 3}=[sibling angle=60]
\tikzstyle{level 4}=[sibling angle=30]
\tikzstyle{small node}=[inner sep=0pt, outer sep=0pt]
\begin{tikzpicture}[scale=0.3, grow cyclic,edge from parent/.style={->,draw},]
\node (noname) at (12,0) [my dot] {} 
  child { node [my dot] {} 
    child {node [small node] {} edge from parent[dashed]} 
    child {node [small node] {} edge from parent[dashed]} edge from parent node[pin={[overlay,font=\small]90:${1/2={\g{(1/4)}}\cdot{\b{(2/1)}}}$}]{} }; 
\node (eta) [below of=noname,node distance = 0.3*12cm,my dot] {};
\draw [->] (eta) -- node[overlay,anchor=west,pos=0.5,font=\small] {${1} = {\b{(2/1)}} - 1$} (noname);
\end{tikzpicture}
\caption{Figure~\ref{fig:MetInduction}: Metric tree of an irreducible $f$ $\rightsquigarrow$ Metric tree of its replacement polynomial} \label{fig:MetInduction}
\end{figure}
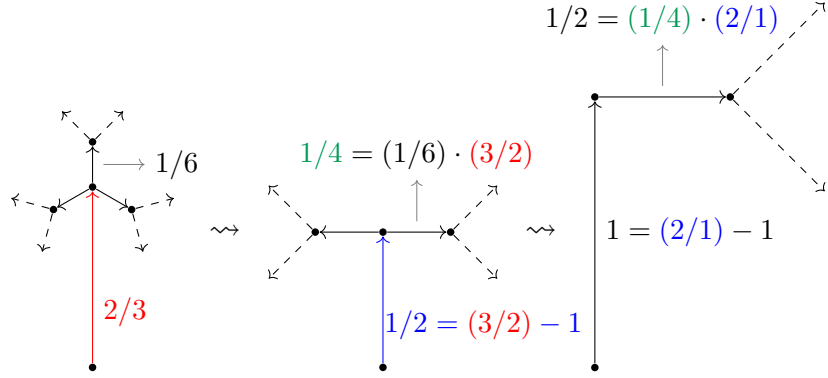

\subsubsection{\textbf{Termination of induction}}
Finally, in Section~\ref{finalproof}, we put together the results of the previous three sections to prove the key inductive inequality (Theorem~\ref{replacementchange}). We prove that the induction terminates (Corollary~\ref{Cdiscind}), and study the exact combinatorial restrictions needed for equality to hold (Theorem~\ref{Cexbal}).

\subsection{Related work}
In the semistable case, work of Kausz \cite{Kau} (when $p \neq 2$) and Maugeais \cite{Mau} (all $p$) compares
the Artin conductor to yet another notion of discriminant. 

Concurrent to and independent of our work, the authors of \cite{DDMM} introduced the cluster picture of a polynomial $f$, which is the same as the metric tree introduced in this paper. The authors compute many arithmetic invariants attached to hyperelliptic curves in terms of the cluster picture of $f$ in the semistable case. In contrast, our results do not require the semistability hypothesis. 

In \cite{Kohls}, Kohls compares the conductor exponent $\phi$ for the Galois representation $\Gal (\overline{K}/K) \rightarrow \Aut_{\Q_\ell} (H^1_{\mathrm{et}}(X_{\overline{K}}, \Q_\ell))$ with the minimal discriminant of superelliptic curves, by studying the Galois action on the special fiber of the semistable model as in \cite{BW_Glasgow}. In \cite{BKSW}, the authors define minimal discriminants of Picard curves (degree $3$ cyclic covers of $\P^1_K$) and compare the conductor exponent and the minimal discriminant for such curves. Our results are stronger than these results in the case of hyperelliptic curves, since $-\Art(C/K)=n-1+\phi$, where $n$ is the number of irreducible components in the special fiber of the minimal proper regular model of $C$.

In \cite{NowFar}, Faraggi and Nowell describe the special fibers of snc models of hyperelliptic
curves when the splitting field of $f$ is tamely ramified. Their approach is to resolve the tame quotient singularities that show up when you take the quotient of the semistable model (which they explicitly describe using the cluster picture/metric tree) by the Galois action. We cannot directly use their constructions, since the conductor-discriminant inequality does not hold with the minimal snc-model in place of the minimal regular model. This inequality already fails in genus $1$ when the minimal regular model does not coincide with the minimal snc-model.

The conductor-discriminant inequality also holds in the wild case when $\delta \neq 0$ in genus $1$ and genus $2$ due to Ogg, Saito and Liu. In \cite{oddpaper}, in joint work with Andrew Obus, we extend the conductor-discriminant inequality to all hyperelliptic curves when $\cha(k) \neq 2$, using the so-called ``Mac Lane valuations''. These give an explicit way of describing the entire regular resolution directly in terms of lower degree approximations of the roots of $f$, without having to write down Newton-Puiseux expansions of the roots of $f$ first. Our results in \cite{oddpaper} reprove the results in this paper using different techniques and also covers wild ramification. However, the combinatorial criterion for equality $-(\Art(X^f)) = \nu(\Delta_f)$ is more transparent and easier to analyze using the techniques in this paper, since we analyze the the difference in the two sides of the inequality after each blow up instead of writing the entire regular model all at once. We also hope that the inductive argument on metric trees would be of independent interest to the more combinatorially-inclined reader.

\subsection{Notation}\label{notation}
The invariants $-\Art(X/S)$ and $\nu(\Delta)$ are unchanged when we extend scalars to the strict Henselization. So from the very beginning, we let $R$ be a complete discrete valuation ring with algebraically closed residue field $k$. Assume that $\cha k \neq 2$. Let $K$ be the fraction field of $R$ and $\overline{K}$ be a separable closure of $K$. Let $\nu \colon \overline{K} \rightarrow \Q \cup \{\infty\}$ be the unique extension of the discrete valuation on $K$ to $\overline{K}$. Let $t \in R$ be a uniformizer; $\nu(t) = 1$. Let $S = \Spec R$. Let $C$ be a hyperelliptic curve over $K$ with genus $g \geq 2$. 

Let $y^2-f(x) = 0$ be an {\textup{\color{blue}integral Weierstrass equation}} for $C$, i.e., $f(x) \in R[x]$ and $C$ is birational to the plane curve given by this equation. The {\textup{\color{blue}discriminant}} of a Weierstrass equation ${\color{blue}d_f}$ equals the discriminant of $f$ considered as a polynomial of degree $2g+2$. A {\textup{\color{blue}minimal Weierstrass equation}} for $C$ is a Weierstrass equation for $C$ such that $\nu(d_f)$ is as small as possible amongst all integral Weierstrass equations for $C$. The {\textup{\color{blue} minimal discriminant $\nu (\Delta_C)$ of $C$}} equals $\nu(d_f)$ for a minimal Weierstrass equation $y^2-f(x)$ for $C$. Fix such an equation. 

For any proper regular curve $Z$ over $S$, we will denote the special fiber of $Z$ by $Z_s$, the generic fiber by $Z_\eta$ and the geometric generic fiber by $Z_{\overline{\eta}}$. We will denote the function field of an integral scheme $Z$ by $K(Z)$, the local ring at a point $z$ of a scheme $Z$ by $\O_{z}$ and the unique maximal ideal in $\O_{z}$ by $\mathfrak{m}_{z}$. For $f \in K(Z)$, we will denote the divisor of $f$ by $\divi(f)$ and the divisor of zeroes of $f$ by $\divi_0(f)$. The reduced scheme attached to a scheme $Z$ will be denoted $Z_{\mathrm{red}}$. If $Z$ is a smooth divisor on a smooth scheme $Z'$, then we will denote the corresponding discrete valuation on $K(Z')$ by $\nu_Z$.

We will let $\mathbb{P}^{1,\mathrm{Berk}}_{L}$ denote the Berkovich projective line over the field $L$, and let $\zeta$ denote its Gauss point. 

\section{The Artin conductor/Deligne discriminant}
 Let $X$ be an integral proper $S$-scheme of relative dimension $1$. Fix $\ell \neq \cha k$. Let $\chi$ denote the compactly-supported Euler-characteristic for the $\ell$-adic \'etale topology. Let $\delta$ be the Swan conductor associated to the $\ell$-adic representation $\mathrm{Gal}\ (\overline{K}/K) \rightarrow \mathrm{Aut}_{\Q_\ell} \ (H^1_{\mathrm{et}}(X_{\overline{\eta}}, \Q_\ell))$ ($\ell \neq \mathrm{char}\ k$) \cite[p.153]{saito2}.

\begin{defn}
 The (negative of) the {\textup{\textsf{\color{blue}Artin conductor}}} of $X$, or alternately, the {\textup{\textsf{\color{blue}Deligne discriminant}}} of $X$, denoted {\textup{\textsf{\color{blue}$-\Art (X/S)$}}} is given by
 \[-\Art (X/S) \colonequals \chi(X_s)-\chi(X_{\overline{\eta}})+\delta .\]
\end{defn}

Let $Y$ be a regular integral $2$-dimensional $S$-scheme and let $f$ be a rational function on $Y$ that is not a square. Assume that the residue field at any closed point of $Y$ is not of characteristic $2$. Let $X$ be the normalization of $Y$ in $K(Y)(\sqrt{f})$. Let $\divisor(f) = \sum_{i \in I} m_i \Gamma_i$, and let $B = \sum_{m_i \ \text{odd}} \Gamma_i$.  
\begin{lemma}\label{formula} Keep the notation from the paragraph above. Assume $\cha(k) \neq 2$.
 \[ -\Art (X/S) = 2(\chi(Y_s)-\chi(Y_{\overline{\eta}}))-(\chi(B_s)-\chi(B_{\overline{\eta}})) + \delta .\]
 If $X_{\overline{\eta}}$ is a hyperelliptic curve with equation $y^2=f(x)$ and $\cha(k) > \deg(f)$, then $\delta = 0$.
\end{lemma}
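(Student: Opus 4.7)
The plan is to split the lemma into two essentially independent assertions: the identity will follow from the standard formula for the \'etale Euler characteristic of a degree-$2$ branched cover, while the vanishing of $\delta$ will follow from tameness of the splitting field of $f$.

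For the identity, I would use that the finite morphism $\pi\colon X \to Y$ is \'etale of degree $2$ away from $B$ (the even-multiplicity components of $\divisor(f)$ contribute squares and so do not appear in the branch locus) and is totally ramified along $B$. Since $2$ is invertible in every residue field of $Y$, the reduced preimage $\pi^{-1}(B)_{\mathrm{red}}$ maps bijectively onto $B$ at the level of points. The compactly-supported $\ell$-adic Euler characteristic is additive over locally closed stratifications and depends only on the underlying reduced scheme, so combining multiplicativity over the \'etale locus $Y \setminus B$ with the bijection over $B$ yields
\[
\chi(X_s) = 2\chi(Y_s) - \chi(B_s) \qquad\text{and}\qquad \chi(X_{\overline\eta}) = 2\chi(Y_{\overline\eta}) - \chi(B_{\overline\eta}).
\]
Substituting these into $-\Art(X/S) = \chi(X_s) - \chi(X_{\overline\eta}) + \delta$ and rearranging gives the stated formula. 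I would apply the double-cover formula to both the special fiber (over the algebraically closed field $k$) and the geometric generic fiber (over $\overline{K}$); the assumption $\ell \neq \cha k$ ensures $\chi$ is well-behaved in both settings.

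For the vanishing of $\delta$, let $L$ be the splitting field of $f$ over $K$. Then $\Gal(L/K)$ embeds into the symmetric group $S_{\deg f}$, whose order divides $(\deg f)!$. Since $\cha(k) > \deg(f)$, this order is coprime to $p = \cha k$, so $L/K$ is tamely ramified. Over $L$ the roots of $f$ become rational, and $y^2 = f(x)$ acquires semistable reduction after at most a further quadratic extension of $L$, which is still tame since $p \neq 2$. By the Galois-equivariant comparison between $\ell$-adic cohomology of the generic fiber and the special fiber of a semistable model (the standard ``tame inertia acts through the monodromy filtration'' picture), the action of the inertia group $I \subset \Gal(\overline K/K)$ on $H^1_{\mathrm{et}}(X_{\overline\eta}, \Q_\ell)$ factors through the tame quotient $I/P$, so the wild inertia $P$ acts trivially. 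Since $\delta$ measures precisely the contribution of $P$, we conclude $\delta = 0$.

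The main obstacle is justifying the double cover formula in Step~1 cleanly: one needs the hypothesis $\cha(k) \neq 2$ exactly to guarantee that $\pi^{-1}(B) \to B$ is a homeomorphism (in residue characteristic $2$ the residual degree along the ramification locus can jump to $2$ and the formula breaks), and one must verify that additivity of the compactly-supported Euler characteristic is applicable to the stratifications we use in both mixed and equal characteristic settings. Once this is in place the Euler characteristic manipulation is mechanical, and the Swan conductor vanishing is a standard consequence of the group-theoretic tameness bound $|\Gal(L/K)| \mid (\deg f)!$.
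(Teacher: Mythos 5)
Your proof of the Euler-characteristic identity is essentially the paper's own argument: multiplicativity of the compactly supported $\chi$ over the degree-$2$ \'etale cover of $Y\setminus B$, the point-level bijection over $B$ (which is where $\cha(k)\neq 2$ enters), and additivity/excision applied to both the special and geometric generic fibers, then substitution into $-\Art(X/S)=\chi(X_s)-\chi(X_{\overline\eta})+\delta$. For the vanishing of $\delta$ the paper simply observes that $\cha(k)>\deg(f)$ forces $\cha(k)>2g+1$, and your tameness argument (the splitting field has degree dividing $(\deg f)!$, hence prime to $p$, and semistable reduction is then reached over a tame extension, so wild inertia acts trivially on $H^1$) is the standard justification of exactly that assertion; the proposal is correct.
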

\begin{proof} This is the Riemann-Hurwitz formula applied to the finite branched tame degree $2$ covers $X_{\overline{\eta}} \rightarrow Y_{\overline{\eta}}$ and $X_s \rightarrow Y_s$. Let $R$ be the inverse image of $B$ in $X$; then the map $V \colonequals X \setminus R \rightarrow U \colonequals Y \setminus B$ is \'{e}tale. Since $\chi(V) = d \chi(U)$ for any tame \'{e}tale degree $2$ cover $V \rightarrow U$ of varieties over an algebraically closed field of characteristic $\neq \ell$ and since $\cha(k) \neq 2$, we have $\chi(X_s \setminus R_s) = 2 \chi(Y_s \setminus B_s)$ and $\chi(X_{\overline{\eta}} \setminus R_{\overline{\eta}}) = 2 \chi(Y_{\overline{\eta}} \setminus B_{\overline{\eta}})$. Since $k$ is algebraically closed and $\ell$-adic \'{e}tale cohomology satisfies the same dimension and exactness axioms as singular cohomology, the proof of the formula now follows from excision. If $X_{\overline{\eta}}$ is hyperelliptic and $\cha(k) > \deg(f)$, then $\cha(k) > 2g+1$ and hence $\delta=0$.
\end{proof}

\section{An explicit regular model}
In this section, we construct a good regular model of a hyperelliptic curve with minimal Weierstrass equation $y^2=f(x)$ by first constructing a suitable embedded resolution of the pair $(\mathbb{P}^1_R,\divisor(f))$ (Lemma~\ref{goodembres}), and then taking its normalization in a degree $2$ extension of its function field (Lemma~\ref{useful}). We also prove a lemma about when two such embedded resolutions of pairs $(Y,\Gamma)$ and $(Y',\Gamma')$ are isomorphic (Lemma~\ref{blowupiso}), which we will use in Section~\ref{induction} for inductive arguments.

\begin{defn}
 Given a regular arithmetic surface $Y$ and a Weil divisor $\Gamma = \sum m_i \Gamma_i$, define the {\textup{\textsf{\color{blue}underlying odd divisor}}} $\Gamma_{\mathrm{odd}}$ by $\Gamma_{\mathrm{odd}} \colonequals \sum_{m_i \mathrm{odd}} m_i \Gamma_i$  .
\end{defn}

\begin{defn}\label{defgoodembres} Let $Y \rightarrow S$ be a regular arithmetic surface, let $f \in K(Y)$. A {\textup{\textsf{\color{blue}good embedded resolution}}} of the pair $(Y,\divisor(f))$ is another regular arithmetic surface $Y'$ such that $Y'$ fits in a sequence $Y'\colonequals Y_n \rightarrow Y_{n-1} \cdots \rightarrow Y_1 \rightarrow Y_0 \colonequals Y$, where each $Y_{i}$ is obtained by blowing up the nonregular points of the closed subscheme $[\divisor(f)_{\mathrm{odd}}]_{\mathrm{red}}$ on $Y_{i-1}$, and such that the divisor $[\divisor(f)_{\mathrm{odd}}]_{\mathrm{red}}$ on $Y'$ is regular. 
\end{defn}

\begin{lemma}\label{goodembres}\hfill
\begin{enumerate}[\upshape(a)]
 \item A good embedded resolution exists for every pair $(Y,\divisor(f)$ as above.
 \item If $Y' \rightarrow Y$ is a good embedded resolution of $\divisor(f)$ and $\divisor(f)_{\mathrm{odd}} = \sum m_i \Gamma_i$ on $Y'$, then $\Gamma_i$ is regular for every $i$ and $\Gamma_i$ and $\Gamma_j$ do not intersect if $i \neq j$.
\end{enumerate} 
\end{lemma}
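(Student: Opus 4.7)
The plan is to deduce (b) from elementary commutative algebra and to prove (a) by a termination-of-blowups argument modeled on the classical embedded resolution of curves on regular surfaces, with a small additional bookkeeping to obtain the stronger conclusion that the \emph{odd} part (not merely the full reduced support of $\divisor(f)$) becomes regular.

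For (b), assume that $D := [\divisor(f)_{\mathrm{odd}}]_{\mathrm{red}}$ is regular on $Y'$. Then at every closed point $p$ of $D$ the local ring $\mathcal{O}_{D,p}$ is a regular local ring of Krull dimension at most one, hence an integral domain. Consequently exactly one irreducible component of $D$ passes through $p$. This immediately yields both conclusions: distinct $\Gamma_i$ must be disjoint (else they would share a point $p$, giving two minimal primes in $\mathcal{O}_{D,p}$), and each $\Gamma_i$, being locally isomorphic to $D$ at each of its points, is itself regular.

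For (a), I would set $D_i := [\divisor_{Y_i}(f)_{\mathrm{odd}}]_{\mathrm{red}}$, let $Z_i$ be the (finite) nonregular locus of $D_i$ on $Y_i$, and define $Y_{i+1} := \Bl_{Z_i} Y_i$, which is again a regular arithmetic surface since $Z_i$ consists of regular closed points of $Y_i$. A point $p \in Z_i$ is either a singular point of a single odd-multiplicity component of $\divisor_{Y_i}(f)$, or a meeting of two or more distinct odd components. The first case is handled by the classical embedded resolution of reduced curves on regular surfaces: the standard invariant $\sum_p \delta_p(D_i)$, the sum of $\delta$-invariants of local singularities of $D_i$, strictly decreases under each blowup of a singular point. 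In the second case, when exactly two odd components of multiplicities $m_1$ and $m_2$ meet transversally at $p$, the exceptional divisor $E$ above $p$ has $\nu_E(f) = m_1 + m_2$, which is \emph{even}, so $E$ drops out of the odd part and the two strict transforms become disjoint.

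The main obstacle is the combined bookkeeping when the exceptional divisor $E$ \emph{does} acquire odd multiplicity (for instance, when an odd number of odd components pass through $p$, or when $p$ is a singular point whose contribution to $\nu_E(f)$ happens to be odd), so that $E$ joins $D_{i+1}$ and may create new nonregular points where it meets strict transforms of other odd components. Here one uses that $E \cong \mathbb{P}^1$ is itself regular and meets each strict transform with total intersection multiplicity bounded by the prior multiplicity at $p$ via the formula $\tilde{D}_i \cdot E = m_p(D_i)$. The $\delta$-invariant sum (extended to account for both types of nonregular points above) remains a strictly decreasing invariant, delivering termination of the blowup tower. Once $Z_n = \emptyset$, the tower $Y_n \to \cdots \to Y$ is a good embedded resolution by construction, completing (a); and (b) follows from the observation made in the first paragraph.
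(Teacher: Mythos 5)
Your argument for (b) is correct: regularity of $[\divisor(f)_{\mathrm{odd}}]_{\mathrm{red}}$ makes each of its local rings a regular one-dimensional local ring, hence a domain, so exactly one component passes through each point, and each component is then locally equal to the reduced odd divisor and therefore regular. This is the same content as the paper's proof, which phrases it instead via the divisor being locally principal and the criterion that a principal divisor on a regular surface is regular at $P$ iff its equation lies outside $\mathfrak{m}_P^2$.

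The gap is in the termination claim for (a). You correctly isolate the problematic case, namely when the exceptional curve $E$ acquires odd multiplicity and re-enters the odd locus, but the ``extended $\delta$-invariant sum'' you invoke is never defined, and the natural candidate --- the sum of the $\delta$-invariants of the singular points of $[\divisor(f)_{\mathrm{odd}}]_{\mathrm{red}}$ --- is not strictly decreasing in exactly that case. If the reduced odd divisor has multiplicity $m$ at the centre, blowing up changes this sum by $-m(m-1)/2$, while adjoining $E$ (which happens precisely when $\nu_E(f)$ is odd) adds back the intersection number of $E$ with the strict transform, which equals $m$; moreover $\nu_E(f)\equiv m \pmod 2$ (even-multiplicity components through $P$ contribute evenly), so the odd case forces $m$ odd, and for $m=3$ the net change is $0$. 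Concretely, for $R=k[[t]]$ and $f=x(x^2-t^3)$ the invariant equals $0+1+3=4$ at the origin; after blowing up, in the chart $x=x_1t$ one finds $f=t^3x_1(x_1^2-t)$, three smooth odd branches through one point with pairwise intersection numbers $1,2,1$, so the invariant is again $4$, and it stays at $4$ for one more step before dropping. Hence no argument of this shape terminates the tower, and your transversal-node computation (which is fine, up to the harmless omission of even components through $p$, irrelevant to the parity of $\nu_E(f)$) only handles the final configuration. The paper avoids this by not running a single numerical induction on the naive process: it first invokes the standard embedded-resolution and normal-crossings algorithms from Liu's Chapter 9 (whose termination proof uses multiplicities together with intersection data against the exceptional locus) to make the odd components regular and mutually transverse, and then uses a separate lemma from the author's earlier work to remove a transversal intersection of exactly two odd components by one further blowup, the exceptional curve there having even multiplicity. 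As written, the termination of your blowup tower is not established.
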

\begin{proof}\hfill
\begin{enumerate}
  \item Since $R$ is assumed to be a complete discrete valuation ring and therefore excellent, we may use the results of \cite[Chapter~9]{liu}. The construction of $Y'$ is analogous to the proof of embedded resolution in \cite[p.404, Chapter~9, Theorem~2.26]{liu} and we sketch the details. We first blow up closed points of $Y$ to make the irreducible components of $\divisor(f)_{\mathrm{odd}}$ regular as in \cite[p.405, Chapter~9, Lemma~2.32]{liu}, and then do some further blowups to separate components of $\divisor(f)_{\mathrm{odd}} \subset Y$ as in the construction of a normal crossings model in \cite[p.404, Chapter~9, Theorem~2.26]{liu}. The main difference is that we do not care about making horizontal components of $\divisor(f)_{\mathrm{odd}}$ transverse to exceptional curves that appear with even multiplicity. Once we get to the point that at most two irreducible components $\Gamma$ and $\Gamma'$ of $\divisor(f)_{\mathrm{odd}}$ pass through any point $P$, then one further blowup at $P$ produces a curve that appears with even 
multiplicity in $\divisor(f)$ and separates $\Gamma$ and $\Gamma'$ (\cite[Lemma~2.3]{conddisc}).  
  \item The reduced curve $[\divisor(f)_{\mathrm{odd}}]_{\mathrm{red}}$ on the regular surface $Y'$ is locally given by the vanishing of a single function by \cite[p.117, Chapter~4, Proposition~1.12]{liu}. By \cite[p.378, Chapter~9, Proposition~1.8]{liu}, the zero locus of a single function on a regular surface is regular at a point $P$ if and only if the function is not in $\mathfrak{m}_P^2$. Putting these two facts together, it follows that $[\divisor(f)_{\mathrm{odd}}]_{\mathrm{red}}$ is regular if and only if its irreducible components are regular and pairwise disjoint. \qedhere
\end{enumerate}
\end{proof}

\begin{remark}\label{localdefgoodembres}
 From the local nature of the construction, we see that we may also talk about the good embedded resolution of a pair $(\mathcal{O},\divisor(f))$, where $\mathcal{O}$ is a regular $2$-dimensional $R$-algebra and $f \in \mathcal{O}$. 
\end{remark}

We record the following corollary which will be useful for inductive arguments in Section~\ref{induction}.
\begin{corollary}\label{blowupiso}
 Let $\mathcal{O}$ be a regular $2$-dimensional $k$-algebra. Let $u,q,g,h \in \mathcal{O}$ be such that $g = uq^2h$ and $u$ is a unit in $\mathcal{O}$. Then the good embedded resolutions of the pairs $(\mathcal{O},\divisor(g))$ and $(\mathcal{O},\divisor(h))$ are isomorphic. Furthermore $\divisor(g)_{\mathrm{odd}} = \divisor(h)_{\mathrm{odd}}$ on the resolution. 
\end{corollary}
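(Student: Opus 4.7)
My plan is to prove by induction on the length $n$ of the sequence of blowups producing the good embedded resolution of $(\mathcal{O},\divisor(h))$ that, at every intermediate surface $Y_i$, the reduced subscheme $[\divisor_{Y_i}(g)_{\mathrm{odd}}]_{\mathrm{red}}$ coincides with $[\divisor_{Y_i}(h)_{\mathrm{odd}}]_{\mathrm{red}}$. Since the blowup prescribed by Definition~\ref{defgoodembres} at step $i$ is determined solely by the nonregular locus of this reduced subscheme, and since the termination criterion for the resolution procedure also depends only on whether this reduced subscheme is regular, the two sequences of blowups can then be identified step by step, giving the desired isomorphism of good embedded resolutions over $\Spec\mathcal{O}$.

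The inductive step reduces to a purely divisorial identity. Since $u \in \mathcal{O}^\times$ remains a unit in the local ring at every point of every $Y_i$ (invertibility pulls back under the composite morphism $\pi_i\colon Y_i \to \Spec\mathcal{O}$), we have $\divisor_{Y_i}(u)=0$, with no exceptional correction term. Combined with the multiplicative relation $g = uq^2h$, which holds in the common function field of all the $Y_i$, this yields
\[
\divisor_{Y_i}(g) \;=\; 2\,\divisor_{Y_i}(q) + \divisor_{Y_i}(h).
\]
Along every prime divisor of $Y_i$ the multiplicities of $g$ and $h$ therefore have the same parity, so $\divisor_{Y_i}(g)_{\mathrm{odd}}$ and $\divisor_{Y_i}(h)_{\mathrm{odd}}$ are supported on exactly the same set of prime divisors, which proves the inductive claim. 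Applying the identity on the final surface $Y' = Y_n$ also gives the second assertion, in the sense that the odd parts of $\divisor(g)$ and $\divisor(h)$ have identical reduced underlying divisors on $Y'$ (which is all that subsequent inductive arguments in Section~\ref{induction} require, since the odd divisor enters there only via the branch locus of Lemma~\ref{formula} and the disjointness statement of Lemma~\ref{goodembres}(b), neither of which depends on the exact multiplicities of the odd components).

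The main obstacle is really just the bookkeeping: verifying that the factorization hypothesis $g = uq^2h$ with $u$ a unit is preserved at every intermediate stage, and checking that $\divisor_{Y_i}(u)$ genuinely picks up no exceptional correction during pullback. Both reduce to the elementary observation that a unit on $\Spec\mathcal{O}$ remains a unit at every point of every blowup $Y_i$. Once this is in place, the entire corollary is simply the statement that the good embedded resolution depends only on the class of $\divisor(f)$ modulo $2$ in the group of Weil divisors of the ambient regular surface.
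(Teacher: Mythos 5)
Your proposal is correct and takes essentially the same route as the paper, whose one-line proof likewise rests on the observation that $g = uq^2h$ forces the parities of $\operatorname{ord}_\Gamma(g)$ and $\operatorname{ord}_\Gamma(h)$ to agree along every prime divisor at every stage, so that the resolution procedure of Definition~\ref{defgoodembres} (which only sees $[\divisor(\cdot)_{\mathrm{odd}}]_{\mathrm{red}}$) runs identically for both pairs. Your extra care in reading the final assertion as an equality of the \emph{reduced} odd divisors is a reasonable gloss, since the odd multiplicities can indeed differ by even amounts and only the support enters the later arguments via the branch locus.
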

\begin{proof} Since $g = uq^2h$ implies that $\divisor(g)_{\mathrm{odd}} = \divisor(h)_{\mathrm{odd}}$, it follows from Definition~\ref{defgoodembres} and Remark~\ref{localdefgoodembres} that the good embedded resolutions of the pairs $(\mathcal{O},\divisor(g))$ and $(\mathcal{O},\divisor(h))$ are isomorphic. 
\end{proof}

\begin{defn}\label{models}
 Let ${\color{blue}{Y^f}}$ be a good embedded resolution of the pair $(\mathbb{P}^1_R,\divisor(f))$, and let the branch locus ${\color{blue}{B^f}} \colonequals \divisor(f)_{\mathrm{odd}}$ on $Y^f$.
 Define ${\color{blue}{X^f}}$ to be the normalization of $Y^f$ in $K(x)[y]/(y^2-f(x))$.
\end{defn}

\begin{lemma}\label{useful}
 The model $X^f$ is regular.
\end{lemma}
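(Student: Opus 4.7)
The plan is to check regularity locally at each point $P \in X^f$ lying above a point $Q \in Y^f$, using the structure theorem for good embedded resolutions (Lemma~\ref{goodembres}(b)) to trivialize $f$ up to squares near $Q$.

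First I would exploit that $\divisor(f)_{\mathrm{odd}} = \sum_{m_i \text{ odd}} m_i \Gamma_i$ has regular, pairwise disjoint irreducible components on $Y^f$. Consequently, at any point $Q \in Y^f$, at most one odd-multiplicity component of $\divisor(f)$ passes through $Q$. Writing the local factorization of $f$ in $\mathcal{O}_{Y^f,Q}$ by splitting off square parts of the even components and the even part of the unique odd component (if present), I can express
\[ f = u \cdot q^2 \cdot \pi \quad \text{in } \mathcal{O}_{Y^f,Q}, \]
where $u \in \mathcal{O}_{Y^f,Q}^\times$, $q \in \mathcal{O}_{Y^f,Q}$, and $\pi$ is either a unit (if no odd component passes through $Q$) or a local equation for the unique odd component through $Q$ (in which case $\pi$ extends to a regular system of parameters $(\pi,s)$ of $\mathcal{O}_{Y^f,Q}$ because that component is regular).

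Next I would introduce the element $y' \colonequals y/q$ of $K(Y^f)(\sqrt{f})$, which satisfies the monic integral equation $y'^2 - u\pi = 0$ over $\mathcal{O}_{Y^f,Q}$, hence lies in $\mathcal{O}_{X^f}$. The local ring of $X^f$ over $Q$ is then the integral closure of $\mathcal{O}_{Y^f,Q}$ inside $K(Y^f)(y')$. I split into two cases:

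\textbf{Case 1 (unramified):} $\pi$ is a unit. Then $y'^2 = u\pi$ with $u\pi$ a unit, and since $\cha k \neq 2$, the derivative $2y'$ is a unit, so $\mathcal{O}_{Y^f,Q}[y']/(y'^2-u\pi)$ is étale over $\mathcal{O}_{Y^f,Q}$. An étale extension of a regular local ring is a product of regular local rings, so it equals the integral closure and each local ring of $X^f$ over $Q$ is regular.

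\textbf{Case 2 (ramified):} $\pi$ is a regular parameter at $Q$, extended to $(\pi,s)$. Let $A \colonequals \mathcal{O}_{Y^f,Q}[y']/(y'^2-u\pi)$. Since $A$ is a free $\mathcal{O}_{Y^f,Q}$-module of rank $2$, it is Cohen--Macaulay, hence equidimensional of dimension $2$. The relation $\pi = y'^2/u$ shows that the ideal $(y',s) \subset A$ contains $\pi$, so it contains the maximal ideal of $\mathcal{O}_{Y^f,Q}$; since $A/(y',s) \cong \mathcal{O}_{Y^f,Q}/(\pi,s)$ is the residue field, $(y',s)$ is a maximal ideal generated by $2 = \dim A$ elements, giving a regular local ring at that point. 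Because $A$ has a unique maximal ideal above $(\pi,s)$, $A$ is already normal, and it is the local ring of $X^f$ at the unique point $P$ over $Q$.

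In both cases $\mathcal{O}_{X^f,P}$ is regular, and this completes the proof. The main subtlety I expect is verifying carefully that $A$ in Case 2 is already integrally closed (so that the integral closure adds nothing beyond $y'$); this follows from Serre's criterion since $A$ is Cohen--Macaulay (hence $S_2$) and regular in codimension one (as shown), but it must be stated.
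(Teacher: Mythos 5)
Your proof is correct, and it supplies what the paper itself leaves implicit: the paper's entire proof of this lemma is a citation to \cite{conddisc}[Lemma~2.1], whereas you give the standard direct verification that the normalization of a regular surface in a degree-$2$ extension is regular when the odd part of the branch divisor is regular with pairwise disjoint components. Your local factorization $f = u q^2 \pi$ at $Q \in Y^f$, the passage to $y' = y/q$ with $y'^2 = u\pi$, and the split into the \'etale case ($\pi$ a unit) and the ramified case ($\pi$ a regular parameter) is exactly the expected argument, and the key computation in Case 2 --- that $(y',s)$ is a maximal ideal of $A = \mathcal{O}_{Y^f,Q}[y']/(y'^2-u\pi)$ generated by two elements with $\pi \in (y')$ --- is right.

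Two small points should be tightened, neither a gap. First, $q$ need not lie in $\mathcal{O}_{Y^f,Q}$: at points on components where $f$ has poles (e.g.\ on $\Gamma_{\infty}$ when $\deg f$ is odd, which does lie in the odd branch locus), the even part of $\divisor(f)$ has negative multiplicities, so $q$ is merely a nonzero element of $K(Y^f)$; this is harmless, since all you use is that $y' = y/q$ is integral via $y'^2 = u\pi$ with $u$ a unit and $\pi \in \mathcal{O}_{Y^f,Q}$. Second, the sentence ``because $A$ has a unique maximal ideal above $(\pi,s)$, $A$ is already normal'' is not a valid implication as stated, and Serre's criterion is more than you need: since $A$ is finite over the local ring $\mathcal{O}_{Y^f,Q}$ and $A/\mathfrak{m}_Q A \cong k[y']/(y'^2)$ is local, $A$ is a local ring with maximal ideal $(y',s)$, so your regularity computation shows $A$ itself is a regular local ring, hence normal; note also that $u\pi$ is not a square in $K(Y^f)$ (otherwise $f$ would be), so $A$ is a domain with fraction field $K(Y^f)(\sqrt{f})$, which is what identifies $A$ with the integral closure of $\mathcal{O}_{Y^f,Q}$ and hence with the local ring of $X^f$ at the unique point over $Q$.
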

\begin{proof}
This follows from \cite{conddisc}[Lemma~2.1].
\end{proof}

\section{Reduction to the equicharacteristic case}\label{embedding}

The goal of this section is to show that we may assume $R=k[[t]]$ without any loss of generality. Fix a (set-theoretic) section $k \rightarrow R$ of the natural surjective reduction map $R \rightarrow k$ sending $0$ to $0$. (If $k \subset R$, fix the identity section.) Elements in the image of this section will be called {\textup{\textsf{\color{blue}{lifts}}}}. Let $n$ be a positive integer coprime to $p$. Every element $a \in R[t^{1/n}]$, has a unique expansion of the form $a = \sum_{m \in \Z_{\geq 0}} a_m t^{m/n}$ (the {\textup{\textsf{\color{blue}{Newton-Puiseux expansion}}}}) such that every $a_m$ is a lift.

We will let $\nu$ denote the discrete valuation on both $\bigcup_{n > 1, (n , \cha k) = 1}R[t^{1/n}]$ and $\bigcup_{n > 1, (n , \cha k) = 1} k[[t^{1/n}]]$. If $a = \sum_{m \in \Z_{\geq 0}} a_m t^{m/n}$ is the Newton-Puiseux/$t$-adic expansion of an element in one of these rings, then $\nu(a) = m/n$, where $m$ is the smallest integer with $a_m \neq 0$.
\begin{prop}\label{tiltdisc} 
Let $f \in R[x]$ be a separable polynomial with $\deg f < \cha k$ if $\cha k > 0$. Then there exists a separable polynomial $f^\sharp \in k[[t]][x]$ of the same degree with the following properties.
\begin{enumerate}[\upshape (a)] 
\item There is a bijection of the roots $\{\alpha_1,\ldots,\alpha_r\}$ of $f$ with the roots $\{\beta_1,\beta_2,\ldots,\beta_r\}$ of $f^\sharp$ that satisfy
 \begin{itemize}
  \item $\nu(\alpha_i) = \nu(\beta_i)$ for all $i$, and
  \item $\nu(\alpha_i - \alpha_{i'}) = \nu(\beta_i - \beta_{i'})$ for all $i \neq i'$.
 \end{itemize}
 In particular $\Delta_{f} = \Delta_{f^\sharp}$.
 \item The special fibers of the models $X_f$ and $X_{f^\sharp}$ from Definition~\ref{models} are isomorphic. In particular $\Art(X_f) = \Art(X_{f^\sharp})$. 
\end{enumerate} 
\end{prop}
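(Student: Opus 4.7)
The plan is to build $f^\sharp$ directly from Newton--Puiseux expansions of the roots of $f$. Since $\deg f < \cha k$, the splitting field of $f$ over $K$ is tamely ramified, and the algebraically closed residue field forces it into $K(t^{1/n})$ for some $n$ coprime to $p$. After rescaling to assume $f$ is monic, each root $\alpha_i$ lies in the integral closure $R[t^{1/n}]$ and has a unique Newton--Puiseux expansion $\alpha_i = \sum_m a_{i,m}\, t^{m/n}$ with each $a_{i,m}$ a lift of some $\bar a_{i,m} \in k$. I would define $\beta_i := \sum_m \bar a_{i,m}\, t^{m/n} \in k[[t^{1/n}]]$ (now with $t$ playing the role of the uniformizer of $k[[t]]$) and set $f^\sharp := \prod_i(x - \beta_i)$.

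The first hurdle is to verify $f^\sharp \in k[[t]][x]$, i.e.\ that $\{\beta_i\}$ is stable under $\Gal(k((t^{1/n}))/k((t)))$, whose generator sends $t^{1/n} \mapsto \zeta\, t^{1/n}$ for a primitive $n$-th root of unity $\zeta$. The key preparatory move is to choose the set-theoretic section $k \to R$ so that its restriction to the (prime-to-$p$) roots of unity in $k$ agrees with the Teichm\"uller lift, which is possible by applying Hensel's lemma to $x^n - 1$. Then for $\sigma$ the generator of $\Gal(K(t^{1/n})/K)$ sending $t^{1/n} \mapsto \hat\zeta\, t^{1/n}$ (where $\hat\zeta$ is the lift of $\zeta$), one computes $\sigma(\alpha_i) = \sum_m (a_{i,m}\hat\zeta^m)\, t^{m/n}$, which is already in Newton--Puiseux form because each $a_{i,m}\hat\zeta^m$ is again a lift. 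Consequently the permutation of indices $\sigma$ induces on $\{\alpha_i\}$ is matched by the corresponding permutation of $\{\beta_i\}$ under $t^{1/n} \mapsto \zeta\, t^{1/n}$, and $\{\beta_i\}$ is Galois stable.

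For part (a), $\nu(\alpha_i) = \min\{m/n : a_{i,m} \neq 0\}$ and the same formula gives $\nu(\beta_i)$. Two lifts coincide iff their reductions do, while unequal lifts differ by a unit of $R$; hence $\nu(\alpha_i - \alpha_{i'}) = \min\{m/n : a_{i,m} \neq a_{i',m}\} = \nu(\beta_i - \beta_{i'})$. The equality of valuations of discriminants then follows from the standard product formula $\Delta_f = \mathrm{lc}(f)^{2\deg f - 2}\prod_{i<j}(\alpha_i - \alpha_j)^2$. For part (b), the good embedded resolution $Y^f$ of Definition~\ref{defgoodembres} is constructed by iterated blow-ups at non-regular points of $[\divi(f)_{\mathrm{odd}}]_{\mathrm{red}}$, and both the sequence of centers and the resulting special fiber (with its branch divisor $B^f$) are determined solely by the combinatorics of the valuations of the roots and their pairwise differences. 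Since these combinatorial data agree for $f$ and $f^\sharp$ by the previous paragraph, the special fibers of $Y^f$ and $Y^{f^\sharp}$ with their branch loci are canonically identified; taking normalizations in the quadratic extensions (Definition~\ref{models}) passes the identification to $X^f_s \cong X^{f^\sharp}_s$, and Lemma~\ref{formula} then gives $\Art(X^f) = \Art(X^{f^\sharp})$.

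The main subtlety is the Galois-compatibility step in the second paragraph: without the Teichm\"uller choice of section, products of lifts with roots of unity need not themselves be lifts, and one would have to track the resulting carry-over into higher-order Newton--Puiseux coefficients when Galois-conjugating. Once the section is chosen so that this carry-over vanishes, everything reduces to the bookkeeping observation that the Newton--Puiseux data of the roots determines both the discriminant valuation and the combinatorics of the embedded resolution, and hence all invariants compared in parts (a) and (b).
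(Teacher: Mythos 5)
The central gap is in your Galois-stability step. Choosing the section $k \to R$ to agree with the Teichm\"uller lift \emph{on the roots of unity themselves} does not make the section equivariant for multiplication by roots of unity: nothing in that choice forces $s(\bar\zeta\,\bar a)=\hat{\zeta}\,s(\bar a)$ for a general $\bar a\in k$. So $a_{i,m}\hat{\zeta}^{m}$ need not lie in the image of the section, $\sigma(\alpha_i)=\sum_m a_{i,m}\hat{\zeta}^{m}t^{m/n}$ need not be a Newton--Puiseux expansion with chosen lifts, and in mixed characteristic the discrepancy $a_{i,m}\hat{\zeta}^m - s(\bar a_{i,m}\bar\zeta^m)\in\mathfrak m_R$ propagates into the higher coefficients of the \emph{canonical} expansion of $\sigma(\alpha_i)$. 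Consequently you cannot conclude that termwise reduction intertwines the two Galois actions, which is exactly what your definition $f^\sharp=\prod_i(x-\beta_i)$ needs in order to have $f^\sharp\in k[[t]][x]$. The paper is explicitly aware of this trap: it notes that the conjugate expressions $\sum_m \zeta_i^{jm}a^i_m t^{m/n}$ ``may not be the Newton--Puiseux expansion of the element using the chosen lifts'' and sidesteps the issue by defining $f^\sharp$ factor by factor as the minimal polynomial over $k[[t]]$ of a single reduced truncated root, so integrality over $k[[t]]$ is automatic and no equivariance of the section is ever used. Your argument can be repaired, either by following that route or by choosing a section that is genuinely equivariant under multiplication by the prime-to-$p$ roots of unity (this exists, since those roots of unity lift uniquely to $R$ and act freely on $k^\times$, so one may lift orbit representatives and extend multiplicatively along orbits), but the condition you actually imposed is weaker than what your key claim uses. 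In the equicharacteristic case there is no issue at all; the difficulty is precisely the mixed-characteristic case, which is the whole point of the proposition.

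Granting that repair, your part (a) is fine (canonical Newton--Puiseux coefficients are chosen lifts, hence differ in $R$ if and only if their reductions differ in $k$, which gives the valuation and discriminant statements), but part (b) is asserted rather than proved, and it is where the paper does most of its work. It is not enough to say that the special fiber of the good embedded resolution is ``determined by the combinatorics of the valuations of the roots and their pairwise differences'': one must exhibit this, which the paper does via the explicit continued-fraction/characteristic-exponent resolution algorithm, and one must also match the orders of vanishing of $f$ and $f^\sharp$ along every vertical component and the incidence and multiplicity data of the horizontal branches, since these determine the odd part of $\divi(f)$, the further blowups separating odd components, and the branch locus entering the Riemann--Hurwitz computation of Lemma~\ref{formula}; one also needs $\delta=0$ and the equality of generic-fiber Euler characteristics. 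As written, your final paragraph compresses this into a single sentence, so the proof of (b) is incomplete even once (a) is secured.
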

\begin{proof} 
 Since $\cha k > \deg (f)$ and $k$ is algebraically closed, the splitting field of $f$ is a totally ramified tame extension of $K$, and therefore cyclic \cite[Chapter~IV,\S~2,Proposition~7,Corollary~2]{serrelocal}. Since $K$ is complete and $k$ is algebraically closed, by Kummer theory, we may further assume that this splitting field equals $K(t^{1/n})$ for some integer $n \geq 1$. Since $f$ is monic and integral, it follows that all roots of $f$ are contained in $R[t^{1/n}]$. Let $g_1,g_2,\ldots,g_l$ be the irreducible factors of $f$, and let $f=ut^bg_1 \ldots g_l$ for some $b \in \{0,1\}$ and unit $u \in R$. For each irreducible factor $g_j$, pick a root $\alpha_{i}$ of $g_j$ and write down its Newton-Puiseux expansion $\alpha_i \colonequals \sum_{m \in \Z_{\geq 0}} a_m^i t^{m/n}$.  Since $f$ is separable, there exists an integer $M$ such that
 \begin{itemize}
  \item for every $i$, we have $\deg g_i = \lcm (\denom(m/n) \ | \ m \leq M, a_m^{i} \neq 0)$, where $\denom(m/n)$ is the denominator of the rational number $m/n$ when written in lowest form.
  \item whenever $i \neq j$, we have $\alpha_i \mod t^{M/n} \neq \alpha_j \mod t^{M/n}$ in $R[t^{1/n}]$.
 \end{itemize}
 
  Define
 \begin{align*} g_i^\sharp &\colonequals \textup{The minimal polynomial of } \sum_{m =0}^M \overline{a_m^{i}} t^{m/n} \textup{ in } k[[t]][x] \\
 f^\sharp &\colonequals t^b\prod_{i=1}^l g_i^\sharp.\end{align*}
 \begin{enumerate}[\upshape (a)]
  \item For every $i$ with $1 \leq i \leq l$, fix a primitive $(\deg g_i)^{\mathrm{th}}$ root of unity $\zeta_i \in R$ ($\deg g_i < \deg f$ and is therefore prime to $\cha k$ if $\cha k > 0$). Since the splitting field of $g_i$ is $K(t^{1/\deg g_i})$ with Galois group generated by $t^{1/\deg g_i} \mapsto \zeta_i t^{1/\deg g_i}$, every root of $g_i$ has the form $\sum_{m \in \Z_{\geq 0}} \zeta_i^{jm} a_m^i t^{m/n}$ for a unique $j$ such that $1 \leq j \leq \deg g_i$. (This may not be the Newton-Puiseux expansion of the element using the chosen lifts, since we only fixed a set-theoretic section $k \rightarrow R$, but that is okay since we do not need this for what follows.) Since the splitting field of $g_i^\sharp$ is also $k((t^{1/\deg g_i}))$, it follows that every root of $g_i^\sharp$ has the form $\sum_{m \in \Z_{\geq 0}} \overline{\zeta_i^{jm} a_m^i} t^{m/n}$ for a unique $j$ such that $1 \leq j \leq \deg g_i$. Extend the list $\{\alpha_1,\ldots,\alpha_l\}$ to a complete set of roots of $f$, and set $\beta_i = \sum_{m \in \Z_{\geq 0}} \overline{\zeta_i^{jm} a_m^i} t^{m/n}$ if $\alpha_i = \sum_{m \in \Z_{\geq 0}} \zeta_i^{jm} a_m^i t^{m/n}$. Since the $\zeta_i$ are units in $R$, it follows that for any two indices $i,j$, we have $\zeta_i^{jm} a_m^i = 0$ if and only if $\overline{\zeta_i^{jm} a_m^i} = 0$. This implies that  
  \[ \nu(\sum_{m \in \Z_{\geq 0}} \zeta_i^{jm} a_m^i t^{m/n}) = \min (m/n \ | \ \zeta_i^{jm} a_m^i \neq 0 ) = \min (m/n \ | \ \overline{\zeta_i^{jm} a_m^i} \neq 0 ) = \nu (\sum_{m \in \Z_{\geq 0}} \overline{\zeta_i^{jm} a_m^i} t^{m/n}) .\]  Similarly, if $\alpha_i = \sum_{m \in \Z_{\geq 0}} \zeta_i^{jm} a_m^i t^{m/n}$ and $\alpha_{i'} = \sum_{m \in \Z_{\geq 0}} \zeta_{i'}^{jm} a_m^{i'} t^{m/n}$, then $\nu(\alpha_i - \alpha_{i'}) = \min ( m/n \ | \ \zeta_i^{jm} a_m^i \neq \zeta_{i'}^{j'm} a_m^{i'})$. Since $\overline{\zeta} \neq 1$ for any root of unity $\zeta \neq 1$ of order prime to $\cha k$, it follows that $\zeta_i^{jm} a_m^i = \zeta_{i'}^{j'm} a_m^{i'}$ if and only if $\overline{\zeta_i^{jm} a_m^i} = \overline{\zeta_{i'}^{j'm} a_m^{i'}}$ for any choice of indices $m,i,i',j,j'$. This shows $\nu(\alpha_i - \alpha_{i'}) = \nu(\beta_i - \beta_{i'})$ since $\nu(\beta_i - \beta_{i'}) = \min ( m/n \ | \ \overline{\zeta_i^{jm} a_m^i} \neq \overline{\zeta_{i'}^{j'm} a_m^{i'}})$. Finally, we have 
 \[ \Delta_f = \sum_{i \neq i'} \nu(\alpha_i - \alpha_{i'}) = \sum_{i \neq i'} \nu(\beta_i - \beta_{i'}) = \Delta_{f^\sharp} . \]
 \item Since $f$ and $f^\sharp$ have the same degree, the generic fibers of $X_f$ and $X_{f^\sharp}$ have the same $\ell$-adic Euler characteristic ($=4-\deg f$ or $3-\deg f$ depending on whether $\deg f$ is even or odd). It suffices to show that the same is true of the special fibers. Since $\cha k > \deg(f)$ if $\cha k > 0$, it follows that $\delta=0$. The Riemann-Hurwitz formula Lemma~\ref{formula} implies that it is enough to prove the following three things. 
 \begin{itemize}
  \item The special fibers of $Y_f$ and $Y_{f^\sharp}$ are isomorphic.
  \item The order of vanishing of $f$ along any irreducible component of $(Y_f)_s$ is equal to the order of vanishing of $f^\sharp$ along the corresponding irreducible component of $(Y_{f^\sharp})_s$.
  \item There is a bijection between the horizontal components of $\divi (f)$ and those of $\divi (f^\sharp)$ such that the points where these divisors intersect the special fiber also correspond under the above isomorphism, and the multiplicities of $f$ and $f^\sharp$ in the local ring at the point of intersection are also equal.
 \end{itemize} 
 
  Let $Y'_f$ and $Y'_{f^\sharp}$ be the minimal surfaces with a map to $\mathbb{P}^1_R$ and $\mathbb{P}^1_{k[[t]]}$ respectively such that all horizontal components of $\divi(f)$ and $\divi(f^\sharp)$ respectively are regular. The special fibers of $Y'_f$ and $Y'_{f^\sharp}$, and the incidence of the horizontal components with the special fiber are algorithmically determined by the exponents in the Newton-Puiseux expansions of roots of $f$ and $f^\sharp$, and the positions where any two such Newton-Puiseux expansions differ -- this can be seen by using the explicit resolution algorithm as described in \cite[Theorem~3.3.1,Lemma~3.6.1]{Wall} using the continued fraction expansions of the exponents appearing in the expansions.  For instance, the above described algorithm in the case when $f$ is irreducible shows that the dual graph of the minimal resolution of $\divisor(f)$ is a tree that consists of a single horizontal main segment, with finitely vertical segments attached at specified vertices that are determined by the characteristic exponents of the expansion of a root, i.e., the `jump positions' in the l.c.m. of the denominators of the exponents of partial truncations of the Newton-Puiseux expansions (See Definition~\ref{charexponents} for a definition of characteristic exponents, and \cite[p.61, Figure~3.5]{Wall} for a picture of a typical dual graph). (The l.c.m. of the denominators of the exponents is initialized to be $1$, and it increases to $n$ by the time we get to the truncation of a root mod $t^{M/n}$; it stays $n$ thereafter). The number of components in each segment of the dual graph can likewise be determined by suitably normalized `Farey sequences' of rational numbers.

The same algorithm applies in both the equicharacteristic and mixed characteristic cases, once we are guaranteed the existence of Newton-Puiseux series with exponents of bounded denominators. The proof even shows that the resolution is completely determined by the $M$-truncations of the roots of $f$ and $f^\sharp$ respectively, where the integer $M$ is chosen as in part(a) of this Proposition. It is obtained by gluing together the embedded resolutions of the divisors corresponding to the irreducible factors $g_i$ of $f$ appropriately. The gluing data is determined by the positions where the Newton-Puiseux series differ. 

The explicit resolution algorithm also show that the multiplicities of the strict transforms of irreducible components $f$ at their points of incidence with the special fiber of the blowup is determined by the exponents appearing in the Newton-Puiseux expansion \cite{Wall}[Proposition~4.3.8]. These multiplicities in turn determine the order of vanishing of $f$ along any component of the exceptional curve by \cite{liu}[Chapter~9, Proposition~2.23]. \cite{conddisc}[Lemma~2.2] can now be used to show that the additional blowups required to separate intersecting odd vertical components to produce $Y_f$ from $Y'_f$, and to produce $Y_{f^\sharp}$ from $Y'_{f^\sharp}$ also coincide. This gives the required isomorphism of special fibers of $Y_f$ and $Y_{f^\sharp}$, preserving the required incidence data. 
\qedhere  
 \end{enumerate}
\end{proof}

\begin{remark}
 Another way to justify these claims is using the theory of Mac Lane valuations as in \cite{oddpaper}. Mac Lane valuations give a way of ``labelling'' the irreducible components that appear in the resolutions $Y_f$ and $Y_{f^\sharp}$ as a valuation on $K(\P^1_K)$. These labels are in terms of certain ``key polynomials $\phi_i$'' and rational numbers $\lambda_i$, and in our case come from the minimal polynomials of truncations of Newton-Puiseux expansions just before a jump position, and the essential exponents at the jump position. (This is explained in \cite[Remark~5.26]{oddpaper}) Since the labels for the components of $Y_f$ and $Y_{f^\sharp}$ can be paired up, we see that the corresponding components are in bijective correspondence (see \cite[Section~5.4]{oddpaper}). The order of vanishing of $f$ along these components can be directly computed from the corresponding Mac Lane labels of the irreducible components, and agree for $f$ and $f^\sharp$. The specialization of horizontal components are also completely determined by the Mac Lane descriptions of these irreducible components (see \cite[Corollary~5.4]{oddpaper}). 
\end{remark}

This Proposition shows that we may assume $R = k[[t]]$ for proving $-\Art(X_f) \leq \Delta_f$ without any loss of generality. {\color{blue}{In the rest of the paper, let $R=k[[t]]$.}}
 
\section{Base case of induction}\label{Sbasecase}
\begin{lemma}\label{RegularHorizontal}
Let $g \colonequals \sum c_i x^i \in R[x]$ be a monic irreducible polynomial with $\deg g \geq 2$. Let $\Gamma \colonequals \divisor(g)$. Assume that $\Gamma$ intersects the special fiber of $\mathbb{P}^1_R$ at the origin. Then $\Gamma$ is regular if and only if $\nu(c_0) = 1$.
\end{lemma}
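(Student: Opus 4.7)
The plan is to reduce regularity of $\Gamma$ to a single membership question in the maximal ideal at the origin. Since $g$ is irreducible in $R[x]$, the generic fiber $\Gamma_\eta = \Spec K[x]/(g)$ is a single closed point of a $0$-dimensional scheme, hence regular. So the only potential failure of regularity is at a closed point of the special fiber, which by hypothesis consists only of the origin $P = (t,x) \in \mathbb{P}^1_R$. The local ring $\mathcal{O}_{\mathbb{P}^1_R,P} = R[x]_{(t,x)}$ is $2$-dimensional regular with maximal ideal $\mathfrak{m}_P = (t,x)$, so by the standard criterion (cited in the proof of Lemma~\ref{goodembres}(b) via \cite[p.378, Chapter~9, Proposition~1.8]{liu}) the quotient $\mathcal{O}_{\Gamma,P} = R[x]_{(t,x)}/(g)$ is regular if and only if $g \notin \mathfrak{m}_P^2$.

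Next I would extract what the geometric hypothesis forces on the coefficients. Since $g$ is monic and $k$ is algebraically closed, the reduction $\overline{g} \in k[x]$ is monic of degree $\deg g$ and splits into linear factors over $k$; each distinct root corresponds to a closed point where $\Gamma$ meets the special fiber. The assumption that $\Gamma$ meets the special fiber only at the origin therefore forces $\overline{g} = x^{\deg g}$, i.e.\ $\nu(c_i) \geq 1$ for all $0 \leq i < \deg g$.

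With this in hand, I would analyze the expansion $g = c_0 + c_1 x + c_2 x^2 + \cdots + x^{\deg g}$ term by term. Every monomial $c_i x^i$ with $i \geq 2$ lies in $\mathfrak{m}_P^2$ because $x^2 \in \mathfrak{m}_P^2$. The linear term $c_1 x$ also lies in $\mathfrak{m}_P^2$, since $c_1 \in (t) \subset \mathfrak{m}_P$ and $x \in \mathfrak{m}_P$. Hence
\[
g \in \mathfrak{m}_P^2 \iff c_0 \in \mathfrak{m}_P^2.
\]
Since $c_0 \in R$, the question becomes whether $c_0 \in \mathfrak{m}_P^2 \cap R$. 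A direct check (using that $R[x]_{(t,x)}$ is faithfully flat over $R$, or simply inspecting $(t^2,tx,x^2) \cap R$) gives $\mathfrak{m}_P^2 \cap R = (t^2)$. Therefore $g \in \mathfrak{m}_P^2 \iff \nu(c_0) \geq 2$, and combined with the already-established $\nu(c_0) \geq 1$ this yields the equivalence: $\Gamma$ is regular $\iff \nu(c_0) = 1$.

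The step requiring the most care is the conversion of the geometric hypothesis ``$\Gamma$ meets $(\mathbb{P}^1_R)_s$ at the origin'' into the algebraic statement $\overline{g} = x^{\deg g}$; this is where monicity of $g$ and algebraic closedness of $k$ are both essential. The rest is routine manipulation in the $2$-dimensional regular local ring $R[x]_{(t,x)}$.
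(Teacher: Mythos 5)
Your proof is correct and takes essentially the same route as the paper: reduce regularity of $\Gamma$ at its unique closed point $P$ to the criterion $g \notin \mathfrak{m}_P^2$ in the $2$-dimensional regular local ring $\mathcal{O}_{\mathbb{P}^1_R,P}$, and then check this membership coefficient-by-coefficient, the paper merely leaving implicit the steps you spell out (that the specialization hypothesis forces $\overline{g}=x^{\deg g}$, hence $\nu(c_i)\geq 1$ for $i<\deg g$, and that $\mathfrak{m}_P^2 \cap R = (t^2)$). The only cosmetic difference is that the paper deduces uniqueness of the closed point of $\Gamma$ from irreducibility of $g$ over the complete local ring $R$, while you read it off the hypothesis directly; both are fine.
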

\begin{proof}
 The curve $\Gamma$ has a unique closed point $P$ corresponding to $x=t=0$. The point $P \in \mathbb{P}^1_R$ corresponds to a maximal ideal $\mathfrak{m}$ in the regular local ring $\O_{\mathbb{P}^1_R, P}$, and $\O_{\Gamma, P} = \O_{\mathbb{P}^1_R, P}/(g)$ . The curve $\Gamma$ is regular if and only if $\O_{\Gamma, P}$ is a regular local ring, which happens if and only if the defining equation $g \notin \mathfrak{m}^2$. Since $\deg g \geq 2$, this happens if and only if $\nu(c_0) = 1$.
\end{proof}

\begin{lemma}\label{evenbase}
 Assume that $f=ug_1g_2\ldots g_l$ where $u \in R$ is a unit, and the $g_i \in R[x]$ are pairwise distinct monic irreducible polynomials of degree $n_i$. Let $\Gamma_i = \divisor (g_i)$ on $\mathbb{P}^1_R$. If $n_i \geq 2$, assume that $g_i(x) = h_i(x+a_i)$ for some Eisenstein polynomial $h_i$ and for some $a_i \in R$. Assume that the $\Gamma_i$ intersect the special fiber of $\mathbb{P}^1_R$ at distinct points. Let $Y^f$ and $X^f$ be as in Definition~\ref{models}. Then $Y^f = \mathbb{P}^1_R$ and  $X^f$ is regular, and,
  \[ -\Art(X^f/S) = \nu(\Delta_f) = \sum_{i=1}^l (n_i-1). \]
\end{lemma}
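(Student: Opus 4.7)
The plan is to verify directly that $\mathbb{P}^1_R$ itself is a good embedded resolution of the pair $(\mathbb{P}^1_R,\divisor(f))$, so that $Y^f=\mathbb{P}^1_R$, and then compute both sides of the desired equality by inspection. First I would show that on $\mathbb{P}^1_R$ the odd part $\divisor(f)_{\mathrm{odd}}$ is already regular. Each $\Gamma_i$ with $n_i=1$ is a section of $\mathbb{P}^1_R\to S$ and hence regular. For $n_i\geq 2$, after the automorphism $x\mapsto x+a_i$ of $\mathbb{P}^1_R$ (which does not change the regularity of $\Gamma_i$), the ideal of $\Gamma_i$ is generated by the Eisenstein polynomial $h_i$, so Lemma~\ref{RegularHorizontal} applies: the constant term of $h_i$ has $\nu=1$ and thus $\Gamma_i$ is regular. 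The hypothesis that the $\Gamma_i$ meet the special fiber at distinct points makes them pairwise disjoint (horizontal divisors which are disjoint on the special fiber are globally disjoint, since their only candidate intersection points are supported on the special fiber). Combined with possibly the section at infinity (which enters $\divisor(f)_{\mathrm{odd}}$ when $\deg f$ is odd, is itself a regular section, and obviously disjoint from the $\Gamma_i$), this shows $\divisor(f)_{\mathrm{odd}}$ is regular on $\mathbb{P}^1_R$. By Definition~\ref{defgoodembres} no further blowups are required, so $Y^f=\mathbb{P}^1_R$, and $X^f$ is regular by Lemma~\ref{useful}.

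Next I would compute $\nu(\Delta_f)=\sum_i(n_i-1)$. Writing the discriminant as a sum $\nu(\Delta_f)=\sum_{\alpha\neq\alpha'}\nu(\alpha-\alpha')$ over ordered pairs of (finite) roots, this splits into (i) contributions from pairs of roots of the same factor $g_i$ and (ii) contributions from pairs of roots of different factors $g_i,g_j$. Because $\Gamma_i$ and $\Gamma_j$ meet the special fiber at distinct points, roots of $g_i$ and $g_j$ have distinct reductions in $k$, so every cross-term in (ii) has valuation $0$. For (i), when $n_i=1$ there is nothing to sum. When $n_i\geq 2$, $g_i$ is a translate of an Eisenstein polynomial $h_i$, which defines a totally tamely ramified degree-$n_i$ extension of $K$, so by the tame different formula $\nu(\Delta_{g_i})=\nu(\Delta_{h_i})=n_i-1$. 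Summing gives $\nu(\Delta_f)=\sum_i(n_i-1)$.

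For the conductor side I would apply Lemma~\ref{formula} to $Y^f=\mathbb{P}^1_R$ and $B^f=\divisor(f)_{\mathrm{odd}}$. Since both fibers of $\mathbb{P}^1_R$ are $\mathbb{P}^1$, $\chi(Y^f_s)-\chi(Y^f_{\overline{\eta}})=0$. The branch divisor $B^f$ is a disjoint union of the regular horizontal divisors $\Gamma_i$ together with, if $\deg f$ is odd, the section at infinity. Each $\Gamma_i$ has $\chi((\Gamma_i)_{\overline{\eta}})=n_i$ (a reduced length-$n_i$ scheme over $\overline{K}$) while $(\Gamma_i)_s$ is a single (possibly non-reduced, when $n_i\geq 2$) closed point, so $\chi((\Gamma_i)_s)=1$. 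The infinity contribution, when present, is $1$ at each fiber and so cancels. In every case
\[
\chi(B^f_s)-\chi(B^f_{\overline{\eta}})=-\sum_i(n_i-1).
\]
Since $\cha k>2g+1$ and $\cha k$ is odd, $\cha k>\deg f$ regardless of whether $\deg f$ is $2g+1$ or $2g+2$, so Lemma~\ref{formula} gives $\delta=0$. Putting this together:
\[
-\Art(X^f/S)=2\cdot 0-\bigl(-\sum_i(n_i-1)\bigr)+0=\sum_i(n_i-1)=\nu(\Delta_f).
\]

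The only step requiring care is the Euler-characteristic bookkeeping for $B^f$, particularly handling the non-reduced special fiber of each $\Gamma_i$ in the Eisenstein case (ensuring we use the correct $\chi$, which counts geometric points and is $1$ for any connected length-$n$ local scheme over $k$) and treating the section at infinity correctly when $\deg f$ is odd. Once this is pinned down, the equality of $-\Art(X^f/S)$ and $\nu(\Delta_f)$ falls out immediately.
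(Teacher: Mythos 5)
Your proposal is correct and follows essentially the same route as the paper: show $Y^f=\mathbb{P}^1_R$ via Lemma~\ref{RegularHorizontal} (after translating by $a_i$) and the distinct specializations, get regularity of $X^f$ from Lemma~\ref{useful}, compute $-\Art(X^f/S)$ from Lemma~\ref{formula} with $\chi(B_s)-\chi(B_{\overline{\eta}})=l-\deg f$, and compute $\nu(\Delta_f)=\sum_i(n_i-1)$ from tame total ramification of each $g_i$ plus vanishing of cross terms. Your explicit bookkeeping of the section at infinity and of the non-reduced special fiber of each $\Gamma_i$ only makes explicit what the paper handles implicitly.
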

\begin{proof}
 Since translation by $a_i$ is an isomorphism of $\mathbb{P}^1_R$, Lemma~\ref{RegularHorizontal} tells us that all the $\Gamma_i$ are regular. Since we also assumed that the $\Gamma_i$ intersect the special fiber of $\mathbb{P}^1_R$ at distinct points, it follows that $Y^f = \P^1_R, B = \divisor (f)_{\mathrm{odd}}$ and $X^f$ is regular by Lemma~\ref{useful}.  Since we assumed that the $g_i$ are pairwise distinct irreducible polynomials, it follows that $B_{\overline{\eta}} \subset \mathbb{P}^1_{\overline{K}}$ is a sum of $\deg f$ or $\deg f+1$ distinct closed points depending on whether $\deg f$ is even or odd. Similarly, our assumption that the $\Gamma_i$ intersect the special fiber at distinct points implies that $B_s \subset \mathbb{P}^1_k$ is a sum of $l$ or $l+1$ distinct closed points depending on whether $\deg f$ is even or odd. Since $Y^f_s = \P^1_k$ and $Y^f_{\overline{\eta}} = \mathbb{P}^1_{\overline{K}}$ and the $\ell$-adic Euler characteristic of a closed point over an algebraically closed field 
is $1$, it follows that
 \begin{align*}
  -\Art (X^f/S) &= 2(\chi(Y^f_s)-\chi(Y^f_{\overline{\eta}}))-(\chi(B_s)-\chi(B_{\overline{\eta}})) \\
  &= 2(2-2)-(l-\deg f) \\
  &= \sum_{i=1}^l (n_i-1).
 \end{align*}
Assume that $n_i \geq 2$. Since $\cha k > \deg f \geq n_i$ and $k$ is algebraically closed, any root of $g_i$ generates a tame totally ramified Kummer extension of $K$ and $\nu(\Delta_{g_i}) = n_i-1$ by \cite[Chapter~IV, \S~1, Proposition~4]{serrelocal}. Since the $\Gamma_i$ intersect the special fiber $Y^f_s$ at distinct points, it follows that
\[ \nu(\Delta_f) = \sum_{i=1}^l \nu(\Delta_{g_i}) = \sum_{i=1}^l (n_i-1) .\qedhere \]
\end{proof}

\section{The inductive step: replacement polynomials}\label{induction}
In Lemma~\ref{badpoints} and Corollary~\ref{badpointscorollary}, we first prove that the conditions in Lemma~\ref{evenbase} are in fact necessary and sufficient for regularity of the the standard Weierstrass model.
In Section~\ref{outline}, we describe an inductive proof strategy to prove $-\Art(X^f) \leq \nu(\Delta_f)$, where $X^f$ is the regular model from Lemma~\ref{useful}.
\subsection{Setup}\label{setup}
Recall that $R$ can be assumed to be the ring $k[[t]]$. Let $f=ut^bg_1g_2\ldots g_l$ be the prime factorization of the squarefree polynomial $f$ in $R[x]$, where $u$ is a unit, $b \in \{0,1\}$ and the $g_i$ are pairwise distinct monic irreducible polynomials in $R[x]$. Let $n_i \colonequals \deg (g_i)$. 
Let $\Gamma_i$ be the irreducible horizontal divisor corresponding to $g_i$ in $Y_0 \colonequals \mathbb{P}^1_R$. Let $P_i$ be the closed point of $\mathbb{P}^1_k$ where $\Gamma_i$ intersects the special fiber of $Y_0$, and let $(x-a_{P_i}) \subset k[x]$ be the corresponding maximal ideal. Let $\lambda_i \colonequals \nu(g_i(a_{P_i}))$. 

 Let $\infty$ be the closed point at infinity on the special fiber of $Y_0$. Let $\Gamma_{\infty}$ be the scheme-theoretic closure in $\mathbb{P}^1_R$ of $\infty \in \mathbb{P}^1_K$. Let \[ A \colonequals \begin{cases} \{ P_1,P_2,\ldots,P_l \} \quad \quad \quad \quad \quad \, \text{if}\  \deg f \ \text{is even, and,} \\ \{ P_1,P_2,\ldots,P_l \} \cup \{\infty\} \quad \quad \text{if}\  \deg f \ \text{is odd.} \end{cases} \] 
 For every $P$ in $A \setminus \{ \infty \}$, let 
\begin{align*}
 C_P &\colonequals \{ g_i \ | \ 1 \leq i \leq l, \ P_i = P \}, \\
 C_P^{< 1} &\colonequals \{ g_i \in C_P \ | \ \lambda_i/n_i < 1 \},  \ \text{and}, \\
 C_P^{\geq 1} &\colonequals \{ g_i \in C_P \ | \ \lambda_i/n_i \geq 1 \}.  
\end{align*}
Note that $C_P^{< 1}$ consists precisely of those irreducible factors specializing to $P$ that have roots of valuation $<1$ after we move $P$ to $x=t=0$ by a change of variables, and likewise $C_P^{\geq 1}$ are those factors whose roots have valuation $\geq 1$ after a change of variables.

\begin{defn}\label{D:weights}[Weights] Let $P$ in $A$. Define
\[ \widetilde{\wt_P} \colonequals \begin{cases} \sum_{g_i \in C_P} \min (n_i,\lambda_i) =  \sum_{g_i \in C_P^{<1}} \lambda_i + \sum_{g_i \in C_P^{\geq 1}} n_i \quad\quad &\textup{if } P \neq \infty, \\
\textup{parity of } \deg(f) \quad\quad &\textup{if } P = \infty.\end{cases}  \]
Define the {\textup{\textsf{\color{blue}{weight $\wt_P$}}}} of $P$ to be
\[ \wt_P = b+ \widetilde{\wt_P}.\] 
\end{defn}

We also define the notion of good weight $3$ points appearing in the statement of Theorem~\ref{Cexbal}.
\begin{defn}\label{D:goodtype3}
 $P \in A$ is a {\textup{\textsf{\color{blue}good weight $3$ point}}} if $b=0,\wt_P = 3$ and we are in one of the following cases:
 \begin{enumerate}
  \item the irreducible polynomials in $C_P$ specialize to at least two distinct points after a single blow up.
  \item $C_P$ consists of two irreducible polynomials $f_1,f_2$ that specialize to the same point in the exceptional curve $E_P$ such that $\min(n_1,\lambda_1) = 1$ and $(n_2, \lambda_2) \in \{ (2,3),(3,2) \}$.
  \item $C_P$ consists of a single irreducible polynomial $f_1$ such that $(n_1,\lambda_1) \in \{ (3,4),(4,3),(3,5),(5,3) \}$.
 \end{enumerate}
\end{defn}

\begin{lemma}\label{L:wtsmult}
 For any $P$ in $A \setminus \{\infty\}$, the multiplicity $\mu_P(f)$ of $f$ in the local ring of $\mathbb{P}^1_R$ at $P$ is $\wt_P$.
\end{lemma}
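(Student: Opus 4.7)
The plan is to use the additivity of multiplicity together with a Newton-polygon analysis for each irreducible factor. First I would invoke the fact that in the regular local ring $\mathcal{O}_{\mathbb{P}^1_R,P}$ multiplicity is additive on products, so from the factorization $f = u t^b g_1 \cdots g_l$ one obtains
\[ \mu_P(f) = \mu_P(u) + b\,\mu_P(t) + \sum_{i=1}^{l} \mu_P(g_i). \]
The unit $u$ contributes $0$; the parameter $t$ contributes $1$; and any $g_i$ with $P_i \neq P$ is itself a unit at $P$ (since its reduction does not vanish at $a_P$), hence also contributes $0$. This reduces the problem to showing $\mu_P(g_i) = \min(n_i, \lambda_i)$ for each $g_i \in C_P$, since summing then yields
\[ \mu_P(f) = b + \sum_{g_i \in C_P} \min(n_i, \lambda_i) = b + \widetilde{\wt_P} = \wt_P. \]

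For the key local computation, I would translate so that $a_{P_i} = 0$, writing $g_i(x) = \sum_{j=0}^{n_i} c_j x^j$ with $c_{n_i} = 1$ and $\lambda_i = \nu(c_0)$. Since $g_i$ is monic irreducible and all of its roots in $\overline{K}$ share the common valuation $\lambda_i/n_i$, the standard bounds on elementary symmetric functions give $\nu(c_j) \geq (n_i - j)\lambda_i/n_i$ for every $j$. If $\lambda_i/n_i \geq 1$, then $\nu(c_j) \geq n_i - j$, so every monomial $c_j x^j$ lies in $\mathfrak{m}_P^{n_i}$; moreover the leading monomial $x^{n_i}$ has non-vanishing image in the degree-$n_i$ piece of the associated graded ring $\mathrm{gr}_{\mathfrak{m}_P}\mathcal{O}_{\mathbb{P}^1_R,P} \cong k[\overline{t},\overline{x}]$, so $\mu_P(g_i) = n_i$. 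If $\lambda_i/n_i < 1$, then for every $j \geq 1$ one has $\nu(c_j) + j \geq \lambda_i + j(1 - \lambda_i/n_i) > \lambda_i$, so those monomials all lie in $\mathfrak{m}_P^{\lambda_i+1}$, while the constant term $c_0$ realizes the valuation $\lambda_i$ exactly. Hence $\mu_P(g_i) = \lambda_i$. In both cases the answer equals $\min(n_i, \lambda_i)$.

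The argument is essentially bookkeeping, and I do not expect serious obstacles. Two small points warrant care: the boundary case $\lambda_i = n_i$ falls into Case~1 by the $C_P^{\geq 1}$ convention but is consistent with Case~2 as well, since there $\min(n_i, \lambda_i) = n_i = \lambda_i$; and the associated-graded step genuinely detects that the leading homogeneous form of $g_i$ is non-zero in $\mathfrak{m}_P^{\mu}/\mathfrak{m}_P^{\mu+1}$ only because $\mathrm{gr}_{\mathfrak{m}_P}\mathcal{O}_{\mathbb{P}^1_R,P}$ is a polynomial ring in $\overline{t}$ and $\overline{x}$, and the distinguished monomial ($\overline{x}^{n_i}$ in Case~1, a nonzero multiple of $\overline{t}^{\lambda_i}$ in Case~2) lives in a graded piece that no other term of $g_i$ can reach. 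The only real work is verifying the inequality $\nu(c_j) \geq (n_i-j)\lambda_i/n_i$ and checking that the inequality is \emph{strict} after adding $j$ whenever $j \geq 1$ in Case~2, which is exactly where the hypothesis $\lambda_i/n_i < 1$ is used.
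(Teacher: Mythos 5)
Your proof is correct and takes essentially the same route as the paper: additivity of the multiplicity over the factorization $f=ut^bg_1\cdots g_l$, reduction to $\mu_P(g_i)=\min(n_i,\lambda_i)$ for $g_i\in C_P$, and the Newton-polygon bound $\nu(c_j)\geq (n_i-j)\lambda_i/n_i$ — which is exactly the computation the paper uses (recorded inside the proof of Lemma~\ref{eqforstricttransform}) to justify $\mu_P(\Gamma_i)=\lambda_i$ for $g_i\in C_P^{<1}$ and $\mu_P(\Gamma_i)=n_i$ for $g_i\in C_P^{\geq 1}$. One small caveat: in the boundary case $\lambda_i=n_i$ other monomials $c_jx^j$ can also land in degree exactly $n_i$ (so the parenthetical ``no other term of $g_i$ can reach'' that graded piece is not literally true there), but since none of them contributes the monomial $\overline{x}^{\,n_i}$, no cancellation can occur and the leading form is still nonzero, so the conclusion $\mu_P(g_i)=n_i$ stands; the paper's own wording has the same slight imprecision.
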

\begin{proof}
 Since $\mu_P(\Gamma_i) = \lambda_i$ if $g_i \in C_P^{<1}$ and $\mu_P(\Gamma_i) = n_i$ if $g_i \in C_P^{\geq 1}$, it follows that the multiplicity $\mu_P(f)$ of $\divisor (f)$ at $P$ is given by
\[ \mu_P(f) = \mu_P(ut^bg_1g_2\ldots g_l) = b \mu_P(t) + \sum \mu_P(g_i)= b+ \left( \sum_{g_i \in C_P^{<1}} \lambda_i \right) + \left( \sum_{g_i \in C_P^{\geq 1}} n_i \right) .\qedhere\]
\end{proof}

Let
\[ A_{\mathrm{bad}} \colonequals \left\{ P \in A \ \bigg| \ \wt_P \geq 2 \right\}.  \]
Let $X_0$ be the normalization of $Y_0$ in $K(x)(\sqrt{f})$. Let $\pi_0$ denote the associated finite map $X_0 \rightarrow Y_0$.  Let $B \subset Y_0$ be the branch locus of $\pi_0$. 

\begin{lemma}\label{badpoints}
Let $X_0^{\mathrm{sing}}, B^{\mathrm{sing}}$ be the (possibly empty) sets of nonregular points of $X_0$ and $B$ respectively. Then $A_{\mathrm{bad}} = B^{\mathrm{sing}} = \pi_0(X_0^{\mathrm{sing}}) $.
\end{lemma}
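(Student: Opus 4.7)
The plan is to prove the two equalities $A_{\mathrm{bad}} = B^{\mathrm{sing}}$ and $B^{\mathrm{sing}} = \pi_0(X_0^{\mathrm{sing}})$ separately. The second is the standard statement that the singular locus of a tame degree-$2$ cover of a regular surface equals the preimage of the singular locus of its reduced branch divisor. I would deduce it as in the proof of Lemma~\ref{useful}: locally at $P \in Y_0$, factor $f = h^2 f_0$ with $f_0$ squarefree (using that $\O_P$ is a regular, hence UFD, local ring). The normalization $X_0$ above $P$ is then $\O_P[\sqrt{f_0}]$, which is regular if and only if either $f_0$ is a unit at $P$ or $f_0 \notin \mathfrak{m}_P^2$; this is precisely the condition that $B$ (locally the reduced divisor $\divisor(f_0)$) is regular at $P$.

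For the first equality, I would carry out an explicit local analysis on $Y_0 = \mathbb{P}^1_R$. Since $f = u t^b g_1 \cdots g_l$, the principal divisor of $f$ decomposes as
\[ \divisor(f) = b \cdot (Y_0)_s + \sum_{i=1}^l \Gamma_i - (\deg f)\, \Gamma_\infty, \]
so $B$ consists of the special fiber (when $b=1$), the horizontal curves $\Gamma_i$, and $\Gamma_\infty$ (when $\deg f$ is odd). The special fiber is smooth, $\Gamma_\infty$ is a section and hence smooth, and Lemma~\ref{RegularHorizontal} (after translating $P_i$ to the origin) tells us that a horizontal $\Gamma_i$ with $n_i \geq 2$ is singular at $P_i$ exactly when $\lambda_i \geq 2$. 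Consequently every singular point of $B$ lies in $A$, so it suffices to check the equivalence pointwise.

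For $P = P_i \in A \setminus \{\infty\}$, $B$ is singular at $P$ iff either two distinct components of $B$ pass through $P$, or the unique component through $P$ is itself singular there. These possibilities correspond respectively to $b = 1$, to $|C_P| \geq 2$, and to $|C_P| = 1$ with $\min(n_i, \lambda_i) \geq 2$. Combining them, $B$ is singular at $P$ iff $b + \sum_{g_i \in C_P} \min(n_i, \lambda_i) \geq 2$, which is exactly $\wt_P \geq 2$. For $P = \infty$ (relevant only when $\deg f$ is odd), $\Gamma_\infty$ is a smooth section, so $B$ is singular at $\infty$ iff the special fiber also passes through $\infty$, i.e., iff $b = 1$; this matches $\wt_\infty = b + 1 \geq 2$.

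The main obstacle is bookkeeping: one must combine the contribution of the vertical component $b \cdot (Y_0)_s$ with the horizontal contributions from $C_P$ and from $\Gamma_\infty$ so that the single formula for $\wt_P$ uniformly encodes both singularity mechanisms (multiple components meeting versus one component being non-smooth). Once the decomposition of $\divisor(f)$ and Lemma~\ref{RegularHorizontal} are in hand, the case analysis is routine.
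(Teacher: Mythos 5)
Your proposal is correct and follows essentially the same route as the paper: both equalities are proved separately, the cover statement by the same local computation on $\mathcal{O}_{Y_0,P}[y]/(y^2-f)$ (regular above $P$ iff the local equation of the branch divisor is a unit or lies outside $\mathfrak{m}_P^2$), and the identification $B^{\mathrm{sing}}=A_{\mathrm{bad}}$ by checking regularity of $B$ at the points of $A$, with the $\infty$ case handled exactly as in the paper via the two components $\divisor(t)$ and $\Gamma_{\infty}$. The only cosmetic difference is at finite $P$, where the paper reads off $f\in\mathfrak{m}_P^2$ from the multiplicity identity $\mu_P(f)=\wt_P$ (Lemma~\ref{L:wtsmult}) while you re-derive the same equivalence component-by-component from Lemma~\ref{RegularHorizontal}; when asserting that $B^{\mathrm{sing}}\subseteq A$ you should also record that distinct components of $B$ can only meet at points of $A$ (each $\Gamma_i$ has $P_i$ as its unique closed point, and $\Gamma_{\infty}\subset B$ exactly when $\infty\in A$), since smoothness of the individual components alone does not rule out intersections elsewhere.
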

\begin{proof}
As a Weil divisor $B$ is the sum of the odd components of $\divisor (f)$, and therefore it follows that if $\deg f$ is odd, then $B = b \cdot \divisor(t) + \sum \Gamma_i + \Gamma_{\infty}$ and if $\deg f$ is even, then $B = b \cdot \divisor(t) + \sum \Gamma_i$.

We will first show that $B^{\mathrm{sing}} = A_{\mathrm{bad}}$. Note $\infty \in A_{\mathrm{bad}}$ precisely when $b=1$ and $\deg(f)$ is odd. If $b=1$ and $\deg f$ is odd, then $\infty$ lies on two different irreducible components of $B$, namely $\divisor(t)$ and $\Gamma_{\infty}$. In this case \cite[p.~129, Chapter~4, Corollary~2.12]{liu}  implies that $\infty$ is a nonregular point of $B$. Since $\divisor(t)$ and $\Gamma_{\infty}$ are both regular at $\infty$, and $\infty \notin \Gamma_i$ for every $i$, it follows that $\infty$ is a regular point of $B$ in all other cases. Since $B$ is cut out by $f$ at $P$ when $P \neq \infty$, \cite[p.~129, Chapter~4, Corollary~2.12]{liu} implies that $P \in B^{\mathrm{sing}}$ if and only if $f \in \mathfrak{m}_{Y_0,P}^2$, i.e, if and only if the multiplicity $\mu_P(f) \geq 2$. Lemma~\ref{L:wtsmult} now completes the proof of $A_{\mathrm{bad}} = B^{\mathrm{sing}}$.

Let $\tilde{P} \in X_0$ and $\pi_0(\tilde{P}) = P \in Y_0$. We will now show that $\tilde{P} \in X_0^{\mathrm{sing}}$ if and only if $P \in B^{\mathrm{sing}}$. Since $Y_0$ is regular and $\pi_0 \colon X_0 \setminus \pi_0^{-1}(B) \rightarrow Y_0 \setminus B$ is \'{e}tale, it follows that $X_0 \setminus \pi_0^{-1}(B)$ is regular by \cite[p.~49, Proposition~9]{blr}. If $P \in B$ is regular, then $f \notin \mathfrak{m}_{Y_0,P}^2$ by \cite[p.~129, Chapter~4, Corollary~2.12]{liu}, which in turn implies that $b=0$ and that $P$ lies on a unique irreducible component of $B$. By \cite[p.~129, Chapter~4, Corollary~2.15]{liu}, it follows that $f$ is part of a system of parameters for $Y_0$ at $P$, i.e, there exists another rational function $g$ such that $f$ and $g$ generate the maximal ideal $\mathfrak{m}_{Y_0,P}$. Since $\mathcal{O}_{X_0,\tilde{P}} = \mathcal{O}_{Y_0,P}[y]/(y^2-f)$, it follows that the maximal ideal at $\tilde{P} \in R$ is generated by $y$ and $g$, and is therefore also regular. If $P \in B^{\mathrm{
sing}}$, then $f \in \mathfrak{m}_{Y_0,P}^2$, which in turn implies that $y^2-f \in \mathfrak{m}_{X_0,\tilde{P}}^2$. Since $\mathcal{O}_{X_0,\tilde{P}} = \mathcal{O}_{Y_0,P}[y]/(y^2-f)$, it follows that $\dim (\mathfrak{m}_{X_0,\tilde{P}}/\mathfrak{m}_{X_0,\tilde{P}}^2) \geq \dim (\mathfrak{m}_{Y_0,P}/\mathfrak{m}_{Y_0,P}^2) + 1 = 3$ and therefore $\tilde{P}$ is not a regular point of $X_0$. \qedhere
\end{proof}

\begin{corollary}\label{badpointscorollary}
The scheme $X_0$ is regular if and only if $f$ satisfies the hypotheses of Lemma~\ref{evenbase}.
\end{corollary}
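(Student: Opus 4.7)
The plan is to apply Lemma~\ref{badpoints}, which identifies $X_0^{\mathrm{sing}}$ with $\pi_0^{-1}(A_{\mathrm{bad}})$, thereby reducing the regularity of $X_0$ to the combinatorial condition $\wt_P \leq 1$ for every $P \in A$. I then want to show directly that this is equivalent to the three hypotheses of Lemma~\ref{evenbase}: that $b=0$, that the specializations $P_i$ are pairwise distinct, and that every $g_i$ of degree $\geq 2$ is a translate of an Eisenstein polynomial.

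The backward direction is essentially free. Under the hypotheses of Lemma~\ref{evenbase}, that lemma already produces $Y^f = \mathbb{P}^1_R = Y_0$, so $X^f = X_0$, and asserts that $X^f$ is regular; quotation of Lemma~\ref{evenbase} finishes the argument.

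For the forward direction, I assume $\wt_P \leq 1$ for all $P \in A$ and derive the Lemma~\ref{evenbase} hypotheses one at a time. First, $b = 0$: otherwise either $\deg f$ is odd and $\wt_\infty = b + 1 = 2$, or $\deg f$ is even and then $l \geq 1$ (since $\sum n_i = \deg f - 1$ is odd), giving $\wt_{P_1} \geq b + \min(n_1, \lambda_1) \geq 2$, a contradiction. Next, the $P_i$ are distinct: otherwise $C_{P_i}$ contains two irreducible factors and $\widetilde{\wt_{P_i}} \geq 2$. Finally, for each $g_i$ with $n_i \geq 2$, distinctness gives $C_{P_i} = \{g_i\}$, so $\widetilde{\wt_{P_i}} = \min(n_i, \lambda_i) \leq 1$ forces $\lambda_i = 1$.

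The only step requiring more than bookkeeping, and the main obstacle, is extracting an Eisenstein structure from $\lambda_i = 1$. My plan is: since $g_i \in R[x]$ is monic irreducible and $R$ is henselian, Hensel's lemma forces $\bar g_i \in k[x]$ to be a power of a single irreducible; because $k$ is algebraically closed, $\bar g_i = (x - a_{P_i})^{n_i}$. Lifting $a_{P_i}$ to $\tilde a_{P_i} \in R$ via the fixed section $k \to R$ from Section~\ref{embedding}, the shifted polynomial $h_i(y) \colonequals g_i(y + \tilde a_{P_i})$ reduces to $y^{n_i}$ modulo $t$, so all its non-leading coefficients lie in $(t)$, while its constant term $g_i(\tilde a_{P_i})$ has valuation $\lambda_i = 1$; hence $h_i$ is Eisenstein and $g_i(x) = h_i(x - \tilde a_{P_i})$ exhibits $g_i$ as a translate of one, completing the verification.
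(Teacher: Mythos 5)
Your proposal is correct and takes essentially the paper's route: the paper's own proof is just the observation (via Lemma~\ref{badpoints}) that $X_0$ is regular if and only if $A_{\mathrm{bad}}=\emptyset$, i.e.\ $\wt_P\leq 1$ for all $P\in A$, together with the assertion that this is equivalent to the hypotheses of Lemma~\ref{evenbase} — you merely verify that equivalence in detail (Hensel's lemma plus $\lambda_i=1$ yielding the Eisenstein shift) and quote Lemma~\ref{evenbase} for the converse. One harmless slip: in the case $b=1$, $\deg f$ even, you write $\sum n_i=\deg f-1$, but $t^{b}$ contributes nothing to the $x$-degree, so $\sum n_i=\deg f$; the needed conclusion $l\geq 1$ holds anyway since $\deg f\geq 1$.
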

\begin{proof}
The set $A_{\mathrm{bad}}$ is empty if and only if $f$ satisfies the hypotheses of Lemma~\ref{evenbase}.
\end{proof}

Let $Y^f \rightarrow \cdots \rightarrow Y_1 \rightarrow Y_0$ be the good embedded resolution of the pair $(\mathbb{P}^1_R,\divisor(f))$ and let $X^f$ be the normalization of $Y^f$ in $K(Y_0)(\sqrt{f})$. By Definition~\ref{defgoodembres} $Y_1$ is the blowup of $Y_0$ along the closed subscheme $A_{\mathrm{bad}} = B^{\mathrm{sing}}$ of $B$. For $P \in A_{\mathrm{bad}}$, let $E_P$ be the exceptional curve for the blowup $Y_1 \rightarrow Y_0$ at $P$.
\begin{corollary}\label{fmultexc}
 The order of vanishing $\nu_{E_P}(f)$ of $f$ along $E_P$ is $\wt_P$ if $P \neq \infty$ and even if $P = \infty$.
\end{corollary}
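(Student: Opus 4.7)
The plan is to split into the two cases $P\neq\infty$ and $P=\infty$ and handle each by a direct local computation, relying on Lemma~\ref{L:wtsmult} in the first case and on an explicit chart computation at infinity in the second.

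For $P\in A_{\mathrm{bad}}\setminus\{\infty\}$, the idea is to pass from the multiplicity of $f$ in the local ring $\O_{\P^1_R,P}$ to the vanishing order along the exceptional curve. By Lemma~\ref{L:wtsmult} we have $\mu_P(f)=\wt_P$. The reference \cite[Chapter~9, Proposition~2.23]{liu} (already invoked in Section~\ref{embedding}) gives that for a blowup of a regular surface at a closed point $P$, the order of vanishing of a regular function $f$ along the exceptional divisor equals $\mu_P(f)$. Combining these yields $\nu_{E_P}(f)=\mu_P(f)=\wt_P$, which is the claim.

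For $P=\infty$, I first note that $\infty\in A_{\mathrm{bad}}$ forces a very restricted setup: by Definition~\ref{D:weights} we have $\wt_\infty=b+\text{parity}(\deg f)$, and since $b\in\{0,1\}$ this can only reach $2$ when $b=1$ and $\deg f$ is odd. The plan is then to work in the affine chart at infinity with uniformizer $w=1/x$, so that
\[
 f(x)=ut^b\,w^{-\deg f}\prod_{i=1}^{l}\tilde g_i(w),
\]
where $\tilde g_i(w)=w^{n_i}g_i(1/w)$ is a polynomial with $\tilde g_i(0)=1$ since $g_i$ is monic. Thus each $\tilde g_i$ is a unit at $\infty$, and $f$ differs from $t^b w^{-\deg f}$ by a unit in $\O_{\P^1_R,\infty}$. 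Choose a chart of the blowup at $\infty$, for example $t=uv$, $w=u$, where $E_\infty$ is cut out by $u=0$; reading off the exponent of $u$ gives $\nu_{E_\infty}(f)=b-\deg f=1-\deg f$. Since $\deg f$ is odd this integer is even, as required. (The same answer is obtained in the other chart $t=v,\ w=uv$, which is a useful sanity check.)

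I do not expect any serious obstacle. The only mildly delicate point is that $f$ is a rational (not regular) function at $\infty$, so one has to be careful to include the pole contribution $-\deg f\cdot\Gamma_\infty$ when writing $\divisor(f)$ locally; once this is done the computation collapses to a one-line exponent count. Everything else is a direct citation of Lemma~\ref{L:wtsmult} and the standard blowup formula for the vanishing order along an exceptional divisor.
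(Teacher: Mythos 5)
Your proposal is correct and follows essentially the same route as the paper: for $P\neq\infty$ you combine the standard fact $\nu_{E_P}(f)=\mu_P(f)$ with Lemma~\ref{L:wtsmult}, exactly as the paper does, and for $P=\infty$ you reduce to the case $b=1$, $\deg f$ odd and compute $\nu_{E_\infty}(f)=b-\deg f$, which is even. Your explicit chart computation at infinity is just a more detailed version of the paper's one-line multiplicity count using $\infty\notin\Gamma_i$ and $\mu_\infty(\divisor(t))=\mu_\infty(\Gamma_\infty)=1$.
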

\begin{proof}
First assume $P = \infty$. By Lemma~\ref{badpoints}, the exceptional curve $E_P$ is defined if and only if $b=1$ and $\deg(f)$ is odd. In this case, the corollary follows since $\infty \notin \Gamma_i$, and $\mu_{\infty}(\divisor (t)) = \mu_{\infty}(\Gamma_{\infty}) = 1$. Now assume $P \neq infty$. The order of vanishing of $f$ along $E_P$ equals the multiplicity $\mu_P(f)$ of $\divisor (f)$ on $\mathbb{P}^1_R$ at $P$. The corollary follows from Lemma~\ref{L:wtsmult}.   
\end{proof}

\subsection{Outline of inductive proof strategy}\label{outline}
We now outline an inductive strategy for proving $-\Art (X^{f}/S) \leq \nu(\Delta_{f})$; we shall henceforth refer to this inequality as the {\textup{\textsf{\color{blue}conductor-discriminant inequality for $f$}}}. We will prove the conductor-discriminant inequality for $f$ by induction on the ordered pair of integers $(\deg(f),\nu(\tilde{\Delta}_f))$. The case $A_{\mathrm{bad}} = \emptyset$ is Lemma~\ref{evenbase}. 

If $A_{\mathrm{bad}} \neq \emptyset$, for every $P \in A_{\mathrm{bad}} \setminus \{\infty\}$, we will construct a new pair of squarefree polynomials $f_P^{\infty},f_P^{\neq \infty}$ in $R[x]$  (see Definition~\ref{defnreppoly}). We call $\{ f_P^{\infty} \ | \ P \in A_{\mathrm{bad}} \setminus \{\infty\}, \deg(f_P^{\infty})\geq 1  \} \cup \{ f_P^{\neq \infty} \ | \ P \in A_{\mathrm{bad}} \setminus \{\infty\}, \deg(f_P^{\neq \infty})\geq 1 \}$ the collection of {\textup{\textsf{\color{blue}replacement polynomials for $f$}}}. These replacement polynomials come from the equations of the strict transform of $\divisor(f)$ after one blow up at $P$ (see section~\ref{stricttransformeqn}). 
In Section~\ref{finalproof}, we prove the {\textup{\textsf{\color{blue}key inductive inequality}}}
\begin{thm}\label{replacementchange} 
\begin{enumerate}[\upshape (a)] \hfill
 \item 
 \begin{multline*}-\Art (X^f/S)- \left( \sum_{\substack{P \in A_{\mathrm{bad}} \setminus \{\infty\} \\ \deg(f_P^{\infty}) \geq 1 }}-\Art (X^{f_P^{\infty}}/S) +\sum_{\substack{P \in A_{\mathrm{bad}} \setminus \{\infty\} \\ \deg(f_P^{\neq \infty}) \geq 1 }}-\Art (X^{f_P^{\neq \infty}}/S) \right) \leq \\ \nu(\Delta_f)-\sum_{\substack{P \in A_{\mathrm{bad}} \setminus \{\infty\} \\ \deg(f_P^{\infty}) \geq 1 }} \nu(\Delta_{f_P^{\infty}}) -\sum_{\substack{P \in A_{\mathrm{bad}} \setminus \{\infty\} \\ \deg(f_P^{\neq \infty}) \geq 1 }} \nu(\Delta_{f_P^{\neq \infty}}).\end{multline*} 
 \item Equality holds if and only if we have $\wt_P \in \{2,3\}$ for each $P \in A_{\mathrm{bad}}$.
 \item The left hand side of the inequality in part(a) is nonnegative. The right hand side is strictly positive except when $b=0, \wt_P = 3$ for every $P \in A_{\mathrm{bad}}$.
\end{enumerate}

\end{thm}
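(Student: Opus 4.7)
The plan is to prove all three parts together by decomposing each side of the inequality into local contributions indexed by the points $P \in A_{\mathrm{bad}}$, then establishing a local inequality at each such $P$. The geometric setup supports this: since $Y^f \to Y_0$ factors through the single blowup $Y_1 \to Y_0$ at the points of $A_{\mathrm{bad}} = B^{\mathrm{sing}}$, and since the subsequent blowups above each exceptional curve $E_P$ only involve the odd components of the strict transform that meet $E_P$, the replacement polynomials $f_P^{\infty}$ and $f_P^{\neq \infty}$ from Section~\ref{induction} are designed precisely so that the models $Y^{f_P^{\infty}}$ and $Y^{f_P^{\neq \infty}}$ reproduce the later part of the resolution. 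I would first make this identification precise using Corollary~\ref{blowupiso} to match local rings at points of $E_P \subset Y_1$ with the corresponding local rings on $\mathbb{P}^1_R$ for the two replacement polynomials at $P$.

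For the conductor side of part~(a), the main tool is the Riemann-Hurwitz-type formula of Lemma~\ref{formula}. I would apply it to $X^f \to Y^f$ and to each $X^{f_P^{\infty}} \to Y^{f_P^{\infty}}$ and $X^{f_P^{\neq \infty}} \to Y^{f_P^{\neq \infty}}$, then combine the resulting expressions using additivity of compactly-supported Euler characteristics together with an inclusion-exclusion decomposition of $Y^f_s$ along the exceptional curves $E_P$. The output should be a closed-form expression for the LHS of part~(a) as a sum over $P \in A_{\mathrm{bad}}$ of an explicit local term depending only on the parity of $\wt_P$ (via Corollary~\ref{fmultexc}) and on the number of horizontal branches of $B^f$ meeting $E_P$. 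This is the content of Section~\ref{secconductorchange}.

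For the discriminant side, the plan is to translate $\nu(\Delta_f)$ into the metric tree of $f$ via Lemma~\ref{gromov} and then exploit the scaling behavior of the metric tree under the passage from $f$ to its replacement polynomials. By Theorems~\ref{mettreerep1} and \ref{mettreerep2}, the metric trees of $f_P^{\infty}$ and $f_P^{\neq \infty}$ are obtained from rescaled subtrees of the metric tree of $f$ sitting above $P$, with the rescaling factor determined by the valuation data $(n_i,\lambda_i)$ of the irreducible factors in $C_P$. Abhyankar's inversion formula, in its refinement from \cite{GGP}, is essential here, particularly when $C_P$ contains more than one polynomial and we must glue contributions of different irreducible factors. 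The main obstacle in this step is bookkeeping: correctly tracking how the Galois symmetry of the metric tree interacts with the split of factors into $C_P^{<1}$ versus $C_P^{\geq 1}$, and how the rescaling factor is read off from the continued-fraction data of the characteristic exponents.

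With both sides localized to sums over $P \in A_{\mathrm{bad}}$, part~(a) reduces to a family of purely combinatorial inequalities, one per $P$, comparing a linear expression in $\wt_P$ on the conductor side to a richer combinatorial expression involving the multiset $\{(n_i,\lambda_i) : g_i \in C_P\}$ on the discriminant side; schematically this is the mechanism underlying the elementary inequality $2 \leq n(n-1)$ alluded to in the introduction. For part~(b), I would track which local inequalities are tight: these turn out to be exactly the cases $\wt_P \in \{2,3\}$, while $\wt_P \geq 4$ produces quantifiable slack. Part~(c) follows from the same local formulas: the conductor-side local term is manifestly nonnegative because blowing up only adds nonnegative contributions to the Artin conductor, while the discriminant-side local term is strictly positive except in the degenerate case $b=0$ and $\wt_P = 3$ at a point $P$ of the type described in Definition~\ref{D:goodtype3}.
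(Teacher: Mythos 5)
Your proposal follows essentially the same route as the paper: the conductor change is computed via Euler characteristics and the Riemann--Hurwitz formula (Lemma~\ref{formula}, Corollary~\ref{diffchiY}, Theorem~\ref{condchange}), the discriminant change via metric trees and the inversion formula (Lemma~\ref{gromov}, Theorems~\ref{mettreerep1}, \ref{mettreerep2}, \ref{discchange}), and the two are compared point-by-point over $A_{\mathrm{bad}}$ using the elementary inequalities of Lemma~\ref{convexity}, with equality exactly when $\wt_P \in \{2,3\}$. Two small corrections: the vanishing of the right-hand side in part~(c) requires only $b=0$ and $\wt_P=3$ (plus one of the two replacement degrees being zero), not the ``good weight $3$'' condition of Definition~\ref{D:goodtype3}, which enters only in Theorem~\ref{Cexbal}; and the nonnegativity of the left-hand side is not taken as ``manifest'' but falls out of the same case analysis with Lemma~\ref{convexity}.
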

We will show in Corollary~\ref{Cdiscind} that either the degree or the discriminant decreases after at most two such replacement steps. The induction hypothesis then gives the conductor-discriminant inequality for the replacement polynomials of $f$. Adding all these inequalities to the one in Theorem~\ref{replacementchange} then proves the conductor-discriminant inequality for $f$. In Section~\ref{finalproof}, we also prove Theorem~\ref{Cexbal}, by analyzing when the condition for equality in Theorem~\ref{replacementchange} holds for $f$ and for all its replacement polynomials.

The rest of this section is devoted to defining the main objects for the inductive step, the replacement polynomials. 

\subsection{Replacement polynomials $f_P^{\infty}$ and $f_P^{\neq \infty}$ and equations for the strict transform of $\divisor(g_i)$}\label{stricttransformeqn}\label{defreplacepoly} 
In this section, we prove Lemma~\ref{eqforstricttransform} which gives an explicit equation for the strict transform of the irreducible components of $\divisor(f)$ passing through a given $P \in A_{\mathrm{bad}}$ after one blowup. We will use these equations along with the Weierstrass preparation theorem and a change of variables to define the replacement polynomials mentioned in the outline above (Definition~\ref{defnreppoly}).

\subsubsection{Notation} 
Let $f,g_i,b,a_{P_i},\lambda_i,n_i, Y_0, X_0, C_P, C_P^{<1}, C_P^{\geq 1}$ be as in subsection~\ref{setup}. Fix $P \in A_{\mathrm{bad}} \setminus \{\infty\}$. Let $Y_1 \rightarrow Y_0$ be the blowup of $Y_0 \colonequals \mathbb{P}^1_R$ at $A_{\mathrm{bad}}$, and let $E$ be the exceptional curve for the blowup at $P$, and let $H$ be the strict transform of the divisor of zeroes of $g_i$. Let $Q \in (Y_1)_s(k)$ be the point where $E$ meets the rest of $(Y_1)_s$. By replacing $g_i(x)$ by $\tilde{g_i}(x) \colonequals g_i(x+a_{P_i})$, we may assume that the point $P$ corresponds to the origin $x=t=0$ on the special fiber of $Y_0$.
Since $\tilde{g_i}$ is irreducible, all its roots in $\overline{K}$ have valuation $\lambda_i/n_i$. Let $P_i \in (Y_1)_s(k)$ be the intersection $P_i \colonequals H \cap E$. We omit the proof of the following lemma. 
\begin{lemma}\label{specialization}
If $\lambda_i/n_i \geq 1$ (or equivalently, if $i \in C_P^{\geq 1}$), then $P_i \neq Q$ and the ideal $\mathfrak{m}_{P_i}$ of functions on $Y_1$ vanishing at $P_i$ is generated by $\tfrac{x}{t}-c$ and $t$ for some $c \in R$. If $\lambda_i/n_i < 1$ (or equivalently, if $i \in C_P^{< 1}$), then $P_i = Q$, and the ideal $\mathfrak{m}_Q$ is generated by $\tfrac{t}{x}$ and $x$. 
\end{lemma}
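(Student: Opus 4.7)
The plan is to work explicitly in the two standard affine charts of the blowup $Y_1 \to Y_0$ at $P = V(x,t)$. Near $P$, $Y_1$ is covered by $U_1 = \Spec R[x,s]/(xs-t)$ with $s = t/x$, and $U_2 = \Spec R[r]$ with $r = x/t$ and $x = tr$; the two charts glue along $E$ via $s = 1/r$. In $U_1$, the exceptional divisor $E$ is $V(x)$ and the strict transform of the special fiber $V(t) \subset Y_0$ is $V(s)$, so $Q = V(x,s)$. In $U_2$, $E$ is $V(t)$, but $Q$, which sits at $r = \infty$ on $E \cong \P^1_k$, is not in this chart.

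Next I would locate the specialization of a root $\alpha$ of $g_i$ on $E$. Since $\cha k > n_i$ and $k$ is algebraically closed, $K(\alpha)/K$ is tame, totally ramified, and cyclic of degree $n_i$, and a generator $\sigma$ of $\Gal(K(\alpha)/K)$ acts on the Newton--Puiseux expansion by $t^{1/n_i} \mapsto \zeta t^{1/n_i}$ for a primitive $n_i$-th root of unity $\zeta$. Writing $\alpha = a_{\lambda_i} t^{\lambda_i/n_i} + (\text{higher order})$ with $a_{\lambda_i} \in k^{\times}$, the Galois conjugates take the form $\sigma^j(\alpha) = a_{\lambda_i} \zeta^{j\lambda_i} t^{\lambda_i/n_i} + \cdots$. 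In the case $\lambda_i/n_i \geq 1$, the $r$-coordinate $\sigma^j(\alpha)/t$ has non-negative valuation, so every conjugate specializes into $U_2$; its reduction equals $0$ when $\lambda_i > n_i$, and equals $a_{\lambda_i} \zeta^{j\lambda_i}$ when $\lambda_i = n_i$, and the key miracle is that $\zeta^{j\lambda_i} = \zeta^{jn_i} = 1$ in this latter case, collapsing all conjugates to the same residue $c \in k$. This gives $H \cap E = \{P_i\}$ with $P_i \in U_2$ cut out by $t = 0$ and $r = c$, and $P_i \neq Q$ since $Q \notin U_2$; the local maximal ideal is then $\mathfrak{m}_{P_i} = (x/t - c, t)$ as required. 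In the case $\lambda_i/n_i < 1$, both $\sigma^j(\alpha)$ and $t/\sigma^j(\alpha)$ have strictly positive $t$-valuation ($\lambda_i/n_i$ and $1-\lambda_i/n_i$ respectively), so every conjugate specializes to $(x,s) = (0,0) = Q$ in $U_1$, yielding $P_i = Q$ with $\mathfrak{m}_Q = (x, t/x)$.

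The main (very mild) obstacle is the Galois-theoretic step showing that all $n_i$ conjugate roots of the irreducible polynomial $g_i$ land on a \emph{single} $k$-point of $E$, rather than on a Galois-stable set of several points. In the tame setting this reduces to the identity $\zeta^{\lambda_i} = 1$, which holds precisely when $\lambda_i = n_i$, i.e., in the one subcase where the specialization is non-zero; in every other regime the specialization of $r$ or of $(x,s)$ is zero for trivial valuation reasons, so there is nothing further to check.
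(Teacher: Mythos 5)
Your argument is correct, and in fact the paper gives no proof at all of this lemma (it is stated with ``We omit the proof''), so your chart computation fills in exactly the kind of verification that was intended: blow up in the two standard charts $\Spec R[x,t/x]$ and $\Spec R[x/t]$, identify $E$, the strict transform of the special fibre, and $Q$, and read off the specialization of the roots from the valuations of $\alpha$ and $t/\alpha$ (resp.\ $\alpha/t$). The only remark worth making is that the ``key miracle'' $\zeta^{j\lambda_i}=1$ when $\lambda_i=n_i$, while correctly verified, is not really needed: since $R$ is complete with algebraically closed residue field, the valuation of $K$ extends uniquely to $K(\alpha)$ and the residue extension is trivial, so the strict transform $H$ (being integral and finite over the complete local ring $R$) is itself local and automatically meets $(Y_1)_s$ in a single point, to which all Galois conjugates specialize; your explicit computation of that point in the appropriate chart then gives $c$ and the generators of $\mathfrak{m}_{P_i}$, resp.\ $\mathfrak{m}_Q$, as in the statement.
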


\begin{defn}
Define
\[ \tilde{g}_i^{\mathrm{new}} \colonequals \begin{cases} \tilde{g}_i/x^{\lambda_i} \quad \quad \quad \quad \text{if} \ i \in C_P^{< 1} \\ \tilde{g}_i/t^{n_i} \quad \quad \quad  \quad \text{if}\ i \in C_P^{\geq 1}.  \end{cases} \]
\end{defn}

\begin{lemma}\label{eqforstricttransform}
 The strict transform $H$ of the divisor of zeroes of $\tilde{g_i}$ is cut out by $\tilde{g}_i^{\mathrm{new}}$ in the local ring $\mathcal{O}_{P_i}$ and $\nu_E(\tilde{g}_i^{\mathrm{new}}) = 0$. If $i \in C_P^{\geq 1}$, then $\tilde{g}_i/x^{n_i}$ is a unit in the local ring $\mathcal{O}_Q$.
\end{lemma}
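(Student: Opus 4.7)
The plan is to work in the two standard affine charts covering the blowup $Y_1 \to Y_0$ at $P = \{x=t=0\}$: the chart where $u \colonequals x/t$ is adjoined (so $E$ is cut out by $t=0$), and the chart where $v \colonequals t/x$ is adjoined (so $E$ is cut out by $x=0$ and $Q = \{x=v=0\}$). By Lemma~\ref{specialization}, when $i \in C_P^{<1}$ the point $P_i = Q$ lies in the $(x,v)$ chart, while when $i \in C_P^{\geq 1}$ the point $P_i$ lies in the $(u,t)$ chart and is distinct from $Q$. In either chart, a direct calculation shows that the order of vanishing along $E$ of $\tilde{g}_i = \sum_{j=0}^{n_i} c_j x^j$ equals $\min_j(\nu_t(c_j) + j)$, which by Lemma~\ref{L:wtsmult} coincides with $\mu_P(\tilde{g}_i) = \min(n_i, \lambda_i)$.

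For $i \in C_P^{<1}$, so that $\mu_P(\tilde{g}_i) = \lambda_i$, I would expand $\tilde{g}_i$ in the $(x,v)$ chart by substituting $t = xv$ into each $c_j \in k[[t]]$. The multiplicity inequality $\nu_t(c_j) + j \geq \lambda_i$ guarantees that after factoring out $x^{\lambda_i}$, every monomial remains in $\mathcal{O}_Q$; hence $\tilde{g}_i/x^{\lambda_i} \in \mathcal{O}_Q$ with $\nu_E(\tilde{g}_i/x^{\lambda_i}) = \lambda_i - \lambda_i = 0$. Away from $E$, this element defines the same zero locus as $\tilde{g}_i$, namely $H$, so it cuts out $H$ locally at $Q$.

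For $i \in C_P^{\geq 1}$, so that $\mu_P(\tilde{g}_i) = n_i$, the analogous computation in the $(u,t)$ chart substitutes $x = ut$ to give $\tilde{g}_i = \sum c_j u^j t^j$. The bound $\nu_t(c_j) + j \geq n_i$ places $\tilde{g}_i/t^{n_i}$ in $\mathcal{O}_{P_i}$ with $\nu_E(\tilde{g}_i/t^{n_i}) = 0$, and the same reasoning identifies it as the local equation for $H$ at $P_i$. For the final claim at $Q$, since $P_i \neq Q$ the horizontal divisor $H$ does not pass through $Q$, so $\tilde{g}_i$ must equal a unit times the local equation of $E$ raised to the $n_i$-th power. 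Concretely, passing back to the $(x,v)$ chart and expanding $\tilde{g}_i = \sum_{j,k} c_j^{(k)} x^{j+k} v^k$ (where $c_j(t) = \sum_k c_j^{(k)} t^k$), the same bound allows division by $x^{n_i}$, and evaluation at $x = v = 0$ selects only the term $(j,k) = (n_i, 0)$, contributing the leading coefficient $c_{n_i} = 1$.

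The only real subtlety is the systematic use of the multiplicity inequality $\nu_t(c_j) + j \geq \mu_P(\tilde{g}_i)$, which ensures that the divisions by $x^{\lambda_i}$, $t^{n_i}$, and $x^{n_i}$ all land in the correct local rings. Once Lemma~\ref{L:wtsmult} identifies $\mu_P(\tilde{g}_i) = \min(n_i, \lambda_i)$, all three assertions reduce to direct bookkeeping in the appropriate blowup chart.
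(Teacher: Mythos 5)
Your proof is correct and follows essentially the same route as the paper: compute $\mu_P(\tilde{g_i})=\min(n_i,\lambda_i)$, divide out the local equation of $E$ in the appropriate blowup chart ($x$ at $Q$, $t$ at $P_i$), and verify the unit claim at $Q$ by observing that the constant term after dividing by $x^{n_i}$ is $1$. The one caveat is that you cite Lemma~\ref{L:wtsmult} (whose statement concerns all of $f$ and whose proof merely asserts $\mu_P(\Gamma_i)=\min(n_i,\lambda_i)$) to get the coefficient bound $\nu_t(c_j)+j\ge\min(n_i,\lambda_i)$, whereas the paper derives it directly from the Newton polygon of $\tilde{g_i}$ --- every root has valuation $\lambda_i/n_i$, so $\nu(c_j)\ge(n_i-j)\lambda_i/n_i$ --- which is the one-line argument you should include to make the write-up self-contained.
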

\begin{proof} If we let $\mu_P(\tilde{g_i})$ denote the multiplicity at $P$ of $\divisor(\tilde{g_i}) \subset \mathbb{P}^1_R$, then we have $\divisor(\tilde{g_i}) = \mu_P(\tilde{g_i})E+H$ in $\mathrm{Div}(Y_1)$. We now compute $\mu_P(\tilde{g_i})$. Let $\tilde{g_i}(x) \colonequals \sum_{j=0}^{n_i-1} c_j x^j + x^{n_i}$. Since $\tilde{g_i}$ is irreducible and all its roots in $\overline{K}$ have valuation $\lambda_i/n_i$, a Newton polygon argument shows that $\nu(c_j) \geq (n_i-j)\lambda_i/n_i$ for all $j$. Since $\mathfrak{m}_P$ is generated by $x$ and $t$, these bounds show that when $i \in C_P^{<1}$, i.e., when $\lambda_i/n_i < 1$, we have $c_0 \in \mathfrak{m}_P^{\lambda_i}$ and all the other terms of $\tilde{g_i}$ are in $\mathfrak{m}_P^{\lambda_i+1}$ and therefore $\mu_P(\tilde{g_i}) = \lambda_i$. Similarly, when $i \in C_P^{\geq 1}$, i.e., when $\lambda_i/n_i \geq 1$, we have $x^{n_i} \in \mathfrak{m}_P^{n_i}$ and all the other terms of $\tilde{g_i}$ are in $\mathfrak{m}_P^{n_i+1}$ and therefore $\mu_P(\tilde{g_i}) = n_i$. 
 
Finally in the local ring $\mathcal{O}_{P_i}$, we have $\divisor(x) = E$ when $i \in C_P^{<1}$ and $\divisor(t) = E$ when $i \in C_P^{\geq 1}$. When $i \in C_P^{\geq 1}$, since $t/x \in \mathcal{O}_Q$, we also have $(\tilde{g_i}-x^{n_i})/x^{n_i}$  in $\mathfrak{m}_Q$, which in turn implies that $\tilde{g_i}/x^{n_i} \in \mathcal{O}_Q$ is a unit. \qedhere
\end{proof}

We now obtain the replacement polynomials $f_P^{\infty}$ and $f_{P}^{\neq \infty}$ by doing a natural change of variables on the defining equation $\tilde{g_i}^{\mathrm{new}}$ of the strict transform of $H$. The idea behind the change of variables is to replace the triple $\mathbb{P}^1_R$ with coordinate $x$ and the divisor of zeroes of $\tilde{g_i}$, with $\mathbb{P}^1_R$ with coordinate $x/t$ (and $t/x$ respectively) and the divisor of zeroes of $\tilde{g_i}^{\mathrm{new}}$ rewritten in new coordinates when $i \in C_P^{\geq 1}$ (and when $i \in C_P^{<1}$ respectively). Recall that $Q$ is the point where the exceptional curve at $P \in Y_0$ for the blowup $Y_1 \rightarrow Y_0$ meets the rest of the special fiber of $Y_1$. The reason we replace $x$ by $t$ when $i \in C_P^{<1}$ is that $x = 0$ cuts out the exceptional curve $E_P$ in the local ring $\mathcal{O}_Q$, just as $t=0$ cuts out the special fiber $(Y_0)_s$ in the local ring $\mathcal{O}_P$.

Recall in Section~\ref{embedding}, we proved we may assume $R=k[[t]]$ for our purposes. In this case, since $x$ and $\tfrac{t}{x}$ generate $\mathfrak{m}_Q$, we have a canonical isomorphism of the completed local ring $\hat{\mathcal{O}_{Q}} \cong k[[x,\tfrac{t}{x}]]$ by the Cohen structure theorem. For $i \in C_P^{<1}$, view the germ of the function $\tilde{g_i} \in \mathcal{O}_{Q} \hookrightarrow \hat{\mathcal{O}_{Q}}$ as a bivariate power series via this isomorphism.
\begin{defn}\label{defnreppoly}
If $i \in C_P^{<1}$, let $h_i^\Diamond \in k[[t,x]]$ be the power series obtained by making the change of variables  $x \mapsto t$ and $\tfrac{t}{x} \mapsto x$ in the power series $\tilde{g_i}(x,\tfrac{t}{x})/x^{\lambda_i} \in \hat{\mathcal{O}_{Q}} \cong k[[x,\tfrac{t}{x}]]$, i.e., $h_i^\Diamond = \tilde{g_i}(t,x)/t^{\lambda_i}$. Use the Weierstrass preparation theorem to write $h_i^\Diamond = u^\Diamond h_i$ for some unit $u^\Diamond \in k[[t,x]]$ and monic polynomial $h_i \in k[[t]][x]$. If $i \in C_P^{\geq 1}$, let $h_i(x) \colonequals \tilde{g_i}^{\mathrm{new}}(tx).$  Let
\[ b_P \colonequals \begin{cases} 0 \quad \textup{if $v_{E_P}(f)$ is even} \\ 1 \quad \textup{if $v_{E_P}(f)$ is odd.} \end{cases} \]
 \begin{align*} f_P^{\infty}(x) &\colonequals t^{b_P} x^b \prod_{g_i \in C_P^{< 1}} h_i(x), \ \text{and}, \\
f_P^{\neq \infty}(x) &\colonequals t^{b_P} \prod_{g_i \in C_P^{\geq 1}} h_i(x).\end{align*}
The {\textup{\textsf{\color{blue}replacement polynomial for $g_i$}}} is defined to be $h_i$, and the {\textup{\textsf{\color{blue}replacement polynomials for $f$}}} is the set $\{ f_P^{\infty} \ | \ P \in A_{\mathrm{bad}} \setminus \{\infty\},\ \deg f_P^{\infty} \geq 1 \} \cup \{ f_P^{\neq \infty} \ | \ P \in A_{\mathrm{bad}} \setminus \{\infty\}, \deg f_P^{\neq \infty} \geq 1 \}$. 
\end{defn}

\begin{lemma}\label{degrep}
 Fix $i \in C_P^{<1}$ with $\deg h_i \geq 1$. Recall $\lambda_i = \nu_K(\tilde{g_i}(0))$. Then $h_i$ is irreducible and $\deg h_i = \lambda_i$.
\end{lemma}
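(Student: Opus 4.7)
The plan is to derive both claims from the Weierstrass factorization $h_i^\Diamond = u^\Diamond h_i$ by analyzing $h_i^\Diamond$ explicitly. Writing $\tilde{g_i}(x) = \sum_{j=0}^{n_i} c_j\, x^j$ with $c_{n_i}=1$ and $c_j \in R = k[[t]]$, and expanding $c_j(t) = \sum_{m \geq 0} c_{j,m}\, t^m$, the substitution $x \mapsto t$, $t \mapsto tx$ of Definition~\ref{defnreppoly} gives
\[ h_i^\Diamond(t,x) \;=\; \tilde{g_i}(t,x)/t^{\lambda_i} \;=\; \sum_{j,m \geq 0} c_{j,m}\, t^{j+m-\lambda_i}\, x^m. \]
Irreducibility of $\tilde{g_i}$, combined with $\nu(c_0) = \lambda_i$ and $\nu(c_{n_i}) = 0$, forces its Newton polygon to be the single segment from $(0,\lambda_i)$ to $(n_i,0)$; hence $\nu(c_j) \geq (n_i-j)\lambda_i/n_i$ for all $j$, with equality at $j=0$.

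For the degree, by Weierstrass preparation $\deg h_i$ equals the order of vanishing of $h_i^\Diamond(0,x)$ at $x=0$. Setting $t=0$ keeps only the terms with $j+m = \lambda_i$, so
\[ h_i^\Diamond(0,x) \;=\; \sum_{j=0}^{\lambda_i} c_{j,\lambda_i-j}\, x^{\lambda_i-j}. \]
For $j>0$, the Newton polygon bound combined with $\lambda_i/n_i < 1$ (i.e.\ $i \in C_P^{<1}$) yields the \emph{strict} inequality $\nu(c_j) \geq (n_i-j)\lambda_i/n_i > \lambda_i - j$, which forces $c_{j,\lambda_i-j}=0$. For $j=0$, the coefficient $c_{0,\lambda_i}$ is the leading term of $c_0$ and is nonzero, so $h_i^\Diamond(0,x) = c_{0,\lambda_i}\, x^{\lambda_i}$, giving $\deg h_i = \lambda_i$. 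As a sanity check, the same equality admits a conceptual derivation as the intersection number $(H \cdot E)_Q$, computed via $\pi^*\divi(\tilde{g_i}) = H + \lambda_i E$, $\pi_* E = 0$, and $E^2 = -1$.

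For irreducibility, Lemma~\ref{eqforstricttransform} identifies $h_i^\Diamond$ (up to a unit) with a local equation for the strict transform $H$ of $\divi(\tilde{g_i})$ in $\hat{\mathcal{O}}_Q \cong k[[t,x]]$. Since $\tilde{g_i}$ is irreducible and squarefree, $H$ is an irreducible, reduced Weil divisor, so $h_i^\Diamond$ generates a height-one prime in the UFD $k[[t,x]]$ and is therefore an irreducible element of $k[[t,x]]$; the same holds for $h_i$. To upgrade this to irreducibility in $k[[t]][x]$, I would use that $h_i$ is a distinguished Weierstrass polynomial: any factorization $h_i = f_1 f_2$ in $k[[t]][x]$ with $f_1, f_2$ monic of positive degree would give $f_\ell(0,0)=0$, so neither $f_\ell$ is a unit in $k[[t,x]]$, contradicting the irreducibility of $h_i$ there.

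The main obstacle is bookkeeping the Newton polygon estimate so as to obtain the strict inequality $\nu(c_j) > \lambda_i - j$ for $0 < j \leq \lambda_i$; this is exactly where the hypothesis $i \in C_P^{<1}$ (equivalently, $\lambda_i < n_i$) is indispensable, since without it one cannot rule out intermediate contributions to $h_i^\Diamond(0,x)$ and the degree could drop below $\lambda_i$. Everything else is formal manipulation of the Weierstrass preparation theorem and of the UFD structure on $k[[t,x]]$.
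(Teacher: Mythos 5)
Your proof is correct and takes essentially the same approach as the paper: the degree claim is the same Weierstrass-preparation argument, using the single-slope Newton polygon bound (strict for $j\geq 1$ precisely because $\lambda_i<n_i$) to check that $h_i^\Diamond$ is $x$-regular of order exactly $\lambda_i$ --- the paper carries out this computation in the $(x,t/x)$ coordinates before the variable swap --- and irreducibility is deduced, as in the paper, from the fact that $h_i$ is, up to a unit and the coordinate change, a local equation of the irreducible strict transform cut out by $\tilde{g_i}^{\mathrm{new}}$. Your additional step descending irreducibility from $k[[t,x]]$ to $k[[t]][x]$ via the distinguishedness of $h_i$ merely makes explicit a point the paper leaves implicit.
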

\begin{proof}
 Let $\tilde{g_i} = \sum_{i=0}^{n_i-1} c_i x^i+x^{n_i}$. In the proof of Lemma~\ref{eqforstricttransform}, we showed that $c_0 \in \mathfrak{m}_Q^{\lambda_i}$ and $c_i \in \mathfrak{m}_Q^{\lambda_i+1}$ for $i > 0$. Rewriting $\tilde{g_i}^{\mathrm{new}} = \tilde{g_i}/x^{\lambda_i} \in \hat{\mathcal{O}_Q} \cong k[[x]][[t/x]]$ as $\sum_{i=0}^\infty \tilde{c_i} (t/x)^i$ for $\tilde{c_i} \in k[[x]]$ and using $c_0 \in \mathfrak{m}_Q^{\lambda_i}$ and $c_i \in \mathfrak{m}_Q^{\lambda_i+1}$ for $i > 0$, we see that $\tilde{c_{\lambda_i}} \in k[[x]] \setminus xk[[x]]$ and $\tilde{c_i} \in xk[[x]]$ for $i < \lambda_i$. The Weierstrass preparation theorem then shows that if we write $\tilde{g_i}/x^\lambda_i = \tilde{u^{\Diamond}} \tilde{h_i}$ for a unit $\tilde{u^{\Diamond}} \in \hat{\mathcal{O}_Q}^*$ and monic polynomial $\tilde{h_i} \in k[[x]][t/x]$, then $\deg_{t/x} \tilde{h_i} = \lambda_i$. Since $h_i$ is obtained from $\tilde{h_i}$ by the change of variables $x \mapsto t, t/x \mapsto x$, we get $\deg_x h_i = \deg_{t/x} \tilde{h_i} = \lambda_i$.
 
 The $h_i$ are irreducible for each $i \in C_P^{<1}$ since Lemma~\ref{eqforstricttransform} shows that $h_i$ up to a change of variables equals $\tilde{g_i}^{\mathrm{new}}$ and $\tilde{g_i}^{\mathrm{new}}$ cuts out the irreducible divisor corresponding to the strict transform of $\divisor(g_i)$ after we blow up $P$.
\end{proof}

Let $(\widetilde{n_i},\widetilde{\lambda_i})$ be the pair of integers associated to the replacement polynomial $h_i$ the same way $(n_i,\lambda_i)$ is associated to $f_i$.
\begin{remark}\label{rootscoefficients}
 Let $i \in C_P^{\geq 1}$. Then the polynomials $h_i$ and $\tilde{g_i}$ are monic irreducible polynomials of the same degree, and furthermore, division by $t$ gives a bijection from the roots of $h_i$ to the roots of $\tilde{g_i}$. In particular, $(\widetilde{n_i},\widetilde{\lambda_i}) = (n_i,\lambda_i-n_i)$ and therefore $\min(\widetilde{n_i},\widetilde{\lambda_i}) \leq (n_i,\lambda_i)$.
\end{remark}

\begin{remark}\label{Rcomprep} When $i \in C_P^{<1}$, the relation between the roots of $h_i$ and the roots of $\tilde{g_i}$ is more complicated than in Remark~\ref{RAbaddeg}; for instance in Lemma~\ref{degrep} we proved that $\deg h_i = \lambda_i < n_i = \deg \tilde{g_i}$.  However, Theorem~\ref{expofreplacement} lets us relate certain coefficients and exponents of the Newton-Puiseux expansions of the roots of $h_i$ to those of $\tilde{g_i}$, which in turn lets us compute $\Delta_{\tilde{g_i}} - \Delta_{h_i}$. We will show in Corollary~\ref{Cnewpair} that $(\widetilde{n_i},\widetilde{\lambda_i}) = (\lambda_i,n_i-\lambda_i)$ and therefore $\min(\widetilde{n_i},\widetilde{\lambda_i}) \leq (n_i,\lambda_i)$. \end{remark}

\begin{remark}\label{RAbaddeg}
 From the definitions of $A_{\mathrm{bad}}$ (\S~\ref{setup}) and the replacement polynomials $f_P^{\infty}$ and $f_P^{\neq \infty}$ (Definition~\ref{defnreppoly}), Lemma~\ref{degrep} and Remark~\ref{rootscoefficients}, it follows that if $P \in A_{\mathrm{bad}} \setminus \{\infty\}$, then we cannot have $\deg(f_P^{\infty}) = \deg(f_P^{\neq \infty}) = 0$.
\end{remark}

\begin{lemma}\label{repsqfree}
 The replacement polynomials are squarefree.
\end{lemma}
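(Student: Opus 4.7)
Each replacement polynomial has the form $f_P^{\infty} = t^{b_P}x^b\prod_{g_i \in C_P^{<1}} h_i$ or $f_P^{\neq \infty} = t^{b_P}\prod_{g_i \in C_P^{\geq 1}} h_i$, where $b, b_P \in \{0,1\}$ and each $h_i \in k[[t]][x]$ is monic. The plan is to verify squarefreeness by checking: (a) each $h_i$ is irreducible; (b) the $h_i$ appearing in the same product are pairwise distinct; (c) each such $h_i$ is coprime to $t$, and moreover to $x$ when $g_i \in C_P^{<1}$ (so the explicit factor $x^b$ in $f_P^{\infty}$ does not collide with any $h_i$). Once (a)--(c) are in place, squarefreeness of both replacement polynomials follows from unique factorization in $k[[t]][x]$.

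\textbf{Irreducibility and coprimality.} Irreducibility of $h_i$ for $g_i \in C_P^{<1}$ is Lemma~\ref{degrep}. For $g_i \in C_P^{\geq 1}$, the formula $h_i(x) = \tilde{g_i}(tx)/t^{n_i}$ exhibits the roots of $h_i$ in $\overline{K}$ as $\alpha/t$ for $\alpha$ a root of the irreducible polynomial $\tilde{g_i}$; since $\Gal(\overline{K}/K)$ fixes $t$, transitivity of the Galois action on the roots of $\tilde{g_i}$ transfers to $h_i$, giving irreducibility. Coprimality $t \nmid h_i$ is immediate from monicity in $x$. For $x \nmid h_i$ when $g_i \in C_P^{<1}$, I will compute $h_i^\Diamond(t,0)$ directly: writing $\tilde{g_i}(x) = \sum_j c_j(t)x^j$, the Newton-polygon bound $\nu(c_j) \geq \lambda_i(n_i-j)/n_i$ used in the proof of Lemma~\ref{eqforstricttransform} is strictly positive for $j < n_i$, so $c_j(0) = 0$ for such $j$, while $c_{n_i}(0) = 1$. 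Unwinding the change of variables from Definition~\ref{defnreppoly} then yields $h_i^\Diamond(t,0) = t^{n_i-\lambda_i}$, which is nonzero since $n_i > \lambda_i$ for $g_i \in C_P^{<1}$. Since $h_i^\Diamond = u^\Diamond h_i$ with $u^\Diamond$ a unit in $k[[t,x]]$, we conclude $x \nmid h_i$.

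\textbf{Distinctness.} For $i \neq j$ with $g_i, g_j \in C_P^{\geq 1}$, the root sets of $h_i, h_j$ in $\overline{K}$ are $t^{-1}$ times the disjoint root sets of the distinct irreducibles $\tilde{g_i}, \tilde{g_j}$, hence disjoint, so $h_i \neq h_j$. For $i \neq j$ with $g_i, g_j \in C_P^{<1}$, suppose $h_i = h_j$. Then $h_i^\Diamond$ and $h_j^\Diamond$ differ by a unit in $k[[t,x]]$, so reversing the change of variables, $\tilde{g_i}^{\mathrm{new}}$ and $\tilde{g_j}^{\mathrm{new}}$ generate the same ideal in $\hat{\mathcal{O}_Q}$. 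By faithful flatness of $\mathcal{O}_{Y_1,Q} \to \hat{\mathcal{O}_Q}$, these two elements of $\mathcal{O}_{Y_1,Q}$ already generate the same principal ideal there; by Lemma~\ref{eqforstricttransform} this ideal cuts out the strict transforms $H_i$ and $H_j$ of $\Gamma_i$ and $\Gamma_j$ at $Q$, so $H_i$ and $H_j$ coincide on a Zariski neighborhood of $Q$, and hence globally by irreducibility. This contradicts $\Gamma_i \neq \Gamma_j$.

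\textbf{Main obstacle.} The only genuinely delicate step is the distinctness argument for $g_i \in C_P^{<1}$, since it must translate an equality of Weierstrass polynomials through the change of variables into a geometric statement about strict transforms and then descend from the complete local ring to $\mathcal{O}_{Y_1,Q}$. The remaining checks amount to unwinding the definitions together with the Newton-polygon bookkeeping already in hand.
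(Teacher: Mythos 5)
Your proof is correct, and it ultimately rests on the same inputs as the paper's, but it takes a more element-theoretic route. The paper never factors the replacement polynomials: it uses the characterization that $g\in R[x]$ is squarefree iff $\divi(g)$ is a sum of pairwise distinct prime Weil divisors, notes via Lemma~\ref{eqforstricttransform} that in $\Spec\O_Q$ the divisors $\divi(x)$ (the exceptional curve), $\divi(t/x)$ (the strict transform of the special fiber) and $\divi(\tilde{g_i}/x^{\lambda_i})=H_i$ (the strict transforms, each with multiplicity one) are pairwise distinct primes, and transports this through the relabelling $x\mapsto t$, $t/x\mapsto x$; in particular irreducibility of the $h_i$ is never invoked, since multiplicity one of each $H_i$ does that job at the divisor level. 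You instead argue by unique factorization in $k[[t]][x]$, which obliges you to supply irreducibility (Lemma~\ref{degrep} for $C_P^{<1}$, the Galois/root argument for $C_P^{\geq 1}$), coprimality to $t$ and $x$, and pairwise distinctness; all your substitutes check out. Your computation $h_i^\Diamond(t,0)=t^{n_i-\lambda_i}$ is correct and is the element-level form of the statement that $H_i$ is not the strict transform of the special fiber (the divisor $\divi(t/x)$ in $\Spec\O_Q$, which the relabelling turns into $\divi(x)$), while your descent of the equality of principal ideals from $\hat{\mathcal{O}_Q}$ to $\O_Q$, followed by the identification with strict transforms via Lemma~\ref{eqforstricttransform}, correctly converts $h_i=h_j$ into $H_i=H_j$ and hence $\Gamma_i=\Gamma_j$ — exactly the distinctness the paper asserts directly. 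So the two proofs share the key lemma and the blowup geometry at $Q$; yours is longer because the UFD formulation forces the irreducibility and coprimality checks to be explicit, but it has the virtue of spelling out steps (including why a squarefree conclusion in $R[x]$ follows) that the paper leaves implicit.
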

\begin{proof}
Fix $P \in A_{\mathrm{bad}} \setminus \{\infty\}$, and let $Q$ be the point where the exceptional curve $E_P$ for the blowup of $\P^1_R$ at $P$ meets the strict transform of the special fiber of $\P^1_R$. If $i,j \in C_P^{\geq 1}$ and $i \neq j$, then $h_i \neq h_j$. Combined with the previous remark, this proves $f_P^{\neq \infty}$ is squarefree. 

A polynomial $g \in R[x]$ is squarefree if and only if $\divisor(g) = \sum \Gamma_i$ for pairwise distinct irreducible Weil divisors $\Gamma_i$. In $\O_Q$, by Lemma~\ref{eqforstricttransform} $\tilde{g_i}/x^{\lambda_i}$ cuts out the strict transform of $\tilde{g_i}$ after the blowup at $P$ for every $i \in C_P^{<1}$, the function $t/x$ cuts out the strict transform of the special fiber of $\mathbb{P}^1_R$, and the function $x$ cuts out $E_P$. It follows that $\{ \divisor(t/x), \divisor(x) \} \cup \{ \divisor(\tilde{g_i}/x^{\lambda_i}) \ | \ i \in C_P^{<1} \}$ is a collection of pairwise distinct irreducible Weil divisors in $\Spec \O_Q$. Up to the relabelling $x \mapsto t$ and $t/x \mapsto x$, this shows $\divisor(f_P^{\infty})$ is a sum of pairwise distinct irreducible Weil divisors, and therefore $f_P^{\infty}$ is squarefree. \qedhere
\end{proof}

\section{Computing change in conductor}\label{secconductorchange}
In this section, we compute the left hand side of the key inductive inequality Theorem~\ref{replacementchange}. The main idea is to relate the good embedded resolutions of the replacement polynomials to that of $f$ (Lemma~\ref{treereplacethm}) and use the additivity of the $\ell$-adic Euler characteristic combined with the Riemann-Hurwitz formula (Corollary~\ref{diffchiY} and Theorem~\ref{condchange}).
\subsection{Relating good embedded resolutions and branch loci of $f$ and its replacement polynomials}
We continue to use the notation from Section~\ref{setup}, Definition~\ref{defgoodembres} and Definition~\ref{defnreppoly}.

\begin{defn}\label{Dparitybit}
 Define the {\textup{\textsf{\color{blue}parity integer $d$}}} to be $0$ or $1$ depending on whether $\deg (f)$ is even or odd. For each $P \in A_{\mathrm{bad}} \setminus \{\infty\}$, when $\deg (f_P^{\infty}) \geq 1$ 
 define {\textup{\textsf{\color{blue}$d_P^{\mathrm{nod}}$}}} to be $0$ or $1$ depending on whether $\deg (f_P^{\infty})$ is even or odd. Similarly when $\deg (f_P^{\neq \infty}) \geq 1$ define {\textup{\textsf{\color{blue}$d_P^{\mathrm{sm}}$}}} using the parity of $\deg (f_{P}^{\neq \infty})$. 
\end{defn}

\begin{lemma}\label{Lparitydegree}
  The closed point $\infty$ is in $A$ precisely when $d=1$, and $\infty \in A_{\mathrm{bad}}$ when $b=d=1$ (or equivalently, when $bd=1$).
\end{lemma}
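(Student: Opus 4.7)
My plan is to verify both statements by directly unwinding the definitions of $A$, $d$, $\wt_\infty$, and $A_{\mathrm{bad}}$; there is no genuine obstacle here beyond careful bookkeeping of parities.

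First, for the equivalence $\infty \in A \iff d = 1$: recall that in Section~\ref{setup}, the set $A$ is defined piecewise according to the parity of $\deg(f)$, with $\infty$ included precisely when $\deg(f)$ is odd. Comparing this with Definition~\ref{Dparitybit}, where $d$ is declared to be $0$ or $1$ according to the same parity, the equivalence is immediate.

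Second, for the characterization of when $\infty \in A_{\mathrm{bad}}$, I would invoke Definition~\ref{D:weights} at $P = \infty$. That definition gives $\widetilde{\wt_\infty} = (\text{parity of } \deg f) = d$, whence $\wt_\infty = b + \widetilde{\wt_\infty} = b + d$. By the definition of $A_{\mathrm{bad}}$ in Section~\ref{setup}, the point $\infty$ lies in $A_{\mathrm{bad}}$ if and only if both $\infty \in A$ and $\wt_\infty \geq 2$. The first condition forces $d = 1$ by the previous paragraph. Combined with $b, d \in \{0, 1\}$, the second condition $b + d \geq 2$ then forces $b = 1$ as well. Hence $\infty \in A_{\mathrm{bad}}$ precisely when $b = d = 1$, which since $b, d \in \{0,1\}$ is the same as $bd = 1$, completing the proof.
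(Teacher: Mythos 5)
Your proposal is correct and matches the paper's argument: the paper's proof is simply the observation that the claim follows from the definitions of $A$ and $A_{\mathrm{bad}}$ in Section~\ref{setup}, and your write-up just carries out that unwinding explicitly (via $\wt_\infty = b + d$ and the condition $\wt_P \geq 2$). No gaps.
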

\begin{proof}
 This follows from the definitions of the sets $A$ and $A_{\mathrm{bad}}$ in Section~\ref{setup}.
\end{proof}

\begin{lemma}\label{affine}
 Let $f=u t^b g_1 \ldots g_l \in R[x]$ be the irreducible factorization of a squarefree polynomial, and let $Y$ and $Z$ be the good embedded resolutions of the pairs $(\P^1_R,\divisor(f))$ and $(\mathbb{A}^1_R,\divisor(f))$ respectively and let $B,B^\circ$ be $\divisor(f)_{\mathrm{odd}}$ on $Y$ and $Z$ respectively. Let $d$ be as in Definition~\ref{Dparitybit}. Then
 \[ \chi(Y_s)-\chi(Z_s) = 1+bd = \begin{cases} 2 \quad \quad \textup{if } b=1 \ \textup{and } d=1 \\ 
 1 \quad \quad  \textup{otherwise}. \end{cases} \]
 \[ \chi(B_s) - \chi(B^{\circ}_s) = b+d = \begin{cases} 2 \quad \quad \textup{if } b=1 \ \textup{and } d=1\\
 0 \quad \quad \textup{if } b=0 \ \textup{and } d=0\\
 1 \quad \quad \textup{otherwise}. \end{cases} \]
\end{lemma}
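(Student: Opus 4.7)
The plan is to identify $Z$ with the open subscheme of $Y$ obtained as the preimage of $\mathbb{A}^1_R \subset \mathbb{P}^1_R$ under $Y \to \mathbb{P}^1_R$, so that $Y \setminus Z$ is supported over the closed point $\infty_k$ of the special fiber of $\mathbb{P}^1_R$. Since the compactly-supported $\ell$-adic Euler characteristic is additive on open/closed decompositions of schemes over the algebraically closed field $k$, we obtain
\[
\chi(Y_s) - \chi(Z_s) = \chi(Y_s \setminus Z_s) \quad\text{and}\quad \chi(B_s) - \chi(B_s^\circ) = \chi(B_s \setminus B_s^\circ),
\]
which reduces both identities to analyzing what sits over $\infty_k$.

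On $\mathbb{P}^1_R$ the only components of $\divisor(f)$ meeting $\infty_k$ are the vertical component $F \colonequals (\mathbb{P}^1_R)_s$ with multiplicity $b$ and the horizontal $\Gamma_\infty$ with multiplicity $-\deg(f)$ (of parity $d$). These are transverse smooth curves at $\infty_k$, so $[\divisor(f)_{\mathrm{odd}}]_{\mathrm{red}}$ fails to be regular at $\infty_k$ precisely when both are odd, i.e.\ when $b = d = 1$; this matches Lemma~\ref{Lparitydegree}. When $bd = 1$, I expect that a single blowup at $\infty_k$ already suffices for the good embedded resolution in this region: the strict transforms $\tilde F$ and $\tilde\Gamma_\infty$ meet the exceptional curve $E_\infty \cong \mathbb{P}^1_k$ at two distinct points (corresponding to their two different tangent directions), and a local chart computation shows that the multiplicity of $E_\infty$ in the pulled back $\divisor(f)$ equals $b - \deg(f)$, which is even since both $b$ and $\deg(f)$ are odd; hence $E_\infty$ does not enter $\divisor(f)_{\mathrm{odd}}$ and no further blowup at $\infty$ is needed.

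Both identities then follow by case analysis on $(b,d) \in \{0,1\}^2$. If $bd = 0$, no blowup is performed at $\infty_k$, so $Y \setminus Z = \Gamma_\infty$ and $(Y \setminus Z) \cap Y_s = \{\infty_k\}$; this gives $\chi(Y_s) - \chi(Z_s) = 1 = 1 + bd$. For the branch locus in this case: if $b + d = 0$ no odd component of $\divisor(f)$ passes through $\infty_k$ and the difference vanishes, while if exactly one of $b, d$ equals $1$ the corresponding odd component (either $F$ or $\Gamma_\infty$) contributes precisely the point $\infty_k$ to $B_s \setminus B_s^\circ$, giving $\chi = 1 = b + d$. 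If $bd = 1$, then $Y \setminus Z = E_\infty \cup \tilde\Gamma_\infty$ and $(Y \setminus Z) \cap Y_s = E_\infty$ (the single point where the horizontal $\tilde\Gamma_\infty$ meets $Y_s$ already lies on $E_\infty$), yielding $\chi(Y_s) - \chi(Z_s) = \chi(\mathbb{P}^1_k) = 2 = 1 + bd$; and for $B$ the two odd strict transforms $\tilde F$ and $\tilde\Gamma_\infty$ meet $E_\infty$ at two distinct points while $E_\infty \notin B$, so $B_s \setminus B_s^\circ$ consists of exactly these two points, giving $\chi = 2 = b + d$. The main obstacle is the parity and transversality verification in the case $bd = 1$, which is what guarantees that the good embedded resolution over $\infty$ requires only a single blowup and pins down which components meet $E_\infty$.
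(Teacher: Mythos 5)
Your proposal is correct and follows essentially the same route as the paper: identify $Z$ with the preimage of $\mathbb{A}^1_R$ in $Y$, use additivity of $\chi$ to reduce to the fiber over $\infty_k$, observe that $\infty$ is a nonregular point of $[\divisor(f)_{\mathrm{odd}}]_{\mathrm{red}}$ exactly when $b=d=1$, and in that case note that $\nu_{E_\infty}(f)=b-\deg(f)$ is even so a single blowup settles everything over $\infty$, then do the four-case parity count. The step you flagged as an expectation (one blowup suffices when $bd=1$) is exactly what the paper verifies via Corollary~\ref{fmultexc} and the regularity/disjointness of the two odd strict transforms on $E_\infty$, so there is no gap.
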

\begin{proof}
 From the definition of good embedded resolutions, it is clear that $Z_s \subset Y_s$ and $B^\circ \subset B$ and to analyze the complements, we have to understand the behaviour of $f$ at the closed point at $\infty$ on the special fiber of $\P^1_R$. Definition~\ref{defgoodembres} and Lemma~\ref{badpoints} imply that the blowup $\pi \colon Y \rightarrow \P^1_R$ is not an isomorphism in a neighbourhood of $\infty \in \P^1_k \subset \P^1_R$ if and only if $b=1$ and $\deg f$ is odd, and in this case let $E_{\infty}$ be the exceptional curve for the blowup at $\infty$, and let $Q$ be the point where it meets the strict transform of the special fiber of $\P^1_R$. Let $\Gamma_{\infty}$ be the scheme-theoretic closure in $Y$ of the point at infinity on the generic fiber $\P^1_K$, let $\infty = (\Gamma_{\infty})_s$ and let $\Gamma$ be the strict transform of the special fiber of $\P^1_R$ in $Y$. Then $\Gamma_\infty \subset B$ if and only if $\deg f$ is odd, and $\Gamma \subset B$ if and only if $b=1$. We consider four cases based on the parity of $b$ and $\deg f$.
 
 If $b=1$ and $\deg f$ is odd, then $\nu_{E_\infty}(f) = \mu_{\infty}(\divisor(f)) = 0$ and therefore $E_{\infty} \cap B = \{Q,\infty\}$ and $\divisor(f)_{\mathrm{odd}}$ is regular at these points. Therefore the map from $Y$ to this blowup is an isomorphism in a neighbourhood of $E_{\infty}$ (i.e. points of $E_{\infty}$ are not blown up any further on passing to the good embedded resolution $Y$). In this case, we have $Y_s \setminus Z_s = E_\infty \cong \mathbb{P}^1_k$ and $B_s \setminus B^\circ_s = \{Q,\infty\}$. Since $k$ is algebraically closed, it follows that $\chi(\P^1_k) = 2$ and $\chi(k-\text{rational point}) = 1$. Since $\chi$ is an additive functor, we have $\chi(Y_s) - \chi(Z_s) = \chi(\P^1_k) = 2$ and $\chi(B_s) - \chi(B^\circ_s) = \chi(\{Q\}) + \chi(\{\infty\}) = 2$. Similarly, one can check that if $b=0$ and $\deg f$ is even, then $Y_s \setminus Z_s = \infty$ and $B =B^\circ$ and in all other cases $Y_s \setminus Z_s = B_s \setminus B^\circ_s = \infty$. Since $\chi(\infty)=1$, the lemma follows. \qedhere
\end{proof}

Let $Y = Y_n \rightarrow Y_{n-1} \cdots \rightarrow Y_1 \rightarrow Y_0 = \P^1_R$ be be the good embedded resolution of the pair $(\P^1_R,\divisor(f))$ as in Definition~\ref{defgoodembres}. Fix $P \in A_\mathrm{bad}$. Let $E_P$ be the exceptional curve for the blowup $Y_1 \rightarrow Y_0$ at $P$, and let $\Gamma$ be the strict transform of the special fiber of $Y_0$ in $Y$. Recall in Definition~\ref{defnreppoly}, we defined $b_P \in \{0,1\}$ as the parity of $\nu_{E_P}(f)$.

Let $Q_P \colonequals \Gamma \cap E_P \in Y_s(k)$. For $P \neq \infty$, let $a \in R$ such that $x-a$ specializes to $P$ in $Y_0$, and let $Q'_P \colonequals \divi_0(x-a) \cap E_P \in (Y_1)_s(k)$. (These are the points $\infty$ and $0$ respectively on $E_P \cong \P^1_k$ in the coordinate $(x-a)/t$.) If $\infty \in A_{\mathrm{bad}}$, let $Q'_{\infty}$ be the closed point where the scheme-theoretic closure of the point at infinity in $\P^1_K$ meets the special fiber of $Y^f$.

Let $B={\divisor(f)}_{\mathrm{odd}} \subset Y$. For $P \neq \infty$, let $Z_P^{\mathrm{sm}},Z_P^{\mathrm{nod}}$ denote the good embedded resolutions of the pairs $(\mathbb{A}^1_R,\divisor(f_P^{\neq \infty}))$ and $(\mathbb{A}^1_R,\divisor(f_P^{\infty}))$ respectively and let $B_P^\mathrm{sm},B_P^{\mathrm{nod}}$ denote $\divisor(f_P^{\neq \infty})_{\mathrm{odd}},\divisor(f_P^{\infty})_{\mathrm{odd}}$ on $Z_P^{\mathrm{sm}},Z_P^{\mathrm{nod}}$ respectively.

\begin{lemma}\label{treereplacethm}
Keep the notation from the three paragraphs above.
\begin{enumerate}[\upshape(a)]
 \item $(Z_P^{\mathrm{sm}})_{s,\mathrm{red}}$ and $(Z_P^{\mathrm{nod}})_{s,\mathrm{red}}$ (and similarly $(B_P^{\mathrm{sm}})_{\mathrm{red}}$ and $(B_P^{\mathrm{nod}})_{\mathrm{red}}$ respectively) are naturally isomorphic to closed subschemes of $Y_{s,\mathrm{red}}$ (and $B_{\mathrm{red}}$ respectively). 
 \item 
 \[ Y_{s,\mathrm{red}} \setminus \left( \bigcup_{\substack{P \in A_\mathrm{bad} \setminus \{\infty\} \\  \deg f_P^{\neq \infty} \geq 1}} (Z_P^{\mathrm{sm}})_{s,\mathrm{red}} \cup \bigcup_{\substack{P \in A_\mathrm{bad} \setminus \{\infty\} \\ \deg f_P^{\infty} \geq 1}} (Z_P^{\mathrm{nod}})_{s,\mathrm{red}} \right) \]
 equals
 \begin{align*} &E_{\infty} \cup \left( \Gamma \setminus A_{\mathrm{bad}} \right) 
 \displaystyle{\bigcup_{\substack{P \in A_\mathrm{bad} \setminus \{\infty\} \\  \deg f_P^{\neq \infty} = 0}}} \{Q_P'\} \cup \displaystyle{\bigcup_{\substack{P \in A_\mathrm{bad} \setminus \{\infty\} \\ \deg f_P^{\infty} = 0}}} \{Q_P\} &\quad \quad \quad \textup{if } b=1 \textup{ and } d=1, \textup{ and,} \\  
 &\Gamma \setminus A_{\mathrm{bad}} \displaystyle{\bigcup_{\substack{P \in A_\mathrm{bad} \setminus \{\infty\} \\  \deg f_P^{\neq \infty} = 0}}} \{Q'_P\} \cup \displaystyle{\bigcup_{\substack{P \in A_\mathrm{bad} \setminus \{\infty\} \\ \deg f_P^{\infty} = 0}}} \{Q_P\} &\quad \quad \quad  \textup{otherwise.}
 \end{align*} 
 \noindent For $P \in A_{\mathrm{bad}} \setminus \{\infty\}$, the left hand side of the intersection below inside $Y_{s,\mathrm{red}}$ is nonempty if and only if $\deg f_P^{\neq \infty} \geq 1$ and $\deg f_P^{\infty} \geq 1$, and in this case we have 
 \[ (Z_P^{\mathrm{sm}})_{s,\mathrm{red}} \cap (Z_P^{\mathrm{nod}})_{s,\mathrm{red}} = E_P \setminus \{Q_P,Q'_P\}\cong \mathbb{P}^1_k \setminus \{0,\infty\} \subset \mathbb{P}^1_k \cong E_P \subset Y_{s,\mathrm{red}} .\] 
 
 \item
  \[ B_{s,\mathrm{red}} \setminus \left( \bigcup_{\substack{P \in A_\mathrm{bad} \setminus \{\infty\} \\ \deg f_P^{\neq \infty} \geq 1}} (B_P^{\mathrm{sm}})_{s,\mathrm{red}} \cup \bigcup_{\substack{P \in A_\mathrm{bad} \setminus \{\infty\} \\ \deg f_P^{\infty} \geq 1}} (B_P^{\mathrm{nod}})_{s,\mathrm{red}} \right) \] 
 equals 
 \begin{align*} (\Gamma \setminus A_{\mathrm{bad}}) \cup \{Q'_{\infty}\} \displaystyle{\bigcup_{\substack{P \in A_\mathrm{bad} \setminus \{\infty\} \\  b_P=1, \deg f_P^{\neq \infty} = 0}}} \{Q'_P\} \cup \displaystyle{\bigcup_{\substack{P \in A_\mathrm{bad} \setminus \{\infty\} \\ \deg f_P^{\infty} = 0}}} \{Q_P\} &\quad \quad \quad \textup{if}\ b=1 \textup{ and } d=1,  \\  
 \Gamma \setminus A_{\mathrm{bad}} \displaystyle{\bigcup_{\substack{P \in A_\mathrm{bad} \setminus \{\infty\} \\  b_P=1, \deg f_P^{\neq \infty} = 0}}} \{Q'_P\} \cup \displaystyle{\bigcup_{\substack{P \in A_\mathrm{bad} \setminus \{\infty\} \\ \deg f_P^{\infty} = 0}}} \{Q_P\} &\quad \quad \quad \textup{if}\ b=1 \textup{ and } 
 d=0, \textup{ and,}  \\ 
 A \setminus A_{\mathrm{bad}} \displaystyle{\bigcup_{\substack{P \in A_\mathrm{bad} \setminus \{\infty\} \\  b_P=1, \deg f_P^{\neq \infty} = 0}}} \{Q'_P\} \cup \displaystyle{\bigcup_{\substack{P \in A_\mathrm{bad} \setminus \{\infty\} \\ b_P=1, \deg f_P^{\infty} = 0}}} \{Q_P\} &\quad \quad \quad \textup{if}\  b=0. \end{align*}

 \noindent The left hand side of the intersection below inside $B_{s,\mathrm{red}}$ is nonempty if and only if $b_P = 1, \deg f_P^{\neq \infty} \geq 1$ and $\deg f_P^{\infty} \geq 1$,
 and in this case we have \[ (B_P^{\mathrm{nod}})_{s,\mathrm{red}} \cap (B_P^{\mathrm{sm}})_{s,\mathrm{red}} = E_P \setminus \{Q_P,Q'_P\} \cong \P^1_k \setminus \{0,\infty\} \subset \mathbb{P}^1_k \cong E_P \subset B_{s,\mathrm{red}}.\]

\end{enumerate} 
\end{lemma}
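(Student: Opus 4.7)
The plan is to identify each $Z_P^{\mathrm{sm}}$ (respectively $Z_P^{\mathrm{nod}}$) with the restriction of $Y \to Y_1$ above a neighborhood of $Q'_P$ (respectively $Q_P$) in $Y_1$, via the change of variables built into Definition~\ref{defnreppoly}, and then compare complements in $Y_{s,\mathrm{red}}$ and $B_{s,\mathrm{red}}$.

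For part (a), fix $P \in A_{\mathrm{bad}} \setminus \{\infty\}$ and use the two standard affine charts of the blowup $Y_1 \to Y_0 = \mathbb{P}^1_R$ at $P$: coordinates $(x, t/x)$ near $Q_P$, and $(x/t, t)$ near $Q'_P$. Setting $t_{\mathrm{new}} \colonequals x,\ x_{\mathrm{new}} \colonequals t/x$ (respectively $x_{\mathrm{new}} \colonequals x/t,\ t_{\mathrm{new}} \colonequals t$) identifies the completed local ring at $Q_P$ (resp.\ $Q'_P$) with $\widehat{\mathcal{O}}_{\mathbb{A}^1_R, 0}$. Combining Lemma~\ref{eqforstricttransform} with Newton polygon bounds on each $g_i$, the pullback of $f$ factors in the first chart as a unit times $t_{\mathrm{new}}^{\wt_P} x_{\mathrm{new}}^b \prod_{g_i \in C_P^{<1}} h_i(x_{\mathrm{new}})$, and in the second chart as a unit times $t_{\mathrm{new}}^{\wt_P} \prod_{g_i \in C_P^{\geq 1}} h_i(x_{\mathrm{new}})$. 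Since $\wt_P \equiv b_P \pmod{2}$, these expressions differ from $f_P^{\infty}$ and $f_P^{\neq\infty}$ respectively by a unit and a perfect square. Corollary~\ref{blowupiso} then implies that the good embedded resolution of $Y \to Y_1$ at $Q_P$ (resp.\ $Q'_P$) coincides with that of $(\mathbb{A}^1_R, \divisor(f_P^{\infty}))$ at the origin (resp.\ $(\mathbb{A}^1_R, \divisor(f_P^{\neq\infty}))$ at the origin) under the identification, and that the reduced odd divisors agree on the common resolution. Reading off the reduced special fibers and reduced branch loci yields the embeddings in (a).

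For part (b), the two charts cover all of $Y_1$ near $P$, overlapping along $E_P \setminus \{Q_P, Q'_P\}$. Under the identification from (a), the images of $(Z_P^{\mathrm{nod}})_{s,\mathrm{red}}$ and $(Z_P^{\mathrm{sm}})_{s,\mathrm{red}}$ together cover the preimage in $Y_{s,\mathrm{red}}$ of this region, with intersection $E_P \setminus \{Q_P, Q'_P\}$. Aggregating over $P \in A_{\mathrm{bad}} \setminus \{\infty\}$, the complement in $Y_{s,\mathrm{red}}$ is: the strict transform $\Gamma$ of $(\mathbb{P}^1_R)_s$ minus the intersection points $\{Q_P : P \in A_{\mathrm{bad}}\}$ with the exceptional curves; the curve $E_{\infty}$ when $\infty \in A_{\mathrm{bad}}$ (i.e.\ when $bd = 1$, by Lemma~\ref{Lparitydegree}), since no replacement polynomial is attached to $\infty$; and the individual points $Q_P$ when $\deg f_P^{\infty} = 0$ or $Q'_P$ when $\deg f_P^{\neq\infty} = 0$. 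Remark~\ref{RAbaddeg} rules out both degrees being zero simultaneously.

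Part (c) is the parallel calculation for the reduced branch loci. By Corollary~\ref{fmultexc}, a vertical component $C$ of $Y_s$ lies in $B$ if and only if $\nu_C(f)$ is odd, which gives $\Gamma \subset B$ iff $b = 1$, $\Gamma_\infty \subset B$ iff $d = 1$, and $E_P \subset B$ iff $b_P = 1$. The factors $t^{b_P}$ and $x^b$ in the replacement polynomials are calibrated precisely so that the identification of (a) restricts to branch loci, and the three listed cases arise from case analysis on $(b, d)$: when $b = 0$, the only $\Gamma$-level contribution is the finite set $A \setminus A_{\mathrm{bad}}$ (i.e.\ the points where horizontal $B$-components meet $\Gamma$ outside $A_{\mathrm{bad}}$); when $b = 1, d = 0$, all of $\Gamma \setminus A_{\mathrm{bad}}$ lies in $B$; and when $b = d = 1$, one adds the single point $Q'_\infty = (\Gamma_\infty)_s$ on $E_{\infty}$. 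The main technical obstacle is the simultaneous bookkeeping across the two charts, verifying both that $t^{b_P}$ and $x^b$ reproduce the correct parity data of $E_P$ and $\Gamma$ at both ends of $E_P$, and that each point or curve in $Y_{s,\mathrm{red}}$ and $B_{s,\mathrm{red}}$ is accounted for in exactly one of the complement cases.
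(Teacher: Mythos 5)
Your proposal follows essentially the same route as the paper's proof: factor the pullback of $f$ on the two charts of the blowup at each $P\in A_{\mathrm{bad}}\setminus\{\infty\}$ via Lemma~\ref{eqforstricttransform}, note it differs from $f_P^{\infty}$ (resp.\ $f_P^{\neq\infty}$) by a unit times an even power of the local equation of $E_P$ because $\wt_P\equiv b_P \pmod 2$, apply Corollary~\ref{blowupiso} to match the good embedded resolutions and their odd divisors, and then carry out the same chart-by-chart bookkeeping of complements and branch loci, including the $(b,d)$ and $b_P$ case analysis. The only caveat is that the nod-side identification should be phrased, as in the paper, through the (completed) local ring at $Q_P$ together with identifying the strict transform of the special fiber of $\mathbb{A}^1_R$ with $(E_P)_{\mathrm{red}}$, rather than literally as ``the restriction of $Y\to Y_1$ above a neighborhood of $Q_P$'' (which would wrongly pick up exceptional curves over interior points of $E_P$); with that reading your intersection computation in (b) is exactly the paper's.
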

\begin{proof}
Fix $P \in A_{\mathrm{bad}} \setminus \{\infty\}$. For $i \in C_P$, if we let $a_{P_i}$ be as in Section~\ref{setup}, since we are working over the equicharacteristic ring $R=k[[t]]$, we have $a_{P_i} = a_{P_j}$ for all $i,j \in C_P$.  We move $P$ to the origin by the map $x \mapsto x+a_{P_i}$ and work with $\tilde{g_i}$ instead of $g_i$ for all $i \in C_P$.  We will construct $h^{\geq 1} \in \Spec R[x/t]$ (and $h^{<1} \in \mathcal{O}_Q$ respectively) with the properties that the special fiber of the good embedded resolution of the pair $(\Spec R[x/t],h^{\geq 1})$ (and $(\mathcal{O}_Q,\divisor(h^{<1}))$ respectively) is naturally a subset of $(Y^f_s)_{\mathrm{red}}$. We will then show that up to a change of variables and multiplication by a unit, the function $h^{\geq 1}$ equals $f_P^{\neq \infty}$ (and $h^{<1}$ equals $f_P^{\infty}$ respectively). 

The reason we only get isomorphisms of the underlying reduced subschemes is that the change of variables to go from $h^{<1}$ to $f_P^\infty$ uses the isomorphism of complete local $k$-algebras $k[[x,t/x]] \cong k[[t,x]]$ given by $x \mapsto t$ and $t/x \mapsto x$, which is not an isomorphism of $k[[t]]$-algebras. Hence, we do not expect the multiplicities of the components in the special fiber of a good embedded resolution to agree, and we only get equalities of the underlying reduced subschemes. 
\begin{enumerate}[\upshape(a)]
\item We first show that $(Z_P^{\mathrm{sm}})_{s,\mathrm{red}}$ is a closed subset of $Y_{s,\mathrm{red}}$. Using the formula for $\nu_{E_P}(f)$ from Corollary~\ref{fmultexc}, we get
\[ f = ut^b \left(\prod_{i \in C_P^{<1}} t^{\lambda_i} (x/t)^{\lambda_i} (\tilde{g_i}/x^{\lambda_i}) \right) \left(\prod_{i \in C_P^{\geq 1}} t^{n_i} (\tilde{g_i}/t^{n_i}) \right)  = \left( u \prod_{i \in C_P^{<1}} (x/t)^{\lambda_i} (\tilde{g_i}/x^{\lambda_i}) \right) t^{\nu_E(f)} \left(\prod_{i \in C_P^{\geq 1}} (\tilde{g_i}/t^{n_i}) \right). \]
By Lemma~\ref{eqforstricttransform}, $\nu_{E_P}(\tilde{g_i}/x^{\lambda_i}) = 0$ for $i \in C_P^{<1}$. Since $\nu_{E_P}(x/t) = 0$ and since $(\tilde{g_i}/x^{\lambda_i})$ specializes to the point at $\infty$ on $E_P$, it follows that $\left( u \prod_{i \in C_P^{<1}} (x/t)^{\lambda_i} (\tilde{g_i}/x^{\lambda_i}) \right)$ is a unit on the affine patch $\Spec R[x/t]$ of the blowup $Y_1 \rightarrow Y_0$.  By Definition~\ref{goodembres}, it also follows that the good embedded resolution of $(\Spec R[x/t],\divisor(f))$ is a closed subset of the good embedded resolution of $(Y_0,\divisor(f))$. Let $h^{\geq 1} \colonequals t^{b_P}\left(\prod_{i \in C_P^{\geq 1}} (\tilde{g_i}/t^{n_i}) \right)$. Since $b_P = 1$ when $\nu_{E_P}(f)$ is odd and $b_P = 0$ when $\nu_{E_P}(f)$ is even, Lemma~\ref{blowupiso} implies that the good embedded resolutions of the pairs $(\Spec R[x/t],\divisor(f))$ and $(\Spec R[x/t],\divisor(h^{\geq 1}))$ are equal, and furthermore $\divisor(f)_{\mathrm{odd}} = \divisor(h^{\geq 1})_{\mathrm{odd}}$ on the resolution. Since $h^{\geq 1}$ is $f_{P}^{\neq \infty}$ up to the change of variables $x/t \mapsto x$, it follows that these two pairs have the same embedded resolution as the pair $(\Spec R[x],\divisor(f_P^{\neq \infty}))$. Let $\Gamma_{\neq \infty}$ be the strict transform of the special fiber of $\mathbb{A}^1_R$ in the good embedded resolution of the pair $(\mathbb{A}^1_R,\divisor(f_P^{\neq \infty}))$. From the definition of $f_P^{\neq \infty}$, it follows that $\Gamma_{\neq \infty} \subset B_P^{\mathrm{sm}}$ if and only if $b_P = 1$, or equivalently, if and only if $E_P \subset \divisor(f)_{\mathrm{odd}} = B \subset Y$. Putting the above identifications together, and identifying $(\Gamma_{\neq \infty})_{\mathrm{red}}$ with $(E_P)_{\mathrm{red}}$, we get that $(Z_P^{\mathrm{sm}})_{s,\mathrm{red}}$ (and $(B_P^{\mathrm{sm}})_{\mathrm{red}}$ respectively) is a closed subset of $Y_{s,\mathrm{red}}$ (of $B_{\mathrm{red}}$ respectively). 

We now show that $(Y^{f_P^{\infty}}_s)_{\mathrm{red}}$ is a closed subset of $(Y^f_s)_{\mathrm{red}}$.  Recall that $Q \in (Y_1)_s(k)$ is the point where the exceptional curve $E$ for the blowup at $P$ meets the rest of $(Y_1)_s$. For each $i \in C_P^{\geq 1}$, Lemma~\ref{specialization} shows that $\tilde{g_i}/x^{n_i}$ is a unit in $\mathcal{O}_Q$ for every $i \in C_P^{\geq 1}$.  As before, we can now factor $f$ as 
\[ f = ut^b \left(\prod_{i \in C_P^{<1}} x^{\lambda_i} (\tilde{g_i}/x^{\lambda_i}) \right) \left(\prod_{i \in C_P^{\geq 1}} x^{n_i} (\tilde{g_i}/x^{n_i}) \right)  = \left( u \prod_{i \in C_P^{\geq 1}} (\tilde{g_i}/x^{n_i}) \right) (t/x)^b x^{\nu_E(f)} \left(\prod_{i \in C_P^{< 1}} (\tilde{g_i}/x^{\lambda_i}) \right). \]
Let $h^{< 1} \colonequals (t/x)^b x^{b_P} \left(\prod_{i \in C_P^{< 1}} (\tilde{g_i}/x^{\lambda_i}) \right)$. As before, when combined with Lemma~\ref{blowupiso} and the definition of $b_P$, this yields that the pairs $(\mathcal{O}_Q,\divisor(f))$ and $(\mathcal{O}_Q,\divisor(h^{<1}))$ have isomorphic good embedded resolutions, and that $\divisor(f)_{\mathrm{odd}} = \divisor(h^{<1})_{\mathrm{odd}}$ on the resolution. The reduced special fiber of the good embedded resolution of $(\mathcal{O}_Q,\divisor(f))$ is a closed subset of $(Y^f_s)_{\mathrm{red}}$, with $Q$ identified with the point at $\infty$ on the exceptional curve $E_P$. Let $\Gamma_{ \infty}$ be the strict transform of the special fiber of $\mathbb{P}^1_R$ in the good embedded resolution of the pair $(\P^1_R,\divisor(f_P^{\infty}))$. From the definition of $f_P^{\infty}$, it follows that $\Gamma_{\infty} \subset B^{f_P^{\infty}}$ if and only if $b_P = 1$, or equivalently, if and only if $E_P \subset \divisor(f)_{\mathrm{odd}} = B^f \subset Y^f$. Since $h^{<1}$ is $f_{P}^{\infty}$ up to the change of variables $t/x  \mapsto x$ and $x \mapsto t$ and multiplication by the unit $u^{\Diamond} \in \mathcal{O}_Q$, by identifying $(\Gamma_{\infty})_{\mathrm{red}}$ with $(E_P)_{\mathrm{red}}$ as before, and using the isomorphism of good embedded resolutions of $(\mathcal{O}_Q,\divisor(f))$ and $(\mathcal{O}_Q,\divisor(h^{<1}))$, we also get that $(Y^{f_P^{ \infty}}_s)_{\mathrm{red}}$ (and $(B^{f_P^{ \infty}})_{\mathrm{red}}$ respectively) is a closed subset of $(Y^f_s)_{\mathrm{red}}$ (and $(B^f)_{\mathrm{red}}$ respectively). 
\item Since $Y^f$ is also the good embedded resolution of $(Y_1,\divisor(f))$, since $E_P = \{Q\} \cup E_P \setminus \{Q\}$, and since $(\mathcal{O}_Q,\divisor(h^{<1}))$ and $(E_P \setminus \{Q\} = \Spec R[x/t],\divisor(h^{\geq 1}))$ from part~(a) above have the same good embedded resolutions as $(\mathcal{O}_Q,\divisor(f))$ and $(E_P \setminus \{Q\} = \Spec R[x/t],\divisor(f))$ respectively, the result follows from the identifications and change of variables in part~(a) above.
\item Since $\Gamma = \divi(f)_{\mathrm{odd}}$, the component $\Gamma \subset B$ if and only if $b=1$, the component $E_P \subset B$ if and only if $b_P = 1$, the components $\Gamma_{\infty}, \Gamma_{\neq \infty}$ from the proof of part~(a) appear in $B^{f_P^{\infty}},B^{f_P^{\neq \infty}}$ respectively if and only if $b_P = 1$. Since the left hand side equals the intersection of the left hand side of part~(b) intersected with $B_{s,\mathrm{red}}$, we get part~(c) by intersecting the right hand side of part~(b) with $B_{s,\mathrm{red}}$. \qedhere
\end{enumerate} 
\end{proof}

\begin{corollary}\label{diffchiY} For any $\star \in \{f,f_P^{\neq \infty},f_P^{\infty}\}$, let $Y^\star,B^\star$ be as in Definition~\ref{models}. Then
\begin{itemize}
 \item 
 \begin{multline*} \chi (Y^f_s)- \displaystyle\sum_{\substack{P \in A_\mathrm{bad} \setminus \{\infty\} \\ \deg f_P^{\infty} \geq 1}} \chi (Y^{f^{\infty}_P}_s) -  \displaystyle\sum_{\substack{P \in A_\mathrm{bad} \setminus \{\infty\} \\ \deg f_P^{\neq \infty} \geq 1}} \chi (Y^{f^{\neq \infty}_P}_s) \\ =  
 2+2 b d - \sharp(A_{\mathrm{bad}})+\left(\sum_{\substack{P \in A_{\mathrm{bad}} \setminus \{ \infty \} \\ \deg f_P^{\infty} = 0 \ \textup{or} \ \deg f_P^{\neq \infty} = 0}} 1 \right)- \displaystyle\sum_{\substack{P \in A_\mathrm{bad} \setminus \{\infty\} \\ \deg f_P^{\infty} \geq 1}} (1+b_P  d_P^{\mathrm{nod}}) -  \displaystyle\sum_{\substack{P \in A_\mathrm{bad} \setminus \{\infty\} \\ \deg f_P^{\neq \infty} \geq 1}} (1+b_P  d_P^{\mathrm{sm}}) 
\end{multline*}
\item 
 \[ \chi (B^f_s)-  \displaystyle\sum_{\substack{P \in A_\mathrm{bad} \setminus \{\infty\} \\ \deg f_P^{\infty} \geq 1}} \chi (B^{f^{\infty}_P}_s) -  \displaystyle\sum_{\substack{P \in A_\mathrm{bad} \setminus \{\infty\} \\ \deg f_P^{\neq \infty} \geq 1}} \chi (B^{f^{\neq \infty}_P}_s) \] equals
 \begin{multline*} \sharp(A) - \sharp(A_{\mathrm{bad}}) + b(2+2d-\sharp(A))+\left(\sum_{\substack{P \in A_{\mathrm{bad}} \setminus \{ \infty \} \\ \deg f_P^{\infty} = 0}} (b+b_P-bb_P) \right)+\left(\sum_{\substack{P \in A_{\mathrm{bad}} \setminus \{ \infty \} \\ \deg f_P^{\neq \infty} = 0}} b_P \right) \\ - \sum_{\substack{P \in A_{\mathrm{bad}} \setminus \{ \infty \} \\ \deg f_P^{\infty} \geq 1}} (b_P+d_P^{\mathrm{nod}}) - \sum_{\substack{P \in A_{\mathrm{bad}} \setminus \{ \infty \} \\ \deg f_P^{\neq \infty} \geq 1}} (b_P+d_P^{\mathrm{sm}}). \end{multline*}
 \end{itemize}
\end{corollary}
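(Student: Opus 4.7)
The plan is to apply additivity of the compactly supported $\ell$-adic Euler characteristic to the locally closed decompositions of $Y^f_{s,\mathrm{red}}$ and $B^f_{s,\mathrm{red}}$ furnished by Lemma~\ref{treereplacethm}, and then convert between the $\mathbb{A}^1_R$-resolutions $Z_P^\star$ and the $\mathbb{P}^1_R$-resolutions $Y^{f_P^\star}$ by appealing to Lemma~\ref{affine}.

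Since $\chi(Z) = \chi(Z_{\mathrm{red}})$, I work with reduced subschemes throughout. By Lemma~\ref{treereplacethm}(a,b), the pieces $(Z_P^{\mathrm{nod}})_{s,\mathrm{red}}$ and $(Z_P^{\mathrm{sm}})_{s,\mathrm{red}}$ embed inside $Y^f_{s,\mathrm{red}}$, are disjoint for distinct $P$'s (they live near disjoint exceptional curves $E_P$), and for a fixed $P$ their intersection (when both appear) equals $E_P\setminus\{Q_P,Q'_P\}\cong\mathbb{P}^1_k\setminus\{0,\infty\}$, which has Euler characteristic zero. Inclusion--exclusion therefore gives
\[\chi(Y^f_s) \;=\; \sum_{P:\,\deg f_P^\infty\geq 1}\chi\bigl((Z_P^{\mathrm{nod}})_s\bigr) + \sum_{P:\,\deg f_P^{\neq\infty}\geq 1}\chi\bigl((Z_P^{\mathrm{sm}})_s\bigr) + \chi(C),\]
where $C$ denotes the complement set described in Lemma~\ref{treereplacethm}(b). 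Applying Lemma~\ref{affine} to each replacement polynomial (whose ``$b$'' is $b_P$ and whose ``$d$'' is $d_P^{\mathrm{nod}}$ or $d_P^{\mathrm{sm}}$) yields $\chi(Y^{f_P^\infty}_s)-\chi((Z_P^{\mathrm{nod}})_s)=1+b_P d_P^{\mathrm{nod}}$ and the analogous identity for $f_P^{\neq\infty}$. Substituting reduces part~(a) to computing $\chi(C)$.

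To evaluate $\chi(C)$, I read off its explicit description from Lemma~\ref{treereplacethm}(b): $C$ is a disjoint union of the exceptional curve $E_\infty\cong\mathbb{P}^1_k$ (present only when $b=d=1$, contributing $2$), the punctured strict transform $\Gamma\setminus A_{\mathrm{bad}}$ of the special fiber of $\mathbb{P}^1_R$ with $\Gamma\cong\mathbb{P}^1_k$ (contributing $2-\sharp A_{\mathrm{bad}}$), and isolated $k$-rational points $Q_P$, $Q'_P$ indexed by those $P$ for which one of $\deg f_P^\infty$, $\deg f_P^{\neq\infty}$ vanishes. By Remark~\ref{RAbaddeg} these two degree-vanishing conditions are mutually exclusive for each $P$, so the point contributions combine into $\sum_{\deg f_P^\infty=0\,\text{or}\,\deg f_P^{\neq\infty}=0}1$. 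Assembling gives $\chi(C)=2+2bd-\sharp A_{\mathrm{bad}}+\sum 1$, which yields part~(a).

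Part~(b) proceeds by the same strategy applied to $B^f_s$ using Lemma~\ref{treereplacethm}(c) and the second clause of Lemma~\ref{affine}, giving $\chi(B^{f_P^\infty}_s)-\chi((B_P^{\mathrm{nod}})_s)=b_P+d_P^{\mathrm{nod}}$ and similarly for $f_P^{\neq\infty}$; the intersections $(B_P^{\mathrm{nod}})_s\cap(B_P^{\mathrm{sm}})_s$ appear only when $b_P=1$ and still have Euler characteristic zero. The main obstacle is parity-dependent bookkeeping of the complement $D$ from Lemma~\ref{treereplacethm}(c): one must verify that the single closed-form expression $\sharp A-\sharp A_{\mathrm{bad}}+b(2+2d-\sharp A)$ simultaneously captures (i) the finite set $A\setminus A_{\mathrm{bad}}$ of good horizontal specializations when $b=0$, and (ii) the vertical contribution $\Gamma\setminus A_{\mathrm{bad}}$ together with the extra point $\{Q'_\infty\}$ when $b=1$, noting that the exceptional curve $E_\infty$ does \emph{not} appear (it is not in $B^f$, its multiplicity being even). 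The isolated-point sums also require care, since the $\{Q'_P\}$ contributions are restricted to $b_P=1$ in the $B^f$-version but not in the $Y^f$-version, which explains the asymmetric factors $(b+b_P-bb_P)$ and $b_P$. A direct case-by-case check across the three alternatives for $(b,d)$ in Lemma~\ref{treereplacethm}(c) then matches $\chi(D)$ with the combined expression and completes the proof.
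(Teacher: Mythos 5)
Your strategy is exactly the paper's: decompose $Y^f_{s,\mathrm{red}}$ and $B^f_{s,\mathrm{red}}$ via Lemma~\ref{treereplacethm}, use additivity of $\chi$ together with the fact that the only overlaps are the same-$P$ intersections $E_P\setminus\{Q_P,Q'_P\}$ of Euler characteristic zero, and then trade $\chi((Z_P^{\star})_s)$, $\chi((B_P^{\star})_s)$ for $\chi(Y^{f_P^{\star}}_s)$, $\chi(B^{f_P^{\star}}_s)$ by applying Lemma~\ref{affine} to $(f_P^{\infty},b_P,d_P^{\mathrm{nod}})$ and $(f_P^{\neq\infty},b_P,d_P^{\mathrm{sm}})$. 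Your treatment of part (a), including the use of Remark~\ref{RAbaddeg} to merge the two degree-zero point sums into a single $\sum 1$, matches the paper's computation.

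The one place your sketch does not yet close is the bookkeeping over $\infty$ in part (b) when $b=d=1$. You tally the non-isolated part of the complement as $\Gamma\setminus A_{\mathrm{bad}}$ plus the single point $Q'_{\infty}$, which gives $\chi = (2-\sharp A_{\mathrm{bad}})+1 = 3-\sharp A_{\mathrm{bad}}$, whereas the closed form you must match, $\sharp A-\sharp A_{\mathrm{bad}}+b(2+2d-\sharp A)$, equals $4-\sharp A_{\mathrm{bad}}$ in this case. The missing unit comes from over $\infty$: since $b=1$ the strict transform $\Gamma$ lies in $B^f$, so the point $\Gamma\cap E_{\infty}$ also belongs to $B^f_s$ and is absorbed by no $(B_P^{\star})_s$ with $P\neq\infty$; together with $Q'_{\infty}$ this contributes $2=\chi(E_{\infty})$, which is how the paper's proof books it (as $\chi(E_{\infty})+\chi(\Gamma\setminus A_{\mathrm{bad}})$). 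Relatedly, $Q'_{\infty}$ exists only when $d=1$, so "the extra point when $b=1$" is not the right trichotomy: the three cases are $b=0$ (giving $\chi(A\setminus A_{\mathrm{bad}})$), $b=1,d=0$ (giving $\chi(\Gamma\setminus A_{\mathrm{bad}})$ with no extra point), and $b=1,d=1$ (giving $\chi(\Gamma\setminus A_{\mathrm{bad}})+2$). With that correction your case-by-case check does match the stated formula; the remaining point sums with weights $b+b_P-bb_P$ for $Q_P$ and $b_P$ for $Q'_P$ are handled exactly as you describe and as in the paper.
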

\begin{proof}
We will continue to use the notation from the lemma above. Since $k$ is algebraically closed, and $\chi$ is an additive functor that takes a disjoint union of locally closed subsets to the corresponding sum of integers, the equalities $\chi(\P^1_k) = 2$ and $\chi(k-\textup{rational point}) = 1$ imply that $\chi(E_P) = 2, \chi(\mathbb{P}^1_k \setminus \{0,\infty\}) = 0$ and $\chi(\Gamma \setminus A_{\mathrm{bad}}) = 2 - \sharp(A_{\mathrm{bad}})$ for every $P \in A_{\mathrm{bad}}$. Since $\chi$ only depends on the underlying reduced subscheme, using the additivity of $\chi$ once again with Lemma~\ref{treereplacethm}~(a,b) and the fact that $d \in \{0,1\}$ and $d=1$ exactly when $\deg(f)$ is odd, we get
\[ \chi (Y^f_s)- \displaystyle\sum_{\substack{P \in A_\mathrm{bad} \setminus \{\infty\} \\ \deg f_P^{\infty} \geq 1}} \chi (Z_{P,s}^{\mathrm{nod}}) -  \displaystyle\sum_{\substack{P \in A_\mathrm{bad} \setminus \{\infty\} \\ \deg f_P^{\neq \infty} \geq 1}} \chi (Z_{P,s}^{\mathrm{sm}}) = 
2 b d + 2 - \sharp(A_{\mathrm{bad}})+\sum_{\substack{P \in A_{\mathrm{bad}} \setminus \{ \infty \} \\ \deg f_P^{\infty} = 0 \ \textup{or} \ \deg f_P^{\neq \infty} = 0}} 1 .\]

The first equality now follows by applying Lemma~\ref{affine} to $(f_P^{\infty},b_P)$ and $(f_P^{\neq \infty},b_P)$ instead of $(f,b)$. 

Observe that $\sharp(A) - \sharp(A_{\mathrm{bad}}) + b(2+2d-\sharp(A))$ equals $\chi(A \setminus A_{\mathrm{bad}})$ when $b=0$, equals $\chi(\Gamma \setminus A_{\mathrm{bad}})$ when $b=1$ and $\deg(f)$ is even and, equals $\chi(E_{\infty})+\chi(\Gamma \setminus A_{\mathrm{bad}})$ when $b=1$ and $\deg(f)$ is odd. For $P \in A_{\mathrm{bad}} \setminus \{\infty\}$, Lemma~\ref{treereplacethm}(c) shows that $Q_P$ is not in the right hand side if and only $b=b_P=0$. Since $b+b_P-bb_P$ is $0$ when $b=b_P=0$ and $1$ otherwise, it follows that $b+b_P-bb_P$ equals $\chi(B_{s,\mathrm{red}} \cap \{Q_P\})$. We also have that $Q'_P$ is in the right hand side of Lemma~\ref{treereplacethm}(c) exactly when $b_P=1$.

The proof of the second equality is now similar to the first and uses Lemma~\ref{treereplacethm}~(a,c) and the second equality of Lemma~\ref{affine} and the observations in the previous paragraph.  \qedhere 
\end{proof}

\begin{thm}\label{condchange}
Keeping the notation from Section~\ref{setup} and Lemma~\ref{diffchiY}, we get
\[ -\Art (X^f/S)- \sum_{\substack{P \in A_{\mathrm{bad}} \setminus \{\infty\} \\ \deg f_P^{\infty} \geq 1}} [-\Art (X^{f_P^{\infty}}/S)]- \sum_{\substack{P \in A_{\mathrm{bad}} \setminus \{\infty\} \\ \deg f_P^{\neq \infty} \geq 1}} [-\Art (X^{f_P^{\neq \infty}}/S)]  \]
equals 
\begin{multline*} -b(2+d)+\displaystyle\sum_{\substack{P \in A \setminus A_{\mathrm{bad}} \\ P \neq \infty, g_i \in C_P}} (n_i-1+b)+(2+b)\sharp(A_{\mathrm{bad}})+\sum_{P \in A_\mathrm{bad}} \sum_{g_i \in C_P^{<1}} \left( n_i - \lambda_i \right) \\
  - \left(\sum_{\substack{P \in A_{\mathrm{bad}} \setminus \{ \infty \} \\ \deg f_P^{\infty} = 0}} (b-bb_P) \right)+\displaystyle\sum_{\substack{P \in A_{\mathrm{bad}} \setminus \{ \infty \} \\ \deg f_P^{\infty} \geq 1 \ \textup{and} \\ \deg f_P^{\neq \infty} \geq 1}} 2b_P-\displaystyle\sum_{\substack{P \in A_\mathrm{bad} \setminus \{\infty\} \\ \deg f_P^{\infty} \geq 1}} (b+2b_P  d_P^{\mathrm{nod}}) -  \displaystyle\sum_{\substack{P \in A_\mathrm{bad} \setminus \{\infty\} \\ \deg f_P^{\neq \infty} \geq 1}} 2b_P  d_P^{\mathrm{sm}}. \end{multline*}
\end{thm}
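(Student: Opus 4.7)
The plan is to deduce Theorem~\ref{condchange} by a direct bookkeeping computation that combines Lemma~\ref{formula} with Corollary~\ref{diffchiY}. For each $\star \in \{f, f_P^{\infty}, f_P^{\neq \infty}\}$, Lemma~\ref{formula} gives
\[
-\Art(X^{\star}/S) = 2\bigl(\chi(Y^{\star}_s)-\chi(Y^{\star}_{\overline{\eta}})\bigr) - \bigl(\chi(B^{\star}_s)-\chi(B^{\star}_{\overline{\eta}})\bigr),
\]
with $\delta=0$ since $\cha(k) > \deg(f)$ and each replacement polynomial has degree $\le \deg(f)$ by Lemma~\ref{degrep} and Remark~\ref{rootscoefficients}. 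Subtracting the corresponding expressions for the replacement polynomials from the one for $f$ writes
\[
-\Art(X^f/S) - \sum_{\star}\bigl(-\Art(X^{\star}/S)\bigr) = 2\Delta^s_Y - 2\Delta^\eta_Y - \Delta^s_B + \Delta^\eta_B,
\]
where $\Delta^s_Y, \Delta^s_B, \Delta^\eta_Y, \Delta^\eta_B$ denote the corresponding differences of Euler characteristics on special and geometric generic fibers.

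The first step is to substitute the two expressions from Corollary~\ref{diffchiY} for $\Delta^s_Y$ and $\Delta^s_B$. This already accounts for every term in the target formula involving $\sharp(A_{\mathrm{bad}})$, $b_P$, $d_P^{\mathrm{nod}}$, $d_P^{\mathrm{sm}}$, and the indicator conditions on the vanishing of $\deg f_P^{\infty}$ and $\deg f_P^{\neq \infty}$.

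The second step is to compute the two generic-fiber differences directly. Since $Y^f$ and each $Y^\star$ are birational to $\P^1_R$, we have $\chi(Y^\star_{\overline{\eta}}) = 2$ for all $\star$, so $\Delta^\eta_Y = 2 - 2(N_1+N_2)$ where $N_1$ (resp. $N_2$) counts $P \in A_{\mathrm{bad}}\setminus\{\infty\}$ with $\deg f_P^{\infty} \ge 1$ (resp. $\deg f_P^{\neq \infty}\ge 1$). For $\Delta^\eta_B$, the branch locus on the geometric generic fiber of $Y^\star$ consists of the geometric roots of $\star$ together with $\infty$ when $\deg(\star)$ is odd, so $\chi(B^\star_{\overline{\eta}}) = \deg_x(\star) + d_\star$ where $d_\star$ is the parity of $\deg_x(\star)$. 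Using $\deg f_P^{\infty} = b + \sum_{g_i \in C_P^{<1}} \lambda_i$ and $\deg f_P^{\neq \infty} = \sum_{g_i \in C_P^{\geq 1}} n_i$ (Lemma~\ref{degrep} and Remark~\ref{rootscoefficients}), one obtains an explicit expression for $\Delta^\eta_B$ in terms of $b,d,n_i,\lambda_i,d_P^{\mathrm{nod}},d_P^{\mathrm{sm}}$.

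The final step is assembling $2\Delta^s_Y - 2\Delta^\eta_Y - \Delta^s_B + \Delta^\eta_B$ and matching the target expression. The main obstacle is purely combinatorial bookkeeping: one must split the global sum $\sum_i n_i = \deg_x f - b$ according to whether the component $g_i$ specializes to $A \setminus A_{\mathrm{bad}}$ (these contribute to the $\sum (n_i-1+b)$ term from the base-case format of Lemma~\ref{evenbase}), to $C_P^{\ge 1}$ for $P \in A_{\mathrm{bad}}$ (these cancel against the corresponding pieces of $\Delta^\eta_B$), or to $C_P^{<1}$ for $P \in A_{\mathrm{bad}}$ (here the identity $n_i = \lambda_i + (n_i - \lambda_i)$ produces the $\sum (n_i - \lambda_i)$ term after the $\lambda_i$ part cancels against $\deg f_P^{\infty}$). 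The various parity-indicator terms $(b+bd)$, $(b+d)$, $(b+b_P - bb_P)$, etc.\ from the two parts of Corollary~\ref{diffchiY} and from $\Delta^\eta_B$ then telescope into the coefficients $-b(2+d)$, $(2+b)\sharp(A_{\mathrm{bad}})$, and the individual $b_P, d_P^{\mathrm{nod}}, d_P^{\mathrm{sm}}$ contributions in the claimed formula. No conceptual difficulty arises beyond carefully tracking which points lie in which of $A\setminus A_{\mathrm{bad}}$, $A_{\mathrm{bad}}\setminus\{\infty\}$, or $\{\infty\}$, and which of the two replacement polynomials at each such point are nontrivial.
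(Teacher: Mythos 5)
Your proposal is correct and follows essentially the same route as the paper: the paper likewise writes $-\Art(X^\star/S)$ via the Riemann--Hurwitz decomposition of Lemma~\ref{formula} (equivalently $\chi(X^\star_s)-\chi(X^\star_{\overline{\eta}})$ with $\chi(X^\star_{\overline{\eta}})=4-d_\star-\deg\star$), substitutes Corollary~\ref{diffchiY} for the special-fiber differences, uses the degree formulas $\deg f_P^{\infty}=b+\sum_{g_i\in C_P^{<1}}\lambda_i$ and $\deg f_P^{\neq\infty}=\sum_{g_i\in C_P^{\geq 1}}n_i$, and finishes with the same combinatorial rearrangement you describe (splitting $\sum_i n_i$ by where the $g_i$ specialize and telescoping the parity terms, using Remark~\ref{RAbaddeg} to handle the points where one of the two replacement polynomials is trivial).
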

\begin{proof}
The idea is to combine Corollary~\ref{diffchiY} with the Riemann-Hurwitz formula Lemma~\ref{formula}. 

Since $X^{\star}_{\overline{\eta}}$ is a hyperelliptic curve for $\star \in \{f,f_P^{\neq \infty},f_P^{\infty}\}$, using Definition~\ref{Dparitybit} we have \[ \chi(X^f_{\overline{\eta}}) = 4-d-\deg f, \quad \chi(X^{f_P^{\infty}}_{\overline{\eta}}) = 4-d_P^{\mathrm{nod}}-\deg f_P^{\infty}, \quad \chi(X^{f_P^{\neq \infty}}_{\overline{\eta}}) = 4-d_P^{\mathrm{sm}}-\deg f_P^{\neq \infty}.\] 
From Definition~\ref{defnreppoly} it follows that 
\[ \deg f = \sum_{P \in A} \sum_{i \in C_P} n_i, \quad \quad \deg f_P^{\infty} = b+\sum_{i \in C_P^{<1}} \lambda_i, \quad \quad \deg f_P^{\neq \infty} = \sum_{i \in C_P^{\geq 1}} n_i .\] 
Putting the last two displayed equations together, we get that
\begin{align} \begin{split}&\quad \sum_{\substack{P \in A_{\mathrm{bad}} \setminus \{\infty\} \\ \deg(f_P^{\infty}) \geq 1}} \chi(X_{\overline{\eta}}^{f_P^{\infty}}) + \sum_{\substack{P \in A_{\mathrm{bad}} \setminus \{\infty\} \\ \deg(f_P^{\neq \infty}) \geq 1}} \chi(X_{\overline{\eta}}^{f_P^{\neq \infty}}) - \chi(X^f_{\overline{\eta}}) \\ &= d-4- \sum_{\substack{P \in A_{\mathrm{bad}} \setminus \{\infty\} \\ \deg(f_P^{\infty}) \geq 1 }} (d_P^{\mathrm{nod}}+b-4)- \sum_{\substack{P \in A_{\mathrm{bad}} \setminus \{\infty\} \\ \deg(f_P^{\neq \infty}) \geq 1 }} \left(d_P^{\mathrm{sm}}-4 \right) +\sum_{\substack{P \in A \setminus A_{\mathrm{bad}} \\ P \neq \infty, g_i \in C_P}} n_i + \sum_{\substack{P \in A_{\mathrm{bad}} \\ P \neq \infty}} \sum_{g_i \in C_P^{<1}} (n_i-\lambda_i). \end{split}\end{align}

Similarly, combining Corollary~\ref{diffchiY} with the Riemann-Hurwitz formula and Remark~\ref{RAbaddeg} which says that if $P \in A_{\mathrm{bad}} \setminus \{\infty\}$, then we cannot have $\deg(f_P^{\infty}) = \deg(f_P^{\neq \infty}) = 0$,  we get
\begin{align}\begin{split} &\quad \chi(X^f_{s}) - \sum_{\substack{P \in A_{\mathrm{bad}} \setminus \{\infty\} \\ \deg(f_P^{\infty}) \geq 1}} \chi(X_{s}^{f_P^{\infty}}) - \sum_{\substack{P \in A_{\mathrm{bad}} \setminus \{\infty\} \\ \deg(f_P^{\neq \infty}) \geq 1}} \chi(X_{s}^{f_P^{\neq \infty}}) \\ 
&= \left[ 2 \chi(Y^f_s) - \chi(B^f_s) \right]- \sum_{\substack{P \in A_{\mathrm{bad}} \setminus \{\infty\} \\ \deg(f_P^{\infty}) \geq 1}} \left[ 2 \chi(Y^{f_P^{\infty}}_s) - \chi(B^{f_P^{\infty}}_s) \right] - \sum_{\substack{P \in A_{\mathrm{bad}} \setminus \{\infty\} \\ \deg(f_P^{\neq \infty}) \geq 1}} \left[ 2 \chi(Y^{f_P^{\neq \infty}}_s) - \chi(B^{f_P^{\neq \infty}}_s) \right] \\
&= 2 \left[ 2+2 b d - \sharp(A_{\mathrm{bad}})+\left(\sum_{\substack{P \in A_{\mathrm{bad}} \setminus \{ \infty \} \\ \deg f_P^{\infty} = 0 \ \textup{or} \ \deg f_P^{\neq \infty} = 0}} 1 \right)- \displaystyle\sum_{\substack{P \in A_\mathrm{bad} \setminus \{\infty\} \\ \deg f_P^{\infty} \geq 1}} (1+b_P  d_P^{\mathrm{nod}}) -  \displaystyle\sum_{\substack{P \in A_\mathrm{bad} \setminus \{\infty\} \\ \deg f_P^{\neq \infty} \geq 1}} (1+b_P  d_P^{\mathrm{sm}}) \right] \\ 
&\quad - \left[ \sharp(A) - \sharp(A_{\mathrm{bad}}) + b(2+2d-\sharp(A))+\left(\sum_{\substack{P \in A_{\mathrm{bad}} \setminus \{ \infty \} \\ \deg f_P^{\infty} = 0}} (b+b_P-bb_P) \right)+\left(\sum_{\substack{P \in A_{\mathrm{bad}} \setminus \{ \infty \} \\ \deg f_P^{\neq \infty} = 0}} b_P \right) \right. \\ 
&\hspace*{9.5cm} \left. - \sum_{\substack{P \in A_{\mathrm{bad}} \setminus \{ \infty \} \\ \deg f_P^{\infty} \geq 1}} (b_P+d_P^{\mathrm{nod}}) - \sum_{\substack{P \in A_{\mathrm{bad}} \setminus \{ \infty \} \\ \deg f_P^{\neq \infty} \geq 1}} (b_P+d_P^{\mathrm{sm}}) \right] \\
&= 4-2b+2bd+(b-1)\sharp(A)- \sharp(A_{\mathrm{bad}})+\left(\sum_{\substack{P \in A_{\mathrm{bad}} \setminus \{ \infty \} \\ \deg f_P^{\infty} = 0 \ \textup{or} \ \deg f_P^{\neq \infty} = 0}} (2-b_P) \right) - \left(\sum_{\substack{P \in A_{\mathrm{bad}} \setminus \{ \infty \} \\ \deg f_P^{\infty} = 0}} (b-bb_P) \right) \\
&\hspace*{4cm}-\displaystyle\sum_{\substack{P \in A_\mathrm{bad} \setminus \{\infty\} \\ \deg f_P^{\infty} \geq 1}} (2-b_P+2b_P  d_P^{\mathrm{nod}} -d_P^{\mathrm{nod}}) -  \displaystyle\sum_{\substack{P \in A_\mathrm{bad} \setminus \{\infty\} \\ \deg f_P^{\neq \infty} \geq 1}} (2-b_P+2b_P  d_P^{\mathrm{sm}} -d_P^{\mathrm{sm}}). \end{split}\end{align}
By Lemma~\ref{Lparitydegree}, Definition~\ref{Dparitybit} and the definitions of the sets $A$ and $A_{\mathrm{bad}}$, it follows that $d=1$ precisely when $\deg(f)$ is odd, which is precisely when $\infty \in A$, and similarly $\infty \in A_{\mathrm{bad}}$ when both $b=1$ and $\deg(f)$ is odd, or equivalently when $bd=1$. Using these and rearranging  terms gives the following three equalities.
 \begin{equation} \displaystyle\sum_{\substack{P \in A \setminus A_{\mathrm{bad}} \\ P \neq \infty, g_i \in C_P}} n_i+(b-1)\sharp(A)- \sharp(A_{\mathrm{bad}}) = \displaystyle\sum_{\substack{P \in A \setminus A_{\mathrm{bad}} \\ P \neq \infty, g_i \in C_P}} (n_i-1+b)+{(b-1)d}-2\sharp(A_{\mathrm{bad}})+b\sharp(A_{\mathrm{bad}}). \end{equation}
 \begin{equation} \displaystyle\sum_{\substack{P \in A_{\mathrm{bad}} \setminus \{ \infty \} \\ \deg f_P^{\infty} = 0 \ \textup{or} \\ \deg f_P^{\neq \infty} = 0}} (2-b_P)-\displaystyle\sum_{\substack{P \in A_\mathrm{bad} \setminus \{\infty\} \\ \deg f_P^{\infty} \geq 1}} (2-b_P)-\displaystyle\sum_{\substack{P \in A_\mathrm{bad} \setminus \{\infty\} \\ \deg f_P^{\neq \infty} \geq 1}} (2-b_P) = -\displaystyle\sum_{\substack{P \in A_{\mathrm{bad}} \setminus \{ \infty \} \\ \deg f_P^{\infty} \geq 1 \ \textup{and} \\ \deg f_P^{\neq \infty} \geq 1}} (4-2b_P).\end{equation}
 \begin{equation}\displaystyle\sum_{\substack{P \in A_\mathrm{bad} \setminus \{\infty\} \\ \deg f_P^{\infty} \geq 1}} 4+\displaystyle\sum_{\substack{P \in A_\mathrm{bad} \setminus \{\infty\} \\ \deg f_P^{\neq \infty} \geq 1}} 4 + (b-2)\sharp(A_{\mathrm{bad}})-\displaystyle\sum_{\substack{P \in A_{\mathrm{bad}} \setminus \{ \infty \} \\ \deg f_P^{\infty} \geq 1 \ \textup{and} \\ \deg f_P^{\neq \infty} \geq 1}} 2 = -4bd+(2+b)\sharp(A_{\mathrm{bad}})+\displaystyle\sum_{\substack{P \in A_{\mathrm{bad}} \setminus \{ \infty \} \\ \deg f_P^{\infty} \geq 1 \ \textup{and} \\ \deg f_P^{\neq \infty} \geq 1}} 4.\end{equation}

For $\star \in \{f,f_P^{\neq \infty},f_P^{\infty}\}$, by definition, we have $-\Art(X^\star) = \chi(X^\star_s)-\chi(X^\star_{\overline{\eta}})$. Combining this with the five numbered equations above, it follows that the left hand side equals
\begin{multline*}
 \quad \left[ \chi(X^f_s)-\chi(X^f_{\overline{\eta}}) \right]- \sum_{\substack{P \in A_{\mathrm{bad}} \setminus \{\infty\} \\ \deg(f_P^{\infty}) \geq 1}} \left[ \chi(X^{f_P^{\infty}}_s) - \chi(X^{f_P^{\infty}}_{\overline{\eta}}) \right]  -  \sum_{\substack{P \in A_{\mathrm{bad}} \setminus \{\infty\} \\ \deg(f_P^{\neq \infty}) \geq 1}} \left[ \chi(X^{f_P^{\neq \infty}}_s)- \chi(X^{f_P^{\neq \infty}}_{\overline{\eta}}) \right]  \\
 = -b(2+d)+\displaystyle\sum_{\substack{P \in A \setminus A_{\mathrm{bad}} \\ P \neq \infty, g_i \in C_P}} (n_i-1+b)+(2+b)\sharp(A_{\mathrm{bad}})+\sum_{P \in A_\mathrm{bad}} \sum_{g_i \in C_P^{<1}} \left( n_i - \lambda_i \right) \\
  - \left(\sum_{\substack{P \in A_{\mathrm{bad}} \setminus \{ \infty \} \\ \deg f_P^{\infty} = 0}} (b-bb_P) \right)+\displaystyle\sum_{\substack{P \in A_{\mathrm{bad}} \setminus \{ \infty \} \\ \deg f_P^{\infty} \geq 1 \ \textup{and} \\ \deg f_P^{\neq \infty} \geq 1}} 2b_P-\displaystyle\sum_{\substack{P \in A_\mathrm{bad} \setminus \{\infty\} \\ \deg f_P^{\infty} \geq 1}} (b+2b_P  d_P^{\mathrm{nod}}) -  \displaystyle\sum_{\substack{P \in A_\mathrm{bad} \setminus \{\infty\} \\ \deg f_P^{\neq \infty} \geq 1}} 2b_P  d_P^{\mathrm{sm}}.
\end{multline*}
\end{proof}

\section{Metric trees of polynomials}\label{secmettree}
\subsection{Overview of this section}
For $P \in A_{\mathrm{bad}}$ and $i \in C_P^{<1}$, it is hard to directly relate the discriminant of $g_i$ with the discriminant of the corresponding replacement polynomial $h_i$ (see Remark~\ref{Rcomprep}), and use it to compute $\nu(\Delta_{f_P^{\infty}})$. Instead, we first define the metric tree $T(f)$ attached to a separable polynomial $f \in R[x]$ (See Example~\ref{Exmetpoly} and Figure~\ref{fig:MetTree1}), which is a combinatorial gadget for recording the $t$-adic distances between all pairs of roots. The main results of this section are Theorem~\ref{mettreerep1} and Theorem~\ref{mettreerep2} that describe how to obtain the metric tree of the replacement polynomials $f_P^{\neq \infty}$ and $f_P^{\infty}$ from the metric tree of $f$. 

More precisely, Lemma~\ref{gromov} shows that $\nu(\Delta_f)$ can be computed from the lengths of edges in the tree $T(f)$ for any monic separable polynomial $f$. In Theorem~\ref{expofreplacement}, we describe how to extract certain exponents and corresponding coefficients of the Newton-Puiseux expansions of the roots of the replacement polynomials $h_i$ from those of $g_i$, and use them to build the metric tree $T(f_P^{\infty})$ of the replacement polynomial from the metric tree $T(f)$ of $f$ by appropriately gluing together the metric trees of the irreducible factors of $f_P^{\infty}$. This will then be used together with Lemma~\ref{gromov} in Theorem~\ref{discchange} for estimating how discriminants change under the replacement operation. 

Throughout this section, we will use some basic terminology of Berkovich spaces; see \cite{BakerRumely} for a detailed introduction to the subject.

\subsection{The metric tree $T(f)$ and the discriminant $\Delta_f$}
\begin{defn}
 Let $S$ be a finite subset of $\mathbb{P}^{1,\mathrm{Berk}}_{\overline{K}}$. The {\textup{\textsf{\color{blue}convex hull}}} $C(S)$ of $S$ is the smallest connected metric subtree of $\mathbb{P}^{1,\mathrm{Berk}}_{\overline{K}}$ containing $S$, with the infinite ends towards the type 1 points in $S$ deleted. 
\end{defn}

\begin{eg}\label{firsteg}
 Let $K = \C((t))$. Let $S = \{\zeta, t^{{2/3}}+t^{5/6},t^{{2/3}}-t^{5/6},\omega t^{{2/3}}-\omega^2 t^{5/6},\omega t^{{2/3}}+\omega^2 t^{5/6},\omega^2 t^{{2/3}}+\omega t^{5/6},\omega^2 t^{{2/3}}-\omega t^{5/6} \}$. Then $C(S)$ is the metric tree in Figure~\ref{fig:MetTree1}.
\end{eg}

\begin{defn}\label{mettreedefn}
 Let $f$ be a monic polynomial in $R[x]$. The {\textup{\textsf{\color{blue}metric tree}}} $T(f)$ of $f$ is the convex hull of the Gauss point $\zeta$ and the roots of $f$ (identified with type I points on $\mathbb{P}^{1,\mathrm{Berk}}_{\overline{K}}$).  
\end{defn}

\begin{eg}\label{Exmetpoly}
 Let $f$ be the minimal polynomial of $t^{\frac{2}{3}}+t^{\frac{5}{6}}$ over $\C((t))$. Then $T(f)$ is the metric tree $C(S)$ in Example~\ref{firsteg}.
\end{eg}

\begin{defn}For any two type 1 points $\alpha$ and $\beta$ and a type $2$ point $\gamma$ in $\mathbb{P}^{1,\mathrm{Berk}}_{\overline{K}}$, observe that $C(\{\alpha,\beta,\gamma\})$ is a line segment of finite length. The {\textup{\textsf{\color{blue}Gromov product}}} $(\alpha|\beta)_{\gamma}$ of $\alpha$ and $\beta$ with respect to $\gamma$ is the length of $C(\{\alpha,\beta,\gamma\})$ .  
\end{defn}

\begin{eg}\label{Egromov}
 In Figure~\ref{fig:Gromov}, the metric tree $C(\omega^2 t^{2/3} - \omega t^{5/6},\omega t^{2/3} + \omega^2 t^{5/6},\zeta)$ is coloured $\r{red}$ and the metric tree $C(t^{2/3} - t^{5/6}, t^{2/3} + t^{5/6},\zeta)$ is coloured $\g{green}$. This shows that $\r{(\omega^2 t^{2/3} - \omega t^{5/6}|\omega t^{2/3} + \omega^2 t^{5/6})_{\zeta} = 2/3}$ and $\g{(t^{2/3} - t^{5/6}| t^{2/3} + t^{5/6})_{\zeta} = 5/6}$. 
\end{eg}

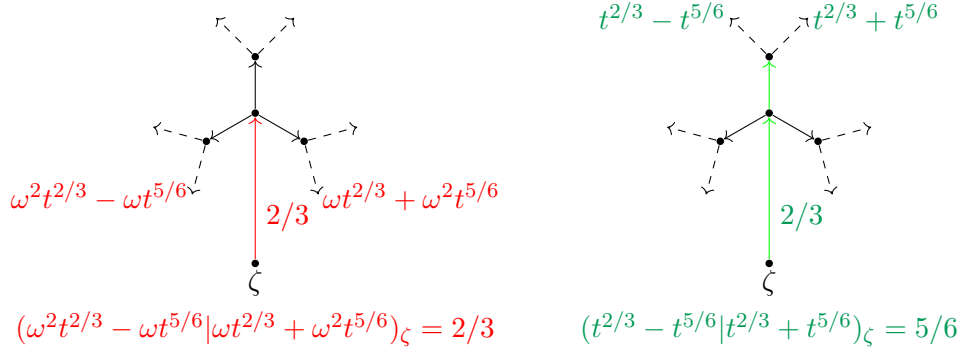
\begin{figure}
\def\r{\color{red}}\def\b{\color{blue}}\def\g{\color{ForestGreen}} 
\hbox to \textwidth{\hss
\tikzstyle{my dot}=[fill,circle,inner sep=1pt]
\tikzstyle{level 1}=[sibling angle=120]
\tikzstyle{level 2}=[sibling angle=90]
\tikzstyle{level 3}=[sibling angle=60]
\tikzstyle{level 4}=[sibling angle=30]
\tikzstyle{small node}=[inner sep=0pt, outer sep=0pt]
\begin{tikzpicture}[rotate=90,grow cyclic,edge from parent/.style={->,draw},scale=0.5, every node/.style={font=\small},circle, inner sep=2pt, every label/.style={rectangle,inner sep=1pt,outer sep=0pt}, baseline=-20mm]
\node (eta) at (-2,0) [my dot,label=below:$\zeta$] {};
\node (noname) at (2,0) [my dot] {} 
  child { node [my dot] {} 
    child {node [small node,font=\small,label=right:$\r{\omega t^{2/3} + \omega^2 t^{5/6}}$] {} edge from parent[dashed]} 
    child {node [small node] {} edge from parent[dashed]} } 
  child { node [my dot] {} 
    child {node [small node] {} edge from parent[color=black,dashed]} 
    child {node [small node] {} edge from parent[color=black,dashed]} edge from parent }
  child { node [my dot] {} 
    child {node [small node] {} edge from parent[dashed]} 
    child {node [small node,font=\small,label=left:$\r{\omega^2 t^{2/3} - \omega t^{5/6}}$] {} edge from parent[dashed]}}; 
\draw [color=red,->] (eta) -- node[anchor=west,pos=.3, xshift=-0.5pt] {$\r{2/3}$} (noname);
\node[below=5mm of eta, shape=rectangle,font=\small] {$\r{(\omega^2 t^{2/3} - \omega t^{5/6}|\omega t^{2/3} + \omega^2 t^{5/6})_{\zeta} = 2/3}$};
\end{tikzpicture}
\tikzstyle{my dot}=[fill,circle,inner sep=1pt]
\tikzstyle{level 1}=[sibling angle=120]
\tikzstyle{level 2}=[sibling angle=90]
\tikzstyle{level 3}=[sibling angle=60]
\tikzstyle{level 4}=[sibling angle=30]
\tikzstyle{small node}=[inner sep=0pt, outer sep=0pt]
\begin{tikzpicture}[rotate=90,grow cyclic,edge from parent/.style={->,draw},scale=0.5, every node/.style={font=\small},circle, inner sep=2pt, every label/.style={rectangle,inner sep=1pt,outer sep=0pt}, baseline=-20mm]
\node (eta) at (-2,0) [my dot,label=below:$\zeta$] {};
\node (noname) at (2,0) [my dot] {} 
  child { node [my dot] {} 
    child {node [small node] {} edge from parent[dashed]} 
    child {node [small node] {} edge from parent[dashed]} } 
  child { node [my dot] {} 
    child {node [small node,label=right:$\g{t^{2/3} + t^{5/6}}$,font=\small] {} edge from parent[color=black,dashed]} 
    child {node [small node,label=left:$\g{t^{2/3} - t^{5/6}}$,font=\small] {} edge from parent[color=black,dashed]} edge from parent[color=green] }
  child { node [my dot] {} 
    child {node [small node] {} edge from parent[dashed]} 
    child {node [small node] {} edge from parent[dashed]}}; 
\draw [color=green,->] (eta) -- node[anchor=west,pos=.3, xshift=-0.5pt] {$\g{2/3}$} (noname);
\node[below=5mm of eta, shape=rectangle,font=\small] {$\g{(t^{2/3} - t^{5/6}| t^{2/3} + t^{5/6})_{\zeta} = 5/6}$} ;
\end{tikzpicture}
\hss}
\caption{Figure~\ref{fig:Gromov}: The Gromov product and $t$-adic distances} \label{fig:Gromov}
\end{figure}

\begin{lemma}\label{gromov}
 Let $f$ be a monic polynomial in $R[x]$. Then
 \[ \nu(\Delta_f) = \sum_{\substack{\alpha_i \neq \alpha_j \\ f(\alpha_i) = f(\alpha_j) = 0}} (\alpha_i|\alpha_j)_{\zeta} .\]
\end{lemma}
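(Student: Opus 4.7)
The plan is to reduce the identity to the well-known formula $\nu(\Delta_f) = \sum_{i \neq j} \nu(\alpha_i - \alpha_j)$ by interpreting the Gromov product $(\alpha|\beta)_\zeta$ as the valuation $\nu(\alpha - \beta)$ whenever $\alpha,\beta$ are type 1 points in the closed unit disk. Since $f \in R[x]$ is monic, every root $\alpha_i \in \overline{K}$ satisfies $\nu(\alpha_i) \geq 0$, so all roots lie in the closed unit disk; one can therefore freely apply any such identity.

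First I would expand the discriminant. For a monic polynomial $f = \prod_i(x - \alpha_i)$, the standard formula $\Delta_f = \prod_{i<j}(\alpha_i - \alpha_j)^2$ yields
\[
\nu(\Delta_f) \;=\; 2\sum_{i<j} \nu(\alpha_i - \alpha_j) \;=\; \sum_{i \neq j} \nu(\alpha_i - \alpha_j),
\]
where the sum runs over ordered pairs of distinct roots (if $f$ has a repeated root both sides are $+\infty$, so we may assume $f$ is separable).

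The core step is the geometric identity $(\alpha|\beta)_\zeta = \nu(\alpha - \beta)$ for distinct type 1 points $\alpha,\beta \in \mathbb{P}^{1,\mathrm{Berk}}_{\overline{K}}$ with $\nu(\alpha),\nu(\beta) \geq 0$. Here I would invoke the standard description of $\mathbb{P}^{1,\mathrm{Berk}}_{\overline{K}}$ from \cite{BakerRumely}: the geodesic from the Gauss point $\zeta$ to a type 1 point $\alpha$ in the closed unit disk is parameterized by the type 2 points $\zeta_{\alpha,s}$ corresponding to closed disks $D(\alpha, r)$ of $\nu$-radius $s \geq 0$, and the logarithmic (path) metric assigns distance exactly $s$ from $\zeta$ to $\zeta_{\alpha,s}$. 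For two such $\alpha,\beta$, setting $\rho \colonequals \nu(\alpha - \beta) \geq 0$, the geodesics $[\zeta,\alpha]$ and $[\zeta,\beta]$ coincide down to the common \emph{pivot} $\zeta_{\alpha,\rho} = \zeta_{\beta,\rho}$ (the smallest disk containing both $\alpha$ and $\beta$), and then diverge along infinite rays toward $\alpha$ and $\beta$. Hence $C(\{\alpha,\beta,\zeta\})$, after deletion of the infinite ends toward the type 1 points $\alpha,\beta$, is precisely the segment from $\zeta$ to $\zeta_{\alpha,\rho}$, a segment of length $\rho = \nu(\alpha-\beta)$. This gives $(\alpha|\beta)_\zeta = \nu(\alpha-\beta)$.

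Combining the two steps yields the lemma:
\[
\nu(\Delta_f) = \sum_{i \neq j} \nu(\alpha_i - \alpha_j) = \sum_{i \neq j} (\alpha_i|\alpha_j)_\zeta.
\]
There is no real obstacle; the proof is essentially a dictionary translation between the algebraic quantity $\nu(\alpha-\beta)$ and the hyperbolic-metric distance from $\zeta$ to the smallest Berkovich disk containing both $\alpha$ and $\beta$. The only point that demands any care is checking that the ``infinite ends toward type 1 points'' convention in the definition of $C(S)$ produces a finite segment of the correct length $\rho$, which is immediate from the parameterization above.
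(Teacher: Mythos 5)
Your proposal is correct and matches the paper's argument: the paper proves the lemma in one line by citing exactly the two facts you establish, namely $\nu(\Delta_f)=\sum_{i\neq j}\nu(\alpha_i-\alpha_j)$ and $(\alpha_i|\alpha_j)_{\zeta}=\nu(\alpha_i-\alpha_j)$. Your extra verification of the Gromov-product identity via the parameterization of geodesics in $\mathbb{P}^{1,\mathrm{Berk}}_{\overline{K}}$ simply spells out what the paper leaves implicit.
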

\begin{proof}
 This follows from $\nu(\alpha_i-\alpha_j) = (\alpha_i|\alpha_j)_{\zeta}$ and $\nu(\Delta_f) = \sum_{\substack{\alpha_i \neq \alpha_j \\ f(\alpha_i) = f(\alpha_j) = 0}} \nu(\alpha_i -\alpha_j)$.
\end{proof}

\subsection{Metric trees of replacement polynomials}
Our next task is to relate the metric trees of the replacement polynomials to the metric tree of $f$ (Theorem~\ref{mettreerep1} and Theorem~\ref{mettreerep2}). We make a few more definitions before stating the result.

We will continue to use the notation from Section~\ref{setup} in the rest of this section. Let $f,f_P^{\infty},f_P^{\neq \infty}$ be as in Section~\ref{induction} and Definition~\ref{defnreppoly}. Recall that type 2 points in $\mathbb{P}^{1,\mathrm{Berk}}_{\overline{K}}$ can be identified with divisorial valuations on $\overline{K}(\mathbb{P}^1)$. We identify the Gauss point $\zeta$ on $\mathbb{P}^{1,\mathrm{Berk}}_{\overline{K}}$ with the divisorial valuation corresponding to the generic point of the irreducible special fiber of $Y_0^f \cong \P^1_R$. Recall that in subsection~\ref{setup}, we picked $a_P \in k \subset R=k[[t]]$ for every point $P \in A$. The point $a_P$ can be identified with a type 1 point on $\mathbb{P}^{1,\mathrm{Berk}}_{\overline{K}}$. 
\begin{defn}
For any real number $l > 0$, let ${\mathsf{\color{blue}\zeta_P^l}}$ be the point on the unique path connecting the Gauss point $\zeta$ to the type 1 point $a_P$ in $\mathbb{P}^{1,\mathrm{Berk}}_{\overline{K}}$ that is at distance $l$ from $\zeta$.
\end{defn}

\subsubsection{Metric tree of $T(f_P^{\neq \infty})$}
Fix $P \in A_{\mathrm{bad}}$ such that $C_P^{\geq 1}$ is not empty. Let $T = T(f)$. Define a new tree $T'$ as follows. Since $T$ is a tree, $T \setminus \{\zeta_P^1\} = T_0 \bigsqcup_{i \geq 1} T_i$ is a disjoint union of subtrees $T_i$ of $T$, and $\zeta \in T_0$. Let $T' = \{\zeta_P^1\} \bigsqcup_{i \geq 1} T_i$. Then $T'$ is a connected subset of $T$, and therefore also a tree, and it inherits the metric $d$ from $T$. 

\begin{thm}\label{mettreerep1}
 The metric tree $T(f_P^{\neq \infty})$ is isomorphic to the tree $T'$ defined in the paragraph above.
\end{thm}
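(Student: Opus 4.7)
The plan is to exhibit the isomorphism $T' \cong T(f_P^{\neq \infty})$ via the affine change of coordinates $\phi(x) = (x - a_P)/t$, which extends to an automorphism $\Phi$ of $\mathbb{P}^{1,\mathrm{Berk}}_{\overline{K}}$. Under $\Phi$, a closed disk $D(a, r)$ in $\overline{K}$ is sent to $D(\phi(a), r/|t|)$, so every type 2 point has its associated log-radius shifted by exactly $-1$; in particular the type 2 point $\zeta_P^1$ corresponding to $D(a_P, |t|)$ is mapped to the Gauss point $\zeta$. Since all log-radii shift by the same constant, $\Phi$ preserves hyperbolic distance on the tree structure and restricts to an isometric isomorphism between any metric subtree and its image.

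Next, I will identify $T'$ with the convex hull, inside $\mathbb{P}^{1,\mathrm{Berk}}_{\overline{K}}$, of $\zeta_P^1$ together with those roots $\alpha$ of $f$ satisfying $\nu(\alpha - a_P) \geq 1$. In the Berkovich tree, $\zeta_P^1$ lies on the unique segment from $\zeta$ to the type 1 point $a_P$ and separates $\zeta$ from exactly the type 1 points contained in the disk $D(a_P, |t|)$. Thus when $\zeta_P^1$ is deleted from $T = T(f)$, the component $T_0$ containing $\zeta$ collects all roots of $f$ lying outside $D(a_P, |t|)$, while the components $T_i$ for $i \geq 1$ partition the roots lying inside $D(a_P, |t|)$; reattaching $\zeta_P^1$ recovers exactly the desired convex hull. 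These interior roots are precisely the roots of the factors $g_i$ with $i \in C_P^{\geq 1}$, since $\nu(\alpha - a_P) = \lambda_i/n_i$ for any root $\alpha$ of $g_i \in C_P$, and this is $\geq 1$ exactly when $i \in C_P^{\geq 1}$ (roots of $g_j \notin C_P$ contribute nothing, satisfying $\nu(\alpha - a_P) = 0$). By Definition~\ref{defnreppoly}, for $i \in C_P^{\geq 1}$ the replacement polynomial is $h_i(x) = \tilde{g_i}(tx)/t^{n_i}$, so $\phi$ sends roots of $\tilde{g_i}$ bijectively to roots of $h_i$; taking products and noting that $t^{b_P}$ contributes no roots in $x$, the map $\phi$ gives a bijection between the roots of $f$ inside $D(a_P,|t|)$ and the roots of $f_P^{\neq \infty}$.

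Putting the pieces together, $\Phi$ carries the convex hull $T'$ isometrically onto the convex hull of $\Phi(\zeta_P^1) = \zeta$ and the roots of $f_P^{\neq \infty}$, which by Definition~\ref{mettreedefn} is precisely $T(f_P^{\neq \infty})$. The step requiring the most care, and which I expect to be the principal obstacle, is the rigorous identification of the components $T_i$ ($i \geq 1$) as exactly the subtrees containing the roots of $f$ inside $D(a_P, |t|)$; this amounts to the basic fact that $\zeta_P^1$ lies on every Berkovich path from $\zeta$ to such a root, but on no path from $\zeta$ to a root outside $D(a_P,|t|)$, which in turn follows from $\zeta_P^1$ being the Gauss point of $D(a_P,|t|)$ and lying on the segment from $\zeta$ to $a_P$.
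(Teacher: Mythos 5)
Your proof is correct and takes essentially the same route as the paper's: both identify $T'$ with the convex hull of $\zeta_P^1$ and the roots of the factors $g_i$ with $i \in C_P^{\geq 1}$, and both observe that passing to $f_P^{\neq \infty}$ amounts to applying $x \mapsto (x-a_P)/t$ to those roots, which sends $\zeta_P^1$ to the Gauss point and preserves distances (the paper phrases this as dropping the leading term and shifting Newton--Puiseux exponents down by $1$, i.e.\ deleting the segment from $\zeta$ to $\zeta_P^1$). One cosmetic slip: $\phi$ sends roots of $g_i$ (not of $\tilde{g}_i$) bijectively to roots of $h_i$; equivalently, division by $t$ sends roots of $\tilde{g}_i$ to roots of $h_i$, as in Remark~\ref{rootscoefficients}.
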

\begin{proof}
This follows from the following two observations:
\begin{itemize}
 \item The tree $T'$ is the convex hull of $\zeta_P^1$ and the roots of $g_i$ for $i \in C_P^{\geq 1}$.
 \item From Remark~\ref{rootscoefficients}, the collection of roots of $f_P^{\neq \infty}$ are simply the collection of roots of the polynomials $\tilde{g}_i$ for $i \in C_P^{\geq 1}$  divided by $t$. 
\end{itemize}
This means that the Newton-Puiseux expansion of the roots of each factor of $f_P^{\neq\infty}$ is obtained by dropping the leading term and then subtracting $1$ from all of the other exponents of the Newton-Puiseux expansions of the corresponding irreducible factors of $f$. The effect of dropping the leading term and shifting all exponents down by $1$ on the metric tree is deleting the initial segment between $\zeta$ and $\zeta_P^1$. 
\end{proof}

\subsubsection{Metric tree of $T(f_P^{\infty})$}
Fix $P \in A_{\mathrm{bad}}$ such that $C_P^{< 1}$ is not empty. 
\begin{defn}Let
 \[ {\color{blue}{\mathcal{V}_P}} \colonequals \{ \lambda_i/n_i  \ | \ g_i \in C_P^{<1}  \} \]
 be the collection of valuations of the roots of $\tilde{g_i}$ for the $g_i \in C_P^{<1}$. 
 \end{defn}
 
 \begin{defn}
  Let $a/b \in \mathcal{V}_P$ and assume $\gcd(a,b) = 1$. Let {\color{blue}{$S_{P,a/b}$}} be a subset of the roots of $f$ defined as follows:
  \[ {\color{blue}{S_{P,a/b}}} \colonequals \{ \lambda \ | \ g_i(\lambda) = 0 \ \textup{for some }\ g_i \in C_P^{<1} \ \textup{satisfying} \ \lambda_i/n_i = a/b \}. \]  
 \end{defn}

 \begin{defn}
  Let $a/b \in \mathcal{V}_P$ and assume $\gcd(a,b) = 1$. Define {\color{blue}{$T_{P,a/b}$}} to be the metric subtree of $T(f)$ obtained by taking the convex hull of $S_{P,a/b}$.
 \end{defn}
 
 \noindent{\textbf{Galois action on metric trees.}}\label{Galois} Let $G \colonequals \Gal(\overline{K}/K)$. Since the $G$ action on $\mathbb{P}^{1,\mathrm{Berk}}_{\overline{K}}$ fixes the Gauss point $\zeta$ and permutes the roots of any irreducible factor of $f \in K[x]$, we get natural induced $G$ actions on the metric trees $T(\tilde{g}_i),T_{P,a/b},T(f)$ for all $i$ and for all $a/b$. These actions preserve the lengths of edges and the valency at every vertex. Let $\mathcal{C}^{a/b} = \bigsqcup C_i$ be the connected components of $T_{P,a/b} \setminus \{ \zeta_P^{a/b} \}$, let $\overline{C_i} = C_i \cup \{ \zeta_P^{a/b} \}$ and let $\overline{\mathcal{C}^{a/b}}$ be the set of $\overline{C_i}$. 
 
\begin{thm}[Local symmetry of $T_{P,a/b}$ at $\zeta_P^{a/b}$]\label{mettreesymmetry}
Fix $i \in C_P^{<1}$. Let $g_i$ be an irreducible factor of $f$ and let $\tilde{g}_i$ be the shift of $g_i$ as defined in Section~\ref{setup}. Let $n_i = \deg g_i$, let $\omega$ be the chosen $n_i^{\mathrm{th}}$ root of unity in $K$ and let the valuation of any root of $\tilde{g}_i$ be $\lambda_i/n_i = a/b$ with $\gcd(a,b) = 1$. Let $d \colonequals n_i/b$. 
\begin{enumerate}[\upshape (a)]
\item The splitting field of $f$ is a cyclic extension of the form $K(t^{1/n})$ for some integer $n \geq 1$.
\item Let $\eta(t^{1/n_i}) \colonequals \sum_{l \geq 0} a_l t^{l/n_i}$ be the Newton-Puiseux expansion of one root of $\tilde{g}_i$. Then for any other root of $\tilde{g}_i$, there exists a unique integer $j$ with $0 \leq j \leq n_i-1$ such that the Newton-Puiseux expansion of this root is of the form  $\eta(\omega^j t^{1/n_i}) = \sum_{l \geq 0} a_l \omega^{jl} t^{l/n_i}$.
\item The point $\zeta_P^{a/b}$ of $T(f)$ lies on the subtree $T_{P,a/b}$, and is fixed by the natural $G$ action. Each $\overline{C_i}$ is a rooted metric tree with root $\eta_{P}^{a/b}$, and is the hull of $\zeta_{P}^{a/b}$ and a naturally defined subset of $S_{P,a/b}$. 
\item The elements of $\overline{\mathcal{C}^{a/b}}$ are in natural bijection with the coefficients of $t^{a/b}$ in the Newton-Puiseux expansions of the elements of $S_{P,a/b}$. Let $\sigma$ be a generator of the cyclic Galois group $G$ of the splitting field of $f$ over $K$, and let $G'$ be the subgroup generated by $\sigma^b$. For any $\overline{C}_i \in \overline{\mathcal{C}^{a/b}}$, the corresponding subset of $S_{P,a/b}$ is a union of $G'$ orbits for the action of $G'$ on $S_{P,a/b}$.
\item The $G$ action on $T_{P,a/b}$ induces a natural $\Z/b\Z$ action on the set of connected components $\overline{\mathcal{C}^{a/b}}$. If $f$ is irreducible with roots of valuation $a/b$, then the size of $\overline{\mathcal{C}^{a/b}}$ is $b$ and the natural $G$ action on $\overline{\mathcal{C}^{a/b}}$ is transitive. In general, every orbit for this action has size $b$, and the connected components in any given orbit are isomorphic as rooted metric subtrees of $T(f)$.
\item For any polynomial $g$, let $T_{P,a/b}^g$ denote the metric tree described above with the polynomial $g$ in place of the polynomial $f$. The metric tree $T_{P,a/b}^{g_i}$ for $i \in C_P^{<1}$ is isomorphic to a natural metric subtree of $T_{P,a/b}^f$, and $T_{P,a/b}^f$ is the union of the images of $T_{P,a/b}^{g_i}$ under these isomorphisms as we vary over all $i \in C_P^{<1}$.
\end{enumerate}   
\end{thm}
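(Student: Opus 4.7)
The plan is to exploit the tame Kummer structure of the splitting field to translate all six assertions into statements about coefficients of Newton--Puiseux expansions, and then read off the Galois action directly on those coefficients.

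Parts (a) and (b) are essentially formal. As in the proof of Proposition~\ref{tiltdisc}, since $\cha k > \deg f$ (or $\cha k = 0$) and $k$ is algebraically closed, the splitting field of $f$ is a totally ramified tame abelian extension of $K$, hence cyclic, and by Kummer theory equals $K(t^{1/n})$ for some $n \geq 1$. Fixing a primitive $n$-th root of unity $\omega'$, a chosen generator $\sigma \in G$ sends $t^{1/n}$ to $\omega' t^{1/n}$. Since $g_i$ is irreducible of degree $n_i$, its splitting field is $K(t^{1/n_i}) \subseteq K(t^{1/n})$, on which the induced Galois action sends $t^{1/n_i}$ to $\omega t^{1/n_i}$; applying this to the Newton--Puiseux expansion of one root then yields the remaining roots in the claimed form $\eta(\omega^j t^{1/n_i})$.

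For (c), the path in $\P^{1,\mathrm{Berk}}_{\overline{K}}$ from the Gauss point $\zeta$ to the type $1$ point $a_P \in K$ is fixed pointwise by $G$, so its point $\zeta_P^{a/b}$ at distance $a/b$ is $G$-fixed. To see $\zeta_P^{a/b} \in T_{P,a/b}$, note $a/b \in (0,1)$ with $\gcd(a,b)=1$ forces $b \geq 2$; for an irreducible factor $g_i$ with $\lambda_i/n_i = a/b$, the conjugates $\eta(\omega^j t^{1/n_i})$ have the coefficient of $t^{a/b}$ multiplied by $\omega^{j a n_i / b}$, which ranges over all primitive $b$-th roots of unity, so at least two conjugates have distinct coefficients of $t^{a/b}$. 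The geodesics from such roots back towards $\zeta$ coincide only up to $\zeta_P^{a/b}$, so $\zeta_P^{a/b}$ lies on a path between two roots in $S_{P,a/b}$ and thus in the hull. Each $\overline{C_i}$ is then a rooted subtree with unique root $\zeta_P^{a/b}$, namely the convex hull of $\zeta_P^{a/b}$ together with those roots in $S_{P,a/b}$ whose geodesic from $\zeta$ enters $C_i$. For (d), the same computation shows that $\sigma$ acts on the coefficient of $t^{a/b}$ by multiplication by $(\omega')^{a n / b}$, a primitive $b$-th root of unity; consequently $\sigma^b$ fixes every such coefficient. Since the direction at $\zeta_P^{a/b}$ chosen by the geodesic to a root is determined precisely by this coefficient, the elements of $\overline{\mathcal{C}^{a/b}}$ biject with the set of distinct coefficients of $t^{a/b}$ that occur, and the $G'$-action on $S_{P,a/b}$ preserves each connected component.

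For (e), the induced action of $G/G' \cong \Z/b\Z$ on $\overline{\mathcal C^{a/b}}$ is free: the stabilizer of a direction in $G$ is exactly $G'$ because $\sigma^k$ fixes the coefficient of $t^{a/b}$ iff $b \mid k$. Hence every orbit has size $b$. When $f$ is irreducible with roots of valuation $a/b$, the $n$ roots form a single $G$-orbit and their leading coefficients partition into $b$ distinct values, giving $\lvert \overline{\mathcal C^{a/b}} \rvert = b$ with $G$ acting transitively. In general, elements of $G$ act on $T(f)$ by root-permuting isometries that fix $\zeta_P^{a/b}$, so components in a common $G$-orbit are carried to one another as rooted metric subtrees and are therefore isomorphic. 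For (f), $T_{P,a/b}^{g_i}$ is by definition the hull of those roots of $g_i$ of valuation $a/b$, a subset of $S_{P,a/b}^f$, so it embeds as a metric subtree of $T_{P,a/b}^f$; taking the union over all $g_i \in C_P^{<1}$ with $\lambda_i/n_i = a/b$ recovers all of $S_{P,a/b}^f$ and hence its hull.

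The main obstacle I anticipate is making the identification in (c) and (d) of ``directions at $\zeta_P^{a/b}$'' with ``coefficients of $t^{a/b}$'' genuinely precise: this is really a statement about the reductions of the closed disks $\{z : \nu(z-\alpha) \geq a/b\}$ and about how they factor through residue disks, and writing it cleanly without veering into a long Berkovich-theoretic digression requires choosing the right intermediate language. Once this is in hand, the remaining assertions reduce to the single explicit computation that $\sigma$ acts on the coefficient of $t^{a/b}$ by a primitive $b$-th root of unity, from which parts (c)--(e) fall out uniformly.
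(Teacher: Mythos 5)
Your proposal is correct and follows essentially the same route as the paper: tame Kummer theory for (a)--(b), the explicit computation that a generator of $G$ multiplies the coefficient of $t^{a/b}$ by a primitive $b$-th root of unity for (c)--(e), and restriction of the natural embedding $T(g_i)\subset T(f)$ for (f). Two small points of precision that the paper makes explicit: the coefficients of the conjugate roots are multiplied by \emph{all} $b$-th roots of unity $\omega^{jad}$, not only primitive ones, and the fact that two such coefficients remain distinct (so that the geodesics separate exactly at $\zeta_P^{a/b}$) uses $\cha k > n_i = db$, i.e.\ that distinct roots of unity of order prime to $\cha k$ have distinct reductions.
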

\begin{proof} \hfill 
 \begin{enumerate}[\upshape (a)]
 \item Let $L_i$ be the splitting field of $\tilde{g}_i$ over $K$. Since $n_i \leq 2g+2 < \cha k$ and $[L_i:K]$ divides $(n_i)!$, it follows that $L_i/K$ is a tame totally ramified Galois extension, and therefore cyclic (\cite[Chapter~IV, \S~1 Proposition~1, \S~2 Corollary~2 to Proposition~2]{serrelocal}). This also means that every subextension is Galois and cyclic. Since the residue field $k$ of $K$ is algebraically closed and $K$ is complete, all units in $K$ have $n_i^{\mathrm{th}}$ roots in $K$. Therefore by Kummer theory, it follows that $L_i = K(t^{1/n_i})$ is a cyclic extension and a generator of the Galois group sends $t^{1/n_i}$ to $\omega t^{1/n_i}$, where $\omega$ is a $n_i^{\mathrm{th}}$ root of unity in $K$. The splitting field of $f$ is the compositum of the $L_i$ and therefore equals $K(t^{1/\lcm(n_i)})$, which by the same argument as before is cyclic and Galois. 
 \item   The Galois group of $L_i/K$ is cyclic of order $n_i$ and is generated by the element $\sigma$ that sends $t^{1/n_i}$ to $\omega^i t^{1/n_i}$. Since the Galois group acts transitively on the roots of $\tilde{g}_i$, if $\alpha$ and $\beta$ are any two roots of $f$, then there is a unique $j$ with $0 \leq j \leq n_i-1$ with $\sigma^j(\alpha) = \beta$. If $\alpha = \sum_{l \geq 0} a_l t^{l/n_i}$, since $\sigma^j(\alpha) = \beta$, it follows that $\beta = \sum_{l \geq 0} a_l \omega^{jl} t^{l/n_i}$.  
 \item To show that $\zeta_P^{a/b}$ lies on $T_{P,a/b}$, it is enough to show that there exist two roots of $\tilde{g}_i$ for $i \in C_P^{<1}$ whose Newton-Puiseux expansions start with $t^{a/b}$, and such that the corresponding coefficients of $t^{a/b}$ are not congruent modulo the maximal ideal of $R$. From the previous paragraph and the fact that $\omega^{iad}$ is not congruent to $\omega^{jad}$ if $i \neq j \mod b$ (since $\cha k > 2g+2 \geq n_i = db$), we see that we can take any two roots of $\tilde{g}_i$ that begin with $ut^{a/b}$ and $u \omega^{ad} t^{a/b}$.
 
 Since the connected components of $T_{P,a/b} \setminus \zeta_P^{a/b}$ are in bijection with the coefficients of the leading terms of the Newton-Puiseux expansions of the elements of $S_{P,a/b}$ after subtracting $a_P$ and the $G$ action respects the metric tree structure of $T(\tilde{g}_i)$ and fixes $\zeta$, it follows that the $G$ action also fixes the point $\zeta_{P,a/b}$ and permutes the connected components of $T_{P,a/b} \setminus \zeta_P^{a/b}$. Since $\zeta_{P}^{a/b}$ lies in the closure of the connected component $C_i$, it follows that $\overline{C}_i = C_i \cup \{ \zeta_P^{a/b} \}$ is also connected and a rooted metric subtree of $T_{P,a/b}$. Since $T_{P,a/b}$ is the convex hull of $\zeta_{P}^{a/b}$ and a subset of Type I points $S_{P,a/b}$, the disjoint union decomposition $T_{P,a/b} \setminus \{\zeta_P^{a/b}\} = \bigsqcup C_i$ induces a corresponding disjoint union decomposition of the $S_{P,a/b}$.
 
 \item The edges adjacent to $\zeta_{P,a/b}$ in $T(g_i)$ (not counting the edge towards the Gauss point $\zeta$) are in bijection with the coefficients of the leading order term $t^{a/b}$ of the roots of $\tilde{g}_i$, so in particular, there are $b$ such edges. If we fix an irreducible factor $g_i$, then the roots of $g_i$ in a particular subtree (i.e, with a leading order term $ut^{a/b}$ for a fixed $u$), are precisely the roots in a given $G'$ orbit of a root, since $u\omega^{iad} \equiv u \mod t$ if and only $i \equiv 0 \mod b$. Taking a union over all irreducible factors of $f$ gives the desired result.
 
 \item The description of the $G$ action on the Newton-Puiseux expansions shows that the action on the coefficients of the leading order terms can be thought of as a permutation action of the $b^{\mathrm{th}}$ roots of unity in $K$ and therefore factors through the group $\Z/b\Z$. From the explicit description of the action, it follows that every orbit for this action has size $b$. In particular, these connected components in any given orbit are all isomorphic as rooted metric subtrees of $T(f)$. 
 
 \item These natural identifications arise from restricting the natural identifications of $T(g_i)$ (hull of $\zeta$ and the roots of the irreducible factor $g_i$ of $f$) with a metric subtree of $T(f)$ (hull of $\zeta$ and the roots of $f$). \qedhere
 \end{enumerate}
  \end{proof}
  
\subsection{Characteristic exponents of Newton-Puiseux expansions and metric trees}
We will now set up some notation to relate the Newton-Puiseux expansions of the roots of $f$ to the metric tree $T(f)$. 

\begin{lemma}\label{mettreerationalroots}
 Assume that the roots of $f$ are all $K$-rational. Let $\{a_1,a_2,\ldots,a_l\}$ be the chosen lifts in $R$ of the reduction of the roots of $f$ modulo $t$, and let $f(x) = h_1(x)h_2(x) \ldots h_l(x)$ be a factorization of $f$ such that for every $i$, every irreducible factor of $h_i$ specializes to $\overline{a}_i$ in $\P^1_R$. Let $h_i'(x) = h_i((x-a_i)/t)$. Let $S$ be the metric tree with vertices $\kappa,\kappa_1,\ldots,\kappa_l$ such that there is a single edge of length $1$ connecting $\kappa$ to $\kappa_i$ for every $i$ and no other edges. Then $T(f) \cong (S \bigsqcup_i T(h_i'))/\sim$ where the equivalence relation $\sim$ glues the point $\kappa_i$ to the point of $T(h_i')$ corresponding to the Gauss point, and under this isomorphism the Gauss point $\zeta$ in $T(f)$ gets identified with the point $\kappa$ of $S$.
\end{lemma}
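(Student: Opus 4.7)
The plan is to identify both sides of the asserted isomorphism as metric trees, working inside $\mathbb{P}^{1,\mathrm{Berk}}_{\overline{K}}$, and to verify that the combinatorial structure and edge lengths agree. My first step is to describe the roots of each $h_i'$ and compute the basic distances: the substitution $y \mapsto tx + a_i$ (the natural reading of the definition of $h_i'$ as a rescaling of $h_i$ around $a_i$) writes each root of $h_i'$ in the form $(\alpha - a_i)/t$ for some root $\alpha$ of $h_i$. The hypothesis that every irreducible factor of $h_i$ specializes to $\overline{a}_i$ gives $\nu(\alpha - a_i) \geq 1$, so the roots of $h_i'$ lie in $R$, and the identity $\nu\!\left((\alpha - a_i)/t - (\alpha' - a_i)/t\right) = \nu(\alpha - \alpha') - 1$ is immediate.

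The second step is to show $T(f) = \bigcup_i T(h_i)$, with pairwise intersections equal to $\{\zeta\}$. Invoking the formula $(\alpha | \alpha')_\zeta = \nu(\alpha - \alpha')$ used in Lemma~\ref{gromov}: for $\alpha$ a root of $h_i$ and $\alpha'$ a root of $h_j$ with $i \neq j$, we have $\overline{\alpha} = \overline{a}_i \neq \overline{a}_j = \overline{\alpha'}$, so $\nu(\alpha - \alpha') = 0$ and the paths from $\zeta$ to $\alpha$ and to $\alpha'$ diverge immediately at $\zeta$. For $\alpha, \alpha'$ both roots of $h_i$, we have $\nu(\alpha - \alpha') \geq 1$, so all paths from $\zeta$ to roots of $h_i$ share an initial segment of length $1$ from $\zeta$ to the point $\zeta_{a_i}^1$ (the point at distance $1$ from $\zeta$ in the direction of $\overline{a}_i$). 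The union of these $l$ initial segments produces the star $S$ with center $\kappa = \zeta$ and leaves $\kappa_i = \zeta_{a_i}^1$.

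For the third step, I will verify that the subtree of $T(h_i)$ lying above $\zeta_{a_i}^1$ is isometric to $T(h_i')$. The affine change of coordinates between the two Berkovich lines given by $y = tx + a_i$ sends the closed unit disc $\{|x| \leq 1\}$ to the disc $\{|y - a_i| \leq |t|\}$, hence sends the Gauss point of the $x$-line to $\zeta_{a_i}^1$ in the $y$-line, while identifying each root $(\alpha - a_i)/t$ of $h_i'$ with the corresponding root $\alpha$ of $h_i$. The distance identity from Step~1 confirms that this map is an isometry of the relevant subtrees. Assembling the three steps yields the isomorphism $T(f) \cong (S \sqcup_i T(h_i'))/\sim$ under which $\kappa$ corresponds to $\zeta$ and each $\kappa_i$ corresponds to $\zeta_{a_i}^1$, which is the Gauss point of $T(h_i')$ under the coordinate change.

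I anticipate no serious obstacle; the main point to handle with care is the identification of the Gauss point of the $x$-line with $\zeta_{a_i}^1$ under the affine change of variables, since this is what rigidly fixes the gluing data. Once that identification and the basic distance identity are in place, the decomposition of $T(f)$ into the star $S$ and the rescaled pieces $T(h_i')$ is forced by tracking valuations.
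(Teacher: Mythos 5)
Your proof is correct and takes essentially the same approach as the paper: the star $S$ comes from the shared initial segments of length $1$ from $\zeta$ toward each residue $\overline{a}_i$ (the paper phrases this via the retraction onto the hull of $\zeta$ and the points $\zeta_{a_i}^1$), and the piece of $T(h_i)$ above $\zeta_{a_i}^1$ is identified with $T(h_i')$ through the coordinate change $x \mapsto (x-a_i)/t$, exactly as in the paper. Your reading of $h_i'$, under which its roots are $(\alpha-a_i)/t$ so that pairwise valuations drop by $1$, is the intended one and matches the paper's own proof.
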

\begin{proof}
 The proof is making the canonical identifications on $\mathbb{P}^{1,\mathrm{Berk}}_{\overline{K}}$ coming from our choice of Newton-Puiseux expansions explicit. The tangent directions from the Gauss point $\zeta$ in $T(f)$ are in bijective correspondence with the reductions of the roots of $f$ modulo $t$. Let $S'$ is the subset of $\mathbb{P}^{1,\mathrm{Berk}}_{\overline{K}}$ that includes the Gauss point $\zeta$ and the points $\zeta_{a_i}^1$ at distance $1$ from $\zeta$ in the direction corresponding to $a_i$ for every $i$. Then $S' \cong S$ and the roots of $f$ specialize to the ends $\kappa_i$ under the canonical retraction of points of $\mathbb{P}^{1,\mathrm{Berk}}_{\overline{K}}$ to $S'$, and the roots specializing to $\kappa_i$ are precisely the roots of $h_i$ for every $i$. The change of coordinates $x \mapsto (x-a_i)/t$ maps the roots of $h_i$ bijectively on to the roots of $h_i'$ and further induces an isomorphism of the hull of the roots of $h_i$ and $\zeta_{a_i}^1$ with the metric tree of $T(h_i'
)$. Since $T(f)$ can also be described as the hull of $\zeta,\zeta_{a_i}^1$ and the roots of $h_i$ for every $i$, this finishes the proof.
\end{proof}

\begin{lemma}\label{changeofscale}
 Let $s \in \overline{K}$ and let $\nu(s) = 1/n$ for some integer $n \geq 1$. If we let $(T_t(f),d_t)$ denote the metric tree from Definition~\ref{mettreedefn} and let $(T_{s}(f),d_s)$ denote the metric tree of $f$ constructed by using Newton-Puiseux expansions using $s$ instead of $t$. Then $T_s(f)$ and $T_t(f)$ are canonically homeomorphic and $d_s = n d_t$.
\end{lemma}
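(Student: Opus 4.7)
The plan is to observe that the construction of the metric tree, although presented via Newton--Puiseux expansions, is really an invariant of the Berkovich projective line together with a chosen normalization of the valuation. The change from $t$ to $s$ does not alter the underlying topological space or the Gauss point or the roots of $f$; it only rescales the metric. So the homeomorphism part will be essentially tautological and the metric part will come down to comparing the normalizations.

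First, I would note that $\mathbb{P}^{1,\mathrm{Berk}}_{\overline{K}}$, the Gauss point $\zeta$, and the set of type 1 points (which include the roots of $f$) are intrinsic to $\overline{K}$ and do not depend on a preferred uniformizer. Under either construction the metric tree is the convex hull of $\zeta$ and the roots of $f$ inside $\mathbb{P}^{1,\mathrm{Berk}}_{\overline{K}}$, so as subsets of the Berkovich line $T_t(f)$ and $T_s(f)$ coincide, giving the canonical homeomorphism.

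Next, I would analyze the metric. The metric on $\mathbb{P}^{1,\mathrm{Berk}}_{\overline{K}}$ underlying these trees is the usual path metric, in which the distance from $\zeta$ to a point $\zeta_{a}^{r}$ (the type 2 point corresponding to the closed disk of radius $|\cdot|^{r}$ around $a$) equals $r$, where $r$ is measured against the valuation being used. When $t$ is the chosen uniformizer one uses the valuation $\nu$ with $\nu(t)=1$, and the distance along a geodesic between two type 2 points equals the difference of the corresponding values of $\nu$. When one instead writes Newton--Puiseux expansions in powers of $s$, the implicit normalization is that $s$ has valuation $1$: calling this valuation $\nu_s$, the hypothesis $\nu(s)=1/n$ forces $\nu_s = n\,\nu$ on all of $\overline{K}$.

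Putting these together, for any two type 2 points $P,Q$ on a common geodesic in $\mathbb{P}^{1,\mathrm{Berk}}_{\overline{K}}$, the path distance satisfies $d_s(P,Q) = n\, d_t(P,Q)$, and by additivity along the tree this gives $d_s = n\, d_t$ on all of $T_t(f) = T_s(f)$. The only real subtlety worth checking explicitly is that the Newton--Puiseux presentation used in Definition~\ref{mettreedefn} genuinely produces the Berkovich-geometric convex hull with the metric normalized by $\nu(t)=1$, which can be verified by retracing the argument of Lemma~\ref{mettreerationalroots} with $s$ in place of $t$. This is the only place where care is needed; everything else is bookkeeping about rescaling a valuation.
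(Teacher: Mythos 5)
Your argument is correct and is essentially the approach the paper intends: the paper omits the proof, remarking only that it proceeds as in Lemma~\ref{mettreerationalroots} by making the canonical identifications inside $\mathbb{P}^{1,\mathrm{Berk}}_{\overline{K}}$ explicit, which is exactly what you do. Your observation that the underlying convex hull is intrinsic and only the normalization of the valuation changes (so that $\nu_s = n\nu$ forces $d_s = n\,d_t$) is the intended content.
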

\begin{proof}
 Omitted. Similar to the proof of the previous lemma making canonical identifications explicit.
\end{proof}

We now recall certain definitions and theorems from \cite{GGP} that will let us relate the metric tree $T(f)$ to $T(f_P^{\infty})$. In \cite{GGP}, the authors relate the `essential exponents' and certain coefficients of the Newton-Puiseux expansions of roots of $f$ to those of its `inverse' obtained by reversing the roles of $x$ and $t$. Dividing the dual Newton-Puiseux expansions by $t$ gives us Newton-Puiseux expansions of roots of $f_P^{\infty}$. We will first recall the definition of characteristic exponents and essential exponents and show how these are related to symmetries of the metric tree $T(f)$ in Lemma~\ref{mettreesymmetry}. In Theorem~\ref{expofreplacement}, we will show how the essential exponents of each irreducible factor of the replacement polynomial can be derived from the essential exponents of the corresponding original irreducible factor. In the same theorem, we will also describe how metric trees for each irreducible factor of the replacement polynomial overlap. In the setting of \cite{
GGP}, the ring $R = \mathbb{K}[[t]]$, where $\mathbb{K}$ is an algebraically closed field of characteristic $0$. In our setting, we may have $\cha(\mathbb{K}) > 0$ but the relevant results still hold since we restrict our attention to polynomials of degree $< \cha \mathbb{K}$, which in turn ensures that the Newton-Puiseux expansions have bounded denominators. 
  
\begin{defn}\label{charexponents}
 Let $\eta \in \bigcup_{n \in \Z_{> 1}, (n,p)=1} R(t^{1/n})$. The {\textup{\textsf{\color{blue}{support $S(\eta)$ of $\eta$}}}}  is the set of nonnegative rational numbers with bounded denominators $S(\eta)$ such that $\eta$ has a Newton-Puiseux expansion of the form $\eta = \sum_{m \in S(\eta)} [\eta]_m t^m$ for the chosen lifts $[\eta]_m \in R \setminus \{0\}$. \newline
 Assume further that $0 \notin S(\eta)$. The {\textup{\textsf{\color{blue}{characteristic exponents $\mathcal{E}(\eta)$ of $\eta$}}}} consists of those elements of $S(\eta)$ which, when written as quotients of integers, need a denominator strictly bigger than the lowest common denominator of the previous exponents. That is:
 \[ \mathcal{E}(\eta) \colonequals \{ l \in S(\eta) \ | \ N_l l \notin \Z \}, \quad \textup{where} \quad N_l \colonequals \min \{ N \in \N \setminus \{0\} \ | \ (S(\eta) \cap [0,l)) \subset \frac{1}{N} \Z \}. \]
 The {\textup{\textsf{\color{blue}{sequence of characteristic exponents}}}} is the set of elements of $\mathcal{E}(\eta)$ written in increasing order.
\end{defn}

\begin{remark}{\textup{The sequence of characteristic exponents is finite for any $\eta$ as in the definition above as we assumed that the support of $\eta$ consists of a set of rational numbers with bounded denominators.}}\end{remark}

\begin{eg}
 Let $R = \C[[t]]$. Then $t^{5/2}+t^{8/3}$ and $2t-t^{5/2}+t^{8/3}-3t^{7/2}+t^{23/6}$ both have the same sequence of characteristic exponents namely $\{5/2,8/3\}$.
\end{eg}
 
\begin{defn}\label{essexponents} 
Consider a set $E \subset \Q_+$ with bounded denominators and an integer $p \in \N \setminus \{0\}$. Then the {\textup{\textsf{\color{blue}{sequence $\ess(E,p) \colonequals (\ess(E,p)_l)_l$ of essential elements of $E$ relative to $p$}}}} is defined inductively by:
\begin{itemize}
 \item $\ess(E,p)_0 \colonequals \min E$, and,
 \item if $l \geq 0$, then $\ess(E,p)_l$ is defined if and only if $E$ is not contained in the abelian subgroup $\Z \{p,\ess(E,p)_0,\ldots,\ess(E,p)_{l-1}\}$ of $\Q_+$ generated by $p,\ess(E,p)_0,\ldots,\ess(E,p)_{l-1}$ , and in this case
 \[ \ess(E,p)_l \colonequals \min (E \setminus \Z \{p,\ess(E,p)_0,\ldots,\ess(E,p)_{l-1}\}).  \]
\end{itemize} 
\end{defn}

In \cite[Lemma~3.13]{GGP}, they prove the following lemma relating the characteristic exponents and the essential exponents of a series $\psi \in R[t^{1/n}]$, that we recall for the reader's convenience.
\begin{lemma}\label{essandchar}
 Let $(\alpha_1,\alpha_2,\ldots,\alpha_g)$ be the sequence of characteristic exponents of a series $\psi \in R(t^{1/n})$. Then this sequence can be obtained from the sequence of essential exponents $(\epsilon_0,\epsilon_1,\ldots,\epsilon_d)$ of $\psi$ relative to $1$ in the following way.
 \begin{itemize}
  \item If $\epsilon_0 \notin \Z$, then $g=d+1$ and $\alpha_i = \epsilon_{i-1}$ for all $i \in \{1,2,\ldots,d+1\}$.
  \item If $\epsilon \in \Z$, then $g=d$ and $\alpha_i = \epsilon_i$ for all $i \in \{1,2,\ldots,d\}$.
 \end{itemize}
\end{lemma}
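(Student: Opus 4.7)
The plan is to translate both definitions into statements about how the denominators of the exponents seen so far generate a subgroup of $\Q$. I would introduce the auxiliary quantity $D_k \colonequals \lcm(\denom(\epsilon_0), \ldots, \denom(\epsilon_{k-1}))$, where denominators are taken in lowest terms (with $D_0 = 1$); then $\Z\{1, \epsilon_0, \ldots, \epsilon_{k-1}\} = \tfrac{1}{D_k}\Z$, and the recursive clause of Definition~\ref{essexponents} becomes $\epsilon_k = \min(S(\psi) \setminus \tfrac{1}{D_k}\Z)$. This rewriting is what makes the recursive definition of the essential exponents look structurally identical to the description of a characteristic exponent as an element bringing a new prime into the denominator.

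The key intermediate identity is $N_{\epsilon_k} = D_k$ for every $k$, which I would prove by induction on $k$. One inclusion is immediate: the essential exponents $\epsilon_0, \ldots, \epsilon_{k-1}$ all lie in $S(\psi) \cap [0, \epsilon_k)$, so the LCM of denominators of that set is divisible by $D_k$. For the reverse inclusion, any $l \in S(\psi) \cap [0, \epsilon_k)$ must lie in $\tfrac{1}{D_k}\Z$, since otherwise $l$ would already have been selected as some $\epsilon_j$ with $j < k$ by the minimality built into Definition~\ref{essexponents}. Hence the LCM divides $D_k$. Once $N_{\epsilon_k} = D_k$ is established, the characteristic condition $N_{\epsilon_k} \epsilon_k \notin \Z$ becomes $\epsilon_k \notin \tfrac{1}{D_k}\Z$, which is exactly the defining property of $\epsilon_k$ as an essential exponent, so every $\epsilon_k$ with $k \geq 1$ lies in $\mathcal{E}(\psi)$.

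With the dictionary in place, I would conclude by ruling out any other characteristic exponents and then separating the two cases at $\epsilon_0$. For a non-essential $l \in S(\psi)$ lying between $\epsilon_i$ and $\epsilon_{i+1}$ (or below $\epsilon_0$, which is impossible since $\epsilon_0 = \min S(\psi)$), the same argument as in the previous paragraph shows $l \in \tfrac{1}{D_{i+1}}\Z$ and $N_l = D_{i+1}$, so $N_l l \in \Z$ and $l \notin \mathcal{E}(\psi)$. The only exponent whose characteristic status depends on the case is $\epsilon_0$: since $N_{\epsilon_0} = 1$ under the empty-LCM convention, one has $\epsilon_0 \in \mathcal{E}(\psi)$ iff $\epsilon_0 \notin \Z$. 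In the first case $\mathcal{E}(\psi) = \{\epsilon_0, \ldots, \epsilon_d\}$ so $g = d+1$ and $\alpha_i = \epsilon_{i-1}$; in the second case $\mathcal{E}(\psi) = \{\epsilon_1, \ldots, \epsilon_d\}$ so $g = d$ and $\alpha_i = \epsilon_i$.

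The main obstacle is purely bookkeeping rather than conceptual: one must be vigilant about the edge cases (such as $d = 0$ when $S(\psi) \subset \Z$, or $k = 0$ where the empty-LCM convention $N_{\epsilon_0} = 1$ is what distinguishes the two cases) and about the off-by-one shift between $(\alpha_i)$ and $(\epsilon_j)$, which flips depending on whether $\epsilon_0$ is integral. Once the dictionary $N_{\epsilon_k} = D_k$ is proven, no further Newton--Puiseux input is required and the rest is formal.
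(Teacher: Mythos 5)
Your argument is correct, but note that the paper does not prove this lemma at all: it is recalled verbatim from \cite[Lemma~3.13]{GGP} ``for the reader's convenience,'' so the paper's proof is a citation. What you supply is a self-contained elementary verification, and the mechanism you choose is the right one: identifying $\Z\{1,\epsilon_0,\ldots,\epsilon_{k-1}\}$ with $\tfrac{1}{D_k}\Z$ (via B\'ezout and the lcm of denominators) turns the recursion in Definition~\ref{essexponents} into $\epsilon_k=\min\bigl(S(\psi)\setminus\tfrac{1}{D_k}\Z\bigr)$, and the dictionary $N_{\epsilon_k}=D_k$ then makes the comparison with Definition~\ref{charexponents} purely formal. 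Both inclusions in that dictionary check out: $D_k\mid N_{\epsilon_k}$ because $\epsilon_0<\cdots<\epsilon_{k-1}<\epsilon_k$ all lie in $S(\psi)\cap[0,\epsilon_k)$ (strict monotonicity follows since $\epsilon_{k-1}\in\tfrac{1}{D_k}\Z$), and $N_{\epsilon_k}\mid D_k$ because any $l\in S(\psi)\cap[0,\epsilon_k)$ outside $\tfrac{1}{D_k}\Z$ would contradict the minimality defining $\epsilon_k$. The case split at $\epsilon_0$ via $N_{\epsilon_0}=1$ is exactly what produces the two bullets of the statement.

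One small bookkeeping point: when you rule out non-essential elements of $S(\psi)$ you only treat $l$ strictly between consecutive essential exponents (and below $\epsilon_0$), but you should also dispose of $l>\epsilon_d$. This is covered by the identical argument, using the termination clause of Definition~\ref{essexponents}: since $\epsilon_{d+1}$ is undefined, $S(\psi)\subset\Z\{1,\epsilon_0,\ldots,\epsilon_d\}=\tfrac{1}{D_{d+1}}\Z$, whence $N_l=D_{d+1}$ and $N_l\,l\in\Z$, so such $l$ is not characteristic. With that case added, the proof is complete.
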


We will now state a theorem that tells us how to build the metric tree $T(f)$ of $f$ from the Newton-Puiseux expansions of the roots of $f$.

\begin{defn}
 We will use the notation introduced in Section~\ref{setup}. For each $P \in A_{\mathrm{bad}}$, let
 \[ \gamma_P^{\mathrm{max}} \colonequals \begin{cases}
                           \max \{\lambda_i/n_i \ | \ i \in C_P^{<1} \} &\quad \textup{if} \ C_P^{\geq 1} = \emptyset \\ 1 &\quad \textup{if} \ C_P^{\geq 1} \neq \emptyset.
                          \end{cases}
\]
Let $S_f$ be the convex hull of $\{ \zeta \} \cup \{ \zeta_P^{\gamma_P} \ | \ P \in A_{\mathrm{bad}} \}$.
\end{defn}

\begin{lemma}\label{retractionontoS}
Let $r \colon \mathbb{P}^{1,\mathrm{Berk}}_{\overline{K}} \rightarrow S_f$ denote the canonical retraction map. Under this retraction, any root of $g_i$ for $i \in C_P^{\geq 1}$ retracts to $\zeta_P^1$ and the roots in $S_{P,a/b}$ retract to $\zeta_P^{a/b}$ for every $a/b \in \mathcal{V}_P$.
\end{lemma}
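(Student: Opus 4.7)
The plan is to combine an explicit description of $S_f$ as a star-shaped subtree of $\mathbb{P}^{1,\mathrm{Berk}}_{\overline{K}}$ with the standard fact that the canonical retraction $r$ onto a closed connected subtree sends a point $x$ to the unique point of $S_f$ at which any geodesic from $x$ into $S_f$ first enters it. The whole argument is essentially a computation of branch points of geodesics in terms of valuations of differences of roots.

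First I would verify that $S_f$ is a star centered at $\zeta$: since the lifts $a_P \in R$ were chosen in Section~\ref{setup} to have pairwise distinct reductions in $\mathbb{P}^1_k$, the tangent directions at the Gauss point $\zeta$ toward the rigid points $a_P$ are pairwise distinct. Hence the segments $\{\zeta_P^s : s \in [0,\gamma_P^{\max}]\}$ for $P \in A_{\mathrm{bad}}$ meet only at $\zeta$, and $S_f$ is the disjoint union of $\{\zeta\}$ with these half-open legs.

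Next I would invoke the standard description of geodesics in the Berkovich projective line: for any type $1$ point $\alpha \in \overline{K}$ distinct from $a_P$, the geodesic from $\alpha$ to $\zeta$ coincides with the $\zeta_P^s$-ray precisely on the sub-segment $s \in [0,\nu(\alpha - a_P)]$. Thus the path from $\alpha$ first enters the $a_P$-leg of $S_f$ at the point $\zeta_P^{\min(\nu(\alpha - a_P),\,\gamma_P^{\max})}$, and this is $r(\alpha)$.

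It now remains to compute this minimum in the two cases. Using the convention from Section~\ref{stricttransformeqn} that $a_{P_i} = a_P$ after translation, if $g_i \in C_P^{\geq 1}$ then every root $\alpha$ of $g_i$ satisfies $\nu(\alpha - a_P) = \lambda_i/n_i \geq 1$, while $C_P^{\geq 1} \neq \emptyset$ forces $\gamma_P^{\max} = 1$, so the minimum equals $1$ and $r(\alpha) = \zeta_P^1$. If instead $g_i \in C_P^{<1}$ and $\lambda_i/n_i = a/b$, then any $\alpha \in S_{P,a/b}$ satisfies $\nu(\alpha - a_P) = a/b$, and by definition $\gamma_P^{\max} \geq a/b$; the minimum is $a/b$ and $r(\alpha) = \zeta_P^{a/b}$. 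There is no real obstacle here: the argument is bookkeeping around the definition of $\gamma_P^{\max}$, and the only potential pitfall is to ensure the star-shape of $S_f$, which is guaranteed by the distinctness of the reductions $\overline{a_P}$.
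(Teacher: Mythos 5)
Your argument is correct, and it takes a more direct route than the paper. You work entirely inside $\mathbb{P}^{1,\mathrm{Berk}}_{\overline{K}}$: you observe that $S_f$ is a star at $\zeta$ whose legs point toward the pairwise distinct residue classes $a_P$, use the standard description of the retraction onto a closed connected subtree as the first entry point of the geodesic toward $\zeta$, and then reduce everything to the identity $r(\alpha)=\zeta_P^{\min(\nu(\alpha-a_P),\,\gamma_P^{\mathrm{max}})}$, which you evaluate using $\nu(\alpha-a_P)=\lambda_i/n_i$ for roots of an irreducible factor specializing to $P$ (together with $\nu(\alpha-a_Q)=0$ for $Q\neq P$, which is what keeps the geodesic off the other legs until it reaches $\zeta$). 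The paper instead first passes to the totally ramified extension $K(t^{1/n})$ so that all roots become rational, and then deduces the statement by repeatedly applying Lemma~\ref{mettreerationalroots} (the gluing description of $T(f)$ when the roots are $K$-rational) together with the rescaling Lemma~\ref{changeofscale}. The two arguments encode the same geometric fact -- that the paths from $\zeta$ to two points share an initial segment of length equal to the valuation of their difference -- but yours avoids the base change and the auxiliary lemmas, at the cost of invoking the general retraction formalism for Berkovich trees, while the paper's route stays within the combinatorial toolkit (rational-root trees plus scaling) that it has already set up and reuses elsewhere. The only point worth making fully explicit in your write-up is the one you flag in passing: beyond the branch point the geodesic from $\alpha$ meets no leg of $S_f$ other than the $a_P$-leg, precisely because $\overline{a_P}\neq\overline{a_Q}$ forces $\nu(\alpha-a_Q)=0$ for $Q\neq P$; with that sentence added, the proof is complete.
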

\begin{proof}
 First pass to a cyclic extension $L = K(t^{1/n})$ to make all roots of $f$ rational. Then $n = kb$. If $s=t^{1/n}$, then the elements of $S_{P,a/b}$ are precisely the roots whose $s$-adic power series expansions begin with $s^{ka}$ and the roots of $\tilde{g}_i$ for $i \in C_P^{\geq 1}$ begin with $s^m$ for some $m \geq n$. We then combine Lemma~\ref{changeofscale} with a repeated application of Lemma~\ref{mettreerationalroots} to get the desired result.
\end{proof}

We will now show that the subtree $\overline{C}_i$ of $T_{P,a/b}$ from Lemma~\ref{mettreesymmetry} is naturally isomorphic to the metric tree of a polynomial over $K(t^{1/b})$. Recall that we proved in Lemma~\ref{mettreesymmetry}~(d) that every such subtree is the hull of $\zeta_P^{a/b}$ and a union of certain $G'$ orbits of roots of $f$. 
\begin{lemma}\label{canonicalsubtreeidentification}  Fix a subtree $\overline{C}_i$ of $T_{P,a/b}$ at $\zeta_{P}^{a/b}$ like in Section~\ref{Galois}, Lemma~\ref{mettreesymmetry}, and let $ut^{a/b}$ be the corresponding leading order term like in Lemma~\ref{mettreesymmetry}~(d). Pick a set of representatives $\alpha_1,\alpha_2,\ldots,\alpha_r$ for each $G'$ orbit of roots corresponding to $\overline{C}_i$, and let $\eta_1(t^{1/n_1}),\eta_2(t^{1/n_2}), \ldots, \eta_r(t^{1/n_r})$ be the corresponding Newton-Puiseux expansions of $\alpha_i-a_P$. Let $s = t^{1/b}$ and let $n_i' = n_i/b$. Fix $l$ with $1 \leq l \leq r$. 
 \begin{enumerate}[\upshape (a)]  
  \item  The minimal polynomial of $\alpha_l-a_P$ over $K(s)$ has degree $n_i/b$ and its roots have Newton-Puiseux expansions $\{ \eta_l( \omega^{bj} s^{1/{n_i'}}) \ | \ 0 \leq j \leq (n_i/b)-1 \}$. In particular, if the characteristic exponents of $\alpha_l-a_P$ over $K$ are $\{\frac{a_0}{b_0},\frac{a_1}{b_0b_1},\ldots,\frac{a_g}{b_0b_1 \ldots b_g}\}$ with $\gcd(b_i,a_i) = 1$, then $a_0=a,b_0=b$ and the characteristic exponents of its minimal polynomial over $K(s)$ are $\{\frac{a_1}{b_1},\ldots,\frac{a_g}{b_1 \ldots b_g}\}$.
  \item Let $j_l$ be the minimal polynomial of $(\alpha_l-a_P-us^a)/s^a$ over $K(s)$.  The characteristic exponents of the roots of $j_l$ over $K(s)$ are $\{\frac{a_1}{b_1}-a,\ldots,\frac{a_g}{b_1 \ldots b_g}-a\}$. Let $j = \prod_{l=1}^r j_l$ and let $(T(j),d)$ the corresponding metric tree over the field $K(s)$. Then $\overline{C}_i$ is isomorphic as a rooted metric tree to $(T(j),\frac{1}{b}d)$ (i.e., the point $\zeta_{P,a/b}$ maps to the Gauss point in $T(j)$).  
 \end{enumerate}
\end{lemma}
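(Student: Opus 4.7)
The plan for part (a) is to compute $\Gal(K(t^{1/n_l})/K(s))$ explicitly. By Lemma~\ref{mettreesymmetry}(a), $K(t^{1/n_l})/K$ is cyclic of order $n_l$ with generator $\sigma \colon t^{1/n_l} \mapsto \omega\, t^{1/n_l}$. Since $s = t^{1/b}$ is fixed exactly by the subgroup $\langle \sigma^b \rangle$ of order $n_l' = n_l/b$, this pins down both the degree and the list of conjugates as $\{\eta_l(\omega^{bj} s^{1/n_l'}) \mid 0 \leq j \leq n_l' - 1\}$. To extract the characteristic exponents I would rewrite the Newton--Puiseux expansion in $s$: the identity $s^{1/n_l'} = t^{1/n_l}$ means every exponent $m/n_l$ in $t$ becomes $m/n_l' = bm/n_l$ in $s$, i.e.\ all exponents are scaled by a factor $b$. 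Since the leading term of $\alpha_l - a_P$ is $u t^{a/b} = u s^a$ with $\gcd(a,b) = 1$, one reads off $a_0 = a$ and $b_0 = b$; applying the $b$-scaling to Definition~\ref{charexponents}, the first exponent $a_0/b_0 = a/b$ becomes the integer $a$ over $K(s)$ and is no longer characteristic, while each subsequent $a_k/(b_0 b_1 \cdots b_k)$ becomes $a_k/(b_1 \cdots b_k)$, giving the claimed list.

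For part (b), I would apply the affine $K(s)$-automorphism $\Phi \colon x \mapsto (x - a_P - u s^a)/s^a$ of $\mathbb{P}^1$. On Newton--Puiseux expansions $\Phi$ strips the leading term $u s^a$ and shifts every remaining exponent down by $a$, so the characteristic exponents of $j_l$ over $K(s)$ are obtained from those computed in part~(a) by subtracting $a$, yielding the stated list $\{a_1/b_1 - a, \ldots, a_g/(b_1 \cdots b_g) - a\}$. Geometrically, $\Phi$ is an isometry of $\mathbb{P}^{1,\mathrm{Berk}}_{\overline{K(s)}}$ in the $s$-adic metric. By Lemma~\ref{mettreesymmetry}(d), the roots of $f$ indexed by the $G'$-orbits comprising $\overline{C}_i$ are exactly those whose $s$-adic expansion begins with $a_P + u s^a$; these map under $\Phi$ to the roots of $j$, and $\Phi$ carries the type-$2$ point $\zeta_P^{a/b}$ (the unique type-$2$ point at $s$-adic distance $a$ from the Gauss point in the direction specializing to $a_P + u s^a$) onto the Gauss point of $\mathbb{P}^{1,\mathrm{Berk}}_{\overline{K(s)}}$. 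Hence $\Phi$ restricts to an isomorphism of rooted metric trees from $\overline{C}_i$ (endowed with its $s$-adic metric) onto $T(j)$.

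The final reconciliation is the metric rescaling: $\overline{C}_i$ as a subtree of $T(f)$ carries the $t$-adic metric $d$ (with $\nu(t) = 1$), whereas $T(j)$ is built using the $s$-adic metric (with $\nu(s) = 1$). Lemma~\ref{changeofscale} applied with $n = b$ says that passing from $t$ to $s = t^{1/b}$ multiplies all distances by $b$, so the $s$-adic metric on $\overline{C}_i$ equals $b \cdot d$; pulling back the metric of $T(j)$ along $\Phi$ therefore gives $(T(j), \tfrac{1}{b}\, d_{T(j)})$ on $\overline{C}_i$, which is the claim. The principal obstacle I anticipate is the bookkeeping between the $t$-adic and $s$-adic normalizations together with correctly tracking which $G'$-orbits of roots correspond to the chosen component $\overline{C}_i$; the remaining content is routine Galois theory for tame cyclic extensions and elementary manipulation of Newton--Puiseux exponents, both of which are essentially packaged in Lemma~\ref{mettreesymmetry}.
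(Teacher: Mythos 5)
Your proposal is correct and takes essentially the paper's own route: part (a) is exactly the identification of the $K(s)$-conjugates with the $G'=\langle\sigma^b\rangle$-orbit coming from Lemma~\ref{mettreesymmetry}, followed by the direct rescaling of the exponents by $b$, and part (b) is the change of variables defining $j_l$ combined with the component/leading-coefficient identifications of Lemma~\ref{mettreesymmetry} and the metric rescaling of Lemma~\ref{changeofscale}. The only difference is presentational: you spell out (via the affine $K(s)$-map being an isometry of the Berkovich hyperbolic metric carrying $\zeta_P^{a/b}$ to the Gauss point) what the paper compresses into ``part (b) follows from part (a), these identifications and Lemma~\ref{changeofscale}.''
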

\begin{proof} \hfill
\begin{enumerate}
 \item In Lemma~\ref{mettreesymmetry}(d) we showed that if $\sigma$ is the generator of $G$, then $\sigma^b$ is the generator of $G'$ and that the subset $S_{P,a/b}$ of the roots of $f$ is a union of $G'$-orbits. The power series listed here are precisely the elements of the $G'$-orbit listed explicitly, and the computation of their essential exponents is a direct calculation.
 \item In Lemma~\ref{mettreesymmetry}(e,f) we showed that the convex hull of the roots of $f$ in $S_{P,a/b}$ is a metric tree $T_{P,a/b}$ rooted at $\zeta_{P,a/b}$, and that the connected components of $T_{P,a/b} \setminus \{\zeta_{P,a/b}\}$ are in bijective correspondence with the coefficients of the $t^{a/b}$ of the Newton-Puiseux expansions of the elements of $S_{P,a/b}$. Part(b) follows from part(a) and these identifications and Lemma~\ref{changeofscale}. \qedhere
\end{enumerate}  
\end{proof}

\begin{defn}\label{dualseries}[Dual series]
 Two units $\phi(t),\check{\phi}(t) \in k[[t]]^*$ are said to be {\textup{\textsf{\color{blue}{dual}}}} to each other if we have
 \[ t \phi (t \check{\phi}(t)) = t \quad \quad \quad \text{and} \quad \quad \quad t \check{\phi} (t \phi(t)) = t  .\]
\end{defn}
Dual series exist; the dual of $\phi(t)$ is the inverse image of $t$ under the continuous $k$-automorphism of $k[[t]]$ defined by sending $t$ to $t \phi(t)$. 

\begin{defn}
 Given two irreducible polynomials $g,g'$ of degrees $< \cha k$, the {\textup{\textsf{\color{blue}{maximal exponent of contact $\kappa_{g,g'}$ of $g$ and $g'$}}}} is defined to be 
 \[ \kappa_{g,g'} \colonequals \max\ \{ \nu(\alpha-\beta) \ | \ g(\alpha)=g'(\beta) = 0 \}.  \]
\end{defn}

\begin{lemma}\label{exponentoverlap}
 Let $g$ and $g'$ be irreducible polynomials in $R[x]$ such that $2 \leq n \colonequals \deg g < \cha k$, $2 \leq n' \colonequals \deg g' < \cha k$ and $\nu(g(0)) > 0$ and $\nu(g'(0)) > 0$. Let the essential exponents of $g$ and $g'$ be $\{e_0 \colonequals \frac{m}{n},e_1,\ldots,e_h\}$ and $\{e_0' \colonequals \frac{m'}{n'},e_1',\ldots,e_{h'}'\}$ respectively. Let $a_i,b_i$ for $0 \leq i \leq h$ be the positive integers uniquely defined by the relations $e_i = \frac{a_i}{b_1b_2 \ldots b_i}$ and $\gcd(b_i,a_i) = 1$. Assume that $e_r < \kappa \colonequals \kappa_{g,g'} \leq e_{r+1}$ for some $r > 0$. Then,
 \begin{enumerate}[\upshape (a)]
  \item $e_q = e_q'$ for all $q$ such that $0 \leq q \leq r$, 
  \item $\frac{m}{\gcd(m,ne_q)} = \frac{m'}{\gcd(m',n'e_q')} = \frac{a_1 b_2 \ldots b_q}{\gcd(a_1 b_2 \ldots b_q,a_q)}$ and $\frac{ne_q}{\gcd(m,ne_q)} = \frac{n'e_q'}{\gcd(m',n'e_q')} = \frac{a_q}{\gcd(a_1 b_2 \ldots b_q,a_q)}$ for all $q$ such that $0 \leq q \leq r$.
  \item Fix a root $\beta$ of $g'$. Then the multiset $\{ \nu(\alpha-\beta) \ | \ g(\alpha) = 0 \}$ consists of
  \begin{itemize}
   \item $e_q$ occuring $(b_q-1)b_{q+1} b_{q+2} \ldots b_h$ times for $0 \leq q \leq r$, and,
   \item $\kappa$ occuring $b_{r+1} b_{r+2} \ldots b_h$ times.
  \end{itemize}
 \end{enumerate}
\end{lemma}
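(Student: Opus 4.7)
The plan is to prove all three parts simultaneously by choosing a root $\alpha$ of $g$ that achieves the maximal contact with the given root $\beta$ of $g'$, so $\nu(\alpha - \beta) = \kappa$, and then reading everything off the Newton-Puiseux expansions. Since $\deg g, \deg g' < \cha k$, Theorem~\ref{mettreesymmetry}(a,b) applies: the remaining roots of $g$ are the Galois conjugates $\sigma_j(\alpha)$ for $0 \leq j < n$, where $\sigma_j$ acts by $t^{1/n} \mapsto \omega^j t^{1/n}$.

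For part (a), the condition $\nu(\alpha - \beta) = \kappa > e_r$ means that the Newton-Puiseux expansions of $\alpha$ and $\beta$ coincide in every term of exponent $\leq e_r$. By Definition~\ref{charexponents}, the essential exponents $e_0, \ldots, e_r$ are intrinsic to the positions where a new denominator first appears in the support of $\alpha$ truncated at $e_r$; the same statement applied to $\beta$ yields $e_q' = e_q$ for $0 \leq q \leq r$. Part (b) is then a routine arithmetic identity. Substituting $e_q = a_q/(b_1 \cdots b_q)$ and using $\gcd(a_q, b_q) = 1$ together with the standard factorization of $n$ as the denominator-jump product determined by the essential exponents, one computes $\gcd(m, n e_q)$ inductively on $q$ and rearranges to obtain both claimed expressions; symmetry of the formula in $(g, e_q)$ versus $(g', e_q')$ follows from part (a).

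For part (c), the key identity is
\[ \nu\bigl(\sigma_j(\alpha) - \beta\bigr) \geq \min\bigl(\nu(\sigma_j(\alpha) - \alpha),\ \kappa\bigr), \]
with equality whenever the two valuations differ. Theorem~\ref{mettreesymmetry} applied to the irreducible polynomial $g$ shows that the multiset $\{\nu(\sigma_j(\alpha) - \alpha) : 0 \leq j < n,\ \sigma_j \neq \mathrm{id}\}$ consists of each $e_q$ occurring exactly $(b_q - 1)b_{q+1}\cdots b_h$ times (the connected component sizes of $T_{P,a/b}^g$ at each branching level). Those indices $j$ with $\nu(\sigma_j(\alpha) - \alpha) = e_q$ for some $q \leq r$ contribute $\nu(\sigma_j(\alpha) - \beta) = e_q$ directly by the identity, producing the first family. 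The remaining indices (including $j = 0$) satisfy $\nu(\sigma_j(\alpha) - \alpha) \geq e_{r+1} \geq \kappa$, hence $\nu(\sigma_j(\alpha) - \beta) \geq \kappa$; the \emph{maximality} of $\kappa$ in the definition of $\kappa_{g,g'}$ forces equality. A telescoping sum gives that the number of such indices is $b_{r+1}\cdots b_h$, completing the multiset description.

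The hard part will be the borderline subcase $\kappa = e_{r+1}$ in part (c): here there are conjugates with $\nu(\sigma_j(\alpha) - \alpha) = \kappa$, and a priori the leading $t^\kappa$-terms of $\sigma_j(\alpha) - \alpha$ and $\alpha - \beta$ could cancel, pushing $\nu(\sigma_j(\alpha) - \beta)$ strictly above $\kappa$. The hypothesis $\kappa = \kappa_{g,g'}$ is exactly what forbids this: such cancellation would exhibit a root of $g$ with strictly larger contact with $\beta$, contradicting maximality. This single observation bypasses a delicate coefficient-by-coefficient analysis and allows the combinatorial bookkeeping to proceed uniformly in both subcases $\kappa = e_{r+1}$ and $\kappa < e_{r+1}$.
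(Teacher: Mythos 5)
Your parts (a) and (b) run along exactly the same lines as the paper: pick roots $\alpha$ of $g$ and $\beta$ of $g'$ with $\nu(\alpha-\beta)=\kappa$ (you should say explicitly that such an $\alpha$ exists \emph{for the given} $\beta$ by transitivity of the Galois action on the roots of $g'$), observe the expansions agree below $\kappa$ so the essential exponents below $\kappa$ coincide, and then do the gcd manipulation, which the paper writes out and you declare routine (it is). For part (c) you take a genuinely different route. The paper proves (c) by induction on $\deg g$ (following \cite{Wall}): it counts the $(b_0-1)b_1\cdots b_h$ conjugates whose leading coefficient differs from that of $\beta$, then strips off the common leading term, rescales, passes to $K(t^{1/b_0})$ via Lemma~\ref{canonicalsubtreeidentification}(b), and recurses. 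You instead fix one root $\alpha$ with $\nu(\alpha-\beta)=\kappa$ and read off every other contact order from the ultrametric identity $\nu(\sigma_j\alpha-\beta)=\min\bigl(\nu(\sigma_j\alpha-\alpha),\kappa\bigr)$ when the two terms have different valuation, and, when $\nu(\sigma_j\alpha-\alpha)\geq\kappa$ (including the delicate borderline case $\kappa=e_{r+1}$), use the maximality $\nu(\alpha'-\beta)\leq\kappa$ for \emph{every} root $\alpha'$ of $g$ to force equality; the telescoping count $1+\sum_{q>r}(b_q-1)b_{q+1}\cdots b_h=b_{r+1}\cdots b_h$ is correct. This is shorter and avoids the recursive bookkeeping. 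The one ingredient you invoke but do not prove is the self-contact multiset of an irreducible branch, namely that $\{\nu(\sigma_j\alpha-\alpha):\sigma_j\neq\mathrm{id}\}$ consists of each characteristic exponent $e_q$ with multiplicity $(b_q-1)b_{q+1}\cdots b_h$; this is not literally contained in Theorem~\ref{mettreesymmetry}, which only analyzes the first branching level, but it follows in a few lines from the explicit conjugates in Theorem~\ref{mettreesymmetry}(b) together with Definition~\ref{charexponents}: $\nu\bigl(\eta(\omega^j t^{1/n})-\eta(t^{1/n})\bigr)$ is the smallest $l/n$ in the support with $jl\not\equiv 0 \bmod n$, which can only be a characteristic exponent and yields the stated multiplicities by counting which $j$ survive each level. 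With that short verification supplied, your argument is complete; what the paper's inductive proof buys is reuse of the same recursive mechanism (Lemma~\ref{canonicalsubtreeidentification}) that powers Theorem~\ref{expofreplacement}, while yours buys a more elementary and self-contained count.
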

\begin{proof} \hfill
 \begin{enumerate}[\upshape (a)]
  \item Let $\alpha \colonequals \sum_{q \in S(\alpha)} [\alpha]_q t^q$ and $\beta \colonequals \sum_{q \in S(\beta)} [\beta]_q t^q$ be roots of $g_i$ and $g_j$ respectively such that $\nu(\alpha-\beta) = \kappa$. Then $[\alpha]_q = [\beta]_q$ for all $q < \kappa$ and $[\alpha]_\kappa \neq [\beta]_\kappa$. In particular, $S(\alpha)_{< \kappa} \colonequals \{ q \ | \ [\alpha]_q \neq 0, q < \kappa \} = S(\beta)_{< \kappa} \colonequals \{ q \ | \ [\beta]_q \neq 0, q < \kappa \}$. Since the essential exponents of $\alpha$ that are less than $< \kappa$ only depend on the set $S(\alpha)_{< \kappa}$, it follows that $\alpha$ and $\beta$ have the same set of essential exponents less than $\kappa$, i.e, $e_q = e_q'$ for all $q$ such that $0 \leq q \leq r$.
  \item By the definition of essential exponents there exist positive integers $a_i,b_i,a_j',b_j'$ for $0 \leq i \leq g$, $0 \leq j \leq h$ such that 
  \begin{itemize}
   \item $e_i = \frac{a_i}{b_1b_2 \ldots b_i}$, $e_j' = \frac{a_j}{b_1b_2 \ldots b_j}$ for all $i,j$ such that $0 \leq i \leq g$, $0 \leq j \leq h$,
   \item $a_i = a_i',b_i = b_i'$ for all $0 \leq i \leq r$, and
   \item $n = b_1b_2 \ldots b_g,n' = b_1' b_2' \ldots b_h'$.
  \end{itemize}
  Since $m/n = a_1/b_1$ and $n = b_1b_2 \ldots b_g$, we have 
  \begin{align*} \frac{m}{\gcd(m,ne_q)} &= \frac{a_1 b_2 \ldots b_g}{\gcd(a_1 b_2 \ldots b_g, b_1 b_2 \ldots b_g \frac{a_q}{b_1 b_2 \ldots b_q})} \\
  &= \frac{a_1 b_2 \ldots b_q b_{q+1} b_{q+2} \ldots b_g}{\gcd(a_1 b_2 \ldots b_q b_{q+1} b_{q+2} \ldots b_g, b_{q+1} b_{q+2} \ldots b_g a_q)} \\
  &= \frac{a_1 b_2 \ldots b_q}{\gcd(a_1 b_2 \ldots b_q,a_q)} .\end{align*}
  A similar calculation shows $\frac{m'}{\gcd(m',n'e_q')} = \frac{a_1' b_2' \ldots b_q'}{\gcd(a_1' b_2' \ldots b_q',a_q')}$. Since $a_i = a_i'$ and $b_i = b_i'$ for all $i$ such that $0 \leq i \leq r$, it follows that $\frac{m}{\gcd(m,ne_q)} = \frac{m'}{\gcd(m',n'e_q')}$ for all $q \leq r$.
  We can similarly show that that for all $q \leq r$ we have
\[ \frac{ne_q}{\gcd(m,ne_q)} = \frac{a_q}{\gcd(a_1 b_2 \ldots b_q,a_q)} = \frac{a_q'}{\gcd(a_1' b_2' \ldots b_q',a_q')} = \frac{n'e_q'}{\gcd(m',n'e_q')}.\]
\item This proof can also be found in \cite[Proposition~4.1.3]{Wall}, but we are reproducing it here with our notation for the reader's convenience. Let $d$ be the smallest positive integer such that all roots of $g$ are defined over $k[[t^{1/n}]]$. Since $g$ is irreducible and $\deg g < \cha k$, it follows that $d=n$. By the definition of characteristic exponents, we also have $d = b_0b_1 \ldots b_h$. In the rest of the proof, we will freely use $n = b_0b_1 \ldots b_h$.

The proof of part(c) will be an inductive argument on $\deg(g)$ using Lemma~\ref{canonicalsubtreeidentification}~(b), as we now explain. Since the Galois group acts transitively on the roots of $g'$, it follows that for every root $\beta$ of $g'$, there exists a root $\alpha$ of $g$ such that $\nu(\alpha-\beta) = \kappa$. Fix such an $\alpha$ for the chosen $\beta$, and write down Newton-Puiseux expansions $\alpha(t^{1/n}) \colonequals \sum_{q \in S(\alpha)} [\alpha]_q t^q, \beta(t^{1/n'}) \colonequals \sum_{q \in S(\beta)} [\beta]_q t^q$. The other roots of $g$ have the form $\alpha(\omega^i t^{1/n})$ where $\omega$ is a chosen $n^{\mathrm{th}}$ root of unity and $i$ runs between $0$ and $n-1$. Since $\kappa > m/n = m'/{n'} = a_0/b_0$, we have $[\alpha]_{a_0/b_0} = [\beta]_{a_0/b_0}$. This in turn implies that the roots $\alpha'$ of $g$ with a leading order term different from that of $\beta$ (i.e, have $\nu(\alpha'-\beta) = a_0/b_0$) are those of the form $\{ \alpha(\omega^i t^{1/n}) \ | \ 0 \leq i \leq n-1,b_0 \nmid i \}$, and there are precisely 
$n-(n/b_0) = (b_
0-1)b_1b_2 \ldots b_h$ such roots.  

For the inductive step, we see that the roots $\alpha'$ of $g$ such that $\nu(\alpha'-\beta) > a_0/b_0$ are precisely those corresponding to the subtree $\overline{C}_i$ in Lemma~\ref{canonicalsubtreeidentification}. In the notation of that Lemma, our assumptions guarantee that $a_P = 0$, and $u = [\alpha]_{a_0/b_0} = [\beta]_{a_0/b_0}$. Recall that we proved in Lemma~\ref{canonicalsubtreeidentification}~(b) that the set $\{ (\alpha'-ut^{a_0/b_0})/t^{a_0/b_0} \ | \ g(\alpha') = 0, \ [\alpha']_{a_0/b_0} = [\alpha]_{a_0/b_0} = [\beta]_{a_0/b_0} \}$ are precisely the Galois conjugates of $(\alpha-ut^{a_0/b_0})/t^{a_0/b_0}$ over $k[[t^{1/b_0}]]$, and the essential exponents of the corresponding minimal polynomial are $\{ \frac{a_1}{b_1}, \frac{a_2}{b_1b_2}, \ldots, \frac{a_h}{b_1b_2 \ldots b_h} \}$.  Replacing $\beta$ by $(\beta-ut^{a_0/b_0})/t^{a_0/b_0}$ and the roots $\alpha'$ with $\nu(\alpha'-\beta) > a_0/b_0$ by $(\alpha'-ut^{a_0/b_0})/t^{a_0/b_0}$ and working over $K(t^{1/b_0})$ combined with the induction 
hypothesis gives us the desired result. \qedhere
 \end{enumerate}
\end{proof}

\begin{thm}\label{expofreplacement}
Let $h_i$ be the replacement polynomial for $g_i$ as in Definition~\ref{defnreppoly}. Let the sequence of characteristic exponents of any root of $g_i$ (which equals the sequence of characteristic/essential exponents of $\tilde{g}_i$) be $\{\frac{m_i}{n_i},e_1,\ldots,e_d\}$ with $n_i = \deg g_i$.
 \begin{enumerate}[\upshape (a)]
  \item The sequence of essential exponents of any root of $h_i$ relative to $1$ are $\{\frac{n_i}{m_i}-1,\frac{n_i}{m_i}(e_1+1)-2,\ldots,\frac{n_i}{m_i}(e_d+1)-2\}$.
  \item The replacement polynomials $h_i$ are irreducible. Let $i \neq j$. 
  \begin{itemize}
   \item If $m_i/n_i < m_j/n_j$, then $\kappa_{g_i,g_j} = m_i/n_i$ and $\kappa_{h_i,h_j} = (n_j/m_j)-1$.
   \item If $m_i/n_i = m_j/n_j$, then $\kappa_{h_i,h_j} = \frac{n_i}{m_i}(\kappa_{g_i,g_j}+1)-2$.
  \end{itemize}
  \item Let $P \in A_\mathrm{bad}, m/n \in \mathcal{V}_P$. Fix a subtree $(\overline{C},d)$ of $T_{P,m/n}$ corresponding to roots of $f(x-a_P)$ with leading order term $ut^{m/n}$ for some $u$ like in Lemma~\ref{mettreesymmetry}. Let $v \in R$ such that $v^m = u^n$. Then the subtree $\overline{D}$ of $T(f_P^{\mathrm{\infty}})$ obtained by taking the hull of $\zeta_0^{(n/m)-1}$ and the roots of $f_P^{\infty}$ with leading order term $vt^{(n/m)-1}$ is isomorphic to $(\overline{C},(n/m)d)$.
  \item Fix a set of representatives $\{u_1 t^{m/n}, \ldots, u_l t^{m/n} \}$ for the Galois orbits of leading order terms of roots of $f(x-a_P)$ of valuation $m/n$ like in Lemma~\ref{mettreesymmetry}~(e). For each such $u_i$, let $\overline{D}_{u_i}$ be the subtree of $T(f_P^{\infty})$ described in the previous part of the theorem. The subtree $T_{0,(n/m)-1}$ of $T(f_P^{\infty})$ is isometrically isomorphic to the tree obtained by gluing together the following subtrees ($lm$ in total) at the common point $\zeta_0^{(n/m)-1}$: for each value of $u_i$, take $m$ subtrees each isometrically isomorphic to the subtree $\overline{D}_{u_i}$.
 \end{enumerate}
\end{thm}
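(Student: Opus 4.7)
The unifying principle is that the replacement operation $g_i \rightsquigarrow h_i$ is, after the change of coordinates $(x,t/x)\mapsto(t,x)$, the classical Newton-Puiseux inversion applied to the germ $\tilde g_i = 0$, composed with division by $x$ (which shifts every Puiseux exponent down by $1$). Accordingly, the whole theorem will be derived from \emph{Abhyankar's inversion formula} and the refinement of it proven in \cite{GGP}, applied to the germs cut out by $\tilde g_i$ at $Q$. My plan is first to establish (a) and (b) analytically from inversion, and then to translate these statements into the metric-tree language of Section~\ref{secmettree} to obtain (c) and (d).

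For part (a), I parametrize $\tilde g_i = 0$ by a Puiseux branch $x = \eta(t^{1/n_i})$ whose characteristic exponents are $\{m_i/n_i,e_1,\ldots,e_d\}$. Abhyankar's formula, in the form recorded in \cite{GGP}, produces the Puiseux inverse $t = \check\eta(x^{1/m_i})$ and describes its essential exponents: $m_i/n_i \mapsto n_i/m_i$ and $e_r \mapsto (n_i/m_i)(e_r+1)-1$ for $r\ge 1$. By Lemma~\ref{eqforstricttransform} and Definition~\ref{defnreppoly}, the root of $h_i$ corresponding to this branch is (up to a unit from Weierstrass preparation) the series $\check\eta(x^{1/m_i})/x$, whose exponents are obtained from those of $\check\eta$ by subtracting $1$. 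This yields exactly the list in the statement. For part (b), irreducibility of $h_i$ is already established (Lemma~\ref{degrep} and Remark~\ref{rootscoefficients}). The contact exponent computation splits into the two cases: if $m_i/n_i<m_j/n_j$, an elementary Newton polygon argument gives $\kappa_{g_i,g_j}=m_i/n_i$ directly, and a short calculation with leading exponents of the inverses gives $\kappa_{h_i,h_j}=(n_j/m_j)-1$. When $m_i/n_i=m_j/n_j$, I invoke the two-variable refinement of Abhyankar's formula from \cite{GGP}, which records the behavior of the maximal contact between two distinct branches under inversion; this is the same transformation $e\mapsto(n/m)(e+1)-1$ as in (a), applied to $\kappa_{g_i,g_j}$, and subtracting $1$ (the passage from $t$ to $t/x$) yields the stated formula.

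For part (c), the change of variables $(x,t/x)\mapsto(t,x)$ at the point $Q$ exchanges the roles of $x$ and $t$, so the valuation used to construct the metric tree changes: what was an exponent $\alpha$ in $t^{1/n_i}$ becomes an exponent $(n_i/m_i)\alpha$ in the new parameter, by Lemma~\ref{changeofscale}. The subtree $\overline C$ of $T_{P,m/n}$ was identified in Lemma~\ref{canonicalsubtreeidentification} as the convex hull attached to a single leading coefficient $u$ of $t^{m/n}$; under inversion, $x=ut^{m/n}(1+\cdots)$ yields $t=v\,x^{n/m}(1+\cdots)$ with $v$ determined by $u^n=v^{-m}$ (equivalently, after fixing a compatible $m$th root, by $v^m=u^n$, which makes sense in $R$ since $k$ is algebraically closed and $\operatorname{char} k \nmid mn$). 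Dividing by $x$ produces the leading term $v\,x^{(n/m)-1}$ of the corresponding root of $f_P^{\infty}$, so the subtree $\overline D$ of $T(f_P^{\infty})$ cut out by this leading term is the isometric image of $\overline C$ with metric scaled by $n/m$. For part (d), the Galois-orbit analysis of Lemma~\ref{mettreesymmetry} controls the gluing: among the roots of $f(x-a_P)$ of valuation $m/n$, there are $l$ Galois orbits of leading coefficients $u_1,\ldots,u_l$, so part (c) produces $l$ subtrees $\overline D_{u_1},\ldots,\overline D_{u_l}$ of $T(f_P^{\infty})$, each scaled from its $\overline C_{u_i}$ counterpart. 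For each $u_i$ the equation $v^m=u_i^n$ has exactly $m$ solutions in $R$ (since $\gcd(m,n)=1$ and $k$ is algebraically closed of characteristic coprime to $m$), and these $m$ values of $v$ form a single $\operatorname{Gal}(\overline K/K)$-orbit in the new coordinates; by Lemma~\ref{mettreesymmetry} again (applied to $f_P^{\infty}$ at the fractional point $\zeta_0^{(n/m)-1}$) these $m$ values contribute $m$ pairwise isomorphic subtrees of $T_{0,(n/m)-1}$ meeting at $\zeta_0^{(n/m)-1}$, each isometric to $\overline D_{u_i}$. Summing over the $l$ orbits gives the $lm$ glued subtrees asserted in (d).

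The main technical obstacle is the bookkeeping in parts (c) and (d): the inversion scales distances by $n/m$, shifts exponents by $-1$, and acts on leading coefficients by $v^m=u^n$, and one has to keep all three consistent with the Galois-orbit decompositions of Lemma~\ref{mettreesymmetry} at both the original valuation $m/n$ and the dual valuation $(n/m)-1$. In particular, verifying that the $m$-to-$1$ correspondence $u\rightsquigarrow v$ is compatible with the $\operatorname{Gal}$-action and with the identification of subtrees in Lemma~\ref{canonicalsubtreeidentification}(b) is the delicate step; once this is checked, (d) reduces to counting. The remaining content, in particular the exponent formulas of (a) and the contact formula of (b), is essentially a direct translation of \cite{GGP}'s refinement of Abhyankar's inversion formula into the notation of this paper.
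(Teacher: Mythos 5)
Your overall strategy is the paper's: part (a) via the dual series and the exponent transformation of \cite{GGP} (the Halphen--Stolz statement), with the division by $t$ shifting every exponent down by $1$, and part (d) via the Galois-orbit count of Theorem~\ref{mettreesymmetry}. The genuine gap is in (b) and (c). For two distinct branches the dual series is not canonical: constructing $\xi$ from $\eta=a^{m_i}t^{m_i/n_i}(1+\cdots)$ requires choosing an $m_i$-th root $a$ of the leading coefficient, and the contact order $\nu(\xi-\xi')$ of two duals depends on these choices (the leading coefficient of $\xi$ is a power of $a$, by the coefficient formula \cite[Propn~4.10]{GGP}). Hence no single-branch inversion theorem can give $\kappa_{h_i,h_j}$, and \cite{GGP} does not supply a ready-made ``two-variable'' contact statement of the kind you invoke; its refinement is precisely the explicit coefficient formula, which must then be used by hand. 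Concretely, two things are missing from your argument: (i) when $\kappa_{g_i,g_j}=m_i/n_i$ (the branches already separate at their leading coefficients), one must show that $a^{n_i}\neq b^{n_j}$ for \emph{every} admissible choice of roots $a,b$ of the two leading coefficients; the paper does this with a roots-of-unity argument using $\gcd(m,n)=1$ and transitivity of the Galois action, deriving a contradiction with the definition of $\kappa_{g_i,g_j}$ otherwise. (ii) When $\kappa_{g_i,g_j}>m_i/n_i$, one must exhibit \emph{compatible} choices of $a,b$ (via a common $c$ with $c^m$ equal to the shared leading coefficient, where $m$ is an lcm built from the essential exponents shared below $\kappa$, using Lemma~\ref{exponentoverlap}) so that the duals agree below $(n/m)(\kappa+1)-1$ and differ there, and then reverse the construction to upgrade the resulting lower bound on $\kappa_{h_i,h_j}$ to an equality.

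The same compatibility is the substance of (c): the scaled isometry $\overline{C}\cong\overline{D}$ requires that whenever several roots, possibly of different irreducible factors, share an initial segment beyond $\zeta_P^{m/n}$, their duals can be chosen to share the corresponding segment scaled by $n/m$; this is the multi-branch version of (ii), combined with the cardinality count $|S|=|S'|$ and the identifications of Lemma~\ref{canonicalsubtreeidentification}. You correctly single out the compatibility of $u\rightsquigarrow v$ with the Galois action and with those identifications as ``the delicate step,'' but you defer it rather than prove it --- and it is exactly the content of the theorem rather than bookkeeping; once it is done, (d) is indeed the counting you describe. A minor additional slip: $u^n=v^{-m}$ and $v^m=u^n$ are not equivalent conditions; since only the fact that each value of $u^n$ determines $m$ values of $v$ permuted transitively by Galois is used, this does not affect the structure, but one relation should be fixed and used consistently.
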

\begin{proof}\hfill
 \begin{enumerate}[\upshape (a)]
  \item Recall $R=k[[t]]$. Let $\eta(s) \in k[[s]]$ be such that $\eta(t^{1/n_i}) = ut^{m_i/n_i}+\ldots \in k[[t^{1/n_i}]]$ is the Newton-Puiseux expansion of a root of $\tilde{g}_i$, and let $u' \in R$ be such that $u'^{m_i} = u$, and let $\eta'(s) \in k[[s]]$ be such that $\eta'(0) = u'$ and $(s\eta'(s))^{m_i} = \eta(s)$. Note that these two equations uniquely define the power series $\eta'$. Let $u\xi'(u) \in k[[u]]$ be the dual series of $s\eta'(s)$, and let $\xi(u) = (u\xi'(u))^{n_i}$. 
  
  Let $\tilde{g}_i(x) = \sum_{j=0}^{\lambda_i} (\sum_{l} a_{jl}t^l) t^{\lambda_i-j}x^j + \sum_{j=\lambda_i+1}^{n_i} (\sum_{l} a_{jl}t^l) x^j$ like in Section~\ref{defreplacepoly}. Then $h_i^{\Diamond}(x) = \sum_{j=0}^{\lambda_i} (\sum_{l} a_{jl} x^l t^l) x^{\lambda_i-j} + \sum_{j=\lambda_i+1}^{n_i} (\sum_{l} a_{jl} x^l t^l) t^{j-\lambda_i}$. Let $h_i^{\Diamond'}(x) \colonequals t^{\lambda_i} h_i^{\Diamond}(x/t) = \sum_{j=0}^{\lambda_i} (\sum_{l} a_{jl} x^l) x^{\lambda_i-j} t^j + \sum_{j=\lambda_i+1}^{n_i} (\sum_{l} a_{jl} x^l) t^{j}$. Viewing $h_i^{\Diamond'}(x,t)$ and $\tilde{g}_i(t,x)$ as elements of $k[[t,x]]$, we see that $h_i^{\Diamond'}(t,x) = \tilde{g}_i(x,t)$. Since $\tilde{g}_i(\eta(t^{1/n_i}),t) = 0$ and $\eta'$ and $\xi'$ are dual series, the same argument as in \cite[Section~4.1]{GGP} shows that $h_i^{\Diamond'}(\xi(t^{1/m_i}),t) = 0$. Since $k[[t^{1/m_i}]]$ is a domain, it follows that $h_i^{\Diamond}(\xi(t^{1/m_i})/t) = 0$. Since $h_i^{\Diamond} = h_i u$ for some unit $u$ in $k[[t]][[x]]$, we 
also have $h_i(\xi(t^{1/m_i})/t) = 0$.
  
  By the Halphen-Stolz theorem (\cite[Corollary~4.5]{GGP}), we know that the essential exponents of $\xi(t^{1/m_i})$ are $\{\frac{n_i}{m_i},\frac{n_i}{m_i}(e_1+1)-1,\ldots,\frac{n_i}{m_i}(e_d+1)-1\}$ and therefore the essential exponents of $\xi(t^{1/m_i})/t$ are $\{\frac{n_i}{m_i}-1,\frac{n_i}{m_i}(e_1+1)-2,\ldots,\frac{n_i}{m_i}(e_d+1)-2\}$. 
  
  \item Let $\eta(t^{1/{n_i}}) \in k[[t^{1/n_i}]]$ be a root of $\tilde{g}_i$, and choose an $m_i^{\mathrm{th}}$ root of $a \in k$ of the coefficient of the leading order term of $\eta$. Then there exist coefficients $c_l \in k$ for $l > n_i$ such that 
  \[ \eta(t^{1/{n_i}}) = a^{m_i} t^{m_i/n_i} \left( 1 + \sum_{l > m_i} c_l t^{\frac{l-m_i}{n_i}} \right) .\]
  If $\xi(t^{1/m_i}) \colonequals \sum_{q \geq n_i} [\xi]_{q/m_i} t^{q/m_i}$, then we have the following formula for $[\xi]_{q/m_i}$ for $q \geq n_i$ from \cite[Propn~4.10]{GGP}.
  \[ [\xi]_{q/m_i} = \frac{n_i}{q} a^{-q} \left[ 1+ \sum_{i \geq 1} \binom{-q/{m_i}}{i} \left( \sum_{l > m_i} c_l t^{\frac{l-m_i}{n_i}} \right)^i \right]_{-1+ \frac{q}{n_i}} .\]
  The notation $[ \cdot ]_r$ denotes the coefficient of $t^r$ in the enclosed Puiseux series. Varying over all possible roots $\eta(t^{1/{n_i}})$ of $\tilde{g}_i$ and possible $m_i^{\mathrm{th}}$ roots $a$ gives us all possible Newton-Puiseux expansions $\xi(t^{1/m_i})/t$ of roots of $h_i$.
  
  Observe that to compute the coefficient $[\xi]_{q/m_i}$, the only terms that contribute are all integers $l$ in the range $m_i+1 \leq l \leq q+m_i-n_i$. Furthermore, for each $l$, only finitely many $i$ contribute (which can in turn be bounded in terms of $q$ and $l$). In particular for $l = q+m_i-n_i$, the only term that contributes is $i=1$. Let $\eta$ and $\eta'$ be two Newton-Puiseux series for roots of $\tilde{g}_i$ and $\tilde{g}_j$ respectively, with corresponding series $\xi$ and $\xi'$ constructed as above. 
  
  If $m_i/n_i < m_j/n_j$, then $(n_j/m_j) - 1 < (n_i/m_i) -1$ and therefore $\nu(\xi/t) = (n_i/m_i) - 1, \nu(\xi'/t) = (n_j/m_j) - 1$ and $\nu((\xi - \xi')/t) = (n_j/m_j) - 1$. In this case $\kappa_{g_i,g_j} = \max\{ \nu \left(\eta(\omega_{n_i} t^{1/n_i})-\eta'(\omega_{n_j} t^{1/n_j'}) \right) \ | \ \omega_{n_i}^{n_i} = \omega_{n_j}^{n_j} = 1 \} =  m_i/n_i$ and similarly $\kappa_{h_1,h_2} = (n_j/m_j)-1$. 
  
  Now assume that $m_i/n_i = m_j/n_j$. Let $\kappa \colonequals \nu(\eta-\eta') = \kappa_{g_1,g_2}$. There are two possibilities:
  \begin{enumerate}[\upshape (i)]
   \item $m_i/n_i = m_j/n_j = \kappa$
   \item $m_i/n_i = m_j/n_j < \kappa$.
  \end{enumerate}
  
  \noindent{\textbf{Case (i)}}: \\ If $m_i/n_i = m_j/n_j = \kappa$, we will show that $a^{n_i} \neq b^{n_j}$ for any $a,b$ such that $a^{m_i} = [\eta]_{m_i/n_i}$ and ${b}^{m_j} = [\eta']_{m_j/n_j}$.  This would in turn imply that $\nu((\xi - \xi')/t) = (n_i/m_i) - 1 = (n_j/m_j)-1$, and since $\eta,\eta',a,b$ are allowed to vary, this would imply that $\kappa_{h_i,h_j} = \frac{n_i}{m_i}(\kappa_{g_i,g_j}+1)-2$. 
  
  Let $m/n \colonequals m_i/n_i = m_j/n_j$ with $\gcd(m,n) = 1$ and $l_i \colonequals (m_i/m) = (n_i/n)$ and $l_j \colonequals (m_j/m) = (n_j/n)$. First we claim that the value of $a^{n_i}$ for $a$ such that $a^{m_i} = [\eta]_{m_i/n_i}$ only depends on the value of $a^{l_i}$, i.e., if $a,\tilde{a}$ both satisfy $a^{l_i} = \tilde{a}^{l_i} = c$ and $c^m = [\eta]_{m_i/n_i}$, then $a^{n_i} = (a^{l_i})^n = c^n = (\tilde{a}^{l_i})^n = \tilde{a}^{n_i}$. Therefore it is enough to prove that $a^{n} \neq a'^{n}$ for any $a,b$ such that $a^{m} = [\eta]_{m_i/n_i}$ and ${b}^{m} = [\eta']_{m_j/n_j}$, with the further assumption that $\gcd(m,n) = 1$.

  If $a^{n} = b^{n}$, then $a^{mn} = b^{mn}$, i.e, $[\eta]_{m/n}^n = [\eta']_{m/n}^n$. This in turn means that if we let $\omega$ be the $n^{\mathrm{th}}$ root of unity such that $\omega^{m} = [\eta']_{m/n}/[\eta]_{m/n}$ (possible since $m$ and $n$ are coprime, so $z \rightarrow z^m$ induces an automorphism of the set of $n^{\mathrm{th}}$ roots of unity), and let $\omega'$ be an $n_i^{\mathrm{th}}$ root of unity such that $\omega'^{l_i} = \omega$, then $\nu(\eta(\omega' t^{1/n_i}) - \eta'(t^{1/{n_i'}})) > \kappa = m_i/n_i = m_j/n_j$. This contradicts the definition of $\kappa$ since $\eta(\omega' t^{1/n_i})$ is also a root of $\tilde{g}_i$. This means that $a^{n_i} \neq b^{n_j}$ for any $a,b$ such that $a^{m_i} = [\eta]_{m_i/n_i}$ and ${b}^{m_j} = [\eta']_{m_j/n_j}$, which in turn implies that $\kappa_{h_i,h_j} = \frac{n_i}{m_i}(\kappa_{g_i,g_j}+1)-2$.

  \noindent{\textbf{Case (ii)}}: \\ Now assume that $m_i/n_i = m_j/n_j < \kappa$. Let \begin{align*} \eta(t^{1/n_i}) &\colonequals \eta(t^{1/{n_i}}) = c_0 t^{m_i/n_i} \left( 1 + \sum_{l > m_i} c_l t^{\frac{l-m_i}{n_i}} \right) \in k[[t^{1/n_i}]] \\ 
  \eta'(t^{1/n_j}) &\colonequals \eta'(t^{1/{n_j}}) = d_0 t^{m_j/n_j} \left( 1 + \sum_{l' > m_j} d_{l'} t^{\frac{{l'}-m_j}{n_j}} \right) \in k[[t^{1/n_j}]] \end{align*} be roots of $\tilde{g}_i$ and $\tilde{g}_j$ respectively such that $\nu(\eta-\eta') = \kappa$. This means that
  \begin{itemize}
   \item $\tilde{g}_i(\eta) = 0, \tilde{g}_j(\eta') = 0$,
   \item $c_0=d_0$,   
   \item if $l/n_i < \kappa$ and $(ln_j)/{n_i}$ is not an integer, then $c_l = 0$ and similarly if $l'/n_j < \kappa$ and $(l'n_i)/{n_j}$ is not an integer, then $d_{l'} = 0$,
   \item if $l$ and $l'$ are integers such that $l/n_i = l'/n_j < \kappa$, then $c_l = d_{l'}$, and,   
   \item $c_{\kappa n_i} \neq d_{\kappa n_j}$.
  \end{itemize}
  By Lemma~\ref{exponentoverlap}, there exists a unique index $r > 0$ such that $e_r < \kappa \leq e_{r+1}$ and $e_q' = e_q$ for all $q \leq r$, and such that $ \frac{m_i}{\gcd(m_i,n_ie_q)} = \frac{m_j}{\gcd(m_j,n_je_q')} \ \ \ \ \textup{for all} \ q \leq r$. Let $m_i/n_i = m_j/n_j = \tilde{m}/\tilde{n}$ such that $\gcd(\tilde{m},\tilde{n}) = 1$ and let
  \[ m \colonequals \lcm \left( \{\tilde{m}\} \cup \left\{\frac{m_i}{\gcd(m_i,n_ie_q)} \ \Bigg| \ 0 \leq q \leq r \right\} \right) = \lcm \left( \{\tilde{m}\} \cup \left\{\frac{m_j}{\gcd(m_j,n_je_q')} \ \Bigg| \ 0 \leq q \leq r \right\} \right) .\]
  Since $\tilde{m} \mid m_i$, $\tilde{m} \mid m_j$, we also have $m \mid m_i$ and $m \mid m_j$. Let $c \in k$ such that $c^m = c_0 = d_0$, and choose $a,b \in k$ such that $a^{m_i/m} = b^{m_j/m} = c$. Then $a^{m_i} = b^{m_j} = c_0 = d_0$. We can construct dual series $\xi$ and $\xi'$ for $\eta$ and $\eta'$ with these choices for $a,b$. We will now show that because of these careful choices of $a,b$ we have $\nu(\xi-\xi') = (\tilde{n}/\tilde{m})(\kappa+1)-1$. This would imply that $\kappa_{h_i,h_j} \geq \nu((\xi-\xi')/t) = \frac{\tilde{n}}{\tilde{m}}(\kappa_{g_i,g_j}+1)-2$. 
  
  We will first prove that $a^{n_ie_q} = b^{n_j e_q'}$ for all $q \leq r$ and then use this to prove $a^q = b^{q'}$ for any pair of integers $q,q'$ such that $q/m_i = q'/m_j < (\frac{\tilde{n}}{\tilde{m}})(\kappa+1)-1$. Since the definition of $m$ implies that $\frac{m\gcd(m_i,n_ie_k)}{m_i} = \frac{m\gcd(m_j,n_je_k)}{m_j}$ is an integer for $k \leq r$, it follows that for such $k$ we have
  \[ a^{\gcd(m_i,n_ie_k)} = a^{\frac{m_i}{m} \frac{m\gcd(m_i,n_ie_k)}{m_i}} = c^{\frac{m\gcd(m_i,n_ie_k)}{m_i}} = c^{\frac{m\gcd(m_j,n_je_k')}{m_j}} = b^{\frac{m_j}{m} \frac{m\gcd(m_j,n_je_k')}{m_j}} = b^{\gcd(m_j,n_je_k')} .\] 
  Since Lemma~\ref{exponentoverlap} implies that $\frac{n_ie_q}{\gcd(m_i,n_ie_q)} = \frac{n_je_q'}{\gcd(m_j,n_je_q')}$, we also have
  \[ a^{n_ie_q} =  a^{\gcd(m_i,n_ie_k) \frac{n_ie_q}{\gcd(m_i,n_ie_q)}} = b^{\gcd(m_j,n_je_k') \frac{n_ie_q}{\gcd(m_i,n_ie_q)}} = b^{\gcd(m_j,n_je_k') \frac{n_je_q'}{\gcd(m_j,n_je_q')}} = b^{n_je_q'}. \]
  We already have 
  \[ a^{m_i} = c^m = b^{m_j}.\] 
  Similarly, we also have
  \[ a^{n_i} = a^{\frac{m_i}{\tilde{m}} \tilde{n}} = a^{\frac{m_i}{m} \frac{m}{\tilde{m}} \tilde{n}} = c^{\frac{m}{\tilde{m}} \tilde{n}} =  b^{\frac{m_j}{m} \frac{m}{\tilde{m}} \tilde{n}} = b^{n_j}. \]
  Putting the last three equalities together, we have $a^{n_i(e_l+1)-m_i} = b^{n_j(e_l'+1)-m_j}$ for all $l \leq r$. For all $l \leq g$ and $l' \leq h$, let 
  \[ f_l \colonequals \frac{n_i}{m_i}(e_l+1) - 1, \quad \quad \text{and} \quad \quad f_{l'} \colonequals \frac{n_j}{m_j}(e_{l'}+1) - 1 .\]
  By part(a) of this lemma, we know that these are the essential exponents of $\xi$ and $\xi'$ respectively. Let $q,q'$ be integers such that $q/m_i = q'/m_j < (\tilde{n}/\tilde{m})(\kappa+1)-1$. Since $(\tilde{n}/\tilde{m})(\kappa+1)-1 \leq f_{r+1}$, by the definition of essential exponents, there exist integers $\lambda_0,\ldots,\lambda_r$ such that $q=\sum_{l=0}^r \lambda_l m_i f_q =\sum_{l=0}^r \lambda_l [n_i(e_l+1)-m_i]$. Since $q' = (m_jq_i)/m_i$, we also have $q'=\sum_{l=0}^r \lambda_l m_j f_q=\sum_{l=0}^r \lambda_l m_j f_q' =\sum_{l=0}^r \lambda_l [n_j(e_l'+1)-m_j]$. Since we already know $a^{n_i(e_l+1)-m_i} = b^{n_j(e_l'+1)-m_j}$ for all $l \leq r$, this implies that 
  \[ a^q = a^{\sum_{l=0}^r \lambda_l [n_i(e_l+1)-m_i]} = \prod_{l=0}^r a^{\lambda_l [n_i(e_q+1)-m_i]} = \prod_{l=0}^r b^{\lambda_l [n_j(e_q'+1)-m_j]} = b^{q'} .\]
  If $\kappa = e_{r+1}$, then we also have $\kappa=e_{r+1}'$, and a suitable modification of the above argument also shows that $a^{n_i(\kappa+1)-m_i} = b^{n_j(\kappa+1)-m_j}$.
  
  Now we are finally ready to prove that if $\xi \colonequals \sum_{l \in S(\xi)} [\xi]_l t^l,\xi' \colonequals \sum_{l \in S(\xi')} [\xi']_l t^l, \kappa' \colonequals (\tilde{n}/\tilde{m})(\kappa+1)-1$, then $[\xi]_l = [\xi']_l$ if $l < \kappa'$ and $[\xi]_{\kappa'} \neq [\xi']_{\kappa'}$. These equalities/inequalities now follow from the facts
  \begin{itemize}
   \item if $l/n_i < \kappa$ and $(ln_j)/{n_i}$ is not an integer, then $c_l = 0$ and similarly if $l'/n_j < \kappa$ and $(l'n_i)/{n_j}$ is not an integer, then $d_{l'} = 0$,
   \item if $l$ and $l'$ are integers such that $l/n_i = l'/n_j < \kappa$, then $c_l = d_{l'}$, and,   
   \item $c_{\kappa n_i} \neq d_{\kappa n_j}$,
  \end{itemize}
  the explicit formulae \cite[Propn~4.10]{GGP}.
  \[ [\xi]_{q/m_i} = \frac{n_i}{q} a^{-q} \left[ 1+ \sum_{i \geq 1} \binom{-q/{m_i}}{i} \left( \sum_{l > m_i} c_l t^{\frac{l-m_i}{n_i}} \right)^i \right]_{-1+ \frac{q}{n_i}} ,\]
  \[ [\xi']_{q'/m_j} = \frac{n_j}{q'} a^{-q'} \left[ 1+ \sum_{i \geq 1} \binom{-q'/{m_j}}{i} \left( \sum_{l' > m_j} d_{l'} t^{\frac{l'-m_j}{n_j}} \right)^i \right]_{-1+ \frac{q'}{n_j}} ,\]  
  and the observation that for a fixed $q$ (respectively $q'$),to compute the coefficient $[\xi]_{q/m_i}$, the only terms that contribute are all integers $l$ in the range $m_i+1 \leq l \leq q+m_i-n_i$, and furthermore, for each $l$, only finitely many $i$ contribute, that can in turn be bounded in terms of $q$ and $l$ (similar modifications for $l'$). In particular for $l = q+m_i-n_i$, the only term that contributes is $i=1$ (similarly for $l'$).
  
  We will now show that the inequality $\kappa_{h_i,h_j} \geq \nu((\xi-\xi')/t) = \frac{\tilde{n}}{\tilde{m}}(\kappa_{g_i,g_j}+1)-2$ is actually an equality. Since $r \mapsto (n/m)(r+1)-1$ is an increasing bijection $[0,\infty) \rightarrow [0,\infty)$ (with inverse bijection given by $r \mapsto (m/n)(r+1)-1$) that maps $\kappa$ to $(m/n)(\kappa+1)-1$, and since $(\eta,\eta') \mapsto (\xi,\xi')$ can be reversed to produce roots of $\tilde{g}_i, \tilde{g}_j$ from roots of $h_i,h_j$, if there are roots $\alpha,\beta$ of $h_i,h_j$ such that $\nu(\alpha-\beta) > \frac{n_i}{m_i}(\kappa_{g_i,g_j}+1)-2$, then we can produce a pair of power series that are roots of $\tilde{g}_i$ and $\tilde{g}_j$ respectively such that the $\nu$-adic valuation of the difference is higher than $\kappa$, which will contradict the definition of $\kappa$.  
  
  \item 
  Let 
  \[ S \colonequals \{ \eta(t^{1/{n'}})+a_P \ | \ f(\eta+a_P) = 0, \nu(\eta) = m/n, [\eta]_{m/n} = u \}. \]
  Then Lemma~\ref{mettreesymmetry}~[(c),(d)] tells us that $\overline{C}$ is the convex hull of $\zeta_P^{m/n}$ and $S$.
  
  Let $(g_i)_{i \in I}$ be the irreducible factors of $f$ such that $g_i(\alpha) = 0$ for some $\alpha \in S$, and let $n_i = \deg g_i$ as before. By Lemma~\ref{mettreesymmetry}~[(d),(e)], the cardinality of $S$ is $\prod_{i \in I} (n_i/n)$. 
  
  We need to show that we can mimic the compatible choice of $a,b$ for the construction of dual branches $\xi,\xi'$ in the previous part of this theorem, to show the following. First pick $v \in R$ such that $v^m = u^n$ as in the statement of the theorem.
  
  Let 
  \[ S' \colonequals \{ \xi(t^{1/{m'}}) \ | \ \nu(\xi) = n/m, [\xi]_{n/m} = v, \xi(t^{1/{m'}}) \textup{is dual to} \ \eta(t^{1/{n'}})\ \textup{for some} \ \eta+a_P \in S.   \} \]
  Then $\overline{D}$ is the the convex hull of $\zeta_0^{(n/m)-1}$ and $\{ \xi/t \ | \ \xi \in S'\}$. We need to show that $\overline{D}$ is a natural subtree of $T(f_P^{\infty})$ that is isomorphic as a metric tree to $(\overline{C},(n/m)d)$.

We will first show that $\left\vert S \right\vert = \left\vert S' \right\vert$. Let $h_i$ be the replacement polynomial for $g_i$ for every $i \in I$. Let $m_i \colonequals \deg h_i = (m/n) n_i$. Since $h_i$ is irreducible and $\deg h_i < \cha k$, by Lemma~\ref{mettreesymmetry}~(b) applied to $h_i$, there is a natural partition of the roots of $h_i$ into $m$ sets of size $(m_i/m)$ based on the coefficient of the leading order term of the root. The roots of $h_i$ as we vary over $i \in I$ with leading coefficient $v$ are precisely the elements of the form $\beta/t$ for some $\beta \in S'$. Therefore the cardinality of $S'$ is $\prod_{i \in I} (m_i/m)$. Since $\prod_{i=1}^l (n_l/n) = \prod_{i=1}^l (m_l/m)$, it follows that $\left\vert S \right\vert = \left\vert S' \right\vert$. 

Observe that $\overline{C}$ and $\overline{D}$ are both rooted trees with roots $\zeta_P^{m/n}$ and $\zeta_0^{(n/m)-1}$ respectively and are defined as convex hulls of the root and the sets $S,S'$ of the same cardinality of roots of $f,f_P^{\infty}$ respectively, with specified leading order terms $u,v$ respectively. Therefore to prove the claimed isomorphism of metric trees, it suffices to show that if there are elements $\eta_1+a_P,\eta_2+a_P,\ldots,\eta_l+a_P \in S$ such that $[\eta_i]_q = [\eta_j]_q$ for all $i \neq j$ and for all $q < \rho$ (this corresponds to a common segment of length $\rho-(m/n)$ in $\overline{C}$ in the unique path connecting $\zeta_P^{m/n}$ to $\eta_i+a_P$ as we vary over $i$ -- we have subtracted $m/n$ from $\rho$ to remove the length of the initial segment between $\zeta$ and $\zeta_P^{m/n}$ that all roots in $S$ share), then there exist corresponding $\xi_1,\ldots,\xi_l \in S'$ such that $[\xi_i]_q = [\xi_j]_q$ for all $i \neq j$ and for all $q < (n/m)(\rho+1)-1$ (this corresponds 
to a common segment of length $(n/m)(\rho-(m/n))$ in $\overline{D}$ in the unique path connecting $\zeta_0^{(n/m)-1}$ to $(\xi_i)/t$ as we vary over $i$ -- once again we have subtracted $n/m$ from $(n/m)(\rho+1)-1$ to remove the length of the initial segment between $\zeta$ and $\zeta_0^{n/m}$ that all elements in $S'$ share; dividing all elements of $S'$ by $t$ only translates the metric tree and moves $\zeta_0^{n/m}$ to $\zeta_0^{(n/m)-1}$ and does not change relative distances).

Let $e_i$ be the set of essential exponents of $\eta_1$, and assume that $e_r < \rho \leq e_{r+1}$. By Lemma~\ref{exponentoverlap}~(a), all the series $\eta_i$ have the same set of essential exponents $<\rho$, namely $e_0,e_1,e_2,\ldots,e_r$ and let $e_i = a_i/(b_0b_1 \ldots b_i)$ with $\gcd(a_i.b_i)=1$ for $i \leq r$ as in Lemma~\ref{exponentoverlap}. If $\eta_j+a_P$ is a root of $g_j$ and if $\sigma_j$ is a generator of the Galois group of the splitting field of $g_j$ over $K$, and $G_j$ is the subgroup generated by $\sigma_j^{b_0b_1 \ldots b_r}$, then by the definition of essential exponents and the explicit formula for the Galois action, we see that $[g(\eta_j+a_P)]_q = [\eta_j+a_P]_q$ for all $g \in G_j$ and for all $q < e_{r+1}$. So at the very beginning we may replace the set $\{ \eta_1,\ldots,\eta_l \}$ by this possibly larger Galois saturated set $\bigcup_{j=1}^l G_j \eta_j$ without loss of generality. Since the Galois group of the splitting field of $g_j$ acts transitively on the roots of $g_j$ 
without fixed points, the size of the $G_j$ orbit $G_j \eta_j$ is $\left\vert G_j \right\vert = \frac{\deg g_j}{(b_0 b_1 \ldots b_r)} = \frac{n_j}{b_0 b_1 \ldots b_r}$.
  
  First partition the set $\{ \eta_1,\ldots,\eta_l \}$ based on which irreducible factor $\tilde{g}_i$ the elements satisfy, and pick exactly one root for each irreducible factor to make a subset $(\eta_j)_{j \in J}$ of $\{ \eta_1,\ldots,\eta_l \}$. Then $\left\vert \bigsqcup_{j \in J} G_j \eta_j \right\vert =  \prod_{j \in J} \frac{n_j}{b_0b_1 \ldots b_r} = l$. 
 
  We will first construct dual series $\xi_j$ for $j \in J$.  Part(a) of this theorem tells us that the first $r+1$ essential exponents of $\xi_j$ are $\{\frac{b_0}{a_0}, \ldots, \frac{a_r+(b_0-a_0)b_1 \ldots b_r}{a_0b_1 \ldots b_r} \}$ and $\gcd(b_i,a_i+(b_0-a_0)b_1 \ldots b_i) = \gcd(b_i,a_i) = 1$. Using this fact and repeating the count in the previous paragraph tells us that including all Galois conjugates that share the same coefficients up to $(n/m)(e_r+1)-1$ (and therefore same coefficients up till $(n/m)(\rho+1)-1$) gives us $\prod_{j \in J} \frac{m_j}{a_0b_1\ldots b_r} = \prod_{j \in J} \frac{n_j a_0}{b_0} \frac{1}{a_0b_1\ldots b_r} = \prod_{j \in J} \frac{n_j}{b_0b_1 \ldots b_r}$ dual series,  and therefore a full set of dual series $\{ \xi_1,\ldots,\xi_l \}$ that is in bijection with the original set $\{ \eta_1,\ldots,\eta_l \}$. 
  
  To construct $\xi_j$ from $\eta_j$, we need to make a choice of $a_j \in k$ such that $a_j^{m_j} = [\eta_j]_{m/n}$. To ensure $[\xi_i]_q = [\xi_j]_q$ for all $i \neq j$ and for all $q < (n/m)(\rho+1)-1$, from the explicit formula for the dual series and mimicing the argument in the previous part(b) theorem, we need to ensure that $a_j^{qm_j}$ is independent of $j$ for all $q < (n/m)(\rho+1)-1$. Furthermore, the same argument as in part(b) tells us that it is enough to prove this for $q$ of the form $(n/m)(e_r+1)-1$ for all essential exponents $e_r$ of the $\eta_i$ such that $e_r < \rho$. (Recall that by Lemma~\ref{exponentoverlap}, the series $\eta_i$ have the same essential exponents less than $\rho$ as we vary over $i$). The main observation that makes the argument work is that the condition that we need to impose on the $a_i$ to ensure this coincidence (choosing an intermediate $c \in R$ and $m \in \Z$ dividing all the $m_i$ such that $c^m = [\eta_i]_{m/n}$  like in part(b) of this theorem) only depends 
on the value of the essential exponents of these series less than $\rho$ and is the exact same condition for multiple branches as it is for two branches.
  
\item The roots of $f_P^{\infty}$ with valuation $(n/m)-1$ have a leading order term of the form $vt^{(n/m)-1}$ where $v$ satisfies $v^m = u^n$ for some leading order term $ut^{m/n}$ of a root of $f(x-a_P)$. Since each value of $u^n$ corresponding to one Galois orbit for the action on the leading order terms of roots of $f(x-a_P)$ gives rise to $m$ distinct values of $v$, and the explicit formula for the Galois action (Lemma~\ref{mettreesymmetry}~(b)) tells us that these $m$ values get permuted transitively, combining this with Lemma~\ref{mettreesymmetry}~[(d),(e)] gives us the desired result. \qedhere
 \end{enumerate}
\end{proof}

Let $i \in C_P^{<1}$ and let $(\widetilde{n_i},\widetilde{\lambda_i})$ is the pair of integers associated to the replacement polynomial $h_i$ the same way $(n_i,\lambda_i)$ is associated to $f_i$.
\begin{corollary}\label{Cnewpair}
 $(\widetilde{n_i},\widetilde{\lambda_i}) = (\lambda_i,n_i-\lambda_i)$.
\end{corollary}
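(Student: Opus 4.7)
The plan is to read off $\widetilde{n_i}$ and $\widetilde{\lambda_i}$ directly from previously established results.

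First, by Lemma~\ref{degrep}, the replacement polynomial $h_i$ is irreducible of degree $\widetilde{n_i} = \lambda_i$. This handles the first coordinate immediately.

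For the second coordinate, recall that by definition $\lambda_j/n_j$ is the common valuation of the roots of $\tilde{g}_j$ for any $j$, and the characteristic exponents of $\tilde{g}_i$ begin with $\lambda_i/n_i$, i.e., in the notation of Theorem~\ref{expofreplacement} we have $m_i = \lambda_i$. Theorem~\ref{expofreplacement}(a) then tells us that the first essential exponent of a root of $h_i$ (relative to $1$) is $n_i/m_i - 1 = (n_i - \lambda_i)/\lambda_i$. Since $h_i$ is irreducible and monic, all of its roots share this valuation, and therefore
\[
\widetilde{\lambda_i} \;=\; \nu(h_i(0)) \;=\; \widetilde{n_i} \cdot \frac{n_i - \lambda_i}{\lambda_i} \;=\; \lambda_i \cdot \frac{n_i - \lambda_i}{\lambda_i} \;=\; n_i - \lambda_i.
\]

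Combining these two computations yields $(\widetilde{n_i}, \widetilde{\lambda_i}) = (\lambda_i, n_i - \lambda_i)$, as claimed. There is no real obstacle here: the content of the statement has essentially been packaged into Lemma~\ref{degrep} and Theorem~\ref{expofreplacement}(a), and the corollary is just a matter of specializing those results and doing one line of arithmetic. The only minor subtlety is checking that $\lambda_i \neq 0$ so that $n_i/\lambda_i - 1$ is well-defined, but this is automatic from the assumption $i \in C_P^{<1}$ (since $P \in A_{\mathrm{bad}}$ and the root has positive valuation $\lambda_i/n_i$ after translation to the origin).
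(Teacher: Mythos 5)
Your proposal is correct and follows the same route as the paper: the first coordinate comes from Lemma~\ref{degrep} ($\deg h_i = \lambda_i$), and the second from the leading essential exponent $n_i/m_i - 1$ in Theorem~\ref{expofreplacement}(a) (with $m_i = \lambda_i$), read through Definition~\ref{essexponents} as the common valuation of the roots of the irreducible $h_i$. The paper's proof is exactly this two-step specialization, so there is nothing to add.
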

\begin{proof}
 Lemma~\ref{degrep} shows $\deg(h_i) = m_i$. The result now follows from Definition~\ref{essexponents} and Theorem~\ref{expofreplacement}(a).
\end{proof}

\subsection{Metric tree of the replacement polynomial $f_P^{\infty}$}
\begin{defn}
Let $\mathcal{D}$ be a multiset indexing pairs $(T_d,\gamma_d)$ for each $d \in \mathcal{D}$, where $T_d$ is a rooted metric tree and $\gamma_d \in \R$. Let $\gamma_{\mathrm{max}} \colonequals \sup_{d \in \mathcal{D}} \gamma_d$. Let $S$ be a directed line segment of length $\gamma_{\mathrm{max}}$ with starting point $O$. For $0 \leq r \leq \gamma_{\mathrm{max}}$, let $O^r$ be the unique point on $S$ at distance $r$ from $O$. The {\textup{\textsf{\color{blue}amalgamated tree}}} $T_{\mathcal{D}}$ of the multiset $\mathcal{D}$ is the rooted metric tree with root $O$ obtained by taking $(S \bigsqcup_{d \in \mathcal{D}} T_d)/\sim$ where the equivalence $\sim$ identifies the root of $T_d$ with the point $O^{\gamma_d}$ of $S$ for every $d \in \mathcal{D}$.
\end{defn}

\begin{defn}
 Given a metric tree $(T,d)$ and a real number $\alpha > 0$, the {\textup{\textsf{\color{blue}scaled metric tree}}} $T^{\alpha}$  is the metric tree $(T,\alpha d)$.
\end{defn}

Now fix notation as in Theorem~\ref{mettreesymmetry}. Let $a/b \in \mathcal{V}_P$ and let $I_{a/b}$ be the set of orbits for the $\Z/b\Z$ action on the collection of rooted metric trees $\overline{\mathcal{C}^{a/b}}$. For each $i \in I_{a/b}$, choose a rooted metric tree $T_i$ to represent the isomorphism class of rooted metric trees in the orbit corresponding to $i$. Let $\mathcal{D}$ be a multiset defined as follows.
\[ \mathcal{D} \colonequals \bigcup_{a/b \in \mathcal{V}_P, i \in I_{a/b}} \{ \underbrace{(T_i^{b/a},(b/a)-1),\ldots,(T_i^{b/a},(b/a)-1)}_{a \ \text{pairs}}   \}. \]

\begin{thm}\label{mettreerep2}  The metric tree $T(f_P^{\infty})$ is the amalgamated tree of the multiset $\mathcal{D}$ defined in the paragraph above.
\end{thm}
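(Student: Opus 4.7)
The plan is to combine parts (c) and (d) of Theorem~\ref{expofreplacement} with a description of the common backbone segment of $T(f_P^\infty)$.

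First I will identify this backbone. By Theorem~\ref{expofreplacement}(a), every root of every irreducible factor $h_i$ of $f_P^\infty$ (indexed by $g_i \in C_P^{<1}$) has valuation $(n_i/m_i)-1 = (b/a)-1$, where $a/b = \lambda_i/n_i$ is written in lowest form. Hence the multiset of valuations of roots of $\prod_{g_i \in C_P^{<1}} h_i$ takes values precisely in $\{(b/a)-1 : a/b \in \mathcal{V}_P\}$, all positive. Each such root therefore lies in the Berkovich direction of the type~$1$ point $0$ from the Gauss point $\zeta$, and retracts onto the segment $S$ from $\zeta$ to $0$ at the point $\zeta_0^{(b/a)-1}$. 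Since any branching point of $T(f_P^\infty)$ on $S$ must equal the retraction of some root of $f_P^\infty$, it follows that $T(f_P^\infty)$ consists of the portion of $S$ running from $\zeta$ out to $\zeta_0^{\gamma_{\max}}$ (where $\gamma_{\max} = \max\{(b/a)-1 : a/b \in \mathcal{V}_P\}$), together with subtrees hanging off each distinguished point $\zeta_0^{(b/a)-1}$. (The factor of $x^b$ in $f_P^\infty$ contributes only the root $0$, whose infinite end is deleted from the convex hull and so does not enter the finite tree structure.)

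Next I will describe the subtrees hanging off each $\zeta_0^{(b/a)-1}$. Fix $a/b \in \mathcal{V}_P$ in lowest form, so $m = a$ and $n = b$ in the notation of Theorem~\ref{expofreplacement}. The part of $T(f_P^\infty)$ emanating from $\zeta_0^{(b/a)-1}$ away from $\zeta$ is exactly the subtree $T_{0,(b/a)-1}$, namely the hull of $\zeta_0^{(b/a)-1}$ together with the roots of $f_P^\infty$ of valuation $(b/a)-1$. By Theorem~\ref{expofreplacement}(d), this subtree breaks up at its root into $a\cdot |I_{a/b}|$ smaller rooted subtrees: for each Galois orbit representative $u_i$ (indexed by $i \in I_{a/b}$) there are $a$ subtrees, each isomorphic as a rooted metric tree to $\overline{D}_{u_i}$. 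By Theorem~\ref{expofreplacement}(c), each $\overline{D}_{u_i}$ is isomorphic to $(\overline{C}_i,(b/a)\cdot d) = T_i^{b/a}$, where $T_i$ is the chosen representative of the $i$-th orbit class in $\overline{\mathcal{C}^{a/b}}$.

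Assembling all of this: at each point $\zeta_0^{(b/a)-1}$ of the backbone, and for each $i \in I_{a/b}$, the tree $T(f_P^\infty)$ has exactly $a$ copies of $T_i^{b/a}$ attached, which is precisely the data recorded by the multiset $\mathcal{D}$. Since the amalgamated-tree construction places the root of each $T_d$ at the point on the segment $S$ at distance $\gamma_d$ from $O$, the resulting amalgamated tree $T_{\mathcal{D}}$ coincides with $T(f_P^\infty)$. The main obstacle is verifying that the different slices $T_{0,(b/a)-1}$, as $a/b$ ranges over $\mathcal{V}_P$, fit together into a single tree with the asserted common backbone without overlapping branches; this is handled by noting that the attachment points $\zeta_0^{(b/a)-1}$ are distinct points on $S$ and that the Berkovich directions at each carrying the scaled subtrees $T_i^{b/a}$ are all distinct from the tangent direction along $S$ itself, as the scaled subtrees correspond to nonzero leading terms $v t^{(b/a)-1}$.
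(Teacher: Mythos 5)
Your proof is correct and follows essentially the same route as the paper: the paper's (much terser) proof likewise observes that every root of $f_P^{\infty}$ has valuation $(b/a)-1$ for some $a/b\in\mathcal{V}_P$, so that $T(f_P^{\infty})$ is glued from the subtrees $T_{0,(b/a)-1}^{f_P^{\infty}}$, and then invokes Theorem~\ref{expofreplacement}(d) (with (c) giving the scaling $T_i^{b/a}$). Your additional bookkeeping about the backbone, the retraction of roots to $\zeta_0^{(b/a)-1}$, the $x^b$ factor, and the distinctness of attachment points and tangent directions just makes explicit what the paper leaves implicit.
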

\begin{proof}
 The roots of $f_P^{\infty}$ all have valuation $(\frac{b}{a}-1) > 0$ for some $a/b \in \mathcal{V}_P$. Since $T(f_P^{\infty})$ is obtained by gluing together $T_{0,(\frac{b}{a}-1)}^{f_P^{\infty}}$, the result now follows from Theorem~\ref{expofreplacement}~(d).
\end{proof}

\section{Change in discriminant under replacement}\label{Sdiscchange}
Let $f = ut^bg_1 g_2 \ldots g_l$ where the $g_i$ are distinct monic irreducible polynomials in $R[x]$ and $b \in \{0,1\}$. Let $f_P^{\infty},f_P^{\neq \infty}$ be the replacement polynomials for each $P \in A_{\mathrm{bad}} \setminus \{\infty\}$ like in Definition~\ref{defnreppoly}. The goal of this section is to prove the following theorem. 
\begin{thm}\label{discchange} The quantity 
\[ \nu(\Delta_f)-\left( \sum_{\substack{P \in A \setminus A_{\mathrm{bad}} \\ P \neq \infty,g_i \in C_P}} (n_i-1) \right) - \sum_{\substack{P \in A_{\mathrm{bad}}\setminus \{\infty\} \\ \deg(f_P^\infty) \geq 1 }} \nu(\Delta_{f_P^{\infty}}) - \sum_{\substack{P \in A_{\mathrm{bad}} \setminus \{\infty\} \\ \deg(f_P^{\neq \infty}) \geq 1 }} \nu(\Delta_{f_P^{\neq \infty}}) \] equals 
\begin{multline*} 2b(d+\deg f -1) - \sum_{\substack{P \in A_\mathrm{bad} \\ P \neq \{\infty\}}} \left[ \overbrace{2b_P \left( d_P^{\mathrm{nod}}+b - 1 + \sum_{g_i \in C_P^{< 1}} \lambda_i \right) + \sum_{g_i \in C_P^{<1}} 2b(n_i-\lambda_i)}^{\textup{appears only if} \ \deg(f_P^\infty) \geq 1} + \overbrace{2b_P \left( d_P^{\mathrm{sm}} - 1+\sum_{g_i \in C_P^{\geq 1}} n_i \right)}^{\textup{appears only if} \ \deg(f_P^{\neq \infty}) \geq 1}  \right]
\\ + \sum_{\substack{P \in A_\mathrm{bad} \\ P \neq \{\infty\}}} \left[ \sum_{\substack{g_i \in C_P^{<1} \\ g_j \in C_P^{\geq 1}}} 2\lambda_i n_j +  \sum_{g_i \in C_P^{< 1}} \lambda_i \left(\lambda_i + \frac{n_i}{\lambda_i}-2 \right) + \sum_{\substack{i < j \\ g_i,g_j \in C_P^{< 1}}} 2  \lambda_i \lambda_j   + \sum_{g_i \in C_P^{\geq 1}} n_i(n_i-1) + \sum_{\substack{i < j \\ g_i,g_j \in C_P^{\geq 1}}} 2 n_i n_j \right]  . \end{multline*}
\end{thm}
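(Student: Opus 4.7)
The plan is to use Lemma~\ref{gromov}, together with its non-monic extension, to write every discriminant in the statement as a sum of Gromov products plus a leading-coefficient correction, and then to match contributions pair-by-pair using the metric-tree descriptions from Theorems~\ref{mettreerep1}, \ref{mettreerep2}, and \ref{expofreplacement}. For $g \in R[x]$ of degree $n$ with leading coefficient $vt^c$ and parity integer $d_g$, viewing $g$ as a polynomial of degree $2\lceil n/2 \rceil$ as in Section~\ref{notation} gives
\[
\nu(\Delta_g) \;=\; 2c(n-1+d_g) + \sum_{\alpha \neq \beta} (\alpha|\beta)_\zeta,
\]
where the sum runs over ordered pairs of distinct roots. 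Applying this to $f$, $f_P^{\infty}$, and $f_P^{\neq\infty}$, the leading-coefficient terms reproduce the expressions $2b(d+\deg f-1)$, $-2b_P(d_P^{\mathrm{nod}}+b-1+\sum\lambda_i)$, and $-2b_P(d_P^{\mathrm{sm}}-1+\sum n_i)$ on the RHS.

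The remaining task is to compute the difference between the root-pair Gromov sums of $f$ and of its replacement polynomials, minus the contribution $\sum_{P\in A\setminus A_{\mathrm{bad}},\,g_i\in C_P}(n_i-1)$. Partition the ordered pairs $(\alpha,\beta)$ of distinct roots of $f$ by specialization: pairs specializing to distinct points of $A$ contribute $0$; pairs at $P\in A\setminus A_{\mathrm{bad}}$ (where $C_P=\{g_i\}$ is a single linear or translated-Eisenstein factor) contribute $n_i-1$ each by Lemma~\ref{evenbase}; and pairs at $P\in A_{\mathrm{bad}}\setminus\{\infty\}$ split into three sub-cases according to $C_P^{<1}$ versus $C_P^{\geq 1}$ membership.

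For pairs both in $C_P^{\geq 1}$, the map $\alpha\mapsto(\alpha-a_P)/t$ bijects with the roots of $f_P^{\neq\infty}$ (Remark~\ref{rootscoefficients}) and drops each Gromov product by $1$; summing over $N_\geq(N_\geq-1)$ ordered pairs (with $N_\geq=\sum_{g_i\in C_P^{\geq 1}}n_i$) gives the Gromov sum of $f_P^{\neq\infty}$ plus $\sum n_i(n_i-1)+2\sum_{i<j}n_in_j$. Mixed pairs (one root from $g_i\in C_P^{<1}$, one from $g_j\in C_P^{\geq 1}$) meet at $\zeta_P^{\lambda_i/n_i}$, so their $2n_in_j$ ordered pairs contribute $2\lambda_i n_j$ in total. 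Both sub-cases match the corresponding RHS terms directly.

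For pairs both in $C_P^{<1}$---the main case---Theorem~\ref{mettreerep2} presents $T(f_P^{\infty})$ as an amalgamation of scaled copies of the Galois-orbit subtrees of $T_{P,a/b}\subset T(f)$: each orbit appears with multiplicity $b$ in $T(f)$ but with multiplicity $a$ in $T(f_P^{\infty})$, and the subtree metric is scaled by $b/a$. Using the scaling identity $\kappa'=(b/a)(\kappa+1)-2$ from Theorem~\ref{expofreplacement}(b) together with the common orbit size $k_i=n_i/b=\lambda_i/a$, a direct expansion of same-orbit and different-orbit contributions shows that the difference of Gromov sums for ``same-$g_i$'' pairs equals $\lambda_i^2+n_i-2\lambda_i=\lambda_i(\lambda_i+n_i/\lambda_i-2)$, and for ``different-$g_i,g_j$'' pairs (whether or not $\lambda_i/n_i=\lambda_j/n_j$) equals $2\lambda_i\lambda_j$---the latter via the elementary identity $2n_in_j(a/b)-2\lambda_i\lambda_j((b/a)-1)=2\lambda_i\lambda_j$. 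Finally, the extra root at $x=0$ in $f_P^{\infty}$ when $b=1$ (arising from the $x^b$ factor in Definition~\ref{defnreppoly}) contributes $2\sum_{g_i\in C_P^{<1}}\lambda_i(n_i/\lambda_i-1)=2\sum(n_i-\lambda_i)$ to $\nu(\Delta_{f_P^{\infty}})$ with no counterpart in $\nu(\Delta_f)$, accounting for the $-2b(n_i-\lambda_i)$ terms in the RHS.

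The main obstacle is the $C_P^{<1}$ sub-case: tracking the $b$-versus-$a$ multiplicity asymmetry of Galois orbits between $T(f)$ and $T(f_P^{\infty})$, and carefully separating same-orbit pairs (subject to the Gromov-scaling identity) from different-orbit pairs meeting at $\zeta_P^{a/b}$ or $\zeta_0^{(b/a)-1}$. Once the orbit-level bijection is set up via Theorems~\ref{mettreesymmetry}(e) and \ref{expofreplacement}(c,d), each verification reduces to a short algebraic identity.
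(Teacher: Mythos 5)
Your proposal is correct and follows essentially the same route as the paper: it expresses each discriminant via the Gromov-product formula of Lemma~\ref{gromov} (with the leading-coefficient correction for the $t^{b}$, $t^{b_P}$ and $x^{b}$ factors), partitions root pairs by specialization, handles $C_P^{\geq 1}$ by the shift-and-divide bijection of Remark~\ref{rootscoefficients}, and handles the $C_P^{<1}$ case exactly as the paper's Lemma~\ref{discinfty} does, namely by separating the common stem from the subtree contributions and matching the latter through the $b$-versus-$a$ orbit multiplicities and the $b/a$ scaling of Theorem~\ref{expofreplacement}(c,d) (the paper's ``key claim''), leaving only the same elementary stem identities you record.
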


\begin{lemma}\label{discnotinfty}
Let $P \in A_{\mathrm{bad}} \setminus \{\infty\}$ such that $\deg(f_P^{\neq \infty}) \geq 1$ (or equivalently $C_P^{\geq 1} \neq \emptyset$). Let $f_P^+ \colonequals \prod_{g_i \in C_P^{\geq 1}} g_i$. Then
\[ \nu(\Delta_{f_P^{\neq \infty}}) = 2b_P \left( d_P^{\mathrm{sm}} - 1 + \sum_{g_i \in C_P^{\geq 1}} n_i \right) + \sum_{\substack{\alpha \neq \alpha' \\ f_P^+(\alpha) = f_P^+(\alpha') = 0}} (\alpha | \alpha')_{\zeta} - \left[ \sum_{g_i \in C_P^{\geq 1}} n_i(n_i-1) + \sum_{\substack{i < j \\ g_i,g_j \in C_P^{\geq 1}}} 2 n_i n_j  \right]   \] 
\end{lemma}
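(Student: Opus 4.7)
The plan is to compute $\nu(\Delta_{f_P^{\neq \infty}})$ directly as a product over pairs of roots, handling the non-monic leading coefficient $t^{b_P}$ carefully. Set $F \colonequals \prod_{g_i \in C_P^{\geq 1}} h_i$ and $N \colonequals \sum_{g_i \in C_P^{\geq 1}} n_i$, so that $f_P^{\neq \infty} = t^{b_P} F$ has degree $N$ and leading coefficient $t^{b_P}$, while $F$ is monic of degree $N$. By Remark~\ref{rootscoefficients}, for each $i \in C_P^{\geq 1}$ division by $t$ gives a bijection from the roots of $\tilde{g}_i$ to the roots of $h_i$; collecting these, the roots $\{\beta_j\}$ of $F$ are precisely $\{(\alpha - a_P)/t : f_P^+(\alpha) = 0\}$.

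I would then record the discriminant convention: a polynomial of degree $n$ with leading coefficient $c$ and roots $\alpha_1,\ldots,\alpha_n$, considered as having degree $2\lceil n/2\rceil$ (equivalently, homogenized to a binary form with a simple root at infinity when $n$ is odd), has discriminant
\[ \Delta = c^{2(2\lceil n/2 \rceil)-2}\prod_{i<j}(\alpha_i - \alpha_j)^2. \]
For $f_P^{\neq \infty}$ the degree is $N$ and the effective degree $2\lceil N/2 \rceil$ equals $N + d_P^{\mathrm{sm}}$ by Definition~\ref{Dparitybit}; plugging in $c = t^{b_P}$ gives
\[ \nu(\Delta_{f_P^{\neq \infty}}) = 2b_P(N + d_P^{\mathrm{sm}} - 1) + 2 \sum_{i<j} \nu(\beta_i - \beta_j). \]

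Under the bijection $\alpha \leftrightarrow \beta = (\alpha - a_P)/t$, we have $\nu(\beta_i - \beta_j) = \nu(\alpha_i - \alpha_j) - 1$; summing over the $\binom{N}{2}$ unordered pairs and invoking Lemma~\ref{gromov} applied to the monic polynomial $f_P^+$ gives
\[ 2\sum_{i<j} \nu(\beta_i - \beta_j) = \sum_{\alpha \neq \alpha',\; f_P^+(\alpha) = f_P^+(\alpha')=0}(\alpha|\alpha')_{\zeta} - N(N-1). \]
The elementary identity $N(N-1) = \sum_{g_i \in C_P^{\geq 1}} n_i(n_i - 1) + 2\sum_{i<j} n_i n_j$ rewrites the $N(N-1)$ term in the shape appearing in the statement, and combining the two displays yields the claimed formula. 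The one subtlety worth pinning down is the non-monic discriminant convention above, which is what produces the factor $c^{2 d_P^{\mathrm{sm}}}$ responsible for the appearance of $d_P^{\mathrm{sm}}$ in the answer; once that is in hand the argument reduces to the root-bijection and straightforward bookkeeping.
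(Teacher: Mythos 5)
Your proposal is correct and follows essentially the same route as the paper: factor out the leading coefficient $t^{b_P}$ (accounting for it via the degree-$2\lceil N/2\rceil$ discriminant convention, which produces the $2b_P(d_P^{\mathrm{sm}}+N-1)$ term), use the root bijection $\alpha\mapsto(\alpha-a_P)/t$ of Remark~\ref{rootscoefficients} to shift each pairwise valuation down by $1$, and conclude with Lemma~\ref{gromov} and the identity $N(N-1)=\sum_i n_i(n_i-1)+2\sum_{i<j}n_in_j$. The only cosmetic difference is that the paper applies Lemma~\ref{gromov} to the monic factor $h_P^+$ and then shifts Gromov products, whereas you work with valuations of the $\beta$'s directly and invoke Lemma~\ref{gromov} only for $f_P^+$; the bookkeeping is identical.
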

\begin{proof}
 Let $h_P^+ \colonequals \prod_{g_i \in C_P^{\geq 1}} h_i(x)$. Then $f_P^{\neq \infty} = t^{b_P} h_P^+$ and $\nu(\Delta_{f_P^{\neq \infty}}) = 2b_P(d_P^{\mathrm{sm}}+\deg h_P^+ - 1) + \nu(\Delta_{h_P^+})$. By Definition~\ref{defnreppoly} and Remark~\ref{RAbaddeg} we have $\deg (h_i) = \deg (g_i) = n_i$ for every $g_i \in C_P^{\geq 1}$. Combining this with Lemma~\ref{gromov} we get
 \[ \nu(\Delta_{f_P^{\neq \infty}}) = 2b_P \left(d_P^{\mathrm{sm}} - 1+ \sum_{g_i \in C_P^{\geq 1}} n_i \right) + \sum_{\substack{\beta \neq \beta' \\ h_P^+(\beta) = h_P^+(\beta') = 0}} (\beta | \beta')_{\zeta} .\]
 Remark~\ref{rootscoefficients} and the fact that $\tilde{g}_i(x) = g_i(x+a_P)$ for $g_i \in C_P^{\geq 1}$ imply that the map $\alpha \mapsto \beta \colonequals (\alpha-a_P)/t$ induces a bijection from the roots of $f_P^+$ to the roots of $h_P^+$. This in turn means that if $(\alpha,\alpha')$ maps to the pair $(\beta,\beta')$, then $(\alpha|\alpha')_{\zeta} = (\beta|\beta')_{\zeta} + 1$.  Therefore 
 \begin{align*} \nu(\Delta_{f_P^{\neq \infty}}) &= 2b_P \left(d_P^{\mathrm{sm}} - 1 + \sum_{g_i \in C_P^{\geq 1}} n_i \right) + \sum_{\substack{\beta \neq \beta' \\ h_P^+(\beta) = h_P^+(\beta') = 0}} (\beta | \beta')_{\zeta} \\
 &= 2b_P \left( d_P^{\mathrm{sm}}- 1+\sum_{g_i \in C_P^{\geq 1}} n_i \right) + \sum_{\substack{\alpha \neq \alpha' \\ f_P^+(\alpha) = f_P^+(\alpha') = 0}} \left( (\alpha | \alpha')_{\zeta} - 1 \right) \\
 &= 2b_P \left( d_P^{\mathrm{sm}} - 1+\sum_{g_i \in C_P^{\geq 1}} n_i \right) + \sum_{\substack{\alpha \neq \alpha' \\ f_P^+(\alpha) = f_P^+(\alpha') = 0}} (\alpha | \alpha')_{\zeta} - \left[ \sum_{g_i \in C_P^{\geq 1}} n_i(n_i-1) + \sum_{\substack{i < j \\ g_i,g_j \in C_P^{\geq 1}}} 2 n_i n_j  \right]. \end{align*}
\end{proof}

\begin{lemma}\label{discinfty}
 Let $P \in A_{\mathrm{bad}} \setminus \{\infty\}$ such that $\deg(f_P^\infty) \geq 1$.
 Let $f_P^- \colonequals \prod_{g_i \in C_P^{< 1}} g_i$. Then
 \begin{multline*} \nu(\Delta_{f_P^{\infty}}) = 2b_P \left( d_P^{\mathrm{nod}}+b - 1 + \sum_{g_i \in C_P^{< 1}} \lambda_i \right) + \sum_{g_i \in C_P^{<1}} 2b(n_i-\lambda_i) + \sum_{\substack{\alpha \neq \alpha' \\ f_P^-(\alpha) = f_P^-(\alpha') = 0}} (\alpha | \alpha')_{\zeta} \\ - \left[ \sum_{g_i \in C_P^{< 1}} \lambda_i \left(\lambda_i + \frac{n_i}{\lambda_i}-2 \right) + \sum_{\substack{i < j \\ g_i,g_j \in C_P^{< 1}}} 2  \lambda_i \lambda_j  \right]  .\end{multline*}
\end{lemma}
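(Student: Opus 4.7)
The proof strategy mirrors Lemma~\ref{discnotinfty} but is more intricate because $f_P^\infty = t^{b_P} x^b \prod_{g_i \in C_P^{<1}} h_i(x)$ contains the extra factor $x^b$ and the replacement polynomials $h_i$ have degree $\lambda_i \neq n_i$ (by Corollary~\ref{Cnewpair}), so the roots of $h_i$ are not simple translates of roots of $g_i$. My first step is to apply the standard Weierstrass discriminant formula for a polynomial of actual $x$-degree $D$ with leading coefficient $c$, viewed as having Weierstrass degree $D' = 2\lceil D/2 \rceil$:
\[ \nu(\Delta_f) = 2\nu(c)(D'-1) + \sum_{\substack{\beta \neq \beta' \\ f(\beta)=f(\beta')=0}} \nu(\beta-\beta'), \]
which specializes to Lemma~\ref{gromov} when $f$ is monic and which already underlies the proof of Lemma~\ref{discnotinfty}. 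For $f_P^\infty$ the actual degree is $D = b + \sum_{g_i \in C_P^{<1}} \lambda_i$, the leading coefficient is $t^{b_P}$, and $D' = D + d_P^{\mathrm{nod}}$, so the leading-coefficient term contributes exactly the first summand $2b_P\bigl(d_P^{\mathrm{nod}} + b - 1 + \sum_i \lambda_i\bigr)$ of the target formula.

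Next, I would split the root-pair sum into three classes: (i) pairs $(0,\beta)$ with $\beta$ a root of some $h_i$; (ii) pairs among roots of a single $h_i$; (iii) cross pairs from distinct $h_i, h_j$. For class (i), Theorem~\ref{expofreplacement}(a) gives $\nu(\beta) = (n_i-\lambda_i)/\lambda_i$ for each of the $\lambda_i$ roots of $h_i$, which when multiplied by the multiplicity $b$ of $0$ and doubled for ordered pairs produces the $\sum_{g_i \in C_P^{<1}} 2b(n_i-\lambda_i)$ term. Classes (ii) and (iii) together equal $\nu(\Delta_{\prod h_i})$ by Lemma~\ref{gromov} (since $\prod h_i$ is monic), so the problem reduces to proving the key identity
\[ \nu(\Delta_{\prod h_i}) = \sum_{\alpha \neq \alpha' \in f_P^-}(\alpha|\alpha')_\zeta - \sum_{g_i \in C_P^{<1}} \lambda_i\bigl(\lambda_i + n_i/\lambda_i - 2\bigr) - \sum_{\substack{i<j \\ g_i, g_j \in C_P^{<1}}} 2\lambda_i\lambda_j. \]

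For this key identity, I would compute both sides as sums of Gromov products via the metric-tree descriptions of Theorem~\ref{mettreesymmetry}, Theorem~\ref{expofreplacement}, and Theorem~\ref{mettreerep2}. The within-$g_i$ correction $\lambda_i(\lambda_i + n_i/\lambda_i - 2)$ arises by aggregating Gromov products over the Galois-orbit subtrees $\overline{C}$ of $T(\tilde g_i)$: Theorem~\ref{mettreesymmetry}(e) partitions the roots of $\tilde g_i$ into isomorphic copies of $\overline{C}$, while Theorem~\ref{expofreplacement}(c,d) replaces these in $T(h_i)$ by an adjusted number of copies of the scaled subtree $\overline{C}^{n/m}$ (where $m/n = \lambda_i/n_i$ in lowest form); pairs within the same subtree transform by $(\beta|\beta')_\zeta = (n/m)((\alpha|\alpha')_\zeta + 1) - 2$, and cross-subtree pairs transform from Gromov product $m/n$ to $(n/m)-1$, and elementary bookkeeping produces the claimed correction. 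For cross pairs between $g_i$ and $g_j$ with $\lambda_i/n_i \neq \lambda_j/n_j$, Theorem~\ref{expofreplacement}(b) gives the exponents of contact directly, and a simple calculation comparing $n_i n_j$ ordered $g$-pairs (each contributing $\min(\lambda_i/n_i, \lambda_j/n_j)$) with $\lambda_i\lambda_j$ ordered $h$-pairs (each contributing the smaller of $n_i/\lambda_i - 1, n_j/\lambda_j - 1$) telescopes to $2\lambda_i\lambda_j$. The main obstacle is the equal-valuation cross case $\lambda_i/n_i = \lambda_j/n_j$ with $i \neq j$, where the Galois-orbits of leading coefficients of $g_i$ and $g_j$ may or may not align; here Theorem~\ref{expofreplacement}(d) and Theorem~\ref{mettreerep2} are essential to ensure that the correction still aggregates uniformly to $2\lambda_i\lambda_j$ through careful tracking of the $m$-fold replication of subtrees.
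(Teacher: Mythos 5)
Your proposal is correct and follows essentially the same route as the paper: peel off the leading-coefficient term $2b_P(d_P^{\mathrm{nod}}+b-1+\sum\lambda_i)$, compute the pairs involving the root of $x^b$ from $\nu(\xi)=n_i/\lambda_i-1$ and $\deg h_i=\lambda_i$, and then compare the Gromov-product sums for $\prod h_i$ and $f_P^-$ via the scaled-subtree correspondence and Galois-orbit counting of Theorem~\ref{mettreesymmetry} and Theorem~\ref{expofreplacement}(b)--(d). The only difference is organizational: the paper groups roots by valuation and proves one ``key claim'' equating the residual Gromov sums based at $\zeta_P^{r}$ and $\zeta_0^{s}$, whereas you sort pairs by type (within a factor, unequal-valuation cross, equal-valuation cross) and aggregate the same scaling data, which yields the identical corrections $\lambda_i(\lambda_i+n_i/\lambda_i-2)$ and $2\lambda_i\lambda_j$.
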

\begin{proof}
 Let $h_P^-(x) \colonequals \prod_{g_i \in C_P^{< 1}} h_i(x)$. Then $f_P^{\infty}(x) = t^{b_P} x^b h_P^-(x)$ and $\nu(\Delta_{f_P^{\infty}}) = 2b_P(d_P^{\mathrm{nod}}+\deg h_P^- + b - 1) + \nu(\Delta_{x^b h_P^-})$. By Definition~\ref{defnreppoly} and Lemma~\ref{degrep} we have $\deg (h_i) = \lambda_i$ for every $g_i \in C_P^{< 1}$. Combining this with Lemma~\ref{gromov} we get
 \[ \nu(\Delta_{f_P^{\infty}}) = 2b_P \left( d_P^{\mathrm{nod}}+b - 1 + \sum_{g_i \in C_P^{< 1}} \lambda_i \right) + \sum_{\xi \colon h_P^-(\xi) = 0} 2b(0|\xi)_{\zeta} + \sum_{\substack{\xi \neq \xi' \\ h_P^-(\xi) = h_P^-(\xi') = 0}} (\xi | \xi')_{\zeta}  .\]
 For each $g_i \in C_P^{< 1}$, Lemma~\ref{expofreplacement}~[(a),(b)] imply that $h_i$ is irreducible and that $\nu(\xi) = (\frac{n_i}{\lambda_i}-1)$ if $h_i(\xi) = 0$. Since $\deg (h_i) = \lambda_i$, it follows that
 \[ \sum_{\xi \colon h_P^-(\xi) = 0} 2b(0|\xi)_{\zeta} = \sum_{\xi \colon h_P^-(\xi) = 0} 2b\nu(\xi) = \sum_{g_i \in C_P^{<1}} \sum_{\xi \colon h_i(\xi) = 0} 2b\nu(\xi) = \sum_{g_i \in C_P^{<1}} 2b(n_i-\lambda_i).\]
  For integers $r,s > 0$, let 
 \[ f_{P,r} \colonequals \prod_{\substack{\eta \colon f_P^-(\eta+a_P) = 0 \\ \nu(\eta) = r }} (x-(\eta+a_P)),\ \textup{and.} \quad \quad \quad \quad h_{P,s} \colonequals \prod_{\substack{\xi \colon h_P^-(\xi) = 0 \\ \nu(\xi) = s }} (x-\xi).\] 
 Let $s,s'$ be integers with  $s' \leq s$. Since $\deg g_i = \lambda_i$, we have
 \begin{align*} \sum_{\substack{\xi \neq \xi' \\ h_{P,s}(\xi) = 0 \\ h_{P,s'}(\xi') = 0}} (\xi | \xi')_{\zeta} &=  \sum_{\substack{\xi \neq \xi' \\ h_{P,s}(\xi) = 0\\ h_{P,s'}(\xi') = 0}} \left[ s' + (\xi | \xi')_{\zeta_0^{s'}} \right] \\
  &= \sum_{\substack{g_i,g_j \in C_P^{<1} \\ \frac{n_i}{\lambda_i}-1=s \\ \frac{n_j}{\lambda_j}-1=s' }}  \sum_{\substack{\xi \neq \xi' \\ g_i(\xi) = 0 \\ g_j(\xi') = 0}} s' +  \sum_{\substack{\xi \neq \xi' \\ h_{P,s}(\xi) = 0\\ h_{P,s'}(\xi') = 0}} (\xi | \xi')_{\zeta_0^{s'}} \\
  &= \begin{cases} \displaystyle\sum_{\substack{g_j \in C_P^{<1} \\ \frac{n_j}{\lambda_j}-1=s' }}  (\lambda_j - 1) ( n_j-\lambda_j ) + \displaystyle\sum_{\substack{g_i,g_j \in C_P^{<1}, i \neq j \\ \frac{n_i}{\lambda_i}-1=s \\ \frac{n_j}{\lambda_j}-1=s' }}  2\lambda_i \left( n_j-\lambda_j \right) +  \displaystyle\sum_{\substack{\xi \neq \xi' \\ h_{P,s}(\xi) = 0\\ h_{P,s'}(\xi') = 0}} (\xi | \xi')_{\zeta_0^{s'}} \quad & \textup{if}\ s=s'  \\ 
  \displaystyle\sum_{\substack{g_i,g_j \in C_P^{<1}, i \neq j \\ \frac{n_i}{\lambda_i}-1=s \\ \frac{n_j}{\lambda_j}-1=s' }}  2\lambda_i \left( n_j-\lambda_j \right) +  \displaystyle\sum_{\substack{\xi \neq \xi' \\ h_{P,s}(\xi) = 0\\ h_{P,s'}(\xi') = 0}} (\xi | \xi')_{\zeta_0^{s'}} \quad & \textup{if}\ s \neq s'.\end{cases}
\end{align*}
Similarly if $r \leq r'$, we have
\[ \sum_{\substack{\eta \neq \eta' \\ f_{P,r}(\eta) = 0 \\ f_{P,r'}(\eta') = 0}} (\eta | \eta')_{\zeta} = 
\begin{cases}
\displaystyle\sum_{\substack{g_j \in C_P^{<1} \\ \frac{\lambda_j}{n_j}=r }}  \lambda_j ( n_j-1 ) + \displaystyle\sum_{\substack{g_i,g_j \in C_P^{<1}, i \neq j \\ \frac{\lambda_i}{n_i}=r , \frac{\lambda_j}{n_j}=r' }}  2\lambda_i n_j +  \displaystyle\sum_{\substack{\eta \neq \eta' \\ f_{P,r}(\eta) = 0\\ f_{P,r'}(\eta') = 0}} (\eta | \eta')_{\zeta_P^{r}} \quad & \textup{if}\ r=r'  \\ 
\displaystyle\sum_{\substack{g_i,g_j \in C_P^{<1}, i \neq j \\ \frac{\lambda_i}{n_i}=r , \frac{\lambda_j}{n_j}=r' }}  2\lambda_i n_j +  \displaystyle\sum_{\substack{\eta \neq \eta' \\ f_{P,r}(\eta) = 0\\ f_{P,r'}(\eta') = 0}} (\eta | \eta')_{\zeta_P^{r}} \quad & \textup{if}\ r \neq r' .
\end{cases}
\]
Let $g_i \in C_P^{<1}$ and let $\eta,\xi$ satisfy $g_i(\eta+a_P) = 0$ and $h_i(\xi) = 0$. If $r \colonequals \nu(\eta)$ and $s \colonequals \nu(\xi)$ then Lemma~\ref{expofreplacement} implies that $s = \frac{1}{r}-1$.  Let $r,r'$ be integers with $r \leq r'$ and let $s \colonequals \frac{1}{r}-1$ and $s' \colonequals \frac{1}{r'}-1$.

\noindent\textbf{Key claim:}  
\[ \sum_{\substack{\xi \neq \xi' \\ h_{P,s}(\xi) = 0\\ h_{P,s'}(\xi') = 0}} (\xi | \xi')_{\zeta_0^{s'}} = \sum_{\substack{\eta \neq \eta' \\ f_{P,r}(\eta) = 0\\ f_{P,r'}(\eta') = 0}} (\eta | \eta')_{\zeta_P^{r}}.\]

For $r,r',s,s'$ as above we see that the claim along with some algebra implies that
\[\displaystyle\sum_{\substack{\eta \neq \eta' \\ f_{P,r}(\eta) = 0 \\ f_{P,r'}(\eta') = 0}} (\eta | \eta')_{\zeta} - \displaystyle\sum_{\substack{\xi \neq \xi' \\ h_{P,s}(\xi) = 0 \\ h_{P,s'}(\xi') = 0}} (\xi | \xi')_{\zeta} = \begin{cases}
 \displaystyle\sum_{\substack{g_j \in C_P^{<1} \\ \frac{\lambda_j}{n_j}=r }} \lambda_j \left( \lambda_j + \frac{n_j}{\lambda_j} -2 \right) + 
 \displaystyle\sum_{\substack{g_i,g_j \in C_P^{<1}, i \neq j \\ \frac{\lambda_i}{n_i}=r , \frac{\lambda_j}{n_j}=r' }}   2\lambda_i \lambda_j 
 \quad & \textup{if}\ r = r' \\ 
 \displaystyle\sum_{\substack{g_i,g_j \in C_P^{<1}, i \neq j \\ \frac{\lambda_i}{n_i}=r , \frac{\lambda_j}{n_j}=r' }}  2\lambda_i \lambda_j \quad & \textup{if}\ r \neq r' .
\end{cases}\]
Adding these over all possible $r,r'$ then finishes the proof of the lemma. 

Now we prove the key claim. If $r \neq r'$, then $s \neq s'$ and both sides of the equality are $0$ by Lemma~\ref{retractionontoS}. So from now on we may assume that $r=r' \colonequals a/b$ with $\gcd(a,b) = 1$ and $s=s'$ and that there exist roots of $f(x+a_P)$ of valuation $r$ (otherwise both sides are $0$ since we are summing over the empty set). Like in Theorem~\ref{mettreerep2}, let $I_{a/b}$ be the set of orbits for the $\Z/b\Z$ action on the collection of rooted metric trees $\overline{\mathcal{C}^{a/b}}$. For each $\gamma \in I_{a/b}$, choose a rooted metric tree $T_\gamma$ to represent the isomorphism class of rooted metric trees in the orbit corresponding to $\gamma$ and let $\Gamma$ be the corresponding set of roots of $f$ (i.e, $T_{\gamma}$ is the convex hull of $\zeta_{P}^{a/b}$ and $\Gamma$). Since the Galois action induces an isometry of $\mathbb{P}^{1,\mathrm{Berk}}_{\overline{K}}$, we have
\[ \sum_{\substack{\eta \neq \eta' \\ f_{P,r}(\eta) = 0 \\ f_{P,r'}(\eta') = 0}} (\eta | \eta')_{\zeta_P^r} = \sum_{\gamma \in  I_{a/b}} \sum_{\substack{\eta \neq \eta' \\ \eta,\eta' \in \Gamma}} b (\eta | \eta')_{\zeta_P^r}. \]
Theorem~\ref{expofreplacement}~(c) gives us a scaled metric $\overline{D}_{\gamma}$ of $T(f_P^{\infty})$ that is homeomorphic to $\overline{C}_{\gamma}$ but where all lengths are scaled by $b/a$. We similarly have a set of roots $\Delta$ of $f_P^{\infty}$ corresponding to $\overline{D}_{\gamma}$ of valuation $(b/a)-1$. The Galois action on the subtree $T_{0,(b/a)-1}$ of $T(f_P^{\infty})$ factors via $\Z/a\Z$, and the description in Theorem~\ref{expofreplacement}~(d) tells us that the corresponding set of orbits for the action on the subtrees are still indexed by $I_{a/b}$ with $a$ elements in each orbit. So we now have 
\[ \sum_{\substack{\xi \neq \xi' \\ h_{P,s}(\xi) = 0 \\ h_{P,s'}(\xi') = 0}} (\xi | \xi')_{\zeta_0^s} = \sum_{\gamma \in  I_{a/b}} a \cdot \left( \sum_{\substack{\xi \neq \xi' \\ \xi,\xi' \in \Delta}} (\xi | \xi')_{\zeta_0^s} \right) = \sum_{\gamma \in  I_{a/b}} a \cdot (b/a) \cdot \left( \sum_{\substack{\eta \neq \eta' \\ \eta,\eta' \in \Gamma}} (\eta | \eta')_{\zeta_P^r} \right) = \sum_{\substack{\eta \neq \eta' \\ f_{P,r}(\eta) = 0 \\ f_{P,r'}(\eta') = 0}} (\eta | \eta')_{\zeta_P^r}, \]
where the second to last equality relies on the scaled isomorphism between $\overline{C}_{\gamma}$ and $\overline{D}_{\gamma}$.
\end{proof}

Now we can prove the main theorem of this section.
\begin{proof}[Proof of Theorem~\ref{discchange}]
 Lemma~\ref{gromov} implies that
 \begin{align*} \nu(\Delta_f) &= 2b(d+\deg f -1) + \nu(\Delta_{g_1 g_2 \ldots g_l}) \\ &= 2b(d+\deg f -1) + \sum_{\substack{\alpha \neq \alpha' \\ f(\alpha) = f(\alpha') = 0}} (\alpha|\alpha')_{\zeta}. \end{align*}
 If $P \in A \setminus A_{\mathrm{bad}}$ and $P \neq \infty$, then Lemma~\ref{badpoints} implies that there exists a unique index $i$ such that $C_P = \{g_i\}$ and for that $i$, either $g_i \in C_P^{\geq 1}$ and $n_i = 1$ or $g_i \in C_P^{< 1}, n_i > 1,\lambda_i = 1$ and 
 \[ \sum_{\substack{\alpha \neq \alpha' \\ g_i(\alpha) = g_i(\alpha') = 0}} (\alpha|\alpha')_{\zeta} =  \sum_{\substack{\alpha \neq \alpha' \\ g_i(\alpha) = g_i(\alpha') = 0}} \nu(\alpha-\alpha') = \left( \sum_{\substack{\alpha \neq \alpha' \\ g_i(\alpha) = g_i(\alpha') = 0}} \frac{1}{n_i} \right)= n_i-1.\] 
 Since $(\alpha|\alpha')_{\zeta} = \nu(\alpha-\alpha')$, if $\alpha$ and $\alpha'$ specialize to distinct points in $\mathbb{P}^1_R$, then $(\alpha|\alpha')_{\zeta} = 0$. So we can write the above formula for $\nu(\Delta_f)$ as 
 \begin{align*} \nu(\Delta_f) &= 2b(d+\deg f -1) + \sum_{\substack{P \in A \setminus A_{\mathrm{bad}} \\ P \neq \infty}} \sum_{g_i \in C_P} \sum_{\substack{\alpha \neq \alpha' \\ g_i(\alpha) = g_i(\alpha') = 0}} (\alpha|\alpha')_{\zeta} + \sum_{P \in A_\mathrm{bad}} \sum_{g_i,g_j \in C_P} \sum_{\substack{\alpha \neq \alpha' \\ g_i(\alpha) = g_j(\alpha') = 0}} (\alpha|\alpha')_{\zeta} \\ 
 &= 2b(d+\deg f -1) + \left( \sum_{\substack{P \in A \setminus A_{\mathrm{bad}} \\ P \neq \infty}} n_P-1 \right)  \\
 &\quad \quad \quad \quad \quad \quad + \sum_{P \in A_\mathrm{bad}} \left[ \sum_{\substack{\alpha \neq \alpha' \\ g_i,g_j \in C_P^{<1} \\ g_i(\alpha) = g_j(\alpha') = 0}} (\alpha|\alpha')_{\zeta} + \sum_{\substack{\alpha \neq \alpha' \\ g_i \in C_P^{<1},g_j \in C_P^{\geq 1} \\ g_i(\alpha) = g_j(\alpha') = 0}} 2(\alpha|\alpha')_{\zeta} + \sum_{\substack{\alpha \neq \alpha' \\ g_i,g_j \in C_P^{\geq 1} \\ g_i(\alpha) = g_j(\alpha') = 0}} (\alpha|\alpha')_{\zeta} \right].
 \end{align*}
 Since 
 \[
  \sum_{\substack{g_i \in C_P^{<1},g_j \in C_P^{\geq 1} \\ g_i(\alpha) = g_j(\alpha') = 0}} (\alpha|\alpha')_{\zeta} 
  = \sum_{\substack{g_i \in C_P^{<1} \\ g_j \in C_P^{\geq 1}}} \sum_{\substack{g_i(\alpha) = 0 \\ g_j(\alpha') = 0}} \nu(\alpha-\alpha') \\
  = \sum_{\substack{g_i \in C_P^{<1} \\ g_j \in C_P^{\geq 1}}} \sum_{\substack{g_i(\alpha) = 0 \\ g_j(\alpha') = 0}} \nu(\alpha-a_P) \\
  = \sum_{\substack{g_i \in C_P^{<1} \\ g_j \in C_P^{\geq 1}}} \lambda_i n_j,
 \]
the equality further simplifies to
\begin{multline*} \nu(\Delta_f) = 2b(d+\deg f -1) + \left( \sum_{\substack{P \in A \setminus A_{\mathrm{bad}} \\ P \neq \infty,g_i \in C_P}} n_i-1 \right) \\ + \sum_{P \in A_\mathrm{bad}} \left[ \sum_{\substack{\alpha \neq \alpha' \\ g_i,g_j \in C_P^{<1} \\ g_i(\alpha) = g_j(\alpha') = 0}} (\alpha|\alpha')_{\zeta} + \sum_{\substack{g_i \in C_P^{<1} \\ g_j \in C_P^{\geq 1}}} 2\lambda_i n_j + \sum_{\substack{\alpha \neq \alpha' \\ g_i,g_j \in C_P^{\geq 1} \\ g_i(\alpha) = g_j(\alpha') = 0}} (\alpha|\alpha')_{\zeta} \right] .\end{multline*}
Rewriting $\displaystyle\sum_{P \in A_\mathrm{bad}} \displaystyle\sum_{\substack{\alpha \neq \alpha' \\ g_i,g_j \in C_P^{<1} \\ g_i(\alpha) = g_j(\alpha') = 0}} (\alpha|\alpha')_{\zeta}$ and $\displaystyle\sum_{P \in A_\mathrm{bad}} \displaystyle\sum_{\substack{\alpha \neq \alpha' \\ g_i,g_j \in C_P^{\geq 1} \\ g_i(\alpha) = g_j(\alpha') = 0}} (\alpha|\alpha')_{\zeta}$ using Lemma~\ref{discinfty} and Lemma~\ref{discnotinfty} we get that 
\[ \nu(\Delta_f) -\left( \sum_{\substack{P \in A \setminus A_{\mathrm{bad}} \\ P \neq \infty, g_i \in C_P}} (n_i-1) \right) - \sum_{\substack{P \in A_{\mathrm{bad}} \setminus \{\infty\} \\ \deg(f_P^\infty) \geq 1 }} \nu(\Delta_{f_P^{\infty}}) - \sum_{\substack{P \in A_{\mathrm{bad}} \setminus \{\infty\} \\ \deg(f_P^{\neq \infty}) \geq 1 }} \nu(\Delta_{f_P^{\neq \infty}})\] equals
\begin{multline*} 2b(d+\deg f -1) + \sum_{P \in A_\mathrm{bad}} \left[ \sum_{\substack{g_i \in C_P^{<1} \\ g_j \in C_P^{\geq 1}}} 2\lambda_i n_j - \overbrace{2b_P \left(d_P^{\mathrm{sm}} - 1 + \sum_{g_i \in C_P^{\geq 1}} n_i \right)}^{\textup{appears only if} \ \deg(f_P^{\neq \infty}) \geq 1} + \sum_{g_i \in C_P^{\geq 1}} n_i(n_i-1) + \sum_{\substack{i < j \\ g_i,g_j \in C_P^{\geq 1}}} 2 n_i n_j \right] \\
 + \sum_{P \in A_\mathrm{bad}} \left[  \underbrace{- 2b_P \left( d_P^{\mathrm{nod}}+b - 1 + \sum_{g_i \in C_P^{< 1}} \lambda_i \right) - \sum_{g_i \in C_P^{<1}} 2b(n_i-\lambda_i)}_{\textup{appears only if} \ \deg(f_P^{ \infty}) \geq 1} + \sum_{g_i \in C_P^{< 1}} \lambda_i \left(\lambda_i + \frac{n_i}{\lambda_i}-2 \right) + \sum_{\substack{i < j \\ g_i,g_j \in C_P^{< 1}}} 2  \lambda_i \lambda_j   \right]
   , \end{multline*}
 which on rearrangement gives the desired equality.
\end{proof}

\section{Proof of inequality}\label{finalproof}
\subsection{Change in conductor is less than change in discriminant}
In this section, we will combine Theorem~\ref{condchange} and Theorem~\ref{discchange} to establish the key inductive inequality Theorem~\ref{replacementchange}, namely, that the change on the conductor side under the replacement operation is less than the change on the discriminant side. The proof of Theorem~\ref{replacementchange} then follows from an application of the following simple numerical inequalities and some careful book-keeping.
\begin{lemma}\label{convexity} Let $a_1,a_2,\ldots,a_l$ be a finite set of integers, each $\geq 1$ with $\sum_{i=1}^l {a_i} \geq 2$. 
\begin{enumerate}[\upshape(a)]
 \item $\sum_{i} a_i(a_i-1) + 2 \sum_{i < j}a_i a_j \geq 2$.
 \item If $\sum_{i} {a_i}$ is odd, then $\sum_{i} a_i(a_i-3) + 2 \sum_{i < j}a_i a_j \geq 0$.
 \item If $\sum_i a_i$ is even, then equality holds in {\upshape{(a)}} if and only if one of the following holds:
 \begin{itemize}
  \item $l=1$ and $a_1=2$, or,
  \item $l=2$ and $a_1=a_2=1$. 
 \end{itemize}
 \item If $\sum_i a_i$ is odd, then equality holds in {\upshape{(b)}} if and only if one of the following holds:
 \begin{itemize}
  \item $l=1$ and $a_1=3$, or,
  \item $l=2$ and $\{a_1,a_2\} = \{1,2\}$, or,
  \item $l=3$ and $a_1=a_2=a_3=1$.
 \end{itemize}

\end{enumerate} 
\end{lemma}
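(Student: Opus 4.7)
The plan is to reduce all four parts to a single algebraic identity. Setting $N \colonequals \sum_{i=1}^l a_i$, I observe that
\[ \sum_i a_i(a_i-1) + 2\sum_{i<j} a_i a_j = \Bigl(\sum_i a_i\Bigr)^2 - \sum_i a_i = N(N-1), \]
since $\bigl(\sum_i a_i\bigr)^2 = \sum_i a_i^2 + 2\sum_{i<j} a_i a_j$. Similarly,
\[ \sum_i a_i(a_i-3) + 2\sum_{i<j} a_i a_j = N^2 - 3N = N(N-3). \]

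With these identities in hand, parts (a) and (b) become immediate: the hypothesis $N \geq 2$ gives $N(N-1) \geq 2$, proving (a); while for (b), the assumption that $N$ is odd together with $N \geq 2$ forces $N \geq 3$, so $N(N-3) \geq 0$.

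For the equality statements, part (c) requires $N(N-1) = 2$, which for integers $N \geq 2$ forces $N = 2$. Enumerating the ways to write $2$ as an ordered sum of positive integers (up to the indexing) yields precisely $l=1, a_1=2$ or $l=2, a_1=a_2=1$. Part (d) requires $N(N-3) = 0$ with $N$ odd and $N \geq 3$, forcing $N = 3$, whose decompositions into positive integer summands are exactly $l=1, a_1=3$; $l=2, \{a_1,a_2\}=\{1,2\}$; and $l=3, a_1=a_2=a_3=1$.

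There is no real obstacle here — the lemma is a routine consequence of the identity $\bigl(\sum a_i\bigr)^2 = \sum a_i^2 + 2\sum_{i<j} a_i a_j$ combined with small-case enumeration. The only thing to be careful about is the parity hypothesis in (b) and (d), which is what rules out $N=2$ as a potential equality case in (b) (where $N(N-3) = -2 < 0$) and ensures that the bound $N(N-3) \geq 0$ actually holds.
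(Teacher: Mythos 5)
Your proposal is correct and is essentially the same argument as the paper's: both reduce everything to the identities $\sum_i a_i(a_i-1) + 2\sum_{i<j}a_ia_j = N(N-1)$ and $\sum_i a_i(a_i-3) + 2\sum_{i<j}a_ia_j = N(N-3)$ with $N=\sum_i a_i$, and then handle the equality cases by forcing $N=2$ or $N=3$ and enumerating the decompositions into positive integers. No differences worth noting.
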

\begin{proof} 
Let $\sum a_i = S$. We then have
\[ \sum_{i} a_i(a_i-1) + 2 \sum_{i < j}a_i a_j = S(S-1)\]
\[ \sum_{i} a_i(a_i-3) + 2 \sum_{i < j}a_i a_j = S(S-3)\]
If $S \geq 2$, then $S(S-1) \geq 2$. Furthermore, if $S$ is even and $S(S-1)=2$, then we have $\sum a_i = S=2$. Similarly, if $S \geq 3$, then $S(S-3) \geq 0$ and $S(S-3) = 0$ when $\sum a_i = S=3$. The other two parts follow since the $a_i$ are nonnegative integers. \qedhere
\end{proof}

We are now finally ready to prove Theorem~\ref{replacementchange}.
\begin{proof}[Proof of Theorem~\ref{replacementchange}]
 Theorem~\ref{condchange} tells us that the quantity
\[ -\Art (X^f/S)- \left( \sum_{\substack{P \in A_{\mathrm{bad}} \setminus \{\infty\} \\ \deg(f_P^{\infty}) \geq 1 }}-\Art (X^{f_P^{\infty}}/S) +\sum_{\substack{P \in A_{\mathrm{bad}} \setminus \{\infty\} \\ \deg(f_P^{\neq \infty}) \geq 1 }}-\Art (X^{f_P^{\neq \infty}}/S) \right) \]
equals 
\begin{multline*} -b(2+d)+\displaystyle\sum_{\substack{P \in A \setminus A_{\mathrm{bad}} \\ P \neq \infty}} (n_i-1+b)+(2+b)\sharp(A_{\mathrm{bad}})+\sum_{P \in A_\mathrm{bad} \setminus \{\infty\}}  \sum_{g_i \in C_P^{<1}} \left( n_i - \lambda_i \right) \\
  - \left(\sum_{\substack{P \in A_{\mathrm{bad}} \setminus \{ \infty \} \\ \deg f_P^{\infty} = 0}} (b-bb_P) \right)+\displaystyle\sum_{\substack{P \in A_{\mathrm{bad}} \setminus \{ \infty \} \\ \deg f_P^{\infty} \geq 1 \ \textup{and} \\ \deg f_P^{\neq \infty} \geq 1}} 2b_P-\displaystyle\sum_{\substack{P \in A_\mathrm{bad} \setminus \{\infty\} \\ \deg f_P^{\infty} \geq 1}} (b+2b_P  d_P^{\mathrm{nod}}) -  \displaystyle\sum_{\substack{P \in A_\mathrm{bad} \setminus \{\infty\} \\ \deg f_P^{\neq \infty} \geq 1}} 2b_P  d_P^{\mathrm{sm}}. \end{multline*}

 Theorem~\ref{discchange} and the equality $\lambda_i(\lambda_i+ \frac{n_i}{\lambda_i} - 2) = (n_i-\lambda_i) + \lambda_i(\lambda_i-1)$ tell us that the quantity
 \[\nu(\Delta_f) -\left( \sum_{\substack{P \in A \setminus A_{\mathrm{bad}} \\ P \neq \infty, g_i \in C_P}} (n_i-1) \right) - \sum_{\substack{P \in A_{\mathrm{bad}} \setminus \{\infty\} \\ \deg(f_P^\infty) \geq 1 }} \nu(\Delta_{f_P^{\infty}}) - \sum_{\substack{P \in A_{\mathrm{bad}} \setminus \{\infty\} \\ \deg(f_P^{\neq \infty}) \geq 1 }} \nu(\Delta_{f_P^{\neq \infty}})\] 
 equals 
\begin{multline*} 2b(d+\deg f -1) - \sum_{\substack{P \in A_\mathrm{bad} \\ P \neq \{\infty\}}} \left[ \overbrace{2b_P \left( d_P^{\mathrm{nod}}+b - 1 + \sum_{g_i \in C_P^{< 1}} \lambda_i \right) + \sum_{g_i \in C_P^{<1}} 2b(n_i-\lambda_i)}^{\textup{appears only if} \ \deg(f_P^{\infty}) \geq 1 } + \overbrace{2b_P \left(d_P^{\mathrm{nod}} - 1+ \sum_{g_i \in C_P^{\geq 1}} n_i \right)}^{\textup{appears only if} \ \deg(f_P^{\neq \infty}) \geq 1}  \right]
\\ + \sum_{\substack{P \in A_\mathrm{bad} \\ P \neq \{\infty\}}} \left[ \sum_{\substack{g_i \in C_P^{<1} \\ g_j \in C_P^{\geq 1}}} 2\lambda_i n_j +  \sum_{g_i \in C_P^{< 1}} \left[ (n_i - \lambda_i) + \lambda_i (\lambda_i -1) \right]  + \sum_{\substack{i < j \\ g_i,g_j \in C_P^{< 1}}} 2  \lambda_i \lambda_j   + \sum_{g_i \in C_P^{\geq 1}} n_i(n_i-1) + \sum_{\substack{i < j \\ g_i,g_j \in C_P^{\geq 1}}} 2 n_i n_j \right]  . \end{multline*}

\noindent\textbf{Case I: $\mathbf{b= 0}$} \\
In this case, we have to prove that
\begin{multline*} \displaystyle\sum_{\substack{P \in A \setminus A_{\mathrm{bad}} \\ P \neq \infty}} (n_i-1)+2\sharp(A_{\mathrm{bad}})+\sum_{P \in A_\mathrm{bad} \setminus \{\infty\}} \sum_{g_i \in C_P^{<1}} \left( n_i - \lambda_i \right)   +\displaystyle\sum_{\substack{P \in A_{\mathrm{bad}} \setminus \{ \infty \} \\ \deg f_P^{\infty} \geq 1 \ \textup{and} \\ \deg f_P^{\neq \infty} \geq 1}} 2b_P \\
-\displaystyle\sum_{\substack{P \in A_\mathrm{bad} \setminus \{\infty\} \\ \deg f_P^{\infty} \geq 1}} 2b_P  d_P^{\mathrm{nod}} -  \displaystyle\sum_{\substack{P \in A_\mathrm{bad} \setminus \{\infty\} \\ \deg f_P^{\neq \infty} \geq 1}} 2b_P  d_P^{\mathrm{sm}} \end{multline*}
  is less than or equal to
  \begin{multline*} \sum_{\substack{P \in (A \setminus A_{\mathrm{bad}}) \\ P \neq \infty,g_i \in C_P}} (n_i-1) - \sum_{\substack{P \in A_\mathrm{bad} \\ P \neq \infty}} \left[ \overbrace{2b_P \left(d_P^{\mathrm{nod}}- 1 + \sum_{g_i \in C_P^{< 1}} \lambda_i \right) }^{\textup{appears only if} \ \deg(f_P^{\infty}) \geq 1} + \overbrace{2b_P \left( d_P^{\mathrm{sm}} - 1+ \sum_{g_i \in C_P^{\geq 1}} n_i \right)}^{\textup{appears only if} \deg(f_P^{\neq \infty}) \geq 1}  \right] +   \sum_{\substack{P \in A_\mathrm{bad} \\ P \neq \infty}} \sum_{g_i \in C_P^{< 1}} (n_i - \lambda_i)
\\  + \sum_{\substack{P \in A_\mathrm{bad} \\ P \neq \infty}} \left[ \sum_{\substack{g_i \in C_P^{<1} \\ g_j \in C_P^{\geq 1}}} 2\lambda_i n_j +  \sum_{g_i \in C_P^{< 1}} \lambda_i (\lambda_i -1)  + \sum_{\substack{i < j \\ g_i,g_j \in C_P^{< 1}}} 2  \lambda_i \lambda_j   + \sum_{g_i \in C_P^{\geq 1}} n_i(n_i-1) + \sum_{\substack{i < j \\ g_i,g_j \in C_P^{\geq 1}}} 2 n_i n_j \right]  . \end{multline*}

Since $b=0$, by definition of $A_{\mathrm{bad}}$ it follows that $\infty \notin A_{\mathrm{bad}}$. Therefore, it suffices to show that for each $P \in A_{\mathrm{bad}}$, we have the inequality
\begin{multline}\label{Eeqeven}
\begin{split}
 2 + \overbrace{2b_P}^{\textup{appears only if} \ \deg(f_P^{\infty}) \geq 1 \ \textup{and} \  \deg(f_P^{\neq \infty}) \geq 1} - \overbrace{2b_Pd_P^{\mathrm{nod}}}^{\textup{appears only if} \ \deg(f_P^{\infty}) \geq 1}-\overbrace{2b_Pd_P^{\mathrm{sm}}}^{\textup{appears only if} \deg(f_P^{\neq \infty}) \geq 1} \\
 \leq \sum_{\substack{g_i \in C_P^{<1} \\ g_j \in C_P^{\geq 1}}} 2\lambda_i n_j +  \sum_{g_i \in C_P^{< 1}} \lambda_i (\lambda_i -1)  + \sum_{\substack{i < j \\ g_i,g_j \in C_P^{< 1}}} 2  \lambda_i \lambda_j   + \sum_{g_i \in C_P^{\geq 1}} n_i(n_i-1) + \sum_{\substack{i < j \\ g_i,g_j \in C_P^{\geq 1}}} 2 n_i n_j\\
 -\underbrace{2b_P \left(d_P^{\mathrm{nod}}- 1 + \sum_{g_i \in C_P^{< 1}} \lambda_i \right) }_{\textup{appears only if} \ \deg(f_P^{\infty}) \geq 1} - \underbrace{2b_P \left( d_P^{\mathrm{sm}} - 1+\sum_{g_i \in C_P^{\geq 1}} n_i \right)}_{\textup{appears only if} \deg(f_P^{\neq \infty}) \geq 1} 
\end{split}
 \end{multline}
Note that when $b=0$, it follows from Definition~\ref{defnreppoly}, Lemma~\ref{degrep} and Remark~\ref{rootscoefficients} that $\deg(f_P^{\infty}) = \sum_{g_i \in C_P^{<1}} \lambda_i$ and $\deg(f_P^{\neq \infty}) = \sum_{g_i \in C_P^{\geq 1}} n_i$. Since $P \in A_{\mathrm{bad}}$, we have $\sum_{g_i \in C_P^{<1}} \lambda_i + \sum_{g_i \in C_P^{\geq 1}} n_i \geq 2$. If $b_P=0$, the inequality now follows from Lemma~\ref{convexity}(a) applied to $\{n_1,n_2,\ldots \} \cup \{\lambda_1,\lambda_2,\ldots \}$, and furthermore note that the left hand side of Theorem~\ref{replacementchange} is strictly positive. In this case, we note from Lemma~\ref{convexity}(c) that we have equality only if 
\[ \widetilde{\wt_P} = \sum_{i \in C_P^{\geq 1}} n_i + \sum_{i \in C_P^{<1}} \lambda_i=2.\] 

If $b_P=1$, then by Corollary~\ref{fmultexc}, $\sum_{g_i \in C_P^{<1}} \lambda_i + \sum_{g_i \in C_P^{\geq 1}} n_i$ is odd. Since $d_P^{\mathrm{nod}}$ is the parity of $\deg(f_P^{\infty}) = \sum_{g_i \in C_P^{<1}} \lambda_i$ and  $d_P^{\mathrm{sm}}$ is the parity of $\deg(f_P^{\neq \infty}) = \sum_{g_i \in C_P^{\geq 1}} n_i$, it follows that $\{ d_P^{\mathrm{nod}},d_P^{\mathrm{sm}} \} = \{0,1\}$, and the left hand side of our inequality is $2$ if $\deg(f_P^{\infty}) \geq 1$ and $\deg(f_P^{\neq \infty}) \geq 1$, and $0$ otherwise. In this case, the inequality now follows from Lemma~\ref{convexity}(b) applied to $\{n_1,n_2,\ldots \} \cup \{\lambda_1,\lambda_2,\ldots \}$. For the purpose of Corollary~\ref{Cequality}, we note from Lemma~\ref{convexity}(d) that we have equality only if 
\[ \widetilde{\wt_P} = \sum_{i \in C_P^{\geq 1}} n_i + \sum_{i \in C_P^{<1}} \lambda_i=3.\]
Note that the left hand side of Theorem~\ref{replacementchange} is nonnegative, and the right hand side is $0$ only when $\sum_{g_i \in C_P^{<1}} \lambda_i + \sum_{g_i \in C_P^{\geq 1}} n_i = 3$ and one of $\deg(f_P^{\neq \infty})$ and $\deg(f_P^{\infty})$ is $0$. \\

\noindent\textbf{Case II: $\mathbf{b= 1}$} \\

When $b=1$, by Definition~\ref{defnreppoly}, we have $\deg(f_P^{\infty}) \geq 1$ for all $P \in A_\mathrm{bad}$ and therefore 
\begin{equation}\label{Eo1}\left(\sum_{\substack{P \in A_{\mathrm{bad}} \setminus \{ \infty \} \\ \deg f_P^{\infty} = 0}} (b-bb_P) \right) = 0.\end{equation}
Since $(\sum_{i \in C_P^{<1}} \lambda_i + \sum_{i \in C_P^{\geq 1}} n_i) \geq 1$ for all $P \in A$ and since $b=1$, Lemma~\ref{badpoints} implies that $A = A_{\mathrm{bad}}$ and therefore 
\begin{equation}\label{Eo2}\displaystyle\sum_{\substack{P \in A \setminus A_{\mathrm{bad}} \\ P \neq \infty}} (n_i-1+b) = 0.\end{equation}
For each $P \in A_{\mathrm{bad}} \setminus \{\infty\}$, we have
\begin{equation}\label{Eo3} \sum_{i \in C_P^{<1}} 2n_i+\sum_{g_i \in C_P^{\geq 1}} 2n_i - \sum_{i \in C_P^{<1}} 2(n_i-\lambda_i) = \sum_{g_i \in C_P^{\geq 1}} 2n_i +  \sum_{g_i \in C_P^{<1}} 2\lambda_i. \end{equation}
Since $b=1$, by Lemma~\ref{Lparitydegree} we have $\infty \in A_{\mathrm{bad}}$ exactly when $d=1$. This implies that
\begin{equation}\label{Eo4} -bd+b \sharp(A_{\mathrm{bad}})-\sum_{\substack{P \in A_{\mathrm{bad}} \setminus \{\infty\} \\ \deg(f_P^{\infty})=0}}b-\sum_{\substack{P \in A_{\mathrm{bad}} \setminus \{\infty\} \\ \deg(f_P^{\infty}) \geq 1}}b = 0.\end{equation}
Note we also have 
\begin{equation}\label{Eo5} \deg f = \sum_{P \in A \setminus \{\infty\}} \left( \sum_{g_i \in C_P^{<1}} n_i + \sum_{g_i \in C_P^{\geq 1}} n_i \right),\end{equation}
Using equations~\ref{Eo1},\ref{Eo2},\ref{Eo3},\ref{Eo4} and \ref{Eo5} and arguing as in the case $b=0$, to prove Theorem~\ref{replacementchange} when $b=1$, it now suffices to prove that the following inequality holds for each $P \in A_{\mathrm{bad}} \setminus \{\infty\}$. (Recall that we showed $\deg(f_P^{\infty}) \geq 1$ for all $P \in A_{\mathrm{bad}} \setminus \{\infty\}$.)
\begin{multline}\label{Eeqodd}
\begin{split}
 2 + \overbrace{2b_P}^{\textup{appears only if} \  \deg(f_P^{\neq \infty}) \geq 1} - 2b_Pd_P^{\mathrm{nod}}-\overbrace{2b_Pd_P^{\mathrm{sm}}}^{\textup{appears only if} \deg(f_P^{\neq \infty}) \geq 1} \\
 \leq \sum_{\substack{g_i \in C_P^{<1} \\ g_j \in C_P^{\geq 1}}} 2\lambda_i n_j +  \sum_{g_i \in C_P^{< 1}} \lambda_i (\lambda_i -1)  + \sum_{\substack{i < j \\ g_i,g_j \in C_P^{< 1}}} 2  \lambda_i \lambda_j   + \sum_{g_i \in C_P^{\geq 1}} n_i(n_i-1) + \sum_{\substack{i < j \\ g_i,g_j \in C_P^{\geq 1}}} 2 n_i n_j\\
 \sum_{g_i \in C_P^{\geq 1}} 2n_i +  \sum_{g_i \in C_P^{<1}} 2\lambda_i-2b_P \left(d_P^{\mathrm{nod}}- 1 + \sum_{g_i \in C_P^{< 1}} \lambda_i \right)  - \underbrace{2b_P \left( d_P^{\mathrm{sm}} - 1+\sum_{g_i \in C_P^{\geq 1}} n_i \right)}_{\textup{appears only if} \deg(f_P^{\neq \infty}) \geq 1} 
\end{split}
 \end{multline}

Since $b_P \in \{0,1\}$ and by Lemma~\ref{fmultexc} we have $b_P=1$ if and only if $1+\sum_{i \in C_P^{\geq 1}} n_i+\sum_{i \in C_P^{< 1}} \lambda_i$ is odd and since $A=A_{\mathrm{bad}}$, has to be at least $3$. If $b_P=0$ (equivalently $\sum_{i \in C_P^{\geq 1}} n_i+\sum_{i \in C_P^{< 1}} \lambda_i$ is odd), the desired inequality now follows from Lemma~\ref{convexity}(a) if $\sum_{i \in C_P^{\geq 1}} n_i+\sum_{i \in C_P^{< 1}} \lambda_i \geq 2$ and from the inequality 
\[ 2\leq  \sum_{g_i \in C_P^{\geq 1}} 2n_i +  \sum_{g_i \in C_P^{<1}} 2\lambda_i \]
if $\sum_{i \in C_P^{\geq 1}} n_i+\sum_{i \in C_P^{< 1}} \lambda_i = 1$. In both cases, note that the left hand side of Theorem~\ref{replacementchange} is strictly positive. Similarly, if $b_P=1$ (equivalently $\sum_{i \in C_P^{\geq 1}} n_i+\sum_{i \in C_P^{< 1}} \lambda_i$ is even, and therefore $\geq 2$), then
\[ \sum_{g_i \in C_P^{\geq 1}} 2n_i +  \sum_{g_i \in C_P^{<1}} 2\lambda_i-2b_P \left(\sum_{g_i \in C_P^{< 1}} \lambda_i \right)  - 2b_P \left(\sum_{g_i \in C_P^{\geq 1}} n_i \right) = 0 \]
and therefore the desired inequality follows from Lemma~\ref{convexity}(a) as before. Once again note that the right hand side of Theorem~\ref{replacementchange} is strictly positive. Observe that we have equality in Equation~\ref{Eeqodd} precisely when $\widetilde{\wt_P} = \sum_{i \in C_P^{\geq 1}} n_i + \sum_{i \in C_P^{<1}} \lambda_i$ is either $1$ or $2$.

Finally note that if $b=0$, we have $\wt_P \in \{2,3\}$ precisely when $\widetilde{\wt_P} \in \{2,3\}$, and that if $b=1$, we have $\wt_P \in \{2,3\}$ precisely when $\widetilde{\wt_P} \in \{1,2\}$. These are precisely the cases we found for equality above.
\end{proof}

\begin{corollary}\label{Cdiscind} Let $g$ be a replacement polynomial for $f$ and assume that $\deg(g) \geq 1$. Then $(\deg(g),\nu(\Delta_g)) \leq (\deg(f),\nu(\Delta_f))$ in the lexicographic ordering. Equality can possibly hold only when for every $P \in A_{\mathrm{bad}}$, we have $b=0$ and $\wt_P = 3$. In this case, for every replacement polynomial $h$ of $g$, we have $(\deg(h),\nu(\Delta_h)) < (\deg(g),\nu(\Delta_g))$ in the lexicographic ordering. In particular, the inductive process outlined in Section~\ref{outline} terminates.
\end{corollary}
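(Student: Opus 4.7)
The plan is to combine the degree formulas for the replacement polynomials with the key inductive inequality Theorem~\ref{replacementchange}, and then exploit the structure of the equality case to force strict decrease at the next step. The argument will have four pieces: a degree bound, a discriminant comparison, an analysis ruling out most equality cases, and a termination argument.

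First I would bound $\deg g$. By Definition~\ref{defnreppoly}, Lemma~\ref{degrep}, and Remark~\ref{rootscoefficients}, we have $\deg f_P^{\infty} = b + \sum_{i \in C_P^{<1}} \lambda_i$ and $\deg f_P^{\neq \infty} = \sum_{i \in C_P^{\geq 1}} n_i$. Since $\lambda_i < n_i$ whenever $i \in C_P^{<1}$ (because $\lambda_i/n_i < 1$ forces $n_i \geq 2$ and $\lambda_i \geq 1$), both quantities are bounded by $\deg f = \sum_{P' \in A \setminus \{\infty\}} \sum_{i \in C_{P'}} n_i$, so $\deg g \leq \deg f$.

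Next I would apply Theorem~\ref{replacementchange}: parts (a) and (c) together give $\nu(\Delta_f) - \sum_{\mathrm{rep}} \nu(\Delta_{\mathrm{rep}}) \geq 0$. Since every discriminant is nonnegative, $\nu(\Delta_g) \leq \sum \nu(\Delta_{\mathrm{rep}}) \leq \nu(\Delta_f)$, and combining with the degree bound yields the claimed lex inequality. Moreover, by Theorem~\ref{replacementchange}(c) the right hand side is strictly positive whenever $b \neq 0$ or some $\wt_P \neq 3$ for $P \in A_{\mathrm{bad}}$; in those cases the strict inequality $\nu(\Delta_g) < \nu(\Delta_f)$ holds, so lex equality is ruled out and only the stated configuration can produce equality.

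For the termination claim, I would analyze the structure of $g$ in the equality case. With $b = 0$, the $f_P^{\infty}$ branch is impossible for equality: $\deg f_P^{\infty} = \sum_{i \in C_P^{<1}} \lambda_i < \sum_{i \in C_P^{<1}} n_i \leq \deg f$ whenever $C_P^{<1}$ is nonempty (which is forced by $\deg g \geq 1$ when $b = 0$). Hence $g = f_P^{\neq \infty}$, and matching $\deg g = \deg f$ forces $C_{P'} = \emptyset$ for every $P' \neq P$ in $A \setminus \{\infty\}$ and $C_P^{<1} = \emptyset$, so $A_{\mathrm{bad}} = \{P\}$ and $C_P = C_P^{\geq 1}$. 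Since $\wt_P = 3$ is odd, Corollary~\ref{fmultexc} gives $b_P = 1$, so $g = t \prod_{i \in C_P^{\geq 1}} h_i$ and the parity parameter for $g$ is $b^g = 1$. Applying Theorem~\ref{replacementchange}(c) now to $g$, the ``$b = 0$'' clause fails, so the right hand side for $g$ is strictly positive; therefore $\nu(\Delta_h) < \nu(\Delta_g)$ for every replacement $h$ of $g$, and combining with $\deg h \leq \deg g$ gives strict lex decrease in the next step. The pair $(\deg, \nu(\Delta))$ thus strictly decreases at least every two iterations of the replacement process, and since both coordinates are nonnegative integers, the induction terminates.

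The main subtlety will be the structural analysis in the last step: pinning down that the equality configuration necessarily has a single bad point with only $C_P^{\geq 1}$ factors, so that the resulting $g$ acquires $b^g = 1$ and no longer fits the exception clause of Theorem~\ref{replacementchange}(c). Everything else reduces to nonnegativity of discriminants and bookkeeping with the degree formulas.
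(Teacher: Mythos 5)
Your proposal is correct and follows essentially the same route as the paper: degree non-increase of the replacement polynomials together with Theorem~\ref{replacementchange}(c) gives the lexicographic inequality and restricts the equality case to $b=0$, $\wt_P=3$, and then $\wt_P=3$ being odd forces $b_P=1$ via Corollary~\ref{fmultexc}, so the replacement polynomial has $b$-parameter $1$ and a second application of Theorem~\ref{replacementchange}(c) gives strict decrease at the next step. Your additional structural analysis in the equality case (pinning down $g=f_P^{\neq\infty}$, a single bad point, $C_P^{<1}=\emptyset$) is harmless but not needed, since $b_P=1$ already follows from the parity of $\wt_P$ alone, which is exactly what the paper uses.
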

\begin{proof}
 Theorem~\ref{replacementchange}(c) shows that 
 \[0 \leq \nu(\Delta_f) -\left( \sum_{\substack{P \in A \setminus A_{\mathrm{bad}} \\ P \neq \infty, g_i \in C_P}} (n_i-1) \right) - \sum_{\substack{P \in A_{\mathrm{bad}} \setminus \{\infty\} \\ \deg(f_P^\infty) \geq 1 }} \nu(\Delta_{f_P^{\infty}}) - \sum_{\substack{P \in A_{\mathrm{bad}} \setminus \{\infty\} \\ \deg(f_P^{\neq \infty}) \geq 1 }} \nu(\Delta_{f_P^{\neq \infty}}).\]
 Note that by Remark~\ref{rootscoefficients} and Remark~\ref{Rcomprep} the degrees of the replacement polynomials are non-increasing.
 Combining the previous two sentences, we see that for any replacement polynomial $g$ with $\deg(g) \geq 1$, we have $(\deg(g),\nu(\Delta_g)) \leq (\deg(f),\nu(\Delta_f))$ in the lexicographic ordering. Furthermore, by Theorem~\ref{replacementchange}(c) the displayed inequality of discriminants above is strict unless for every $P \in A_{\mathrm{bad}}$, we have $b=0, \wt_P = 3$ and that one of $\deg(f_P^{\neq \infty})$ and $\deg(f_P^{\infty})$ is $0$. This further shows that the only case when we can possibly have $(\deg(g),\nu(\Delta_g)) = (\deg(f),\nu(\Delta_f))$ for a replacement polynomial $g$ is when $b=0, \wt_P=3$. In these cases, by the definition of $b_P$ and Lemma~\ref{fmultexc}, we have $b_P = 1$, and therefore once again by Theorem~\ref{replacementchange}(c), we see any of the replacement polynomials $h$ for $g$ satisfy $(\deg(h),\nu(\Delta_h)) < (\deg(g),\nu(\Delta_g)) = (\deg(f),\nu(\Delta_f))$ in the lexicographic ordering. 
\end{proof}

We need an alternate characterization of good weight $3$ points from Definition~\ref{D:goodtype3} before we can prove Theorem~\ref{Cexbal}. Let $\widetilde{f_P} = \prod_{f_i \in C_P} f_i$.
\begin{lemma}\label{L:altgoodtype3}
 Suppose $b=0$ and $P$ in $\divi(f)$ satisfies $\wt_P = 3$. Then $P$ is a good weight $3$ point if and only if $\widetilde{\wt_Q} \leq 2$ for every $Q$ in $\divi(g)$ for every replacement polynomial $g$ of $f_P$.
\end{lemma}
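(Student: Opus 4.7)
I proceed by case analysis on the ways that $\widetilde{\wt_P} = 3$ can be decomposed (recall $b = 0$, so $\wt_P = \widetilde{\wt_P}$) as a sum of $\min(n_i, \lambda_i)$ over factors $g_i \in C_P$: namely three factors of weight $1$ each, two factors of weights $1$ and $2$, or a single factor of weight $3$. The main computational tools are the formulas $(\tilde{n_i}, \tilde{\lambda_i}) = (n_i, \lambda_i - n_i)$ for $g_i \in C_P^{\geq 1}$ from Remark~\ref{rootscoefficients} and $(\tilde{n_i}, \tilde{\lambda_i}) = (\lambda_i, n_i - \lambda_i)$ for $g_i \in C_P^{<1}$ from Corollary~\ref{Cnewpair}, combined with Lemma~\ref{specialization} and Theorem~\ref{expofreplacement} to track how specialization points on $E_P$ map to specialization points in the divisors of $f_P^{\infty}$ and $f_P^{\neq \infty}$.

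For the forward direction, I verify each case of Definition~\ref{D:goodtype3} separately. In Case 1, the total weight $3$ is distributed across at least two distinct points of $E_P$, so each receives weight at most $2$; since the replacement step satisfies $\min(\tilde{n_i}, \tilde{\lambda_i}) \leq \min(n_i, \lambda_i)$ by the formulas above, the new weight at each $Q$ in the replacement divisor remains $\leq 2$. For Case 2 with $(n_2, \lambda_2) \in \{(2,3), (3,2)\}$, both options give $(\tilde{n_2}, \tilde{\lambda_2}) = (2, 1)$ and hence new $\min = 1$; combined with the weight-$1$ factor contributing at most $1$, the total at the common image $Q$ is $\leq 2$. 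For Case 3 with $(n_1, \lambda_1) \in \{(3,4), (4,3), (3,5), (5,3)\}$, the pairs $(3,4), (4,3)$ yield $\min(\tilde{n_1}, \tilde{\lambda_1}) = 1$ and $(3,5), (5,3)$ yield $\min(\tilde{n_1}, \tilde{\lambda_1}) = 2$, all bounded by $2$.

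For the backward direction I argue the contrapositive: if $P$ is not a good weight $3$ point, I exhibit some $Q$ in a replacement polynomial's divisor with $\widetilde{\wt_Q} \geq 3$. The failure of Case 1 forces all factors of $C_P$ to specialize to a common point $Q^* \in E_P$, so in the corresponding replacement polynomial (either $f_P^{\infty}$ or $f_P^{\neq \infty}$, depending on whether $C_P \subset C_P^{<1}$ or $C_P \subset C_P^{\geq 1}$) their replacement factors all specialize to a single image $Q$. I then consider each remaining configuration: (a) three weight-$1$ factors, where the formulas give each replacement contribution $\geq 1$ and hence total $\geq 3$; (b) two factors of weights $1$ and $2$ with $(n_2, \lambda_2)$ outside $\{(2,3), (3,2)\}$, where the "generic" excluded pairs satisfy $|\lambda_2 - n_2| \geq 2$ and the formulas yield $\min(\tilde{n_2}, \tilde{\lambda_2}) \geq 2$, giving total $\geq 3$ when added to the weight-$1$ factor's contribution; or (c) one weight-$3$ factor with $(n_1, \lambda_1)$ outside the allowed set, where a parallel case-by-case computation shows $\min(\tilde{n_1}, \tilde{\lambda_1}) \geq 3$.

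The main obstacle will be the backward direction's boundary configurations (such as $(n_2, \lambda_2) = (2,2)$ or $(n_1, \lambda_1) = (3,3)$, where $\tilde{\lambda_i} = 0$ in the nominal formula), because the replacement factor's specialization point may no longer be the origin of the new $\mathbb{A}^1$: the weight $\min(\tilde{n_i}, \nu(h_i(a_{\tilde{P_i}})))$ at the actual image $Q$ can differ from the nominal value $\min(\tilde{n_i}, \tilde{\lambda_i})$. I will handle these individually by tracing the Newton-Puiseux expansion of a root of $g_i$ through the construction of $h_i$ to locate its specialization point in $\mathbb{A}^1$, compute the weight contribution there, and thereby confirm that each such excluded configuration produces some $Q$ (possibly at a non-origin image) with $\widetilde{\wt_Q} \geq 3$, completing the equivalence.
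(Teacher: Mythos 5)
Your overall route is the same as the paper's: split according to how the weight $3$ decomposes over the factors in $C_P$, push everything through the pair-update formulas $(\widetilde{n_i},\widetilde{\lambda_i})=(n_i,\lambda_i-n_i)$ resp.\ $(\lambda_i,n_i-\lambda_i)$ of Remark~\ref{rootscoefficients} and Corollary~\ref{Cnewpair}, and your forward direction is exactly the paper's computation. The genuine gap is in the boundary case you flag at the end, and unfortunately the resolution you propose asserts something false. You claim that every excluded configuration with $\lambda_i=n_i$ still produces some $Q$ with $\widetilde{\wt_Q}\geq 3$. Take $f_P=(x-t)\bigl((x-t)^2-t^3\bigr)$: here $(n_1,\lambda_1)=(1,1)$, $(n_2,\lambda_2)=(2,2)$, both factors specialize to the point $x/t=1$ of $E_P$, so $\wt_P=3$ and $P$ is not a good weight $3$ point according to Definition~\ref{D:goodtype3}. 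The only replacement polynomial of $f_P$ is $f_P^{\neq\infty}=t(x-1)\bigl((x-1)^2-t\bigr)$ (the polynomial $f_P^{\infty}$ has degree $0$), and its unique finite specialization point $Q$ at $x=1$ has $\widetilde{\wt_Q}=1+\min\bigl(2,\nu(-t)\bigr)=2$, while the point at infinity carries weight $1$. So the contrapositive you want fails here. The single-factor boundary case behaves the same way: for $g_1=(x-t)^3-t^4$, the minimal polynomial of $t+t^{4/3}$, one has $(n_1,\lambda_1)=(3,3)$, hence not good by the definition, but the replacement $t\bigl((x-1)^3-t\bigr)$ has $\widetilde{\wt_Q}=1$ at $x=1$. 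Worse, the criterion in these configurations is not determined by the pairs $(n_i,\lambda_i)$ at all: the minimal polynomial of $t+t^2+t^{7/3}$ also has $(n_1,\lambda_1)=(3,3)$, yet its replacement $(x-1-t)^3-t^4$ contributes weight $3$ at $x=1$. So no amount of tracing Newton--Puiseux expansions will yield $\widetilde{\wt_Q}\geq 3$ uniformly in the excluded boundary configurations; the dichotomy your plan needs is simply not there.

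For comparison, the paper's own proof does not address this case either: it reads the weight of $h_i$ at its specialization point off the nominal quantity $\min(\widetilde{n_i},\widetilde{\lambda_i})$, which is legitimate only when the roots of $h_i$ have positive valuation, i.e.\ when $\lambda_i>n_i$ for $i\in C_P^{\geq 1}$; when $\lambda_i=n_i$ the roots of $h_i$ specialize away from the origin (Lemma~\ref{specialization}), the nominal value is $0$, and the actual weight there is at least $1$ and depends on how the conjugate roots separate after the blow-up. So your instinct that $(2,2)$ and $(3,3)$ are the delicate configurations is exactly right---they are precisely where the stated equivalence with the case list of Definition~\ref{D:goodtype3} breaks down---but they cannot be disposed of in the direction you propose: either the case list must be enlarged by a condition finer than $(n_i,\lambda_i)$ (recording whether the roots re-collide after the blow-up), or the backward direction has to be argued on different grounds. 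As written, your proof cannot be completed.
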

\begin{proof}
 Since $b=0$ and $\wt_P = 3$, this means $\widetilde{\wt_P} = \sum_{i \in C_P} \min(n_i,\lambda_i) = 3$. 
 
 Since the contribution to $\widetilde{\wt_Q}$ from each irreducible factor is at least $1$, and non-decreasing if we replace $f_i \in C_P$ by any of its replacement polynomials, it follows that if the irreducible polynomials in $C_P$ specialize to more than one point after one blow-up, then $\widetilde{\wt_Q} \leq \widetilde{\wt_P}-1 = 2$ for every $Q$ in $\divi(g)$ for every replacement polynomial $g$ of $f_P$. This is the first case in the definition of a good weight $3$ point.
 
 We may now further assume that all irreducible polynomials in $C_P$ specialize to the same point $Q$ on the exceptional curve $E_P$ after one blowup. Since the contribution from each irreducible polynomial $f_i$ in $C_P$ to $\widetilde{\wt_P}$ is at least $1$, it follows that $C_P$ consists of at most three irreducible polynomials. Furthermore, if $C_P$ consists of $3$ irreducible polynomials $f_1,f_2,f_3$ and $\min(n_i,\lambda_i) = 1$ for every $i$, since the replacement polynomial for each $f_i$ contributes at least $1$ to $\widetilde{\wt_Q}$, it follows that $\widetilde{\wt_Q} = 3$. This case is excluded from the definition of a good weight $3$ point. It remains to analyze the cases when $C_P$ has at most two distinct irreducible factors. 
 
 The remaining possibilities for $\widetilde{\wt_P} = 3$ and $\widetilde{\wt_Q} \leq 2$ are 
\begin{enumerate}
 \item $C_P$ consists of $2$ irreducible polynomials $f_1,f_2$ and $\min(n_1,\lambda_1) = 1$ and $\min(n_2,\lambda_2) = 2$, and the pair of integers $(\widetilde{n_2},\widetilde{\lambda_2})$ for the replacement polynomial $h_2$ of $f_2$ satisfy $\min(\widetilde{n_2},\widetilde{\lambda_2}) = 1$, and,
 \item $C_P$ consists of a single irreducible polynomial $f_1$ and $\min(n_1,\lambda_1) = 3$, and the pair of integers $(\widetilde{n_1},\widetilde{\lambda_1})$ for the replacement polynomial $h_1$ of $f_1$ satisfy $1 \leq \min(\widetilde{n_1},\widetilde{\lambda_1}) \leq 2$.
\end{enumerate}
Since Remark~\ref{rootscoefficients} and Remark~\ref{Rcomprep} show that for each $i$ we have
\[ (\widetilde{n_i},\widetilde{\lambda_i}) = \begin{cases} (n_i,\lambda_i-n_i) \quad\quad \textup{if } \lambda_i \geq n_i, \\ (\lambda_i,n_i-\lambda_i) \quad\quad \textup{if } \lambda_i < n_i,
\end{cases} \]
it follows that
\begin{enumerate}
 \item $\min(n_2,\lambda_2) = 2$ and $\min(\widetilde{n_2},\widetilde{\lambda_2}) = 1$ if and only if $(n_2,\lambda_2) \in \{ (3,2),(2,3) \}$, and,
 \item $\min(n_1,\lambda_1) = 3$ and $1 \leq \min(\widetilde{n_1},\widetilde{\lambda_1}) \leq 2$ if and only if $(n_1,\lambda_1) \in \{ (3,4),(4,3),(3,5),(5,3) \}$.
\end{enumerate}
These are precisely the remaining cases in Definition~\ref{D:goodtype3}. 
\end{proof}

\begin{proof}[Proof of Theorem~\ref{Cexbal}]
We have $-(\Art(X^f)) = \nu(\Delta_f)$ if and only if the condition for equality in Theorem~\ref{replacementchange}(b) holds for $f$ and all its replacement polynomials. In particular, for $-(\Art(X^f)) = \nu(\Delta_f)$, it is necessary that $\wt_P \leq 3$ for every $P \in A$. 

We first show that if $\widetilde{\wt_P} \leq 2$, then the condition for equality is satisfied by all the replacement polynomials coming from $f_i$ in $C_P$. Remark~\ref{rootscoefficients} and Remark~\ref{Rcomprep} show that the contribution to $\widetilde{\wt}$ is non-decreasing when we replace $f_i$ by its replacement polynomials. So once again using $\wt-\widetilde{\wt} \leq 1$, we see that $\wt \leq 3$ and the condition for equality is satisfied by all the replacement polynomials coming from $f_i$ in $C_P$. In particular, if $\wt_P \leq 2$ or if $b=1$ and $\wt_P \leq 3$, then the condition for equality is satisfied by all the replacement polynomials coming from $f_i$ in $C_P$.

It remains to analyze the case $b=0$ and $\wt_P = 3$. Since $\wt_P$ is odd, by Corollary~\ref{fmultexc} and the Definition of $b_P$, we have $b_P = 1$.  Remark~\ref{rootscoefficients} and Remark~\ref{Rcomprep} show that $\widetilde{\wt_Q} \leq \widetilde{\wt_P} = 3$ for every $Q$ in $\divi(g)$ for every replacement polynomial $g$ of $f_P \colonequals \prod_{i \in C_P} f_i$. If $\widetilde{\wt_Q} = 3$, then $\wt_Q = b_P+\widetilde{\wt_Q} = 4$ and the condition for equality in Theorem~\ref{replacementchange}(b) fails at the second stage.  If $\widetilde{\wt_Q} \leq 2$ for every $Q$ in $\divi(g)$ for every replacement polynomial $g$ of $f_P$, then by repeating the same argument as in the case $\widetilde{\wt_P} \leq 2$, we see that the condition for equality in Theorem~\ref{replacementchange}(b) is satisfied by all further replacement polynomials. Combining the previous two sentences with Lemma~\ref{L:altgoodtype3} completes the analysis in the case $b=0$ and $\wt_P = 3$.  \qedhere

\end{proof}

\begin{proof}[Proof of Corollary~\ref{Rgenus1}]
 The only way we have a point $P$ in $\divi(f)$ with $\wt_P \geq 4$ is if $b=1$ and all roots of $f$ specialize to $P$ and have valuation $\geq 1$. In this case the replacement polynomial for $f$ after one blowup defines the same elliptic curve, but has strictly smaller discriminant, so the original equation $y^2=f(x)$ is not minimal. Similarly, if $\wt_P = 3$, and there is a point $Q$ in the exceptional curve at the blowup at $P$ such that $\widetilde{\wt_Q} \geq 3$, then once again it must be the case that all roots of $f$ specialize to $Q$ and that the replacement polynomial of $f$ has strictly smaller discriminant. In other words, if $f$ is a polynomial that realizes the minimal discriminant of the curve $y^2=f(x)$, then by Lemma~\ref{L:altgoodtype3}, the conditions of Theorem~\ref{Cexbal} are satisfied, and we have $-(\Art(X^f)) = \nu(\Delta_f)$. 
\end{proof}

\begin{proof}[Proof of Corollary~\ref{Ctoomuchcollision}]
 Each irreducible factor contributes at least $1$ to the weight.
\end{proof}

\begin{eg}\label{Ecombeq}[Combinatorics to rule out equality]
 The genus $2$ hyperelliptic curve corresponding to the equation $y^2=(x-1)(x-2)(x-3)(x-t^2)(x-2t^2)(x-3t^2)$ over $K=\C((t))$ has $-\Art(X^f) < \nu(\Delta_f)$ since the point $P \colon x=t=0$ is not a good weight $3$ point. The replacement polynomial $f_0^{\neq \infty}(x) = t(x-t)(x-2t)(x-3t)$ and has weight $4$ at the unique point of specialization on $E_P$.
\end{eg}

\begin{eg}\label{Eotherdirection}
Let $g \geq 2$ be an even integer. Pick $g$ elements $a_1,a_2,\ldots,a_g \in R$ with pairwise distinct residues in $k \setminus \{0,1,-1\}$. Let $f(x) = x(x+1)(x-ta_1)(x-ta_2)\cdots(x-t_ag)(x-1-ta_1)(x-1-ta_2)\cdots(x-1-t_ag)$. One can check that the model $Y^f$ is a chain of $3$ projective lines, and that $X^f$ is the minimal regular (even semistable) model, and compute that $\Delta_C = 2g(g-1)$ and $-\Art(C/K)=4$. This example shows that for higher $g$, the difference between $-\Art(C/K)$ and $\nu(\Delta_C)$ can be as large as a quadratic function of $g$.
\end{eg}

\begin{remark}\label{Rotherdirection}
 Since $\sum_{i \in C_P^{\geq 1}} n_i+\sum_{i \in C_P^{< 1}} \lambda_i \leq \deg(f)$, and the degrees of the replacement polynomials are at most the degree of $f$, the inductive inequality in Theorem~\ref{replacementchange} also gives $\nu(\Delta_f) \leq \deg(f)(\deg(f)-1) (-\Art(X^f))$. Since we have not analyzed how many contractible components, the model $X^f$ has in general, it is not clear to us if this also gives $\nu(\Delta_f) = \nu(\Delta_C) \leq (g+1)(2g-1) (-\Art(C/K))$. 
\end{remark}

\subsection{Termination of induction and the conductor-discriminant inequality}\label{terminate}
\begin{proof}[Proof of \ref{Tfinalthm}]
Since regularity is preserved under unramified base extensions and since these invariants are unchanged under unramified base extensions, we may assume that $k$ is algebraically closed by extending scalars to the Henselization. Let $f \in R[x]$ be a separable polynomial such that $\Delta_f = \Delta_C$. We may assume that $R=k[[t]]$ using Proposition~\ref{tiltdisc}. Let $X^f$ be the regular model of $C$ from Definition~\ref{models}, Lemma~\ref{useful}. Since $-\Art(C/K) \leq -\Art(X^f)$ by \cite[Proposition~1]{liup}, it now suffices to prove $-\Art(X^f) \leq \nu(\Delta_f)$.

The proof is by induction on the ordered pair $(\deg(f),\nu(\Delta_f))$. The base case of the induction is when the set $A_{\mathrm{bad}}$ from Section~\ref{setup} is empty, and in this case the inequality follows from Lemma~\ref{badpoints}, Corollary~\ref{badpointscorollary} and Lemma~\ref{evenbase}. If $A_{\mathrm{bad}}$ is not empty, define replacement polynomials as in Definition~\ref{defnreppoly} for each $P \in A_{\mathrm{bad}}$. By Remark~\ref{RAbaddeg} and Corollary~\ref{Cdiscind} the induction hypothesis applies after at most two replacement steps, and it follows that that the conductor-discriminant inequality holds for all the replacement polynomials. Adding these inequalities to the inequality in Theorem~\ref{replacementchange} proves the conductor-discriminant inequality for $f$.
\end{proof}

\section*{Acknowledgements}

I would like to thank Matt Baker, Bjorn Poonen, Joe Rabinoff and Kirsten Wickelgren for several helpful conversations. I would like to thank Matt Baker, Bjorn Poonen, Joe Rabinoff, Doug Ulmer and Kirsten Wickelgren  for their continued support and mentorship. I would like to thank Borys Kadets and Nicholas Triantafillou for suggestions for improving the exposition, and Isabel Vogt for a helpful conversation. I would also like to thank the ``A Room of One's Own initiative'' for focused research time.

\begin{bibdiv}
\bibliographystyle{alpha}
\begin{biblist}

\bib{BakerRumely}{book}{
   author={Baker, Matthew},
   author={Rumely, Robert},
   title={Potential theory and dynamics on the Berkovich projective line},
   series={Mathematical Surveys and Monographs},
   volume={159},
   publisher={American Mathematical Society, Providence, RI},
   date={2010},
   pages={xxxiv+428},
   isbn={978-0-8218-4924-8},
   review={\MR{2599526}},
   doi={10.1090/surv/159},
}

\bib{blr}{book}{
   author={Bosch, Siegfried},
   author={L{\"u}tkebohmert, Werner},
   author={Raynaud, Michel},
   title={N\'eron models},
   series={Ergebnisse der Mathematik und ihrer Grenzgebiete (3) [Results in
   Mathematics and Related Areas (3)]},
   volume={21},
   publisher={Springer-Verlag, Berlin},
   date={1990},
   pages={x+325},
   isbn={3-540-50587-3},
   review={\MR{1045822 (91i:14034)}},
   doi={10.1007/978-3-642-51438-8},
}

\bib{BW_Glasgow}{article}{
  author={Bouw, Irene I.},
  author={Wewers, Stefan},
  title={Computing $L$-functions and semistable reduction of superelliptic
  curves},
  journal={Glasg. Math. J.},
  volume={59},
  date={2017},
  number={1},
  pages={77--108},
  issn={0017-0895},
}

\bib{BKSW}{article}{
    author={Bouw, Irene I.},
    author={Koutsianas, Angelos},
    author={Sijsling, Jeroen},
    author={Wewers, Stefan},
   title={Conductor and discriminant of Picard curves},
   date={2019},
   eprint={arxiv:1902.09624},
}

\bib{DDMM}{article}{
   author={Dokchitser, Tim},
   author={Dokchitser, Vladimir},
   author={Maistret, C\'{e}line},
   author = {Morgan, Adam},
   title={Arithmetic of hyperelliptic curves over local fields},
   date={2018},
   eprint={arxiv:1808.02936},
}


\bib{NowFar}{article}{
  author = 	 {Farragi, Omri},
  author = 	 {Nowell, Sarah},
  title = 	 {Models of Hyperelliptic Curves with Tame Potentially Semistable Reduction},
  date={2019},
  eprint={arxiv:1906.06258},
}

\bib{GGP}{article}{
   author={Garc\'\i a Barroso, Evelia Rosa},
   author={Gonz\'alez P\'erez, Pedro Daniel},
   author={Popescu-Pampu, Patrick},
   title={Variations on inversion theorems for Newton-Puiseux series},
   journal={Math. Ann.},
   volume={368},
   date={2017},
   number={3-4},
   pages={1359--1397},
   issn={0025-5831},
   review={\MR{3673657}},
   doi={10.1007/s00208-016-1503-1},
}

\bib{Kau}{article}{
   author={Kausz, Ivan},
   title={A discriminant and an upper bound for $\omega^2$ for hyperelliptic
   arithmetic surfaces},
   journal={Compositio Math.},
   volume={115},
   date={1999},
   number={1},
   pages={37--69},
   issn={0010-437X},
}

\bib{Kohls}{article}{
   author={Kohls, Roman},
   title={Conductors of superelliptic curves},
   date={2019},
   note={Ph.D. thesis, Universit\"{a}t Ulm},
}

\bib{liu}{book}{
   author={Liu, Qing},
   title={Algebraic geometry and arithmetic curves},
   series={Oxford Graduate Texts in Mathematics},
   volume={6},
   note={Translated from the French by Reinie Ern\'e;
   Oxford Science Publications},
   publisher={Oxford University Press, Oxford},
   date={2002},
   pages={xvi+576},
   isbn={0-19-850284-2},
   review={\MR{1917232 (2003g:14001)}},
}

\bib{liup}{article}{
   author={Liu, Qing},
   title={Conducteur et discriminant minimal de courbes de genre $2$},
   language={French},
   journal={Compositio Math.},
   volume={94},
   date={1994},
   number={1},
   pages={51--79},
   issn={0010-437X},
   review={\MR{1302311 (96b:14038)}},
}

\bib{Mau}{article}{
   author={Maugeais, Sylvain},
   title={Rel\`evement des rev\^{e}tements $p$-cycliques des courbes rationnelles
   semi-stables},
   language={French, with French summary},
   journal={Math. Ann.},
   volume={327},
   date={2003},
   number={2},
   pages={365--393},
   issn={0025-5831},
}


\bib{oddpaper}{article}{
   author={Obus, Andrew},
   author={Srinivasan, Padmavathi},
   title={Conductors and minimal discriminants of hyperelliptic curves in odd residue characteristic},
   date={2019},
   eprint={arxiv:1910.02589},
}

\bib{saito2}{article}{
   author={Saito, Takeshi},
   title={Conductor, discriminant, and the Noether formula of arithmetic
   surfaces},
   journal={Duke Math. J.},
   volume={57},
   date={1988},
   number={1},
   pages={151--173},
   issn={0012-7094},
   review={\MR{952229 (89f:14024)}},
   doi={10.1215/S0012-7094-88-05706-7},
}

\bib{serrelocal}{book}{
   author={Serre, Jean-Pierre},
   title={Local fields},
   series={Graduate Texts in Mathematics},
   volume={67},
   note={Translated from the French by Marvin Jay Greenberg},
   publisher={Springer-Verlag, New York-Berlin},
   date={1979},
   pages={viii+241},
   isbn={0-387-90424-7},
   review={\MR{554237}},
}

\bib{conddisc}{article}{
   author={Srinivasan, Padmavathi},
   title={Conductors and minimal discriminants of hyperelliptic curves with rational Weierstrass points},
   eprint = {arxiv:1508.05172}, 
   year = {2015}, 
}

\bib{Wall}{book}{
   author={Wall, C. T. C.},
   title={Singular points of plane curves},
   series={London Mathematical Society Student Texts},
   volume={63},
   publisher={Cambridge University Press, Cambridge},
   date={2004},
   pages={xii+370},
   isbn={0-521-83904-1},
   isbn={0-521-54774-1},
   review={\MR{2107253}},
   doi={10.1017/CBO9780511617560},
}

\end{biblist}
\end{bibdiv}

\end{document}